\documentclass[10pt]{amsart}
\usepackage[margin=1in]{geometry}
\usepackage{amsmath,amssymb,amsthm,mathrsfs,mathtools}
\usepackage{tikz-cd}
\usepackage{booktabs,array}
\usepackage{enumitem}
\usepackage{xcolor}
\usepackage[colorlinks=true,linkcolor=blue!50!black,citecolor=green!40!black,urlcolor=blue!50!black]{hyperref}

\numberwithin{equation}{section}
\setlist{itemsep=1pt,topsep=3pt}
\newtheorem{theorem}{Theorem}[section]
\newtheorem{proposition}[theorem]{Proposition}
\newtheorem{lemma}[theorem]{Lemma}
\newtheorem{corollary}[theorem]{Corollary}
\newtheorem{conjecture}[theorem]{Conjecture}
\newtheorem{mainthm}[theorem]{Theorem}
\newtheorem{maincor}[theorem]{Corollary}
\theoremstyle{definition}
\newtheorem{definition}[theorem]{Definition}
\newtheorem{example}[theorem]{Example}
\newtheorem{remark}[theorem]{Remark}

\newcommand{\Db}{\mathrm{D}^{\mathrm{b}}}
\newcommand{\Dqc}{\mathrm{D}_{\mathrm{qc}}}
\newcommand{\Perf}{\mathrm{Perf}}
\newcommand{\Ku}{\mathcal{K}u}
\newcommand{\cO}{\mathcal{O}}
\newcommand{\cA}{\mathcal{A}}
\newcommand{\cB}{\mathcal{B}}
\newcommand{\cC}{\mathcal{C}}
\newcommand{\cE}{\mathcal{E}}
\newcommand{\cH}{\mathcal{H}}
\newcommand{\cK}{\mathcal{K}}
\newcommand{\cM}{\mathcal{M}}
\newcommand{\cT}{\mathcal{T}}
\newcommand{\PP}{\mathbb{P}}
\newcommand{\CC}{\mathbb{C}}
\newcommand{\ZZ}{\mathbb{Z}}
\newcommand{\RHom}{\operatorname{RHom}}
\newcommand{\Hom}{\operatorname{Hom}}
\newcommand{\REnd}{\operatorname{REnd}}
\newcommand{\RG}{\mathrm{R}\Gamma}
\newcommand{\id}{\mathrm{id}}
\newcommand{\Fun}{\operatorname{Fun}}

\newcommand{\Pic}{\operatorname{Pic}}
\newcommand{\thick}{\operatorname{thick}}
\newcommand{\Cone}{\operatorname{Cone}}
\newcommand{\Spec}{\operatorname{Spec}}
\newcommand{\Gr}{\operatorname{Gr}}
\newcommand{\HH}{\mathrm{HH}}
\newcommand{\HS}{\mathrm{HS}}
\newcommand{\Sym}{\operatorname{Sym}}
\newcommand{\Proj}{\operatorname{Proj}}

\newcommand{\Rot}{\mathsf{R}}

\title{Categorical Torelli theorem for smooth Fano varieties}
\author{Xun Lin}
\address{School of Science and Engineering, The Chinese University of Hong Kong, Shenzhen, Shenzhen, Guangdong 518172, China}
\email{linxun@cuhk.edu.cn}
\author{Shizhuo Zhang}
\address{School of Mathematics, Sun Yat-sen University, Guangzhou 510275, P.~R.~China}
\email{zhangshzh28@mail.sysu.edu.cn}
\date{October 2026}
\subjclass[2020]{Primary 14F08; Secondary 14J45, 14N05, 18G70}
\keywords{Categorical Torelli theorem, Kuznetsov component, rotation functor, noncommutative divisor, projective duality, Fano variety, Perry's conjecture, hypersurface, prime Fano threefold}

\DeclareMathOperator{\PsPerf}{PsPerf}
\DeclareMathOperator{\Sg}{Sg}
\providecommand{\Sing}{\operatorname{Sing}}
\begin{document}

\begin{abstract}
Let $X$ be a smooth Fano variety and $H$ an ample line bundle with $-K_X=rH$ and $r\le\dim X-2$. We prove that the appropriately defined Kuznetsov component of $X$ with respect to $H$, together with its rotation functor, determines $(X,H)$ when $H$ is very ample, and more generally when $H$ is globally generated and the image of the morphism defined by $|H|$ has a hypersurface as projective dual. As a consequence we prove Perry's conjecture, which predicts that a smooth Fano variety $X$ is determined up to isomorphism by the orthogonal complement $\cO_X^{\perp}$ of its structure sheaf in $\Db(X)$, for all smooth Fano varieties of dimension at least three with very ample anticanonical bundle, and more generally with globally generated anticanonical bundle whose image has a hypersurface as dual. We also prove it when a multiple $-qK_X$ is very ample and the corresponding embedding is cut out by equations of degree at most $(\dim X-1)/q$.

The idea is to read off $X$ from its hyperplane sections. We show that the Kuznetsov component with its rotation functor contains the residual categories of all hyperplane sections of $X$, singular ones included. The sections whose categories are not smooth form the projective dual of $X$, and classical projective duality recovers $X$. When $H$ is not very ample, the nodes of a single tangent hyperplane section, defined over a function field and detected by its singularity category, recover $X$ as a finite cover of its image. Applications include Fano hypersurfaces, all prime Fano threefolds of index one and genus at least three, cyclic covers of quadrics, Fano complete intersections and Gushel--Mukai varieties.
\end{abstract}

\maketitle
\setcounter{tocdepth}{1}
\tableofcontents

\section{Introduction}

\subsection{What does a derived category remember?}
Derived categories of coherent sheaves have become one of the central structures of algebraic geometry. They are remarkably rich invariants: they encode Hodge-theoretic and motivic information, moduli spaces of sheaves, and, through homological mirror symmetry, symplectic geometry. Yet they do not always determine the variety; an abelian variety and its dual, for instance, have equivalent derived categories \cite{Muk81}. A basic question is therefore how much of a variety its derived category, or a piece of it, remembers.

For Fano varieties the answer for the whole category is complete: a smooth Fano variety is determined by its bounded derived category $\Db(X)$ \cite{BO01}. But the derived category of a Fano variety usually contains exceptional objects, such as line bundles and tautological bundles, which carry no moduli. Removing them leaves a smaller category, a \emph{Kuznetsov component}, in which the essential geometry is concentrated \cite{Kuz14}. The Kuznetsov component of a cubic fourfold is a noncommutative K3 surface, conjecturally governing its rationality \cite{Kuz10}; Kuznetsov components of Fano threefolds encode their intermediate Jacobians \cite{BT16,Per22}; and homological projective duality relates the Kuznetsov components of linear sections of projectively dual varieties \cite{Kuz07}. The \emph{categorical Torelli problem} asks whether a Kuznetsov component determines the variety. It is a noncommutative analogue of the classical Torelli theorems, in which a curve is recovered from its principally polarized Jacobian and a smooth cubic threefold from its intermediate Jacobian \cite{CG72}.

Several quite different methods have been developed for this problem. If the Kuznetsov component carries a Bridgeland stability condition, one may recover a classical moduli space attached to $X$ as a moduli space of stable objects, and then $X$ from this moduli space; this was done for cubic threefolds \cite{BMMS12} and, after the construction of stability conditions on Kuznetsov components \cite{BLMS23}, for many Fano threefolds \cite{APR22,FLZ23,JLLZ24,JLZ26}. For hypersurfaces one may instead compute natural transformations: Hochschild cohomology graded by a degree-shift functor recovers the Jacobian ring, and hence the equation \cite{HR19,Pir24,LZ25,LRZ26,Lin26}. Hodge-theoretic invariants can be read off from topological K-theory \cite{BT16,Per22}; combined with Hodge-theoretic Torelli theorems, this gives categorical Torelli theorems for cyclic covers, in particular double covers, by Dell, Jacovskis and Rota \cite{DJR23}. When the Kuznetsov component is too small one may retain part of its gluing data with the exceptional objects \cite{JLZ26,FLZ23}. Each of these approaches requires substantial input specific to the family: a stability condition and a moduli space, an explicit equation, or a Hodge-theoretic Torelli theorem.

This paper develops a uniform and purely geometric mechanism. A Kuznetsov component, together with its rotation functor, contains the categories of all hyperplane sections of $X$, singular ones included, and the singular hyperplane sections determine $X$ through projective duality. When the polarization is not an embedding, a single hyperplane section defined over a function field recovers the covering. The mechanism uses no stability conditions, no equations and no Hodge theory. In its most canonical form it proves Perry's conjecture for a large class of Fano varieties: such a variety is determined by the category obtained from $\Db(X)$ by removing a single object, its structure sheaf.

\subsection{Kuznetsov components and rotation functors}
Let $X$ be a smooth Fano variety of dimension $n$. Its Picard group is torsion free, so $-K_X=i_XH_X$ for a unique primitive class $H_X$, the \emph{fundamental divisor}, where $i_X$ is the \emph{index} of $X$ (Section~\ref{ss:Ku}). More generally, we call a pair $(H,r)$, with $H$ ample and $\omega_X^{-1}\simeq H^{\otimes r}$, a \emph{Fano polarization}; such pairs correspond to the divisors $r$ of $i_X$. By Kodaira vanishing, $\cO_X,\cO_X(H),\dots,\cO_X((r-1)H)$ is an exceptional collection, and its right orthogonal $\Ku(X,H)=\langle\cO_X,\cO_X(H),\dots,\cO_X((r-1)H)\rangle^{\perp}$ is the \emph{Kuznetsov component} of $X$ with respect to $(H,r)$, so that
\begin{equation}\label{eq:SODintro}
\Db(X)=\bigl\langle\Ku(X,H),\cO_X,\cO_X(H),\dots,\cO_X((r-1)H)\bigr\rangle.
\end{equation}
Two choices are distinguished. The \emph{canonical Kuznetsov component} $\Ku(X)=\Ku(X,H_X)$ depends on no choice; for a smooth hypersurface of degree $d\le n+1$ in $\PP^{n+1}$, $n\ge3$, it is the usual Kuznetsov component. The \emph{anticanonical} one, $\Ku(X,-K_X)=\cO_X^{\perp}$, is the residual category of the structure sheaf; it contains $\Ku(X)$, and the two agree exactly when $i_X=1$.

Each Kuznetsov component carries the \emph{rotation functor} of Kuznetsov \cite{Kuz19}, $\Rot_{X,H}=\pi\circ(-\otimes\cO_X(H))\circ i$, where $i$ is the inclusion and $\pi$ its left adjoint; explicitly,
\begin{equation}\label{eq:rotformula}
\Rot_{X,H}(F)\simeq\Cone\bigl(\RG(X,F(H))\otimes\cO_X\xrightarrow{\ \mathrm{ev}\ }F(H)\bigr).
\end{equation}
It is an autoequivalence with $\Rot_{X,H}^{\,r}\simeq\mathsf S^{-1}_{\Ku(X,H)}[n]$ (Proposition~\ref{prop:serre}). Hence for $\cO_X^{\perp}$ the rotation functor $\mathsf S^{-1}[n]$ is determined by the category, while for $r\ge2$ it is additional structure, which cannot be dropped in general: there are non-isomorphic cubic fourfolds with equivalent Kuznetsov components \cite{Per21}. We write $\Rot_X$ for the rotation functor of $\Ku(X)$, and say that an equivalence $\Phi\colon\Ku(X,H_X)\simeq\Ku(Y,H_Y)$ \emph{commutes with the rotation functors} if $\Phi\circ\Rot_{X,H_X}\simeq\Rot_{Y,H_Y}\circ\Phi$ as enhanced functors.

\subsection{The categorical Torelli theorem with rotation functors}
For a globally generated ample line bundle $H$ on $X$ we write
\[
f_H\colon X\longrightarrow Z_H\subset\PP\bigl(H^0(X,H)^\vee\bigr)
\]
for the morphism defined by the complete linear system $|H|$, with reduced image $Z_H$. Since $H$ is ample, $f_H$ is finite; if $H$ is very ample, it is the embedding. Our first main result says that the rotation functor recovers $(X,H)$ in both situations, provided in the second one that the projective dual $Z_H^\vee$ is a hypersurface.

\begin{mainthm}[Categorical Torelli theorem with rotation functors]\label{thm:A}
Let $X$ and $Y$ be smooth Fano varieties of dimension $n$, and write $\omega_X^{-1}\simeq H_X^{\otimes r}$ and $\omega_Y^{-1}\simeq H_Y^{\otimes r}$ with $r\le n-2$ and $H_X,H_Y$ globally generated. Assume that either $H_X$ and $H_Y$ are very ample, or the projective dual of $Z_{H_X}$ is a hypersurface. If $\Ku(X,H_X)\simeq\Ku(Y,H_Y)$ by an equivalence commuting with the rotation functors, then $(X,H_X)\simeq(Y,H_Y)$, by an isomorphism lying over a projective identification of $Z_{H_X}$ and $Z_{H_Y}$.
\end{mainthm}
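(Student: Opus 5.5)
The plan is to reconstruct $(X,H_X)$ from the pair $(\Ku(X,H_X),\Rot_{X,H_X})$ through its hyperplane sections. For a section $s\in H^0(X,H)$ with zero locus $D_s$, the restriction $\cO_X\to\cO_{D_s}(H)$ of $\cO_X(H)$ suggests a noncommutative divisor inside $\Ku(X,H)$. Concretely, I would show that the cone of a natural transformation $\id\to\Rot_{X,H}$, induced by $s$, is a spherical-type "divisor" functor. I would then show that its image, or the category of modules over the resulting monad, is equivalent to the residual category $\langle\cO_{D_s},\dots,\cO_{D_s}((r-2)H)\rangle^{\perp}\subset\Db(D_s)$. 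Here the hypothesis $r\le n-2$ enters: since $\omega_{D_s}\simeq\cO_{D_s}(-(r-1)H)$ by adjunction and $\dim D_s=n-1\ge r+1$, Kodaira-type vanishing on $D_s$ (valid even for singular Cartier divisors, by restricting from $X$) makes the restricted collection exceptional. We also need $H^0(X,H)$ to be intrinsic. I would identify it with $\Hom(\id,\Rot_{X,H})$ in the category of enhanced endofunctors. This uses the formula \eqref{eq:rotformula}, a computation of $\HH$-type groups via the Fourier--Mukai kernel of $\Rot$ (the kernel is the cone of $\cO\boxtimes\cO\to\Delta_*\cO(H)$ projected), and vanishing from $r\le n-2$. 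An equivalence $\Phi$ commuting with rotations then induces a linear isomorphism $H^0(X,H_X)\simeq H^0(Y,H_Y)$. It also matches the families of residual categories of corresponding hyperplane sections.

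The second step detects singular sections categorically. The residual category of $D_s$ is a piece of $\Db(D_s)$ whose complement consists of exceptional objects. So it is smooth (in the dg sense, equivalently it has vanishing singularity category) iff $D_s$ is smooth. This follows because $\Sg(D_s)$ is unchanged by removing exceptional objects from $\Perf$, once one works with $\Db$ versus $\Perf$ of the residual piece. Hence the locus of $s$ with non-smooth residual category is exactly the discriminant $\{s : D_s \text{ singular}\}$, which is $Z_H^\vee$ when $H$ is very ample. The identification from step one is linear, so we get a projective identification $\PP(H^0(X,H_X))\simeq\PP(H^0(Y,H_Y))$ carrying $Z_{H_X}^\vee$ to $Z_{H_Y}^\vee$. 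In the very ample case, biduality in characteristic zero then gives $Z_{H_X}\simeq Z_{H_Y}$ compatibly with the embeddings, so $(X,H_X)\simeq(Y,H_Y)$.

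The third step handles the case where $H$ is only globally generated and $Z_{H_X}^\vee$ is a hypersurface. Here the discriminant locus still equals $Z^\vee$, because singular points of $D_s$ come from branch or tangency behaviour over the tangency points of $Z$. This again recovers $Z_{H_X}\simeq Z_{H_Y}$ via biduality. To recover the finite cover $f_H$, take the generic point $\eta$ of $Z^\vee$ and consider the tangent hyperplane section $D_\eta$ over the function field $K=k(Z^\vee)$. Its singularity category is computed from the base-changed residual category over $K$, and it decomposes into pieces supported at the singular points of $D_\eta$. These are generically nodes, lying over the tangency point in $Z_K$. The support data, namely the set of points with Galois action, gives $f_H^{-1}$ of the generic tangency point, i.e.\ the generic fibre structure of the cover. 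Normalization then recovers $X$ as the normalization of $Z$ in that field extension, and the construction is transported by $\Phi$.

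I expect the main obstacle to be the first step. The difficulty is making intrinsic, at the enhanced level and uniformly over the family of all sections including singular ones, the passage from a natural transformation $\id\to\Rot$ to the residual category of $D_s$. It also requires proving that $\Hom(\id,\Rot)\simeq H^0(X,H)$. I would first try to realise $\Ku(D_s)$ as the category of $\cO_{D_s}$-modules in $\Ku(X,H)$ via the monad $\Cone(s)$, using Kuznetsov's rotation/spherical twist formalism. The nodal detection over $K$ in step three is the second delicate point.
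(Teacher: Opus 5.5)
Your overall plan is the paper's: sections as $\Hom(\id,\Rot)$, residual categories of hyperplane sections, a smoothness criterion cutting out the discriminant, biduality in the very ample case, and the Galois set of nodes of a generic tangent section in the finite case. Your singularity-category criterion for smoothness is an acceptable substitute for the paper's gluing argument. But the step you flag as the main obstacle is a genuine missing idea, and invoking the monad $\Cone(s)$ does not resolve it. An equivalence $\Phi$ commuting with the rotation functors transports the bimodule map $s\colon\Rot^{-1}\to\id$, and hence its cone as a bimodule. The residual category $\cA_{D_s}$, however, is $\Perf$ of a specific $A_\infty$ algebra $B_s=A\oplus P[1]$: the Koszul model of $\REnd(Lj^*G)$, i.e.\ the cone together with the monad structure coming from $Lj^*\dashv j_*$. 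These higher operations are not determined by $s$ in the homotopy category, so nothing in your plan shows that $\Phi$ carries $\cA_{D_s}$ to $\cA_{D_{\alpha(s)}}$.

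The paper closes this with a weight-filtered deformation argument after Seidel. Two divisor structures with the same section are gauge equivalent provided $H^{-1}\RHom(\id,\Rot)=0$ and $H^{1-2m}=H^{2-2m}=0$ for $\RHom(\id,\Rot^m)$, $m\ge2$. This requires a kernel estimate for \emph{all} powers of the rotation in negative degrees, $\RHom(\id,\Rot^m)\in D^{\ge\min\{0,n-r-m\}}$, obtained from a cube of mutation kernels and duality along the diagonal. The identification $\Hom(\id,\Rot)\simeq H^0(X,H)$ alone is not enough. This, rather than exceptionality on $D_s$ (which holds for every $r$ by the Koszul triangle), is where the numerical hypothesis does its work. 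You also need that $Lj^*G$ classically generates $\cA_{D_s}$ for singular and nonreduced $D_s$; the paper proves this by compact generation in $\Dqc$ and positivity of rank. Finally, all of this must hold over arbitrary field extensions, since your last step applies it to a divisor defined over $\CC(Z^\vee)$.

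Your third step rests on a false claim. When $f_H$ ramifies, the locus of singular divisors is not $Z^\vee$. A hyperplane tangent to the branch locus $B\subset Z$ pulls back to a singular divisor (locally $y^2=u_1$), so for a double cover the locus is $Z^\vee\cup B^\vee$. Moreover, the hypersurface hypothesis is imposed only on $Z_{H_X}$, so you cannot run biduality symmetrically on the $Y$ side. Two facts are needed instead:
\begin{enumerate}
\item $Z_{H_X}^\vee$ is a hypersurface component of the discriminant of $X$, by \'etaleness over a dense open set and Bertini, and hence also of the discriminant of $Y$.
\item An incidence/Gauss-map argument: for a general member $[s]$ of \emph{any} hypersurface component $\Gamma$, every singular point of $D_s$ lies over $\gamma_\Gamma([s])$. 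Hence $\Gamma^\vee\subset Z_{H_Y}$, with equality by dimension.
\end{enumerate}
The same incidence lemma is what shows that the generic tangent divisor has no singular points besides the contact nodes, which you assert without justification.

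Two further points are missing at the end. The Galois set of nodes only recovers the \'etale $\CC(Z^\vee)$-algebra $\CC(X)\otimes_{\CC(Z)}\CC(Z^\vee)$. To descend to $\CC(X)$ over $\CC(Z)$ you need $\CC(Z^\vee)$ to be purely transcendental over $\CC(Z)$. This holds because the conormal variety is a projective bundle over the smooth locus of $Z$ mapping birationally onto the hypersurface $Z^\vee$, and then $\CC(X)$ is the relative algebraic closure of $\CC(Z)$. You also need the local nodal factors of the singularity category to be indecomposable, so that an abstract equivalence permutes them Galois-equivariantly.
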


\begin{remark}\label{rem:numerical}
When $H_X$ and $H_Y$ are very ample, the proof shows that the assumption that $X$ and $Y$ have the same dimension and the same $r$ is not needed; it follows from the conclusion (Theorem~\ref{thm:general}). For the anticanonical polarization the dimension is always determined by the category (Lemma~\ref{lem:dim}).
\end{remark}

\begin{remark}\label{rem:hypA}
\leavevmode
\begin{enumerate}[label=(\alph*)]
\item The image $Z_H$ is the image of $|H|$. For $r\ge2$ it differs from the anticanonical image; for $r=1$ the theorem concerns precisely the anticanonical morphism.
\item When $H_X$ and $H_Y$ are very ample, no condition on the dual is needed, since projective biduality holds also for dual-defective varieties, whose dual is not a hypersurface (Section~\ref{ss:duality}). For $H=-K_X$ very ample the dual is automatically a hypersurface (Lemma~\ref{lem:vadual}).
\item For the fundamental polarization, the bound $r\le n-2$ excludes $\PP^n$ and quadrics, the only Fano varieties of index at least $n$ \cite{KO73}, and del Pezzo varieties, of index $n-1$. The case $r=n-1$ is covered by the same proof whenever the natural map $H^0(X,H)\to\Hom(\id,\Rot_{X,H})$ is an isomorphism (Theorems~\ref{thm:general} and~\ref{thm:finite}); this includes cubic hypersurfaces (Corollary~\ref{cor:hypmarked}).
\item The structure sheaf may be replaced by a completely orthogonal collection of exceptional coherent sheaves of positive rank, without any local freeness or vanishing of higher Ext groups against twists (Theorems~\ref{thm:general} and~\ref{thm:finite}). When $H$ is very ample, this assumption may be weakened to the condition that $|H|$ is base point free and defines an unramified morphism (Proposition~\ref{prop:unramified}).
\item For a cyclic cover $\pi\colon X\to Q$ of a smooth quadric, the polarization $H=\pi^*\cO_Q(1)$ is not very ample, but $Z_H=Q$ has a quadric as dual, and Theorem~\ref{thm:A} applies (Corollary~\ref{cor:cyclicquadric}).
\end{enumerate}
\end{remark}

\subsection{Perry's conjecture}
Among the Kuznetsov components of $X$, the anticanonical one is the most canonical: it requires no choice of exceptional objects beyond the structure sheaf, and since its rotation functor is the shifted inverse Serre functor, it carries no extra structure. Perry conjectured that it always remembers the variety.

\begin{conjecture}[Perry]\label{conj:Perry}
Let $X$ and $Y$ be smooth Fano varieties. If $\cO_X^{\perp}\simeq\cO_Y^{\perp}$, then $X\simeq Y$.
\end{conjecture}

Let us recall what was known. In dimension one, $X\simeq\PP^1$ and $\cO_X^{\perp}\simeq\Db(\mathrm{pt})$; conversely, if $\cO_Y^{\perp}\simeq\Db(\mathrm{pt})$ for a smooth Fano variety $Y$, then the Hochschild homology of $Y$ is two-dimensional, so the Hodge numbers of $Y$ add up to two and $Y\simeq\PP^1$. Thus the conjecture is trivial in dimension one. In dimension two, $X$ is a del Pezzo surface, and for pairs of del Pezzo surfaces the conjecture follows from previous work: from the categorical Torelli theorem of Lin--Zhang for cubic surfaces \cite{LZ25}, from the work of Lin--Rampazzo--Zhang on del Pezzo surfaces of degrees one to four via Hochschild--Serre algebras \cite{LRampZ26}, and from Elagin's categorical Torelli theorem for quartic del Pezzo surfaces \cite{Ela26}; in degree at least five the surfaces are rigid and the statement is elementary (Remark~\ref{rem:dP5}). In dimension at least three only special cases were known, all of index one: smooth hypersurfaces of degree $n+1$ in $\PP^{n+1}$ \cite{LZ25}, and refined statements for prime Fano threefolds of index one which use, besides the smaller Kuznetsov component $\langle\cE_X,\cO_X\rangle^{\perp}$, where $\cE_X$ is the Mukai bundle, an additional gluing object \cite{JLLZ24,JLZ26}.

Our second main result proves the conjecture whenever the anticanonical morphism is finite onto an image with hypersurface dual; the morphism may have arbitrary degree.

\begin{mainthm}[Perry's conjecture for finite anticanonical maps]\label{thm:B}
Let $X$ and $Y$ be smooth Fano varieties of dimension at least three with globally generated anticanonical bundles, and assume that the projective dual of the anticanonical image $Z_{-K_X}$ of $X$ is a hypersurface. If $\cO_X^{\perp}\simeq\cO_Y^{\perp}$, then $X\simeq Y$, by an isomorphism lying over a projective identification of the anticanonical images. The same holds for an equivalence $\cB_X^{\perp}\simeq\cB_Y^{\perp}$, where $\cB_X,\cB_Y$ are completely orthogonal collections of exceptional coherent sheaves of positive rank, that is, collections $E_1,\dots,E_s$ with $\RHom(E_\alpha,E_\beta)=0$ for $\alpha\ne\beta$.
\end{mainthm}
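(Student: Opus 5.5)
Theorem~\ref{thm:B} is the specialization of Theorem~\ref{thm:A} to the anticanonical polarization $(H,r)=(-K_X,1)$, and I would prove it by reducing to that case. First, the hypothesis $r\le n-2$ becomes $1\le n-2$, which is exactly $\dim X\ge 3$. Second, I need the numerical data on $Y$ to match that of $X$. By Lemma~\ref{lem:dim}, the dimension is determined by $\cO_X^{\perp}$, for instance through the Serre functor of this category, which is $\mathsf S^{-1}\simeq\Rot[-n]$. Hence $\dim Y=\dim X=n$, and the value $r=1$ is automatic on both sides.

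Third comes the rotation functor. For $r=1$, Proposition~\ref{prop:serre} gives $\Rot_{X,-K_X}\simeq\mathsf S^{-1}_{\cO_X^{\perp}}[n]$. Serre functors are intrinsic, so any enhanced equivalence $\Phi\colon\cO_X^{\perp}\simeq\cO_Y^{\perp}$ satisfies $\Phi\circ\mathsf S_X\simeq\mathsf S_Y\circ\Phi$. Combined with the equality of dimensions, this gives $\Phi\circ\Rot_{X}\simeq\Rot_{Y}\circ\Phi$ with no extra input. There is one point to settle here: the equivalence must be taken as enhanced, or else be known to lift to dg enhancements. For admissible subcategories of $\Db$ of smooth projective varieties I would invoke uniqueness of enhancements. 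I would also check that the compatibility with Serre functors holds at the enhanced level, since the dg Serre functor is defined by the inverse dualizing bimodule and is preserved under quasi-equivalences.

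Now apply Theorem~\ref{thm:A}, whose hypotheses hold: $-K_X$ and $-K_Y$ are globally generated, and the dual of $Z_{-K_X}$ is a hypersurface. It yields $(X,-K_X)\simeq(Y,-K_Y)$ over a projective identification of the anticanonical images. For the second statement, with $\cB_X,\cB_Y$ completely orthogonal collections of exceptional sheaves of positive rank, I would argue in the same way. The Serre functor of $\cB^{\perp}$ is again intrinsic. Rotation with respect to $\cB$ satisfies $\Rot^{1}\simeq\mathsf S^{-1}[n]$ by the analogue of Proposition~\ref{prop:serre}, since complete orthogonality means the collection is a single block, in effect with $r=1$. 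Theorems~\ref{thm:general} and~\ref{thm:finite} (see Remark~\ref{rem:hypA}(d)) then apply.

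I expect the main obstacle to be recovering the dimension in the generalized setting. Lemma~\ref{lem:dim} is stated for the anticanonical polarization, but the argument should carry over, for example via Hochschild homology: $\HH_\ast(\cB^{\perp})$ determines $\HH_\ast(X)$ up to the $s$ extra classes in degree $0$, and the largest $|i|$ with $\HH_i(X)\neq0$ equals $n$ because $h^{n,0}$ or $h^{0,n}$ vanishes on a Fano variety. If that bound is not sharp, I would try the shift $\mathsf S\simeq\Rot^{-1}[n]$ instead, comparing it with the intrinsic Serre functor on Hom spaces of a skyscraper-type object.
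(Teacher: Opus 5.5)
This is essentially the paper's proof: Lemma~\ref{lem:dim} gives $\dim X=\dim Y$ and compatibility of the equivalence with $\Rot=\mathsf S^{-1}[n]$, and then Theorem~\ref{thm:finite} with $(H,r)=(-K,1)$ concludes, both for $\langle\cO\rangle$ and for arbitrary completely orthogonal blocks; this is also how Theorem~\ref{thm:A} is proved in the non-very-ample case. The obstacle you anticipate does not exist, because Lemma~\ref{lem:dim} is already stated for such blocks and rests on the kernel estimate ($\cH_1\in D^{\ge0}$ and $\theta_1$ injective), not on the Serre functor alone; your Hochschild-homology fallback would in fact fail, since by HKR $\HH_{\pm n}(X)$ is $H^n(X,\cO_X)$ or $H^0(X,\omega_X)$, both zero on a Fano variety (for $\PP^n$ all of $\HH_\ast$ sits in degree $0$).
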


\begin{maincor}[Perry's conjecture for very ample anticanonical bundles]\label{cor:vaPerry}
Let $X$ and $Y$ be smooth Fano varieties of dimension at least three with very ample anticanonical bundles. If $\cO_X^{\perp}\simeq\cO_Y^{\perp}$, then $X\simeq Y$, by an isomorphism identifying the anticanonical embeddings. The same holds for the orthogonals of collections as in Theorem~\ref{thm:B}.
\end{maincor}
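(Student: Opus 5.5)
The plan is to deduce Corollary~\ref{cor:vaPerry} from Theorem~\ref{thm:B}, or, more directly, from the very ample case of Theorem~\ref{thm:A} with $H=-K_X$ and $r=1$. First I fix $X$, $Y$ with very ample anticanonical bundles and $\dim X\ge 3$, together with an equivalence $\Phi\colon\cO_X^{\perp}\simeq\cO_Y^{\perp}$.

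The first step reduces to the setting of Theorem~\ref{thm:A}. Lemma~\ref{lem:dim} says that the dimension is determined by the anticanonical Kuznetsov component, so $\dim X=\dim Y=n\ge3$. With $H_X=-K_X$ and $H_Y=-K_Y$ we have $r=1\le n-2$. By Proposition~\ref{prop:serre}, the rotation functor of $\cO_X^{\perp}$ is $\mathsf S^{-1}_{\cO_X^{\perp}}[n]$. Serre functors are intrinsic, and an equivalence of enhanced categories intertwines them up to canonical isomorphism, so $\Phi$ automatically commutes with the rotation functors. The shifts agree because the dimensions agree.

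The second step checks the duality hypothesis. Since $-K_X$ and $-K_Y$ are very ample, we are in the first alternative of Theorem~\ref{thm:A} and no condition on duals is required (Remark~\ref{rem:hypA}(b)). If one prefers to go through Theorem~\ref{thm:B}, then Lemma~\ref{lem:vadual} shows that the dual of the anticanonical embedding is a hypersurface, and global generation is implied by very ampleness.

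Theorem~\ref{thm:A} then gives $(X,-K_X)\simeq(Y,-K_Y)$ over a projective identification of $Z_{-K_X}$ and $Z_{-K_Y}$. Here $Z_{-K_X}$ and $Z_{-K_Y}$ are the anticanonical embeddings themselves, so the isomorphism identifies these embeddings. For orthogonals $\cB_X^{\perp}\simeq\cB_Y^{\perp}$ of completely orthogonal collections of exceptional sheaves of positive rank, the argument is the same. I use the version of Theorem~\ref{thm:B} for collections, or equivalently Remark~\ref{rem:hypA}(d). The rotation functor is again the shifted inverse Serre functor, because the collection has length one in the twist direction ($r=1$).

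The only point needing care is this last one: that for $\cB^{\perp}$ the rotation functor equals $\mathsf S^{-1}[n]$ and that the dimension is still read off from the category. I would handle both exactly as in Proposition~\ref{prop:serre} and Lemma~\ref{lem:dim}, since neither argument uses anything about $\cO_X$ beyond complete orthogonality and exceptionality.
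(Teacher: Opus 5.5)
Your proposal is correct and is essentially the paper's proof. Lemma~\ref{lem:dim} gives $\dim X=\dim Y=n$ and compatibility with the rotation functors $\mathsf S^{-1}[n]$, and then the very ample case of Theorem~\ref{thm:general} with $(H,r)=(-K,1)$ (the result underlying Theorem~\ref{thm:A}) yields the isomorphism identifying the anticanonical embeddings; the paper also notes, as you do, that via Lemma~\ref{lem:vadual} this is the very ample case of Theorem~\ref{thm:B}. The point you flag as needing care is already covered, because Proposition~\ref{prop:serre}, Lemma~\ref{lem:dim} and Theorem~\ref{thm:general} are stated for arbitrary completely orthogonal blocks $\cB_X$ of positive-rank exceptional sheaves, not just for $\langle\cO_X\rangle$.
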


\begin{remark}\label{rem:FMreplacement}
In Theorem~\ref{thm:B}, Corollary~\ref{cor:vaPerry} and Theorem~\ref{thm:C} below, the equivalence $\cO_X^{\perp}\simeq\cO_Y^{\perp}$ need not be assumed to be enhanced, or of Fourier--Mukai type. By \cite[Theorem~1.1 and Corollary~1.2]{LZ26}, the orthogonal of the structure sheaf of a smooth Fano variety has a unique dg enhancement, and every exact $\CC$-linear equivalence $\cO_X^{\perp}\simeq\cO_Y^{\perp}$ implies the existence of a Fourier--Mukai equivalence $\cO_X^{\perp}\simeq\cO_Y^{\perp}$, which can be chosen with the same action on isomorphism classes of objects; no equality of dimensions is required. Since the conclusions only depend on the existence of an equivalence, they hold for arbitrary exact $\CC$-linear equivalences of triangulated categories. The same applies to orthogonals of exceptional vector bundles.
\end{remark}

Our third main result replaces the conditions on the anticanonical system by conditions on the equations of a pluri-anticanonical embedding. For a smooth proper enhanced category $\cA$ with Serre functor $\mathsf S_\cA$, the \emph{Hochschild--Serre algebra}, introduced by Belmans, Fu and Krug under the name Hochschild--Serre cohomology \cite{BFK23} and by Lin and Zhang \cite{LZ25}, is the bigraded algebra
\[
\HS^{\bullet,\bullet}(\cA)=\bigoplus_{p,q\in\ZZ}\HS^{p,q}(\cA),\qquad\HS^{p,q}(\cA)=\Hom\bigl(\id_\cA,\mathsf S_\cA^{q}[p]\bigr),
\]
with the product $u\cdot v=\mathsf S_\cA^{q}[p](v)\circ u$ for $u\in\HS^{p,q}(\cA)$; it is invariant under equivalences \cite[Theorem~3.4]{LZ25}. For a smooth Fano variety $X$ we write $\HS^{p,q}_X=\HS^{p,q}(\cO_X^{\perp})$.

\begin{mainthm}[Perry's conjecture for very ample anticanonical multiples]\label{thm:C}
Let $q\ge1$ and $\ell\ge2$ be integers. Let $X$ and $Y$ be smooth Fano varieties with $q\ell\le\dim X-1$ and $q\ell\le\dim Y-1$, such that $-qK_X$ and $-qK_Y$ are very ample and the homogeneous ideals of the embeddings
\[
X\subset\PP\bigl(H^0(X,-qK_X)^\vee\bigr),\qquad Y\subset\PP\bigl(H^0(Y,-qK_Y)^\vee\bigr)
\]
are generated in degrees at most $\ell$. If $\cO_X^{\perp}\simeq\cO_Y^{\perp}$, then $X\simeq Y$, by an isomorphism identifying these embeddings. Moreover, $X$ is recovered from $\cO_X^{\perp}$ by the presentation
\[
X\simeq\Proj\Bigl(\Sym\HS^{qn,-q}_X\Big/\bigl(\ker\bigl(\Sym^m\HS^{qn,-q}_X\to\HS^{qmn,-qm}_X\bigr):2\le m\le\ell\bigr)\Bigr),
\]
where $n=\dim X$ is determined by $\cO_X^{\perp}$ through $qn=\min\{p\in\ZZ:\HS^{p,-q}_X\ne0\}$ (Lemma~\ref{lem:dimq}).
\end{mainthm}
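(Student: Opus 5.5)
The plan is to compute the Hochschild--Serre algebra of $\cA=\cO_X^{\perp}$ in the pluri-anticanonical direction and identify it with the section ring of $-qK_X$ in low degrees. On $\cA$ the Serre functor is $\mathsf S_\cA\simeq\Rot_{X,-K_X}^{-1}[n]$ by Proposition~\ref{prop:serre} with $r=1$. Hence $\mathsf S_\cA^{-q}[qn]\simeq\Rot_{X,-K_X}^{\,q}$, and $\HS^{qmn,-qm}_X=\Hom(\id,\Rot^{qm})$. The core of the argument is a comparison map $H^0(X,-qmK_X)\to\Hom(\id_\cA,\Rot^{qm})$. It is induced by multiplication by sections, using formula~\eqref{eq:rotformula}: a section $s$ gives $F\to F(-qmK_X)$, and we compose with the projection $\pi$. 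These maps are compatible with products, so we get a graded algebra map
\[
\bigoplus_m H^0(X,-qmK_X)\longrightarrow\bigoplus_m\HS^{qmn,-qm}_X .
\]

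I would prove that this map is an isomorphism in degree $m=1$ and injective in degrees $2\le m\le\ell$. For this I compute $\Hom(\id_\cA,\Rot^{k})$ for $k\le n-1$ via the kernel. The functor $\Rot^{k}$ is given by the projection of the twisted diagonal $\Delta_*\omega_X^{-k}$ into $\cA\boxtimes\cA^{\vee}$. Natural transformations $\id\to\Rot^k$ are then $\Hom$ from the kernel of $\id_\cA$ to this projected kernel. The kernel of $\id_\cA$ is the cone of $\cO\boxtimes\cO\to\cO_\Delta$.

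Expanding, the answer is $H^0(\omega_X^{-k})$ plus correction terms of the form $H^\bullet(\omega_X^{-k})$ in nonzero degrees, $\RHom(\cO\boxtimes\cO,\ldots)$ and so on. Kodaira vanishing kills the higher cohomology of $\omega_X^{-k}$. The remaining terms involve $H^{n}(\omega_X^{j})$ and Hochschild-type groups $H^{p}(\wedge^pT_X\otimes\omega_X^{-k})$ in total degree shifted by $k$ relative to $n$. The constraint $k=qm\le q\ell\le n-1$ is exactly what keeps these in degrees where they vanish. This also gives Lemma~\ref{lem:dimq}: for $p<qn$ the group $\HS^{p,-q}_X$ vanishes, and at $p=qn$ it equals $H^0(-qK_X)\neq0$, so $n$ is read off the category. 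I expect this degree bookkeeping to be the main obstacle. In particular, the product in $\HS$ must agree with multiplication of sections, and I would prove this by functoriality of composition of kernels.

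Given this, the presentation follows. Very ampleness identifies $\Sym H^0(-qK_X)\to\bigoplus_m H^0(-qmK_X)$ with the homogeneous coordinate ring map, whose kernel is the homogeneous ideal $I_X$. The degree-$m$ part of $I_X$ equals $\ker(\Sym^m\HS^{qn,-q}_X\to\HS^{qmn,-qm}_X)$ whenever the comparison map is injective in degree $m$, which holds for $2\le m\le\ell$. Since $I_X$ is generated in degrees $\le\ell$, the displayed ideal equals $I_X$, and the quotient is the coordinate ring of $X\subset\PP(H^0(X,-qK_X)^\vee)$.

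An equivalence $\cO_X^{\perp}\simeq\cO_Y^{\perp}$ may be taken to be Fourier--Mukai by Remark~\ref{rem:FMreplacement}. It then induces a bigraded algebra isomorphism of Hochschild--Serre algebras by \cite[Theorem~3.4]{LZ25}. It also matches $n$, via Lemma~\ref{lem:dimq}, and the bounds $q\ell\le\dim-1$ hold on both sides. It therefore identifies $\HS^{qn,-q}$ and the kernels, hence the presentations. This gives an isomorphism $X\simeq Y$ compatible with the embeddings, induced by a linear identification $H^0(X,-qK_X)\simeq H^0(Y,-qK_Y)$.
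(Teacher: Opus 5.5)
Your architecture matches the paper's: invariance of the bigraded Hochschild--Serre algebra, a multiplicative comparison map from the $q$-th Veronese section ring, bijectivity in degree one and injectivity up to degree $\ell$, matching of ideals, then $\Proj$. Using the generation hypothesis on both sides to compare the two presentations directly is a legitimate shortcut. The paper instead uses generation only for $X$ and finishes with an integrality-and-dimension argument (Remark~\ref{rem:variants}(a)).

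The gap is in the kernel estimate, which carries all the content: you compute the wrong functor. The functor in $\HS^{kn,-k}_X$ is $\mathsf S_\cA^{-k}[kn]\simeq\Rot^k=(\pi Ti)^k$. Its extension $i\Rot^k\pi=PTP\cdots TP$ contains $k+1$ projections $P=i\pi$ (see \eqref{eq:Qm} and Lemma~\ref{lem:mutation}). It is not the projection $PT^kP$ of $\Delta_*\omega_X^{-k}$, which represents $\pi T^ki$. For $k\ge2$ these differ: for $F\in\cA$ there is a triangle $\RG(X,F\otimes\omega_X^{-1})\otimes\pi(\omega_X^{-1})\to\pi T^2iF\to(\pi Ti)^2F$, whose first term is nonzero for suitable $F$.

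For the same reason, composing multiplication by $s$ with $\pi$ only gives an element of $\Hom(\id,\pi T^ki)$. To land in $\Hom(\id,\Rot^k)$ you must insert the unit $\eta\colon\id\to P$ at every intermediate projection. Multiplicativity then rests on the monad identity $\mu\circ(\eta*\eta)=\eta$ at the junction of $i\Rot^e\pi\circ i\Rot^f\pi$ (Lemma~\ref{lem:mult}), not merely on associativity of kernel composition.

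Your degree bookkeeping shows the problem. With the kernel $PT^kP$ only two mutations occur, so the correction terms lie in degrees $\ge n-2$ uniformly in $k$. No constraint $k\le n-1$ would appear, and you would ``prove'' bijectivity in every degree — making the generation hypothesis pointless, but for a functor that is not a power of the Serre functor. The HKR groups $H^j(\wedge^pT_X\otimes\omega_X^{-k})$ you mention belong to the main term $\RHom(\cO_\Delta,\Delta_*\omega_X^{-k})$. That term lies in $D^{\ge0}$ with $H^0=H^0(X,\omega_X^{-k})$, so it is harmless.

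The correct estimate is Proposition~\ref{prop:kernel} with $r=1$:
\begin{enumerate}
\item By Lemma~\ref{lem:diagonal}, replace the kernel of $\id_\cA$ by $\cO_\Delta$.
\item Expand the $k+1$ two-term mutation kernels of $L_0L_1\cdots L_kT^k$ into a cube.
\item After duality along the diagonal, the term indexed by $\{a_1<\dots<a_p\}\subset\{0,\dots,k\}$ is a sum of copies of $\RG(X,\omega_X^{-(k-a_p+a_1+1)})[p-n]$. By Kodaira vanishing it is concentrated in degree $n-p$.
\end{enumerate}
Since $p\le k+1$, the cone lies in $D^{\ge n-k-1}$. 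This gives injectivity for $k\le n-1$ and bijectivity for $k\le n-2$; the constraint comes exactly from the number of intermediate projections growing with $k$.

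Apply this to $k=q\le n-2$ (which follows from $2q\le q\ell\le n-1$) and to $k=qm\le n-1$. You also need commutativity of $\HS^{qn,-q}_X$, which follows from surjectivity of $\theta_q$ and multiplicativity. With these in place, the rest of your argument goes through.
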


\begin{remark}\label{rem:complementary}
Theorems~\ref{thm:B} and~\ref{thm:C} are complementary. Theorem~\ref{thm:B} requires a globally generated anticanonical bundle with image of hypersurface dual, and no condition on equations. Theorem~\ref{thm:C} requires equations of bounded degree for a pluri-anticanonical embedding, and allows base points of $|-K_X|$. Combining them with the known cases in dimensions one and two, Perry's conjecture holds for pairs of del Pezzo surfaces and for all smooth Fano varieties of dimension at least three satisfying the hypotheses of Theorem~\ref{thm:B} or of Theorem~\ref{thm:C}.
\end{remark}

\subsection{Consequences}
For a smooth hypersurface $X\subset\PP^{n+1}$ of degree $d$ with $4\le d\le n+1$, the polarization $H=\cO_X(1)$ is very ample with $r=n+2-d\le n-2$, so Theorem~\ref{thm:A} gives the categorical Torelli theorem with rotation functors. For cubic hypersurfaces of dimension $n\ge4$ we have $r=n-1$, outside the range of Theorem~\ref{thm:A}, but the same argument applies because $H^0(X,\cO_X(1))\to\Hom(\id,\Rot_X)$ is an isomorphism (Remark~\ref{rem:hypA}(c)). For del Pezzo threefolds of degree $2\le d\le4$, which include cubic threefolds, and for prime Fano threefolds of index one and genus $g\ge6$, the usual Kuznetsov component $\cK_X$ is smaller than $\cO_X^{\perp}$: one has $\cO_X^{\perp}=\langle\cK_X,E_X\rangle$ for one further exceptional object $E_X$. The way $E_X$ is attached to $\cK_X$ is recorded by a single object of $\cK_X$, namely $i^!E_X$, where $i^!$ is the right adjoint of the inclusion $i\colon\cK_X\to\Db(X)$; an equivalence $\cK_X\simeq\cK_Y$ which matches these objects therefore extends to an equivalence $\cO_X^{\perp}\simeq\cO_Y^{\perp}$ (Corollary~\ref{cor:gluingobject}), and Corollary~\ref{cor:vaPerry} applies. For del Pezzo threefolds every equivalence matches these objects \cite{FLZ23}, so $\cK_X$ alone determines $X$. Finally, Theorems~\ref{thm:A} and~\ref{thm:B} for globally generated polarizations which are not very ample give cyclic covers of quadrics and of projective spaces, among them the hyperelliptic prime Fano threefolds of genus three.

\begin{maincor}\label{cor:C}
The following categorical Torelli theorems hold.
\begin{enumerate}[label=\textup{(\alph*)}]
\item \textup{(Smooth hypersurfaces.)} Let $n\ge3$ and $3\le d\le n+1$. Smooth hypersurfaces $X,Y\subset\PP^{n+1}$ of degree $d$ whose Kuznetsov components are equivalent compatibly with the rotation functors are projectively equivalent \cite{HR19,Pir24,BMMS12}; if $\gcd(n+2,d)=1$, every equivalence $\Ku(X)\simeq\Ku(Y)$ suffices \cite{LZ25}. For $d=n+1$, every smooth Fano variety $Y$ of dimension at least three with globally generated anticanonical bundle and $\cO_Y^{\perp}\simeq\cO_X^{\perp}$ is isomorphic to $X$, and its anticanonical morphism is an embedding as a hypersurface projectively equivalent to $X$.
\item \textup{(Prime Fano threefolds of index one.)} A smooth prime Fano threefold $X$ of index one and genus $g\ge3$ is determined by $\cO_X^{\perp}$, even among all smooth Fano varieties of dimension at least three with globally generated anticanonical bundle; this includes the quartic threefolds and the hyperelliptic threefolds of genus three. For genus $g\in\{6,7,8,9,10,12\}$, with the decomposition $\Db(X)=\langle\cK_X,\cE_X,\cO_X\rangle$, an equivalence $\cK_X\simeq\cK_Y$ sending $i^!\cE_X$ to $i^!\cE_Y$ implies $X\simeq Y$; this is the refined categorical Torelli theorem of \cite{JLZ26}, and of \cite{JLLZ24} for Gushel--Mukai threefolds.
\item \textup{(Further examples.)} Theorems~\ref{thm:A} and~\ref{thm:B} apply to the following varieties; in \textup{(iv)} one first extends the equivalence from $\Ku(Y_d)$ to $\cO_{Y_d}^{\perp}$, as explained above.
\begin{enumerate}[label=\textup{(\roman*)}]
\item Smooth Fano complete intersections $X\subset\PP^{n+c}$ of type $(d_1,\dots,d_c)$ with all $d_j\ge2$, $n\ge3$ and $\sum_j(d_j-1)\ge3$: the pair $(\Ku(X),\Rot_X)$ determines $X$ up to projective equivalence, and if $\sum_j(d_j-1)=n$, so that $X$ has index one, $\Ku(X)=\cO_X^{\perp}$ alone suffices. Examples are complete intersections of a quadric and a hypersurface of degree $d\ge3$, and the index-one complete intersections of types $(4)$, $(3,2)$, $(2,2,2)$ in dimension three, $(5)$, $(4,2)$, $(3,3)$, $(3,2,2)$, $(2,2,2,2)$ in dimension four, and so on.
\item Ordinary Gushel--Mukai varieties of dimension three, four and five, by $(\Ku(X),\Rot_X)$.
\item Smooth hypersurfaces $X\subset\PP^a\times\PP^b$ of bidegree $(d_1,d_2)$ with $1\le d_1\le a$, $1\le d_2\le b$ and $a+b\ge4$, by $\cO_X^{\perp}$; for instance $(2,2)$-threefolds in $\PP^2\times\PP^2$.
\item Del Pezzo threefolds $Y_d$ of Picard rank one and degree $d\in\{2,3,4\}$, by $\Ku(Y_d)$ alone \cite{FLZ23}, by gluing $\Ku(Y_d)$ to $\cO_{Y_d}^{\perp}$ and applying Corollary~\ref{cor:vaPerry}.
\item Cyclic covers $\pi\colon X\to Q$ of degree $d\ge2$ of a smooth quadric $Q$ of dimension $n\ge3$, branched along a smooth divisor in $|\cO_Q(db)|$ with $b\ge2$ and $r=n-(d-1)b\ge1$: by $(\Ku(X,H),\Rot_{X,H})$ for the polarization $H=\pi^*\cO_Q(1)$, which is not very ample, and by $\cO_X^{\perp}$ alone. Examples are the hyperelliptic prime Fano threefolds of genus three and the triple covers of a quadric fivefold branched along a sextic section, whose anticanonical morphisms have degrees two and three.
\item Cyclic covers of $\PP^n$, $n\ge3$, of degree $d\ge2$, branched along a smooth hypersurface of degree $db$, with $n+1-(d-1)b\ge2$, by $\cO_X^{\perp}$; for instance the double cover of $\PP^6$ branched along a decic, whose anticanonical morphism is two-to-one onto the Veronese variety $\nu_2(\PP^6)$.
\end{enumerate}
\end{enumerate}
\end{maincor}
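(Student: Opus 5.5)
The proof is a case-by-case verification. For each family we check the hypotheses of Theorem~\ref{thm:A}, Theorem~\ref{thm:B} or Corollary~\ref{cor:vaPerry}, or the $r=n-1$ extension in Remark~\ref{rem:hypA}(c). The geometric inputs are the same throughout: very ampleness or global generation of the relevant polarization, the value of $r$, and, in the non-very-ample cases, that the dual of the image is a hypersurface.

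\textbf{(a) Hypersurfaces.} For $X\subset\PP^{n+1}$ of degree $d$ we have $H=\cO_X(1)$ very ample and $r=n+2-d$.
\begin{enumerate}[label=(\roman*)]
\item If $4\le d\le n+1$, then $r\le n-2$ and Theorem~\ref{thm:A} applies. Because the isomorphism lies over a projective identification of $Z_H=X\subset\PP^{n+1}$, the hypersurfaces are projectively equivalent.
\item If $d=3$, then $r=n-1$. We invoke Corollary~\ref{cor:hypmarked}, i.e.\ the isomorphism $H^0(X,\cO_X(1))\simeq\Hom(\id,\Rot_X)$. This can be proved from \eqref{eq:rotformula} via Hochschild cohomology of the Kuznetsov component graded by rotation.
\item The statement for $\gcd(n+2,d)=1$ is quoted from \cite{LZ25}.
\item If $d=n+1$, then $-K_X=H$ is very ample, and Theorem~\ref{thm:B} applies with $Y$ only assumed globally generated. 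The dual of $X$ is a hypersurface by Lemma~\ref{lem:vadual}. The identification of anticanonical images forces $f_{-K_Y}$ to be an isomorphism onto a hypersurface projectively equivalent to $X$.
\end{enumerate}

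\textbf{(b) Prime Fano threefolds.} For $g\ge3$ there are two cases.
\begin{enumerate}[label=(\roman*)]
\item $-K_X$ very ample, the non-hyperelliptic case (this includes quartics): apply Corollary~\ref{cor:vaPerry}, and apply Theorem~\ref{thm:B} to allow $Y$ arbitrary with globally generated anticanonical bundle.
\item $-K_X$ not very ample, the hyperelliptic genus three case: $X$ is a double cover of $\PP^3$ or of a quadric threefold. The anticanonical image has dual a hypersurface, since a quadric is self-dual, and $\PP^3$ embedded by $\cO(2)$ has discriminant-hypersurface dual. Theorem~\ref{thm:B} applies.
\end{enumerate}
For the refined statement we use $\cO_X^{\perp}=\langle\cK_X,\cE_X\rangle$. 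An equivalence matching $i^!\cE_X$ with $i^!\cE_Y$ glues to an equivalence of the $\cO^\perp$ by Corollary~\ref{cor:gluingobject}, and we conclude as before.

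\textbf{(c) Further examples.}
\begin{enumerate}[label=(\roman*)]
\item Complete intersections: $r=n+1-\sum(d_j-1)\le n-2$ exactly when $\sum(d_j-1)\ge3$, with $\cO(1)$ very ample, so Theorem~\ref{thm:A} applies. Projective normality identifies the embedding with that given by $|H|$. Index one is Corollary~\ref{cor:vaPerry}.
\item Ordinary Gushel--Mukai varieties of dimension $3,4,5$: $H$ is very ample (ordinary means $H$ embeds $X$ in $\Gr(2,5)$-linear sections), and $r=n-2$.
\item Bidegree $(d_1,d_2)$ hypersurfaces: $-K_X=\cO(a+1-d_1,b+1-d_2)$ has both entries $\ge1$, hence is very ample. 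Since $\dim X=a+b-1\ge3$, Corollary~\ref{cor:vaPerry} applies.
\item Del Pezzo threefolds: \cite{FLZ23} gives that every equivalence of $\Ku(Y_d)$ matches the gluing objects. We extend to $\cO^\perp$ by Corollary~\ref{cor:gluingobject}; $-K=2H$ is very ample for $d\ge3$. For $d=2$, $-K=2H$ is very ample, since $H$ is globally generated on a double cover of $\PP^3$ and $2H$ separates the sheets.
\item Cyclic covers of quadrics: Corollary~\ref{cor:cyclicquadric} for $(H,r)$. For $\cO_X^\perp$ we use $-K_X=rH$, whose image is $\nu_r(Q)$; its dual is a hypersurface because, for $r\ge1$, a generic tangent hyperplane section of a Veronese re-embedding has an isolated node.
\item Cyclic covers of $\PP^n$: the image is $\nu_{k}(\PP^n)$ with $k=n+1-(d-1)b\ge2$, whose dual (the discriminant) is a hypersurface. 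Theorem~\ref{thm:B} applies.
\end{enumerate}

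\textbf{Main obstacle.} The subtle step is the non-very-ample cases, (b) hyperelliptic and (c)(v),(vi). There we must check carefully that $|-K_X|$ is the complete pullback linear system, i.e.\ $H^0(X,-K_X)=H^0(Z,\cO(k))$ with no contribution from the anti-invariant part, so that $Z_{-K_X}$ really is the Veronese image. The check is by computing $\pi_*\cO_X=\bigoplus\cO(-ib)$ and using vanishing when $ib>k$.
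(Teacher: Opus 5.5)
Your overall strategy is the paper's: each item is reduced to Theorem~\ref{thm:A}, Theorem~\ref{thm:B}, Corollary~\ref{cor:vaPerry} or the gluing Corollary~\ref{cor:gluingobject}. Items (a), (b) for non-hyperelliptic $X$, and (c)(i)--(iv) are handled essentially as in Sections~\ref{sec:hyp}--\ref{sec:examples}.

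\textbf{The gap: (c)(v) and (c)(vi).} It sits exactly at the step you flag as the main obstacle. You claim the anticanonical image is $\nu_r(Q)$, resp.\ $\nu_k(\PP^n)$, by ``vanishing when $ib>k$''. But $H^0(X,-K_X)=\bigoplus_{0\le j\le d-1}H^0(\cO(r-jb))\,z^j$, and the $j=1$ summand vanishes only when $r<b$ (resp.\ $k<b$). That condition is not among the hypotheses. Cases inside the stated range:
\begin{enumerate}
\item[(1)] the double cover of a quadric fourfold branched along a quartic section ($n=4$, $d=b=2$, $r=2$);
\item[(2)] the double cover of $\PP^5$ branched along a quartic ($k=4$, $b=2$);
\item[(3)] the quartic double solid ($n=3$, $k=b=2$).
\end{enumerate}
In these cases the anticanonical morphism is not the cover followed by a Veronese map. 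So you would be checking the hypersurface-dual hypothesis of Theorem~\ref{thm:B} on the wrong variety.

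The missing idea is the case split in Corollary~\ref{cor:cyclicquadric} and Proposition~\ref{prop:veronese}. If $r\ge b$ (resp.\ $k\ge b$), then $-K_X$ is very ample. On the chart $x_0\ne0$ the sections include all monomials of degree $r$ and $zx_0^{r-b}$. Their ratios to $x_0^r$ give the coordinates $x_i/x_0$ and the fibre coordinate $z/x_0^b$ of $X=\{z^d=s\}$ in the total space of $\cO(b)$. Then Lemma~\ref{lem:vadual}, or directly Corollary~\ref{cor:vaPerry}, applies. Only for $r<b$ (resp.\ $k<b$) is the image the Veronese re-embedding. There your node argument (Lemma~\ref{lem:veronesedual}, or self-duality of $Q$ when $r=1$) finishes.

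\textbf{The same confusion in (b)(ii).} A prime Fano threefold of genus $3$ cannot be a double cover of $\PP^3$ with $-K_X=\pi^*\cO(k)$, since $2k^3=4$ has no integer solution. The only hyperelliptic case with $g\ge3$ is the double quadric, with $r=1<b=2$ and self-dual image $Q$. The double $\PP^3$ with image ``$\nu_2(\PP^3)$'' you describe is the quartic double solid. It has index two, and its anticanonical map is an embedding.

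\textbf{Smaller points.}
\begin{enumerate}
\item[(i)] \emph{Coprime case of (a).} The paper does not merely quote \cite{LZ25}. Writing $\lambda r+\mu d=1$, Proposition~\ref{prop:serre} and $\Rot_X^{\,d}\simeq[2]$ give $\Rot_X\simeq\mathsf S^{-\lambda}[\lambda n+2\mu]$. So every equivalence commutes with rotation, and the marked case applies.
\item[(ii)] \emph{Cubic threefolds.} The cited isomorphism $\theta_1$ from \cite{Pir24} is used only for $n\ge4$; the paper treats $n=3$ via Theorem~\ref{thm:dP3}. Your route also works for $n=3$ if you add two facts. First, $\theta_1$ is injective by Proposition~\ref{prop:kernel}. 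Second, $\Rot\simeq\mathsf S[-1]$ there, from $\Rot^2\simeq\mathsf S^{-1}[3]$ and $\Rot^3\simeq[2]$. Hence $\dim\Hom(\id,\Rot)=h^{2,1}=5=h^0(\cO_X(1))$, and $\theta_1$ is an isomorphism.
\item[(iii)] \emph{(c)(i),(ii).} You should say why $\cO_X(1)$, resp.\ the Pl\"ucker class, is the fundamental divisor (Grothendieck--Lefschetz; $H^n=10$ not divisible by $k^n$). Without this, $\Ku(X,H)$ is not known to be the canonical component.
\item[(iv)] \emph{(c)(iv), $d=2$.} ``Separating the sheets'' does not by itself give very ampleness along the ramification divisor. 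The paper gets it from the second Veronese subring being generated in degree one.
\item[(v)] \emph{(c)(v), polarized part.} This should be verified directly rather than by citing Corollary~\ref{cor:cyclicquadric}, which is one of the statements being proved. The checks are $r\le n-2$, since $(d-1)b\ge2$, and $H^0(X,H)=H^0(Q,\cO_Q(1))$, since $b\ge2$.
\end{enumerate}
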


Corollary~\ref{cor:C} is proved in Sections~\ref{sec:hyp}--\ref{sec:examples}: part (a) is Corollaries~\ref{cor:hypmarked}, \ref{cor:hypcoprime} and~\ref{cor:hypindexone}; part (b) is Corollaries~\ref{cor:primePerry} and~\ref{thm:refined}; and in part (c), items (i)--(iv) are Theorem~\ref{thm:CI}, Propositions~\ref{prop:GM} and~\ref{prop:products} and Theorem~\ref{thm:dP3}, item (v) is Corollaries~\ref{cor:cyclicquadric} and~\ref{cor:hyperelliptic} and Example~\ref{ex:triplecover}, and item (vi) is Proposition~\ref{prop:veronese} and Example~\ref{ex:decic}. Table~\ref{tab:comparison} compares these results with the literature.

\subsection{Hyperplane sections inside the category}
The key to Theorem~\ref{thm:A} is that the Kuznetsov component with its rotation functor contains the residual categories of all hyperplane sections of $X$. Multiplication by a section $s\in H^0(X,H)$ induces a natural transformation $\id\to\Rot_{X,H}$. To such a transformation we attach a \emph{noncommutative divisor}: an $A_\infty$ algebra which extends an algebra model of $\Ku(X,H)$ by the bimodule of the inverse rotation, with connecting map $s$ (Section~\ref{sec:ncdiv}). For a divisor $D\in|H|$ put
\[
\Ku(D,H)=\langle\cO_D,\cO_D(H),\dots,\cO_D((r-2)H)\rangle^{\perp}\subset\Perf(D),
\]
with $\Ku(D,H)=\Perf(D)$ if $r=1$.

\begin{theorem}\label{thm:sections}
Let $X$ be a smooth Fano variety of dimension $n$, and $(H,r)$ a Fano polarization with $r\le n-2$; $H$ is only assumed to be ample. Then the pair $(\Ku(X,H),\Rot_{X,H})$ determines the following data, compatibly with all equivalences commuting with the rotation functors:
\begin{enumerate}[label=\textup{(\arabic*)}]
\item the linear system $|H|$, through a canonical isomorphism $\PP\Hom(\id,\Rot_{X,H})\simeq|H|$;
\item for every divisor $D\in|H|$, possibly singular or nonreduced, the category $\Ku(D,H)$, as the category of perfect modules over the unique noncommutative divisor with section $[D]$; the same holds over every field extension of $\CC$, for divisors of sections of $H$ defined over that field;
\item the locus of singular divisors in $|H|$, as the set of $D$ for which $\Ku(D,H)$ is not homologically smooth.
\end{enumerate}
If $H$ is very ample, the locus in \textup{(3)} is the projective dual variety $X^\vee$. In particular, for every smooth Fano variety $X$ of dimension at least three, $\cO_X^{\perp}$ determines the perfect derived categories $\Perf(D)$ of all anticanonical divisors $D$, compatibly with all equivalences $\cO_X^{\perp}\simeq\cO_Y^{\perp}$ with $\dim Y=\dim X$.
\end{theorem}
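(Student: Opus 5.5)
We argue in three steps, matching the three items. For (1) we compute $\Hom(\id,\Rot_{X,H})$ as a Hochschild-type invariant. Write $\Rot_{X,H}$ as a Fourier--Mukai kernel on $X\times X$: by \eqref{eq:rotformula} it is the cone of $\cO_X\boxtimes\cO_X(H)\to\Delta_*\cO_X(H)$, projected to $\Ku(X,H)\otimes\Ku(X,H)^{\mathrm{op}}$. Natural transformations $\id_{\Ku}\to\Rot_{X,H}$ are then $\Hom$ from the kernel of the projection onto $\Ku(X,H)$ to this kernel. Using the semiorthogonal decomposition \eqref{eq:SODintro} of the diagonal, we reduce to Hochschild-type cohomology groups on $X$ of the form $H^{*}(X,\cO(jH))$, $\mathrm{Ext}^*(\Omega^p,\cO(jH))$ and their twists.

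The bound $r\le n-2$, together with Kodaira vanishing ($H^{>0}(X,\cO(jH))=0$ for $j>-r$), should kill every contribution except $H^0(X,H)$. The map $s\mapsto(\text{multiplication by } s)$ then gives a canonical isomorphism $H^0(X,H)\simeq\Hom(\id,\Rot_{X,H})$. Projectivizing yields $|H|$. Compatibility with equivalences commuting with $\Rot$ is formal, since an equivalence $\Phi$ with $\Phi\Rot\simeq\Rot\Phi$ transports natural transformations.

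For (2), given $s$ we form the noncommutative divisor of Section~\ref{sec:ncdiv}: the $A_\infty$ algebra $A\oplus \Rot^{-1}$-bimodule$[1]$ glued along $s$. It depends only on $(\Ku,\Rot,s)$, and uniqueness up to the enhancement comes from the construction there. Geometrically, the cone of $s\colon \Rot^{-1}\to\id$ corresponds to the kernel $i_{D*}i_D^*$ restricted to $\Ku(X,H)$. We then show that perfect modules over this algebra recover $\Ku(D,H)$ by the following check. Restriction $i_D^*$ sends $\Ku(X,H)$ into $\Perf(D)$ and lands in $\langle\cO_D,\dots,\cO_D((r-2)H)\rangle^{\perp}$ up to mutation. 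Its image generates $\Ku(D,H)$, because $\Perf(D)=\langle i_D^*\Db(X)\rangle$ and $i_D^*\cO_X((r-1)H)$ is absorbed via $\omega_D\simeq\cO_D(-(r-1)H)$. Endomorphisms computed on $D$ agree with the algebra $\RHom_X(E,i_{D*}i_D^*F)$, which is the cone of $s$.

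This is a Kuznetsov-style "hyperplane section / spherical functor" argument, and it uses nothing about the smoothness of $D$; only $D$ being a Cartier divisor matters. We expect this step to be the main obstacle: one must verify that the $A_\infty$ structure, not just the cohomology, matches. We would do this by realizing the noncommutative divisor as a dg endomorphism algebra of $i_D^*$ of a generator, through an explicit Koszul model $\cO(-H)\xrightarrow{s}\cO$. Base change to a field extension $K$ is identical, since all constructions commute with $-\otimes_\CC K$.

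For (3) we use the criterion that $\Perf(D)$ is smooth if and only if $D$ is smooth; the same criterion applied to the admissible subcategory $\Ku(D,H)$ should transfer. If $D$ is singular, the singularity category $\Db(D)/\Perf(D)$ is nonzero. Since the exceptional pieces are perfect, it is captured by $\Ku(D,H)$, so $\Ku(D,H)$ is not smooth, using the fact that smoothness of a semiorthogonal component plus smoothness of the others would force smoothness of $\Perf(D)$. Conversely, if $D$ is smooth, $\Ku(D,H)$ is admissible in $\Db(D)$ and hence smooth. When $H$ is very ample, singular members of $|H|$ are exactly the tangent hyperplanes, i.e.\ $X^\vee$. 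For the final claim take $H=-K_X$, $r=1$, for which $\Rot=\mathsf S^{-1}[n]$ is intrinsic once $n$ is fixed, and $\Ku(D,H)=\Perf(D)$.
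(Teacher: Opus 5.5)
\textbf{Gap: uniqueness of the divisor algebra.} The gap is in (2), at the sentence ``It depends only on $(\Ku,\Rot,s)$, and uniqueness \dots\ comes from the construction there.''

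The data $(\Ku(X,H),\Rot_{X,H},s)$ determine only the bimodule map $s\colon P\to A$, with $P$ representing $\Rot_{X,H}^{-1}$, and hence the bimodule $\Cone(s)$. But $\Ku(D,H)\simeq\Perf(B_s)$ depends on an $A_\infty$ algebra structure on $A\oplus P[1]$. That structure is extra data and is not a priori a function of $s$. Your Koszul model produces one such structure and identifies it with $\RHom_D(Lj^*G,Lj^*G)$. That part is essentially tautological and is not the real obstacle.

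What the statement needs is that any two divisor structures on the same pair $(A,P)$ with the same section are $A_\infty$ isomorphic. This is required both for the word ``unique'' in (2) and for the compatibility with equivalences in (2) and (3). An equivalence commuting with the rotation functors identifies $(A_X,P_X)$ with $(A_Y,P_Y)$ and $s$ with $\alpha(s)$. Nothing in your argument identifies the transported geometric algebra $B_s$ with $B_{\alpha(s)}$ on $Y$.

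\textbf{What is needed to close it.} This uniqueness is a deformation problem, and your step (1) does not control its obstructions. Filter the Hochschild complex of the trivial extension by weight, as Seidel does. The weight-$m$ obstructions to uniqueness then lie in $H^{-1}(E_1)$ and in $H^{1-2m}(E_m)$, $H^{2-2m}(E_m)$ for $m\ge2$, where $E_m=\RHom_{A^e}(P^{\otimes_A m},A)\simeq\RHom(\id,\Rot^m)$. So you need vanishing of negative-degree transformations $\id\to\Rot^m$ for every $m$, not just $H^0$ of $\RHom(\id,\Rot)$.

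The paper gets this from the full kernel estimate $\RHom(\id,\Rot^m)\in D^{\ge\min\{0,n-r-m\}}$ for all $m\ge0$ (Proposition~\ref{prop:kernel}), proved via the mutation cube and duality along the diagonal. This kills the obstruction groups once $n-r\ge1$ (Proposition~\ref{prop:unique}). A relative transfer argument then places the two divisor algebras on a common minimal pair, where uniqueness applies.

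\textbf{Smaller points.} First, ``absorbed via $\omega_D$'' does not prove generation. A correct short argument: the Serre class $\cO_X((r-1)H)\to\cO_X(-H)[n]$ restricts to zero on $D$, because $H^n(D,\cO_D(-rH))=0$. Hence $\cO_D((r-1)H)$ is a direct summand of the restriction of an object of $\langle\Ku(X,H),\cO_X,\dots,\cO_X((r-2)H)\rangle$. The paper instead uses compact generation of $\Dqc(D)$. Second, canonicity in (1) requires $\HH^0(\Ku(X,H))=\CC$, which is the case $m=0$ of the same estimate. This ensures that rotation intertwiners differ only by scalars.
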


\subsection{Outline of the proof and the new ingredients}
We first explain the proof of Theorem~\ref{thm:C}, which is the most direct one: it recovers the equations of $X$ from the category, using the Hochschild--Serre algebra and a computation of Fourier--Mukai kernels. We then explain Theorems~\ref{thm:A} and~\ref{thm:B}, which avoid equations altogether and recover $X$ from its hyperplane sections.

\emph{Theorem~\ref{thm:C}: recovering the equations.} The idea is to find the section ring of $X$, and hence its equations, inside the category $\cO_X^{\perp}$. The Hochschild--Serre algebra $\HS^{\bullet,\bullet}_X$ is an algebra attached to $\cO_X^{\perp}$ alone, so it is unchanged under equivalences. A section of $-mK_X$ gives, by multiplication, a natural transformation from the identity to a power of the Serre functor, that is, an element of $\HS^{mn,-m}_X$. This defines maps
\[
\theta_m\colon H^0(X,-mK_X)\longrightarrow\HS^{mn,-m}_X,
\]
which respect products (Lemma~\ref{lem:mult}). If the $\theta_m$ were isomorphisms in all degrees, the Hochschild--Serre algebra would contain the anticanonical ring of $X$, and $X$ would be its $\Proj$. For hypersurfaces this is the mechanism of \cite{HR19,LZ25,LRZ26,Lin26}, where the algebra is identified with a Jacobian ring and the equation is recovered.

For a general Fano variety we control the maps $\theta_m$ by computing the Fourier--Mukai kernels of the functors involved. Following Pirozhkov \cite{Pir24}, who did this for the degree-shift functor of a hypersurface, we write the relevant functors as compositions of mutation functors and compute their kernels on $X\times X$. For an arbitrary smooth Fano variety this shows that $\theta_m$ is bijective for $m\le n-2$ and injective for $m\le n-1$ (Proposition~\ref{prop:kernel}). Consequently, for $q\le n-2$, the products in the Hochschild--Serre algebra of the elements of $\HS^{qn,-q}_X$ behave exactly like the products of sections of $-qK_X$, as long as the degree $qm$ is at most $n-1$. In particular the category sees which polynomials of degree at most $(n-1)/q$ vanish on $X$ in the embedding by $|-qK_X|$. When these equations generate the ideal of $X$, they cut out $X$, and $X$ is recovered (Appendix~\ref{app:equations}). In higher degrees the kernel computation no longer controls $\theta_m$, which is why Theorem~\ref{thm:C} needs a bound on the degrees of the equations.

\emph{Theorems~\ref{thm:A} and~\ref{thm:B}: hyperplane sections instead of equations.} To remove the condition on the equations, we no longer try to recover the section ring. Instead we recover the hyperplane sections of $X$ inside the category, and read off $X$ from their singularities. Let $\cA$ be the Kuznetsov component and $\Rot$ its rotation functor. Choose a classical generator of $\cA$, let $A$ be its endomorphism dg algebra, and let $P$ be the $A$-bimodule representing $\Rot^{-1}$. A natural transformation $s\colon\id\to\Rot$ is then a bimodule map $P\to A$. The argument has four steps:
\[
\begin{tikzcd}[column sep=1.6em,row sep=1.4em]
(\cA,\Rot)\arrow[r,"\text{Step 1}"] & \PP\Hom(\id,\Rot)=|H|\arrow[r,"\text{Step 2}"] & \bigl\{\Ku(D_s,H)\bigr\}_{[s]\in|H|}\arrow[d,"\text{Step 3}"]\\
X\arrow[r,leftarrow,"\text{Step 4}"'] & Z_H\arrow[r,leftarrow,"\text{biduality}"'] & \text{singular sections}\supset Z_H^\vee
\end{tikzcd}
\]

\emph{Step 1: sections.} By the kernel computation, $\Hom(\id,\Rot)\simeq H^0(X,H)$ when $r\le n-2$.

\emph{Step 2: noncommutative divisors.} For a section $s$ with zero divisor $D$, the Koszul complex of $s$ gives a dg algebra $B_s=A\oplus P[1]$, an extension of $A$ by $P[1]$ with connecting map $s$, whose perfect modules form $\Ku(D,H)$. The point is that $B_s$ is determined by $s$ alone, as an $A_\infty$ algebra and not only as a bimodule, and that the same holds for sections defined over any field extension of $\CC$.

\emph{Step 3: the image.} Since $D$ is smooth exactly when $\Ku(D,H)$ is homologically smooth, the category determines the locus of singular divisors. For an embedding this is $X^\vee$, and projective biduality recovers $X$, even if the dual is defective. For a finite map whose image $Z=Z_H$ has hypersurface dual, $Z^\vee$ is a hypersurface component of the locus of singular divisors, and an incidence computation recovers $Z$ from it (Lemmas~\ref{lem:incidence} and~\ref{lem:commonimage}).

\emph{Step 4: the finite cover.} Put $K=\CC(Z)$ and $F=\CC(Z^\vee)$. The conormal correspondence identifies $F$ with a purely transcendental extension of $K$. The generic tangent hyperplane, defined over $F$, pulls back to a divisor on $X_F$ whose singularities are ordinary double points, lying exactly over the generic contact point. Its singularity category, which is determined by its residual category, splits into indecomposable local factors, one for each node, and the Galois action on these factors recovers the finite étale $F$-algebra $\CC(X)\otimes_KF$. Taking the algebraic closure of $K$ in it recovers $\CC(X)$, and normalization of $Z$ in $\CC(X)$ recovers $X$.

Four ingredients of this argument are new in the context of categorical Torelli problems.
\begin{enumerate}[label=(\roman*)]
\item \emph{The full kernel estimate.} The Hochschild--Serre approach, including Pirozhkov's computation, only uses natural transformations $\id\to\Rot^m$ in cohomological degree zero. Step~2 needs more: the higher products of $B_s$ are controlled by the complexes $\RHom(\id,\Rot^m)$ in the negative degrees $1-2m$ and $2-2m$. We therefore extend the kernel computation to the whole complex, and show that $\RHom(\id,\Rot^m)$ has no cohomology below degree $\min\{0,n-r-m\}$ for every $m\ge0$ (Proposition~\ref{prop:kernel}). The proof expands the rotation into a cube of mutation kernels and applies duality along the diagonal; it uses only Kodaira vanishing. The bound decreases by one with each power of $\Rot$, whereas the relevant obstruction degrees decrease by two; this difference is what makes the argument work in all weights, and is the reason why no bound on the degrees of equations appears in Theorems~\ref{thm:A} and~\ref{thm:B}.
\item \emph{Uniqueness of noncommutative divisors.} Following Seidel \cite{Sei17}, we view divisor structures on $A\oplus P[1]$ as Maurer--Cartan elements in a deformation complex filtered by weight. Seidel's uniqueness criterion requires the vanishing of all negative cohomology of these complexes, which need not hold here; we refine it so that only the groups in degrees $-1$, $1-2m$ and $2-2m$ have to vanish, and these are exactly the groups killed by the full kernel estimate. Hence a noncommutative divisor is determined, up to $A_\infty$ isomorphism, by its section (Proposition~\ref{prop:unique}). Together with a relative homological perturbation argument, this allows us to transport divisor algebras along any equivalence commuting with the rotation functors.
\item \emph{A categorical discriminant.} We prove that the residual category of every hyperplane section, including singular and nonreduced ones, is generated by the restriction of a generator of $\cA$, via compact generation in the unbounded derived category (Proposition~\ref{prop:generation}). Homological smoothness then detects singular sections, so the category recovers the projective dual variety, and classical projective duality replaces the Jacobian-ring, Hodge-theoretic or moduli-theoretic inputs of earlier approaches.
\item \emph{Tangent divisors over function fields.} For a finite polarization map, the covering is recovered from one divisor, defined over the function field $F$ of the dual of the image. Its singularity category is recovered from its residual category through pseudo-perfect modules \cite{Efi17}, and decomposes into local factors at the nodes \cite{Orl11,Dyc11}; their Galois action over $F$ is a categorical invariant, and it determines the covering field (Section~\ref{sec:finite}).
\end{enumerate}
For Theorem~\ref{thm:B} one more observation is needed: for $\cO_X^{\perp}$ the rotation functor is $\mathsf S^{-1}[n]$, so it commutes with every equivalence once the dimensions agree, and the dimension is detected by the degree-zero part of the kernel estimate (Lemma~\ref{lem:dim}). Theorem~\ref{thm:B} is then the case $(H,r)=(-K_X,1)$ of the finite-map theorem, and Corollary~\ref{cor:vaPerry} the corresponding case of the embedded one.

For applications without a rotation marking we add a recognition step: the rotation functor is recovered from the Serre functor, from the periodicity of graded matrix factorizations, or from gluing data. Refined theorems for smaller Kuznetsov components follow by an enhanced gluing argument, which reconstructs $\cO_X^{\perp}$ from the smaller component and its gluing data.

\subsection{Organization and logical structure}
Section~\ref{sec:prelim} collects background on Fano polarizations, Kuznetsov components and rotation functors, kernels, homological smoothness, projective duality and gluing. Section~\ref{sec:kernel} proves the full kernel estimate. Section~\ref{sec:ncdiv} develops noncommutative divisors and proves their uniqueness. Section~\ref{sec:sections} constructs the divisor algebra of every hyperplane section, over any field extension, and proves generation. Section~\ref{sec:main} identifies the categorical discriminant and proves Theorem~\ref{thm:sections}, Theorem~\ref{thm:A} for very ample $H_X,H_Y$, and Corollary~\ref{cor:vaPerry}. Section~\ref{sec:finite} treats finite polarization maps and completes the proofs of Theorems~\ref{thm:A} and~\ref{thm:B}. Sections~\ref{sec:hyp}--\ref{sec:examples} prove Corollary~\ref{cor:C}: hypersurfaces in Section~\ref{sec:hyp}, prime Fano threefolds of index one in Section~\ref{sec:prime}, and further examples, cyclic covers and a comparison with the literature in Section~\ref{sec:examples}. Appendix~\ref{app:equations} proves Theorem~\ref{thm:C}; it uses Sections~\ref{sec:prelim} and~\ref{sec:kernel}, but not Sections~\ref{sec:ncdiv}--\ref{sec:finite}. Table~\ref{tab:results} summarizes the hypotheses of the main results and the parts of the paper on which their proofs rely.

\begin{table}[ht]
\centering
\small
\renewcommand{\arraystretch}{1.2}
\begin{tabular}{>{\raggedright\arraybackslash}p{2.4cm}>{\raggedright\arraybackslash}p{2.6cm}>{\raggedright\arraybackslash}p{4.6cm}>{\raggedright\arraybackslash}p{3.4cm}}
\toprule
Result & Category & Hypotheses & Proof\\
\midrule
Theorem~\ref{thm:sections} & $\Ku(X,H)$ with $\Rot_{X,H}$ & $H$ ample, $r\le n-2$ & Sections~\ref{sec:kernel}--\ref{sec:main}\\
Theorem~\ref{thm:A}, $H$ very ample & $\Ku(X,H)$ with $\Rot_{X,H}$ & $H$ very ample, $r\le n-2$ & Theorem~\ref{thm:general}\\
Theorem~\ref{thm:A}, $H$ globally generated & $\Ku(X,H)$ with $\Rot_{X,H}$ & $H$ globally generated, $Z_{H_X}^\vee$ a hypersurface, $r\le n-2$ & Theorem~\ref{thm:finite}\\
Theorem~\ref{thm:B} & $\cO_X^{\perp}$, or $\cB_X^{\perp}$ & $-K$ globally generated, $Z_{-K_X}^\vee$ a hypersurface, $n\ge3$ & Lemma~\ref{lem:dim} and Theorem~\ref{thm:finite}\\
Corollary~\ref{cor:vaPerry} & $\cO_X^{\perp}$, or $\cB_X^{\perp}$ & $-K$ very ample, $n\ge3$ & Lemma~\ref{lem:dim} and Theorem~\ref{thm:general}\\
Theorem~\ref{thm:C} & $\cO_X^{\perp}$ & $-qK$ very ample, ideal generated in degrees $\le\ell$, $q\ell\le n-1$ & Section~\ref{sec:kernel} and Appendix~\ref{app:equations}\\
Corollary~\ref{cor:C} & various & see Sections~\ref{sec:hyp}--\ref{sec:examples} & Theorems~\ref{thm:A}, \ref{thm:B}, gluing (Section~\ref{ss:gluing}), cited inputs\\
\bottomrule
\end{tabular}
\caption{The main results, their hypotheses, and the parts of the paper on which their proofs rely. Here $n=\dim X$, and $(H,r)$ is a Fano polarization. Global generation is required on both varieties, the hypersurface-dual condition on $X$ only.}
\label{tab:results}
\end{table}

\subsection*{Conventions}
We work over $\CC$. All categories, functors, adjunctions and natural transformations are enhanced, and an equivalence means a $\CC$-linear Morita equivalence of the standard dg enhancements. Spaces of natural transformations are cohomology groups of enhanced functor complexes, not the center of a triangulated homotopy category. For a semiorthogonal decomposition $\langle\mathcal U,\mathcal V\rangle$ we have $\Hom(\mathcal V,\mathcal U)=0$, and $E^{\perp}$ consists of the objects $F$ with $\RHom(E,F)=0$. For a vector space $W$, $\PP(W)$ is the space of lines in $W$; thus a variety embedded by a linear system $V=H^0(X,H)$ lies in $\PP(V^\vee)$, and its hyperplane sections are parametrized by $\PP(V)$.

\subsection*{Origins and development of the project}
This project grew out of our refined categorical Torelli theorems for Gushel--Mukai threefolds \cite{JLLZ24} and for prime Fano threefolds of index one and genus $g\ge6$ \cite{JLZ26}. For $g\le5$ the relevant Kuznetsov component is $\cO_X^{\perp}$, and $X$ is a Bridgeland moduli space of stable objects in the class of the ideal sheaf $I_x$ of a point; reconstructing $X$ from the abstract category then requires showing that an equivalence sends each $I_x$, after normalization, to some $I_{x'}$. This would follow from a Serre-invariance property of stability conditions on $\cO_X^{\perp}$, which is not available. For $g\ge6$, on the other hand, the refined data already give an equivalence $\cO_X^{\perp}\simeq\cO_{X'}^{\perp}$ by gluing. Both observations suggested that a more uniform argument should exist.

The Hochschild--Serre algebra method of \cite{LZ25,LRZ26} for smooth and weighted hypersurfaces gives cases of Perry's conjecture in index one, but it left open two basic index-one examples, the complete intersections of a quadric and a cubic in $\PP^5$ ($g=4$) and of three quadrics in $\PP^6$ ($g=5$), whose categorical Torelli problems are exactly cases of Perry's conjecture. Inspired by Pirozhkov's Fourier--Mukai kernel computations \cite{Pir24}, we combined them with the Hochschild--Serre algebra and proved Perry's conjecture for Gushel--Mukai threefolds in July 2026; all work up to this point was done without the assistance of generative AI. The same method then applied to prime Fano threefolds of genus eight, to Verra threefolds and, more generally, to Fano varieties whose anticanonical embedding is defined by quadrics; we began writing the paper and informed several experts.

We subsequently used GPT-5.6. Several exchanges led to the extension to pluri-anticanonical embeddings in Theorem~\ref{thm:C}: on both sides, $-qK$ is very ample and the ideal of the embedding by $|-qK|$ is generated in degrees at most $\ell$, for fixed integers $\ell\ge2$ and $q$ with $q\ell\le\dim X-1$. Since the uniform estimates require dimension at least three, we treated del Pezzo surfaces separately, where our earlier Hochschild--Serre method suffices in the nonrigid cases \cite{LRampZ26} (see also \cite{Ela26}). In parallel, inspired by \cite{HR19,Pir24}, we developed a singularity-category approach \cite{LZZ26}, which recovers the equation of a hypersurface from the Hochschild cohomology of the singularity category of its completed affine cone; under the numerical hypotheses there, it gives categorical Torelli theorems for Fano hypersurfaces and, through the full graded matrix factorization category, for hypersurfaces of general type, as well as further index-one cases of Perry's conjecture.

In early September 2026, after a dozen or so exchanges with ChatGPT (Astra), we obtained the proof of the very ample case of Theorem~\ref{thm:B}, that is, of Corollary~\ref{cor:vaPerry}, together with strengthenings of its hypotheses; for instance, very ampleness can be replaced by the condition that the anticanonical system is base point free and defines an unramified morphism (Proposition~\ref{prop:unramified}). We immediately saw that the argument applies to categorical Torelli problems with rotation functors, which led to Theorem~\ref{thm:A}: it recovers many of our earlier categorical Torelli theorems from a single statement and contains Perry's conjecture for very ample anticanonical bundles. We did not post the manuscript then, because we wanted a fuller conceptual understanding of the ideas behind the proof developed with ChatGPT, and our main effort was directed towards removing the very ampleness assumption altogether. In the present paper the very ampleness of $-K_X$ is replaced by the hypothesis of Theorem~\ref{thm:B}: $-K_X$ is globally generated and the projective dual of the anticanonical image is a hypersurface, so that the anticanonical morphism may be a finite cover of arbitrary degree. On October 16, 2026, Ziqi Liu drew our attention to Ma's preprint \cite{Ma26}, whose main proof structure and central lemmas agree closely with ours in the very ample case, although several technical implementations differ. This prompted us to make the manuscript public before it reached its intended form. We are responsible for the mathematical statements and proofs in the final text.

\subsection*{Acknowledgements}
We thank Chunyi Li and Zhiyu Liu for useful conversations during the early stages of this work. We are grateful to Chunyi Li for sharing interesting ideas on Huybrechts's conjecture for cubic fourfolds, and to Zhiyu Liu for pointing out that the techniques of \cite{LLL+26} give a new proof of the Bondal--Orlov reconstruction theorem and might be useful for Perry's conjecture. We thank Ziqi Liu for bringing Ma's preprint \cite{Ma26} to our attention. Shizhuo Zhang is supported by the Sun Yat-sen University Starting Grant No.~34000-12256019.

\subsection*{Upcoming work}
We are preparing several further papers on categorical reconstruction; their precise statements, hypotheses and proofs will appear separately.

\emph{Quadric fibrations.} Although the cases of three quadrics in $\PP^6$ and of a quadric and a cubic in $\PP^5$ are settled here, their direct Hochschild--Serre calculations lead to further reconstruction problems. For $X=V(q_1,q_2,q_3)\subset\PP^6$, the universal quadric $\mathcal Q_X=\{\lambda_1q_1+\lambda_2q_2+\lambda_3q_3=0\}\subset\PP^6\times\PP^2$ of the net is a quadric fibration over $\PP^2$, whose even Clifford algebra gives a model of $\Ku(X)$ \cite{Kuz08}, and by Corollary~\ref{cor:quadratic} the Hochschild--Serre multiplication recovers the net itself, $\langle q_1,q_2,q_3\rangle\simeq\ker(\Sym^2\HS^{3,-1}_X\to\HS^{6,-2}_X)$. This determines $X$, the universal quadratic form, its symmetric determinantal discriminant and the Clifford algebra, up to change of coordinates; we develop this approach for broader classes of Fano varieties and residual categories of quadric fibrations.

\emph{Hybrid models.} For $X=V(f_2,f_3)\subset\PP^5$ we use the hybrid Landau--Ginzburg model $W=p_2f_2(x)+p_3f_3(x)$ on the total space of $\cO_{\PP(2,3)}(-1)^{\oplus6}$, with the weighted projective line regarded as a stack and the grading and R-charge retained \cite{BS23}. The Hochschild--Serre calculation in this relative matrix factorization setting extends that of \cite{BFK14,LZ25,LRZ26}: once one shows that equivalences preserve the intrinsic data involved, it recovers the quadratic generator up to scalar and the cubic generator modulo linear multiples of the quadric, up to projective change of coordinates. This gives categorical Torelli theorems for complete intersections without compatibility with rotation functors, beyond the scope of Theorem~\ref{thm:A}.

\emph{Nodal prime Fano threefolds.} Jointly with Daniele Faenzi, we prove categorical and refined categorical Torelli theorems for almost all deformation families of one-nodal maximally non-factorial prime Fano threefolds, under admissibility hypotheses specified for each family. The categorical input is the smooth absorption component $\widetilde{\cA}_X$, together with the right-adjoint projection of the Mukai evaluation complex for the refined statements. The component alone determines $X$ for the degree-one bridges of genus four and for the families of genera five and seven; the refined results cover genera six, eight, nine and ten and the types 12nb, 12nc and 12nd in genus twelve. For type 12na we obtain reconstruction from a Hilbert--Burch marking, whose comparison with the Mukai evaluation projection remains open; the genus-two family is not included.

\emph{Hypersurfaces.} Finally, we have settled the remaining cases of the categorical Torelli problem for Fano hypersurfaces with non-Calabi--Yau Kuznetsov components; these results and their precise numerical scope will appear in a separate paper.
\section{Preliminaries}\label{sec:prelim}

\subsection{Enhancements and generators}
A dg category is \emph{proper} if its morphism complexes have finite-dimensional total cohomology. For a scheme $Z$ we write $\Perf(Z)$ for the category of perfect complexes and $\Dqc(Z)$ for the unbounded derived category of complexes with quasi-coherent cohomology; if $Z$ is smooth and projective, $\Perf(Z)=\Db(Z)$. For projective $Z$, the category $\Dqc(Z)$ is compactly generated, its compact objects are the perfect complexes, and a compact generator classically generates $\Perf(Z)$ \cite{BVdB03}. Projecting a classical generator of $\Perf(X)$ to an admissible component gives a classical generator of that component. If $G$ is such a generator and $A=\RHom(G,G)$, enhanced Morita theory identifies the component with $\Perf(A)$ \cite{Toe07}. Functors and natural transformations are represented by derived bimodules and their morphism complexes, and all tensor products of bimodules are derived.

\subsection{Fano polarizations and Kuznetsov components}\label{ss:Ku}

We recall the standard facts we need. Let $X$ be a smooth Fano variety of dimension $n$. Kodaira vanishing gives $H^i(X,\cO_X)=0$ for $i>0$, so $\Pic(X)\simeq H^2(X,\ZZ)$, and this group is torsion free because $X$ is rationally connected \cite{Cam92,KMM92}, hence simply connected \cite[Corollary~4.18]{Deb01}. Hence $-K_X=i_XH_X$ for a unique primitive class $H_X$, which is ample, and the Fano polarizations $(H,r)$ are exactly the pairs with $r\mid i_X$ and $H=(i_X/r)H_X$.

Fix a Fano polarization $(H,r)$, so that $H$ is ample and
\begin{equation}\label{eq:pol}
\omega_X^{-1}\simeq H^{\otimes r},\qquad r\ge1,
\end{equation}
and write $\cO_X(a)=H^{\otimes a}$. Kodaira vanishing for $\cO_X(t)=\omega_X\otimes H^{\otimes(t+r)}$, together with the fact that negative powers of an ample line bundle have no sections, gives
\begin{equation}\label{eq:kodaira}
H^j(X,\cO_X(t))=0\qquad(j>0,\ t>-r),
\end{equation}
\begin{equation}\label{eq:RGamma}
\RG(X,\cO_X)=\CC,\qquad\RG(X,\cO_X(t))=0\ \ (-r<t<0),\qquad\RG(X,\cO_X(-r))=\CC[-n].
\end{equation}
Since $\RHom(\cO_X(a),\cO_X(b))=\RG(X,\cO_X(b-a))$, it follows that $\cO_X,\cO_X(1),\dots,\cO_X(r-1)$ is an exceptional collection in $\Db(X)$.

We shall work in the following slightly more general setting. Let $\cB_X=\langle E_1,\dots,E_s\rangle$ be a \emph{completely orthogonal block of positive-rank exceptional sheaves}: each $E_\alpha$ is an exceptional coherent sheaf of positive rank, and $\RHom_X(E_\alpha,E_\beta)=0$ for $\alpha\ne\beta$. Assume that $\cB_X,\cB_X(H),\dots,\cB_X((r-1)H)$ is semiorthogonal, and put
\begin{equation}\label{eq:setting}
\Perf(X)=\bigl\langle\cA_X,\cB_X,\cB_X(H),\dots,\cB_X((r-1)H)\bigr\rangle.
\end{equation}
The basic example is $\cB_X=\langle\cO_X\rangle$, for which $\cA_X=\Ku(X,H)$. For $H=-K_X$ we have $r=1$, and $\cA_X$ is the orthogonal $\cB_X^{\perp}$ of an arbitrary completely orthogonal collection of positive-rank exceptional sheaves. Since a coherent sheaf on the smooth variety $X$ is perfect, all duals and restrictions below are derived.

Let $i=i_X$ be the inclusion of $\cA_X$ and $\pi=\pi_X$ its left adjoint. The \emph{rotation functor} is
\begin{equation}\label{eq:rot}
\Rot=\Rot_X=\pi\circ T\circ i,\qquad T=(-)\otimes H.
\end{equation}
Write $L_a$ for the left mutation through the block $\cB_X(aH)$. Since the block is completely orthogonal, its projection kernel is $\bigoplus_\alpha E_\alpha^\vee(-aH)\boxtimes E_\alpha(aH)$, and $L_a$ is given by the cone of the evaluation map from this kernel to $\cO_\Delta$.

\begin{lemma}\label{lem:mutation}
There are isomorphisms of functors
\begin{equation}\label{eq:mutation}
i\pi=L_0L_1\cdots L_{r-1},\qquad i\Rot=L_0Ti,\qquad i\Rot^m\pi=L_0L_1\cdots L_{m+r-1}T^m\quad(m\ge0).
\end{equation}
\end{lemma}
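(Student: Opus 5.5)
The plan is to derive all three isomorphisms from the standard description of the left adjoint to the inclusion of the first component of a semiorthogonal decomposition as an iterated left mutation, together with the compatibility of the blocks with twisting by $T$. Throughout, $L_a$ is the left mutation through $\cB_X(aH)$, that is, the cone of the counit $j_aj_a^!\to\id$, where $j_a$ is the inclusion of the block. Since the block is completely orthogonal and consists of exceptional objects, $j_aj_a^!$ has kernel $\bigoplus_\alpha E_\alpha^\vee(-aH)\boxtimes E_\alpha(aH)$, as recalled before the lemma. The functor $L_a$ annihilates $\cB_X(aH)$, takes values in ${}^{\perp}$-free form in $\cB_X(aH)^{\perp}$, and restricts to the identity on that orthogonal.

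\emph{First isomorphism.} The decomposition \eqref{eq:setting} is $\Perf(X)=\langle\cA_X,\cB_X,\dots,\cB_X((r-1)H)\rangle$. I will argue by induction on the number of blocks, using that for $\langle\mathcal U,\mathcal V\rangle$ with $\mathcal V$ admissible the composite $i_{\mathcal U}\pi_{\mathcal U}$ is the left mutation through $\mathcal V$. Peel off the blocks from the left: for $F\in\Perf(X)$, the object $L_{r-1}F$ lies in $\cB_X((r-1)H)^{\perp}=\langle\cA_X,\cB_X,\dots,\cB_X((r-2)H)\rangle$. Then $L_{r-2}$ preserves this subcategory, since it only adds objects of $\cB_X((r-2)H)$, and lands in $\langle\cA_X,\dots,\cB_X((r-3)H)\rangle$. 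Iterating, $L_0\cdots L_{r-1}F$ lies in $\cA_X$. Its cone with $F$ is built from objects of the blocks, hence lies in ${}^{\perp}\cA_X$. This identifies the composite with $i\pi$. Functoriality, and the enhanced version, follow because every step is a cone of a natural transformation of kernels.

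\emph{Second isomorphism.} We have $i\Rot=i\pi Ti=L_0\cdots L_{r-1}Ti$. The key observation is that for $G\in\cA_X$, the object $T G=G(H)$ lies in $\langle\cB_X(H),\dots,\cB_X((r-1)H)\rangle^{\perp}$, because $\RHom(E_\alpha(aH),G(H))=\RHom(E_\alpha((a-1)H),G)=0$ for $1\le a\le r-1$. Hence $L_1,\dots,L_{r-1}$ act as the identity on $TG$, and $i\Rot\simeq L_0Ti$.

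\emph{Third isomorphism.} I will induct on $m$; the case $m=0$ is the first isomorphism. For the inductive step write
\[
i\Rot^{m+1}\pi=i\Rot\pi\,(i\Rot^m\pi)=L_0Ti\pi\,(i\Rot^m\pi).
\]
Since the image of $i\Rot^m\pi$ lies in $\cA_X$, where $i\pi$ is the identity, this equals $L_0T\,L_0\cdots L_{m+r-1}T^m$. Now use $TL_aT^{-1}=L_{a+1}$, which holds because the kernel of $L_a$ twisted by $H$ on both factors is that of $L_{a+1}$. This gives $L_0L_1\cdots L_{m+r}T^{m+1}$, as claimed.

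The only delicate point is the identification $i\pi=L_0\cdots L_{r-1}$ at the level of enhanced functors rather than merely objectwise. I expect to handle it by working with kernels throughout, so that each mutation is a cone in $\Perf(X\times X)$ and the induction produces canonical kernel isomorphisms.
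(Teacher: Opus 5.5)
Your proposal is correct and follows essentially the paper's argument: the first identity by successive mutation starting from the last block, the second from the vanishing $\RHom_X(E_\alpha(aH),F(H))=\RHom_X(E_\alpha((a-1)H),F)=0$ for $1\le a\le r-1$, and the third from $TL_a=L_{a+1}T$. The only cosmetic difference is that you induct directly on $i\Rot^m\pi$ starting from $m=0$, whereas the paper first shows $i\Rot^m=(L_0T)^mi=L_0\cdots L_{m-1}T^mi$ and then composes on the right with the first identity.
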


\begin{proof}
The first identity follows by successive mutation through the exceptional blocks, starting with the last one. For $F\in\cA_X$ and $1\le a\le r-1$, semiorthogonality gives $\RHom_X(E_\alpha(aH),F(H))=\RHom_X(E_\alpha((a-1)H),F)=0$, so in the projection of $F(H)$ only $L_0$ acts nontrivially; this gives the second identity. Since $TL_a=L_{a+1}T$, we have $(L_0T)^m=L_0L_1\cdots L_{m-1}T^m$, and $i\Rot^m=(L_0T)^mi$ by induction. Multiplying by the first identity on the right and moving $T^m$ across the mutations gives the third.
\end{proof}

\begin{proposition}\label{prop:serre}
The rotation functor is an autoequivalence of $\cA_X$, and
\begin{equation}\label{eq:serre}
\Rot_X^{\,r}\simeq\mathsf S_{\cA_X}^{-1}[n],\qquad\mathsf S_{\cA_X}\simeq\Rot_X^{-r}[n].
\end{equation}
In particular, if $r=1$ the rotation functor is determined by the category and its dimension $n$.
\end{proposition}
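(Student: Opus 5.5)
The argument will be Kuznetsov's \cite{Kuz19}, adapted to the block setting \eqref{eq:setting}. It rests on two facts: the Serre functor of $\Perf(X)$ is $\mathsf S_X=T^{-r}[n]$, since $\omega_X\simeq H^{-r}$; and the Serre functor of an admissible component is computed by mutations. First I show that $\Rot$ is an equivalence by writing down an inverse. Let $\pi'$ be the right adjoint of $i$, and let $R_a$ be the right mutation through $\cB_X(aH)$. Put $\Rot'=\pi'\circ T^{-1}\circ i$. For $F\in\cA_X$, the object $L_0(F(H))$ differs from $F(H)$ by an object of $\cB_X$. Twisting by $T^{-1}$ shows that $\Rot F(-H)$ differs from $F$ by an object of $\cB_X(-H)$. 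Since $\cB_X(-H)=\mathsf S_X\bigl(\cB_X((r-1)H)\bigr)[-n]$ lies in ${}^{\perp}\cA_X$ by Serre duality, applying $\pi'$ returns $F$. Hence $\Rot'\Rot\simeq\id$, and the mirror argument gives $\Rot\Rot'\simeq\id$. The block property of $\cB_X$ enters only through Lemma~\ref{lem:mutation}, which gives the explicit form $i\Rot=L_0Ti$.

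For the Serre functor, I use the standard formula for the decomposition $\Perf(X)=\langle\cA_X,{}^{\perp}\cA_X\rangle$: with ${}^{\perp}\cA_X=\langle\cB_X,\dots,\cB_X((r-1)H)\rangle$ one has
\[
\mathsf S_{\cA_X}^{-1}\simeq\pi\circ\mathsf S_X^{-1}\circ i=\pi\circ T^{r}\circ i\,[-n].
\]
This follows from $\Hom(\pi\mathsf S_X^{-1}iF,G)=\Hom(\mathsf S_X^{-1}F,G)=\Hom(G,F)^\vee$ for $F,G\in\cA_X$. It remains to prove $\Rot^r\simeq\pi T^ri$. By Lemma~\ref{lem:mutation} with $m=r$, we have $i\Rot^r\pi=L_0\cdots L_{2r-1}T^r$. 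Composing with $i$ on the right, and using $\pi i\simeq\id$ and $\pi L_a\simeq\pi$ for $0\le a\le r-1$, which holds since $L_a$ changes objects by elements of $\cB_X(aH)\subset\ker\pi$, I get
\[
\Rot^r\simeq\pi L_r\cdots L_{2r-1}T^ri .
\]
So the claim reduces to $\pi L_r\cdots L_{2r-1}T^r i\simeq\pi T^ri$.

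This reduction is the main obstacle, since the blocks $\cB_X(aH)$ with $a\ge r$ are not killed by $\pi$. I would handle it by the twist trick. For $F\in\cA_X$, write $T^rF=F\otimes\omega_X^{-1}=\mathsf S_X^{-1}F[n]$. For $r\le a\le 2r-1$ the object $E_\alpha(aH)$ equals $\mathsf S_X^{-1}E_\alpha((a-r)H)[n]$. Hence
\[
\RHom\bigl(E_\alpha(aH),T^rF\bigr)=\RHom\bigl(E_\alpha((a-r)H),F\bigr)=0,
\]
because $F\in\cA_X$. The same vanishing persists after the earlier mutations. The reason is that $L_{b}$ for $b>a$ alters $T^rF$ only by objects of $\cB_X(bH)=\mathsf S_X^{-1}\cB_X((b-r)H)[n]$, and these are right orthogonal to $\cB_X(aH)$ by semiorthogonality of the shifted blocks. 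Thus each $L_a$ with $r\le a\le 2r-1$ acts as the identity on $T^ri$, which gives $\Rot^r\simeq\pi T^ri\simeq\mathsf S_{\cA_X}^{-1}[n]$. To pass from objects to enhanced functors, I would note that all these identifications are given by canonical maps of kernels, namely the units of evaluation, so they hold at the level of bimodules. The second formula in \eqref{eq:serre} follows by inverting. For $r=1$, $\Rot\simeq\mathsf S_{\cA_X}^{-1}[n]$ is intrinsic once $n$ is known.
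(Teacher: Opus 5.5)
Your argument is correct in substance and rests on the same two facts as the paper's proof: $\Rot^r\simeq\pi T^ri$, obtained from the mutation identities of Lemma~\ref{lem:mutation}, and $\mathsf S_{\cA_X}^{-1}\simeq\pi\,\mathsf S_X^{-1}i$. The difference is in how you get the first fact. The paper iterates the second identity $i\Rot=L_0Ti$ of \eqref{eq:mutation}, not the third, and this gives at once $i\Rot^r=L_0\cdots L_{r-1}T^ri=i\pi T^ri$. Your ``main obstacle'' is the extra factor $i\pi$ built into the third identity, and it disappears in one line: $L_r\cdots L_{2r-1}T^r=T^rL_0\cdots L_{r-1}=T^ri\pi$, and $i\pi i\simeq i$. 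Likewise the paper does not need your explicit inverse. Once $\Rot^r\simeq\mathsf S_{\cA_X}^{-1}[n]$ is known, $\Rot^{r-1}(\Rot^r)^{-1}$ is a two-sided inverse of $\Rot$. (In your inverse argument, note also that $\cB_X(-H)$ lies in $\cA_X^{\perp}=\ker\pi'$, not in ${}^{\perp}\cA_X$.)

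One justification in your twist trick is backwards. Semiorthogonality of the twisted blocks gives $\RHom(\cB_X(bH),\cB_X(aH))=0$ for $b>a$, not $\RHom(\cB_X(aH),\cB_X(bH))=0$. The latter is false in general: for $\cB_X=\langle\cO_X\rangle$ it equals $H^0(X,\cO_X((b-a)H))$. So your claim that alterations by objects of $\cB_X(bH)$ remain orthogonal to $\cB_X(aH)$ is wrong. It is, however, not needed. You proved the vanishing $\RHom(\cB_X(bH),T^rF)=\RHom(\cB_X((b-r)H),F)=0$ for every $r\le b\le 2r-1$. This shows that each $L_b$ fixes $T^rF$, so no alteration ever occurs. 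Descending induction from $L_{2r-1}$ to $L_r$ then gives $\pi L_r\cdots L_{2r-1}T^ri\simeq\pi T^ri$, as required.
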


\begin{proof}
Iterating the second identity of \eqref{eq:mutation} gives $i\Rot^r=L_0\cdots L_{r-1}T^ri=i\pi T^ri$, hence $\Rot^r=\pi T^ri$. For an admissible subcategory, $\mathsf S_{\cA_X}^{-1}=\pi\,\mathsf S_X^{-1}i$: for $E,F\in\cA_X$ adjunction gives
\[
\RHom_{\cA_X}(\pi\mathsf S_X^{-1}iE,F)\simeq\RHom_X(\mathsf S_X^{-1}iE,iF)\simeq\RHom_{\cA_X}(F,E)^\vee.
\]
Since $\mathsf S_X^{-1}=T^r[-n]$, the first formula follows. A functor some positive power of which is an equivalence is itself an equivalence, since $\Rot^{r-1}(\Rot^r)^{-1}$ is an inverse on both sides; this gives the second formula.
\end{proof}

\begin{remark}\label{rem:rotformula}
For $\cB_X=\langle\cO_X\rangle$ the proof of Lemma~\ref{lem:mutation} gives formula \eqref{eq:rotformula}. The right adjoint of $\Rot$ is $i^!T^{-1}i$, where $i^!$ is the right adjoint of $i$; since $\Rot$ is an equivalence, $\Rot^{-1}\simeq i^!T^{-1}i$.
\end{remark}

\subsection{Kernels and natural transformations}\label{ss:kernels}
We use the Fourier--Mukai convention $\Phi_K(F)=Rp_{2*}(p_1^*F\otimes K)$ for a perfect kernel $K$ on $X\times X$; the enhanced kernel formalism is \cite[Theorem~8.15]{Toe07} and \cite[Theorem~6.4]{LS16}. Let $Q_m$ be a kernel representing $i\Rot^m\pi$, and put
\[
\cH_m=\RHom_{\Fun(\cA_X,\cA_X)}\bigl(\id,\Rot^m\bigr).
\]
Extension $F\mapsto iF\pi$ is fully faithful on morphism complexes: restricting to $i(\cA_X)$ and composing with $\pi$ gives an inverse, and naturality with respect to the unit $\id\to i\pi$, together with $\pi i=\id$, shows that this inverse recovers a transformation on all of $\Perf(X)$. Hence
\begin{equation}\label{eq:Hm}
\cH_m\simeq\RHom_{X\times X}(Q_0,Q_m).
\end{equation}
Let $P=i\pi$, an idempotent monad on $\Perf(X)$ with unit $\eta_P\colon\id\to P$ and multiplication $\mu_P=i\varepsilon\pi\colon P\circ P\xrightarrow{\sim}P$, where $\varepsilon\colon\pi i\xrightarrow{\sim}\id$ is the counit. Expanding $\Rot^m=(\pi Ti)^m$ gives the identity of functors
\begin{equation}\label{eq:Qm}
i\Rot^m\pi=P\,T\,P\,T\cdots T\,P\qquad(m\text{ factors }T,\ m+1\text{ factors }P),
\end{equation}
and $Q_m$ is a kernel representing it. Inserting $\eta_P$ at each of the $m+1$ factors $P$ gives the \emph{coaugmentation}
\[
u_m=\eta_P*\id_T*\eta_P*\cdots*\id_T*\eta_P\colon T^m\longrightarrow i\Rot^m\pi;
\]
expanding each factor $P=L_0\cdots L_{r-1}$ by Lemma~\ref{lem:mutation}, $u_m$ is the map $\Delta_*H^{\otimes m}\to Q_m$ given by the coaugmentations of the mutations. For $s\in H^0(X,H^{\otimes m})$, multiplication by $s$ is a transformation $s\colon\id\to T^m$, and $u_m\circ s$ is a morphism of kernels $\cO_\Delta\to Q_m$. By \eqref{eq:Hm} and Lemma~\ref{lem:diagonal} below it defines an element $\theta_m(s)$, and we call
\begin{equation}\label{eq:theta}
\theta_m\colon H^0(X,H^{\otimes m})\longrightarrow H^0(\cH_m)=\Hom(\id,\Rot^m)
\end{equation}
the \emph{geometric map}. For $m=1$, $\theta_1(s)$ is the transformation $\pi(s)\colon\id=\pi i\to\pi Ti=\Rot$ induced by multiplication by $s$.

\subsection{Homological smoothness}\label{ss:smooth}
A dg category is \emph{homologically smooth} if its diagonal bimodule is perfect. Smoothness is invariant under Morita equivalence and passes to admissible components. For a projective scheme $Z$, the category $\Perf(Z)$ is homologically smooth if and only if $Z$ is smooth over $\CC$ \cite[Theorem~4.3]{LS16}; this includes nonreduced $Z$.

We also use smoothness of gluings. Let a proper enhanced category have a finite semiorthogonal decomposition with smooth and proper components. Each gluing bimodule is pointwise a perfect complex, and over the tensor product of two smooth proper categories such a bimodule is perfect, since a finite resolution of the diagonal expresses it as a finite extension of representable modules \cite[Theorem~3.18 and the proof of Theorem~4.15]{Orl16}. The glued category is then smooth by \cite[Theorem~3.25]{Orl16}, applied successively to the components.

\subsection{Projective duality}\label{ss:duality}
Let $Z\subset\PP(V^\vee)$ be an irreducible projective variety. Its projective dual $Z^\vee\subset\PP(V)$ is the closure of the set of hyperplanes containing the embedded tangent space of $Z$ at a smooth point; it is the image of the conormal variety of $Z$, which is irreducible of dimension $\dim V-2$, and it is irreducible. One expects $Z^\vee$ to be a hypersurface; otherwise $Z$ is called \emph{dual defective}, with \emph{dual defect} $\delta=\dim V-2-\dim Z^\vee$. In characteristic zero, projective biduality $(Z^\vee)^\vee=Z$ holds for every irreducible $Z$, including dual-defective ones \cite{Tev05}, \cite[Introduction and Section~1]{LM08}. If $Z$ is smooth, $Z^\vee$ is the locus of hyperplanes $\Pi$ for which the hyperplane section $Z\cap\Pi$ is singular.

\subsection{Enhanced gluing}\label{ss:gluing}
Let $\cC_X=\langle\cK_X,\cB_X\rangle$ be a semiorthogonal decomposition of a smooth proper enhanced category, with inclusions $i_X,j_X$, and put $\Gamma_X=i_X^!j_X\colon\cB_X\to\cK_X$. The mixed morphism complexes form the derived bimodule
\[
\cM_X(b,a)=\RHom_{\cC_X}(i_Xa,j_Xb)\simeq\RHom_{\cK_X}(a,\Gamma_Xb),
\]
and $\cC_X$ is recovered, up to Morita equivalence, from its two components and this bimodule by the upper-triangular construction of \cite[Section~3]{Orl16}. An identification of the gluing bimodules, including their higher actions, is therefore exactly what is needed to glue equivalences of the components.

\begin{lemma}\label{lem:gluing}
Let $\cC_X=\langle\cK_X,\cB_X\rangle$ and $\cC_Y=\langle\cK_Y,\cB_Y\rangle$ be semiorthogonal decompositions of smooth proper enhanced categories. If there are equivalences $\Phi\colon\cK_X\simeq\cK_Y$ and $\Psi\colon\cB_X\simeq\cB_Y$ with an enhanced identification $\Phi\Gamma_X\simeq\Gamma_Y\Psi$, then $\cC_X\simeq\cC_Y$.
\end{lemma}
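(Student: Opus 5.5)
The plan is to realize $\cC_X$ and $\cC_Y$ as upper-triangular gluings in the sense of \cite[Section~3]{Orl16} and to transport the gluing data along $\Phi$ and $\Psi$. First choose classical generators $G_K$ of $\cK_X$ and $G_B$ of $\cB_X$, and put $A=\REnd(G_K)$, $B=\REnd(G_B)$. Let $M=\RHom_{\cC_X}(i_XG_K,j_XG_B)\simeq\RHom_{\cK_X}(G_K,\Gamma_XG_B)$, a $B$--$A$ bimodule whose actions come from the dg enhancement of $\cC_X$. Then $i_XG_K\oplus j_XG_B$ classically generates $\cC_X$. Its endomorphism dg algebra is the triangular algebra with $A$ and $B$ on the diagonal, $M$ in one off-diagonal entry and $0$ in the other, the zero coming from semiorthogonality $\RHom(j_X\cB_X,i_X\cK_X)=0$. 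By Morita theory \cite{Toe07}, $\cC_X\simeq\Perf$ of this algebra.

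Next transport everything to $Y$. The images $\Phi(G_K)$ and $\Psi(G_B)$ are classical generators of $\cK_Y$ and $\cB_Y$, and the enhanced functors $\Phi,\Psi$ give quasi-isomorphisms $A\simeq A':=\REnd(\Phi G_K)$ and $B\simeq B':=\REnd(\Psi G_B)$. By the same construction, $\cC_Y$ is Morita equivalent to the triangular algebra built from $A'$, $B'$ and $M'=\RHom_{\cK_Y}(\Phi G_K,\Gamma_Y\Psi G_B)$. The given enhanced identification $\Phi\Gamma_X\simeq\Gamma_Y\Psi$ yields quasi-isomorphisms
\[
M=\RHom_{\cK_X}(G_K,\Gamma_XG_B)\xrightarrow{\Phi}\RHom_{\cK_Y}(\Phi G_K,\Phi\Gamma_XG_B)\simeq\RHom_{\cK_Y}(\Phi G_K,\Gamma_Y\Psi G_B)=M'.
\]
These are compatible with the bimodule structures along $A\simeq A'$ and $B\simeq B'$, because both the functoriality of $\Phi$ and the natural isomorphism are enhanced.

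The triangular dg algebra depends, up to quasi-isomorphism, only on the quasi-isomorphism class of the triple $(A,B,M)$, with $M$ taken as a derived bimodule. Concretely, one replaces everything by cofibrant models and then uses a zigzag of bimodule quasi-isomorphisms over a zigzag of algebra quasi-isomorphisms. Each such step induces a quasi-isomorphism of triangular algebras, since a quasi-isomorphism is detected entrywise. This gives $\cC_X\simeq\cC_Y$.

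The main technical point is the bimodule compatibility. The enhanced identification must be a morphism of $A$--$B$ bimodules up to coherent homotopy, not merely an objectwise isomorphism. This is exactly what the hypothesis \emph{enhanced identification} supplies, once it is phrased as an isomorphism of dg functors $\cB_X\to\cK_Y$, equivalently of $\cB_X$--$\cK_Y^{\mathrm{op}}$ bimodules. To make this precise I would represent $\Gamma_X$, $\Gamma_Y$, $\Phi$ and $\Psi$ by bimodules via \cite[Theorem~8.15]{Toe07}, and then check that the composite bimodule $\Phi\circ\Gamma_X$ evaluated on the generators is $M'$ pulled back along $A\simeq A'$ and $B\simeq B'$.
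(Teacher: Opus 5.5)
Your proposal is correct and is essentially the paper's argument. Both proofs identify the mixed morphisms $\RHom_{\cC_X}(i_Xa,j_Xb)\simeq\RHom_{\cK_X}(a,\Gamma_Xb)$ as the gluing bimodule, use the enhanced identification $\Phi\Gamma_X\simeq\Gamma_Y\Psi$ to match these bimodules along $\Phi$ and $\Psi$, and conclude by the upper-triangular model of \cite[Section~3]{Orl16}; the only difference is that you carry this out on chosen generators and their triangular dg algebras via Morita theory, rather than directly on the categories.
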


\begin{proof}
The mixed morphisms form the derived bimodule $\cM_X(b,a)\simeq\RHom_{\cK_X}(a,\Gamma_Xb)$, naturally in both arguments and with all higher compositions, so the hypothesis identifies $\cM_X$ with $\cM_Y$ as derived bimodules along $\Phi$ and $\Psi$. The upper-triangular model then gives $\cC_X\simeq\cC_Y$.
\end{proof}

\begin{corollary}\label{cor:gluingobject}
If $\cC_X=\langle\cK_X,E_X\rangle$ with $E_X$ exceptional, then the pair $(\cK_X,i_X^!E_X)$ determines $\cC_X$. It suffices that an equivalence $\Phi\colon\cK_X\simeq\cK_Y$ carries $i_X^!E_X$ into the orbit of $i_Y^!E_Y$ under autoequivalences of $\cK_Y$ and shifts.
\end{corollary}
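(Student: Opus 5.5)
The plan is to apply Lemma~\ref{lem:gluing} with $\cB_X=\langle E_X\rangle$ and $\cB_Y=\langle E_Y\rangle$. Since $E_X$ is exceptional, $\cB_X\simeq\Perf(\CC)$, generated by the single object $E_X$ with $\REnd(E_X)=\CC$. An enhanced functor $\Perf(\CC)\to\cK_X$ is therefore determined up to isomorphism by the image of the generator: it is $V\mapsto V\otimes G$ for an object $G\in\cK_X$. The homotopy category of such functors is equivalent to $\cK_X$ itself, via the kernel, that is, the image of $\CC$. In particular $\Gamma_X=i_X^!j_X$ is the functor $V\mapsto V\otimes i_X^!E_X$. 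The gluing bimodule is
\[
\cM_X(b,a)=\RHom_{\cK_X}(a,\,b\otimes i_X^!E_X),
\]
whose only nontrivial structure is the left $\cK_X$-module structure of the representable functor $\RHom_{\cK_X}(-,i_X^!E_X)$. The right action of $\REnd(E_X)=\CC$ carries no higher operations, since we may take a minimal model with $\CC$ acting by scalars.

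First key step: suppose $\Phi\colon\cK_X\simeq\cK_Y$ satisfies $\Phi(i_X^!E_X)\simeq i_Y^!E_Y$, and let $\Psi\colon\cB_X\to\cB_Y$ be the equivalence sending $E_X\mapsto E_Y$. Then $\Phi\Gamma_X$ and $\Gamma_Y\Psi$ are two enhanced functors $\Perf(\CC)\to\cK_Y$ with isomorphic values on the generator. By the equivalence $\Fun(\Perf(\CC),\cK_Y)\simeq\cK_Y$ (Morita theory, \cite{Toe07}), they are isomorphic as enhanced functors. This is exactly the enhanced identification required in Lemma~\ref{lem:gluing}, which then yields $\cC_X\simeq\cC_Y$. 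So the pair $(\cK_X,i_X^!E_X)$ determines $\cC_X$.

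Second key step, the orbit version: suppose $\Phi(i_X^!E_X)\simeq\alpha(i_Y^!E_Y)[k]$ for an autoequivalence $\alpha$ of $\cK_Y$ and an integer $k$. Replace $\Phi$ by $\alpha^{-1}[-k]\circ\Phi$, which is again an equivalence $\cK_X\simeq\cK_Y$ and now sends $i_X^!E_X$ to $i_Y^!E_Y$ exactly. Here $\alpha$ must be enhanced, per the paper's conventions. Alternatively, keep $\Phi$ and absorb the shift into $\Psi$ by letting $\Psi$ send $E_X\mapsto E_Y[k]$, which is still an equivalence $\cB_X\simeq\cB_Y$. Either way we reduce to the first case.

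The main point needing care is the enhanced, rather than merely objectwise, identification of $\Phi\Gamma_X$ with $\Gamma_Y\Psi$. This rests on the statement that enhanced functors out of $\Perf(\CC)$ are classified by their value on $\CC$, up to quasi-isomorphism. That is where exceptionality of $E_X$ is used: because $\REnd(E_X)=\CC$, no $A_\infty$ module structure over a nontrivial algebra has to be matched.
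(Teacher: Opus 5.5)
Your proposal is correct and follows essentially the same route as the paper. Both identify $\langle E_X\rangle$ with $\Perf(\CC)$ and observe that the gluing functor, equivalently the bimodule $\RHom_{\cK_X}(-,i_X^!E_X)$ with trivial $\CC$-action, is determined by the image $i_X^!E_X$ of the generator, then apply Lemma~\ref{lem:gluing}. Both handle the orbit version by composing $\Phi$ with an autoequivalence of $\cK_Y$ and absorbing the shift into $\Psi$.
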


\begin{proof}
The subcategory generated by $E_X$ is $\Perf(\CC)$, and its gluing functor is determined by the image $i_X^!E_X$ of the generator: the gluing bimodule is the module $\RHom_{\cK_X}(-,i_X^!E_X)$, with the trivial action of $\REnd(E_X)=\CC$. In the orbit version, first compose $\Phi$ with an autoequivalence of $\cK_Y$, and send $E_X$ to the corresponding shift of $E_Y$.
\end{proof}
\section{The full kernel estimate}\label{sec:kernel}

Let $A$ be a dg algebra model of $\cA_X$ and $P$ the $A$-bimodule representing $\Rot_X^{-1}$. To a section $s\in H^0(X,H)$ we shall attach a \emph{divisor algebra}: an $A_\infty$ algebra $B=A\oplus P[1]$ which extends $A$ by $P[1]$, with connecting map $P\to A$ given by $s$ (Definition~\ref{def:ncdiv}). It is the noncommutative analogue of the structure sheaf $\cO_D=\Cone(\cO_X(-H)\xrightarrow{s}\cO_X)$ of the divisor $D$ of $s$, and its perfect modules form the residual category of $D$ (Section~\ref{sec:sections}). To show that $B$ is determined by $s$, we need control over the negative cohomology of the complexes of natural transformations $\id\to\Rot^m$ for all $m$, not only over their degree-zero cohomology. In this section we prove such a bound in the setting of \eqref{eq:setting}.

\subsection{Reduction to the diagonal}

\begin{lemma}\label{lem:diagonal}
The natural map $\cO_\Delta\to Q_0$ induces an isomorphism
\[
\RHom_{X\times X}(Q_0,Q_m)\simeq\RHom_{X\times X}(\cO_\Delta,Q_m).
\]
\end{lemma}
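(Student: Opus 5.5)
The plan is to identify the cone of $\cO_\Delta\to Q_0$ and show that it has no morphisms to $Q_m$. By Lemma~\ref{lem:mutation}, $Q_0$ is the kernel of $P=i\pi=L_0L_1\cdots L_{r-1}$, and the natural map $\cO_\Delta\to Q_0$ is the unit $\eta_P\colon\id\to P$. For every $F\in\Perf(X)$, the unit $F\to i\pi F$ sits in the triangle
\[
j j^! F\to F\to i\pi F,
\]
where $j$ is the inclusion of ${}^{\perp}\cA_X=\langle\cB_X,\dots,\cB_X((r-1)H)\rangle$ and $j^!$ is its right adjoint. On kernels this is a triangle
\[
C\to\cO_\Delta\to Q_0,
\]
where $C$ represents $jj^!$. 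By the standard description of projection kernels onto a subcategory generated by exceptional blocks, $C$ lies in the subcategory of $\Perf(X\times X)$ generated by the objects $(E_\alpha(aH))^\vee\boxtimes E_\beta(bH)$ with $0\le a,b\le r-1$; concretely, the kernel of $jj^!$ is built from the kernels $\bigoplus_\alpha E_\alpha^\vee(-aH)\boxtimes E_\alpha(aH)$ of the individual blocks by iterated cones. Applying $\RHom(-,Q_m)$ to this triangle, it suffices to prove $\RHom_{X\times X}(C,Q_m)=0$.

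For this, compute
\[
\RHom_{X\times X}(G^\vee\boxtimes E,Q_m)\simeq\RHom_X\bigl(E,\Phi_{Q_m}(G\otimes\omega_X)\bigr)[n]
\]
up to the usual Grothendieck duality along $p_2$. More precisely, I would use the adjunction $\RHom(p_1^*G^\vee\otimes p_2^*E,Q)\simeq\RHom(E,p_{2*}(p_1^*G\otimes Q))$, which needs no twist, since $\RHom(p_1^*G^\vee\otimes p_2^*E,Q)=\RHom(p_2^*E,p_1^*G\otimes Q)$ and $p_2^*$ is left adjoint to $p_{2*}$. This gives
\[
\RHom(G^\vee\boxtimes E,Q_m)\simeq\RHom_X\bigl(E,\Phi_{Q_m}(G)\bigr).
\]
Here $\Phi_{Q_m}=i\Rot^m\pi$, so $\Phi_{Q_m}(G)$ lies in $i(\cA_X)$, and $E=E_\beta(bH)$ lies in ${}^{\perp}\cA_X$. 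Hence $\RHom_X(E,iY)=0$ for every $Y\in\cA_X$ by semiorthogonality, and the whole expression vanishes. Since $C$ is generated by such box products, it follows that $\RHom(C,Q_m)=0$, which gives the claimed isomorphism.

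The main obstacle is justifying that the kernel $C$ of $jj^!$ lies in the thick subcategory generated by box products $G^\vee\boxtimes E$ with $E\in{}^{\perp}\cA_X$, including the correct ordering of the factors in the Fourier--Mukai convention. I would argue this by induction on the number of blocks, using the iterated-cone description of $L_0\cdots L_{r-1}$: each step $L_a$ is $\Cone(K_a\to\cO_\Delta)$ with $K_a=\bigoplus_\alpha E_\alpha^\vee(-aH)\boxtimes E_\alpha(aH)$, and convolution with such a kernel stays inside objects of the form $(\text{anything})\boxtimes E_\alpha(aH)$. The remaining verification is that the second factor is always one of the $E_\alpha(aH)$.
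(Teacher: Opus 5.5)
Your proposal is correct and is essentially the paper's argument: the paper also expands the mutations to see that the cone of $\cO_\Delta\to Q_0$ is built from box products $E_\beta^\vee(-aH)\boxtimes E_\alpha(bH)$, and it uses the same untwisted adjunction $\RHom_{X\times X}(G^\vee\boxtimes E,K)\simeq\RHom_X(E,\Phi_K(G))$, so your first, $\omega_X$-twisted formula should simply be dropped. The only difference is which slot gives the vanishing: the paper uses the input slot ($\pi$ kills $E_\beta(aH)$), while you use the output slot ($\Phi_{Q_m}(G)$ lies in $i(\cA_X)$, which is right orthogonal to $E_\alpha(bH)$); both work because both slots carry objects of the exceptional blocks.
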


\begin{proof}
Expanding the $r$ mutations in $Q_0$ by \eqref{eq:mutation}, the cone of $\cO_\Delta\to Q_0$ is built from shifts of external products $E_\beta^\vee(-aH)\boxtimes E_\alpha(bH)$ with $0\le a,b\le r-1$, tensored with finite-dimensional complexes. For any kernel $K$, adjunction gives
\[
\RHom_{X\times X}\bigl(E_\beta^\vee(-aH)\boxtimes E_\alpha(bH),K\bigr)\simeq\RHom_X\bigl(E_\alpha(bH),\Phi_K(E_\beta(aH))\bigr),
\]
and for $K=Q_m$ the right-hand side vanishes because $\pi$ kills $E_\beta(aH)$.
\end{proof}

\subsection{The mutation cube}
By \eqref{eq:mutation}, $Q_m$ is obtained by expanding $m+r$ two-term mutation kernels and tensoring the input by $H^{\otimes m}$; its empty-subset term is $\Delta_*H^{\otimes m}$. A nonempty subset $J=\{a_1<\dots<a_p\}\subset\{0,\dots,m+r-1\}$ with labels $\alpha=(\alpha_1,\dots,\alpha_p)$ contributes
\begin{equation}\label{eq:cube}
C_{J,\alpha}\otimes\bigl(E^\vee_{\alpha_p}((m-a_p)H)\boxtimes E_{\alpha_1}(a_1H)\bigr)[p],\qquad C_{J,\alpha}=\bigotimes_{j=1}^{p-1}\RHom_X\bigl(E_{\alpha_j}(a_jH),E_{\alpha_{j+1}}(a_{j+1}H)\bigr).
\end{equation}
Indeed, the rightmost selected mutation evaluates against $E_{\alpha_p}(a_pH)$ and produces the input factor, each further selected mutation contributes the morphism complex between consecutive selected objects, and each selected evaluation sits in degree $-1$ of its cone. For line bundles, $C_{J,\alpha}=\bigotimes_jH^0(X,\cO_X(a_{j+1}-a_j))$ by \eqref{eq:kodaira}.

\begin{proposition}[Full kernel estimate]\label{prop:kernel}
For every $m\ge0$ there is a natural triangle
\begin{equation}\label{eq:kerneltriangle}
\RHom_{X\times X}\bigl(\cO_\Delta,\Delta_*H^{\otimes m}\bigr)\longrightarrow\cH_m\longrightarrow W_m,\qquad W_m\in D^{\ge n-r-m}(\CC).
\end{equation}
Consequently
\begin{equation}\label{eq:bound}
\cH_m\in D^{\ge b_m}(\CC),\qquad b_m=\min\{0,n-r-m\},
\end{equation}
and the geometric map $\theta_m\colon H^0(X,H^{\otimes m})\to H^0(\cH_m)$ is an isomorphism for $m\le n-r-1$ and injective for $m=n-r$.
\end{proposition}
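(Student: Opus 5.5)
By \eqref{eq:Hm} and Lemma~\ref{lem:diagonal} we have $\cH_m\simeq\RHom_{X\times X}(\cO_\Delta,Q_m)$. We compute the right-hand side by filtering $Q_m$ along the mutation cube \eqref{eq:cube}. The empty-subset term $\Delta_*H^{\otimes m}$ is a subobject, and the quotient is filtered by the terms $C_{J,\alpha}\otimes(E^\vee_{\alpha_p}((m-a_p)H)\boxtimes E_{\alpha_1}(a_1H))[p]$ with $J\ne\emptyset$. Applying $\RHom(\cO_\Delta,-)$ gives the triangle \eqref{eq:kerneltriangle}, with $W_m$ an iterated extension of the complexes
\[
C_{J,\alpha}\otimes\RHom_{X\times X}\bigl(\cO_\Delta,E^\vee_{\alpha_p}((m-a_p)H)\boxtimes E_{\alpha_1}(a_1H)\bigr)[p].
\]
The naturality of the first map comes from the coaugmentation $u_m$.

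To bound each term, we use Grothendieck duality along the diagonal: $\RHom(\cO_\Delta,K)\simeq\RG(X,\Delta^!K)$ with $\Delta^!K=\Delta^*K\otimes\omega_X^{-1}[-n]$. This gives
\[
\RHom\bigl(\cO_\Delta,E^\vee_{\alpha_p}((m-a_p)H)\boxtimes E_{\alpha_1}(a_1H)\bigr)\simeq\RHom_X\bigl(E_{\alpha_p},E_{\alpha_1}((m-a_p+a_1+r)H)\bigr)[-n].
\]
The shift is $[p-n]$ overall, so each term lies in degrees at least $n-p$ plus the lowest degree of the $\RHom$ and of $C_{J,\alpha}$. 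The key step is to show that these $\RHom$ complexes have no negative cohomology, so that they are sheaf Ext groups in nonnegative degrees. For the line bundle case this follows from \eqref{eq:kodaira}, since $m-a_p+a_1+r>-r$. In general it holds because they are Ext groups between coherent sheaves. Likewise $C_{J,\alpha}$ sits in degrees $\ge0$. Thus the term lies in $D^{\ge n-p}$.

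This crude bound only gives $p\le m+r$, which is exactly $W_m\in D^{\ge n-r-m}$. I expect the main obstacle to be the naive bound failing when the Ext complexes between sheaves are not concentrated in nonnegative degree, which they automatically are. A subtler point is whether a finer cancellation is needed. It is not, since $p\le m+r$ gives precisely $n-r-m$. So the bound follows from counting the cube length, together with Kodaira vanishing used only in Lemma~\ref{lem:diagonal} and in the identification of the terms.

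For the consequences, $\RHom(\cO_\Delta,\Delta_*H^{\otimes m})\simeq\bigoplus_j\RG(X,\wedge^jT_X\otimes H^{\otimes m})[-j]$ by HKR, which lies in $D^{\ge0}$ with $H^0=H^0(X,H^{\otimes m})$. The long exact sequence then gives $\cH_m\in D^{\ge\min\{0,n-r-m\}}$. It also shows that $H^0$ of the first term maps isomorphically to $H^0(\cH_m)$ when $H^{-1}(W_m)=H^0(W_m)=0$, that is, when $n-r-m\ge1$, and injectively when $H^{-1}(W_m)=0$, that is, when $m\le n-r$. It remains to check that this map is $\theta_m$, which holds by construction via $u_m\circ s$.
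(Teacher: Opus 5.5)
Your proposal is correct and follows the paper's proof essentially step for step: you reduce to $\RHom(\cO_\Delta,Q_m)$, filter the cone of the coaugmentation by the mutation cube, dualize along the diagonal so that each term lies in degrees $\ge n-p\ge n-r-m$, and conclude with the long exact sequence. The only difference is that you invoke HKR for the first term, while the paper simply notes that $\RHom$ between the sheaves $\cO_\Delta$ and $\Delta_*H^{\otimes m}$ has no negative cohomology and has degree-zero part $H^0(X,H^{\otimes m})$; both arguments work.
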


\begin{proof}
By \eqref{eq:Hm} and Lemma~\ref{lem:diagonal}, $\cH_m\simeq\RHom(\cO_\Delta,Q_m)$. The cone of the map $\Delta_*H^{\otimes m}\to Q_m$ given by the coaugmentations of the mutations has a finite filtration with subquotients \eqref{eq:cube}. Since the normal bundle of the diagonal has determinant $\omega_X^{-1}$, duality along the diagonal sends \eqref{eq:cube} to
\begin{equation}\label{eq:dualcube}
C_{J,\alpha}\otimes\RHom_X\bigl(E_{\alpha_p},E_{\alpha_1}((m-a_p+a_1+r)H)\bigr)[p-n].
\end{equation}
All factors are morphism complexes between coherent sheaves and have no negative cohomology, so \eqref{eq:dualcube} has no cohomology below $n-p\ge n-r-m$, since $p\le m+r$. This gives \eqref{eq:kerneltriangle}. Since $\cO_\Delta$ and $\Delta_*H^{\otimes m}$ are sheaves, their derived morphisms have no negative cohomology and their degree-zero morphisms are $H^0(X,H^{\otimes m})$; the induced map to $H^0(\cH_m)$ is $\theta_m$. The remaining assertions follow from the long exact sequence
\[
H^{-1}(W_m)\longrightarrow H^0(X,H^{\otimes m})\xrightarrow{\ \theta_m\ }H^0(\cH_m)\longrightarrow H^0(W_m).
\]
For line bundles, \eqref{eq:dualcube} is $C_{J,\alpha}\otimes\RG(X,\cO_X(m-a_p+a_1+r))[p-n]$, which is even concentrated in degree $n-p$ by \eqref{eq:kodaira}, since $m-a_p+a_1+r\ge1$.
\end{proof}

\begin{corollary}\label{cor:theta1}
If $n-r\ge2$, then
\[
\theta_1\colon H^0(X,H)\xrightarrow{\ \sim\ }\Hom(\id_{\cA_X},\Rot_X),\qquad\HH^0(\cA_X)=\Hom(\id_{\cA_X},\id_{\cA_X})=\CC.
\]
If $n-r\ge1$, then $\HH^0(\cA_X)=\CC$, and for a fixed equivalence $\Phi$ any two isomorphisms $\Phi\circ\Rot_X\simeq\Rot_Y\circ\Phi$, which we call \emph{rotation intertwiners}, induce the same projective identification of the spaces $\Hom(\id,\Rot)$.
\end{corollary}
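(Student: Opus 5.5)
The plan is to read everything off Proposition~\ref{prop:kernel}, applied with $m=1$ and $m=0$.

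\emph{The map $\theta_1$.} If $n-r\ge2$, then $1\le n-r-1$, and Proposition~\ref{prop:kernel} says directly that $\theta_1\colon H^0(X,H)\to H^0(\cH_1)=\Hom(\id_{\cA_X},\Rot_X)$ is an isomorphism.

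\emph{The space $\HH^0(\cA_X)$.} For $m=0$ we have $\Rot^0=\id$, so $H^0(\cH_0)=\Hom(\id_{\cA_X},\id_{\cA_X})$. Here $H^{\otimes 0}=\cO_X$, and $\RHom_{X\times X}(\cO_\Delta,\cO_\Delta)$ has degree-zero part $H^0(X,\cO_X)=\CC$ and no negative cohomology. In the triangle \eqref{eq:kerneltriangle} we have $W_0\in D^{\ge n-r}(\CC)$. If $n-r\ge2$, both $H^{-1}(W_0)$ and $H^0(W_0)$ vanish, so $\theta_0$ is an isomorphism. If only $n-r\ge1$, then $H^{-1}(W_0)=0$ and $H^0(W_0)=0$ still hold, because $0<n-r$. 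So in either case $\theta_0\colon\CC\to\HH^0(\cA_X)$ is bijective, and it sends $1$ to the identity transformation.

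\emph{Independence of the intertwiner.} Let $\Phi$ be fixed and let $\rho,\rho'\colon\Phi\circ\Rot_X\simeq\Rot_Y\circ\Phi$ be two rotation intertwiners. Then $\rho'\circ\rho^{-1}$ is an automorphism of the functor $\Rot_Y\circ\Phi$. Conjugating by the equivalences $\Phi$ and $\Rot_Y$ turns it into an invertible element of $\Hom(\id_{\cA_Y},\id_{\cA_Y})=\HH^0(\cA_Y)=\CC$, where we apply the previous step to $Y$. So $\rho'=\lambda\rho$ for some $\lambda\in\CC^\times$. The induced map $\Hom(\id_{\cA_X},\Rot_X)\to\Hom(\id_{\cA_Y},\Rot_Y)$ sends $t\mapsto\rho\circ\Phi(t)$, with the identification $\Phi\circ\id\simeq\id\circ\Phi$ understood. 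Replacing $\rho$ by $\rho'$ multiplies this map by $\lambda$, so both intertwiners induce the same projective identification.

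The only delicate point is that all of this must happen in the enhanced functor category, so that $\HH^0$ really is the degree-zero cohomology of $\cH_0$. That is built into the conventions and into \eqref{eq:Hm}, and I expect no real obstacle beyond checking the degree bookkeeping above. The hypothesis $Y$ satisfying $n-r\ge1$ is implicit in the same setting.
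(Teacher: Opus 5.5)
Your proposal is correct and follows the paper's own argument: Proposition~\ref{prop:kernel} with $m=1$ and $m=0$ gives both isomorphisms, and two intertwiners differ by an automorphism of an equivalence, hence by an invertible element of $\HH^0=\CC$. You do not need the extra assumption that $Y$ satisfies $n-r\ge1$. The equivalence $\Phi$ already identifies $\HH^0(\cA_Y)$ with $\HH^0(\cA_X)=\CC$, and this is why the paper can work on the $X$ side by composing with $\Rot^{-1}$.
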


\begin{proof}
Apply Proposition~\ref{prop:kernel} with $m=1$ and $m=0$. Composition with $\Rot^{-1}$ identifies the automorphisms of $\Rot$ with the invertible elements of $\HH^0(\cA_X)=\CC$, so two rotation intertwiners differ by a scalar.
\end{proof}

\section{Noncommutative divisors}\label{sec:ncdiv}

A natural transformation $\id\to\Rot$ gives a bimodule map, but the category of a divisor depends on more: on an algebra structure, including higher products. In this section we define the relevant algebras, following Seidel \cite{Sei17}, and prove that under the kernel estimate they are determined by their sections.

\subsection{Definition}
We use cohomological grading and strictly unital $A_\infty$ algebras with normalized cochains \cite[Sections~3.1 and~3.6]{Kel01}. Let $A$ be a proper dg algebra and $P$ an invertible $A$-bimodule, and put
\begin{equation}\label{eq:Em}
E_m=\RHom_{A^e}\bigl(P^{\otimes_Am},A\bigr),\qquad A^e=A^{\mathrm{op}}\otimes A.
\end{equation}

\begin{definition}[{\cite[Definition~2.9]{Sei17}}]\label{def:ncdiv}
A \emph{noncommutative divisor} on $(A,P)$ is an $A_\infty$ structure on the graded vector space $B=A\oplus P[1]$ that restricts to the given algebra structure on $A$ and induces the given bimodule structure on the quotient $B/A=P[1]$. The inclusion and quotient give a triangle of $A$-bimodules
\begin{equation}\label{eq:divtriangle}
P\xrightarrow{\ t\ }A\longrightarrow B\longrightarrow P[1],
\end{equation}
and the class $[t]\in H^0(E_1)$ is the \emph{section} of $B$. We also call $B$ a \emph{divisor algebra}.
\end{definition}

\subsection{The weight filtration of the deformation complex}
Start from the trivial extension $A\oplus P[1]$, in which all operations with at least two inputs from $P[1]$ vanish. Give $A$ weight $0$ and $P[1]$ weight $-1$; every additional operation of a divisor has positive weight. Let $\mathfrak g_m$ be the weight-$m$ part of the normalized shifted Hochschild cochain complex, $m>0$, with differential given by the commutator with the trivial extension, and
\begin{equation}\label{eq:weight}
\mathfrak g=\prod_{m\ge1}\mathfrak g_m,\qquad F^q\mathfrak g=\prod_{m\ge q}\mathfrak g_m,\qquad[F^q\mathfrak g,F^j\mathfrak g]\subset F^{q+j}\mathfrak g.
\end{equation}
A divisor structure is $b_0+\beta$ with $\beta\in\mathfrak g^1$ satisfying $d\beta+\frac12[\beta,\beta]=0$, which is the equation $(b_0+\beta)^2=0$; relative changes of $A_\infty$ coordinates act by positive-weight gauge transformations, which preserve the subalgebra and the quotient bimodule. By \cite[(2.61)--(2.62)]{Sei17} there are exact sequences
\begin{equation}\label{eq:seidel}
H^{k-2m}(E_m)\longrightarrow H^k(\mathfrak g_m)\longrightarrow H^{k-2m+1}(E_m).
\end{equation}
For $m=1$ and $k=1$, the second map sends a first-order divisor structure to its section. To see the shifts, note that a weight-$m$ operation has either $m$ inputs from $P[1]$ and output in $A$, or $m+1$ such inputs and output in $P[1]$; in the bar construction each such input is $P[2]$, and the two resulting complexes are $\RHom_{A^e}(P^{\otimes(m+1)},P)[-2m]$ and $E_m[1-2m]$. Tensoring by $P^{-1}$ identifies the first with $E_m[-2m]$.

\subsection{Uniqueness for a fixed section}

\begin{lemma}\label{lem:uniqueness}
Assume
\begin{equation}\label{eq:vanishing}
H^{-1}(E_1)=0,\qquad H^{1-2m}(E_m)=H^{2-2m}(E_m)=0\quad(m\ge2).
\end{equation}
Then two noncommutative divisors on $(A,P)$ with the same section are isomorphic by a relative $A_\infty$ isomorphism that is the identity on $A$ and on $P[1]$.
\end{lemma}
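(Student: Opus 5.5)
The argument is the standard weight-by-weight gauge argument for Maurer--Cartan elements in the filtered dg Lie algebra $\mathfrak g$ of \eqref{eq:weight}. Write the two divisors as $b_0+\beta$ and $b_0+\beta'$ with $\beta,\beta'\in\mathfrak g^1$ Maurer--Cartan. We construct inductively gauge transformations $\exp(\gamma_q)$ with $\gamma_q\in F^q\mathfrak g^0$ such that, after applying $\exp(\gamma_{q-1})\cdots\exp(\gamma_1)$ to $\beta'$, the two elements agree modulo $F^{q}\mathfrak g$. The infinite composite converges because $\mathfrak g$ is complete for the weight filtration: each $\gamma_q$ only changes components of weight $\ge q$. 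The limit is a relative $A_\infty$ isomorphism. Since it is a positive-weight gauge transformation, its linear part on $A$ and on $P[1]$ is the identity.

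\textbf{Weight one.} The weight-one components $\beta_1,\beta_1'$ are cocycles in $\mathfrak g_1^1$, since brackets have weight $\ge2$. By \eqref{eq:seidel} with $m=1$, $k=1$, we have the exact sequence
\[
H^{-1}(E_1)\to H^1(\mathfrak g_1)\to H^0(E_1),
\]
where the second map is the section. The two sections agree and $H^{-1}(E_1)=0$, so $[\beta_1]=[\beta_1']$. Hence $\beta_1'-\beta_1=d\gamma_1$ for some $\gamma_1\in\mathfrak g_1^0$. Gauging by $\gamma_1$ makes the two elements agree modulo $F^2\mathfrak g$.

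\textbf{Inductive step, $q\ge2$.} Suppose $\beta\equiv\beta'$ modulo $F^q$. The Maurer--Cartan equation in weight $q$ reads
\[
d\beta_q+\tfrac12\sum_{i+j=q}[\beta_i,\beta_j]=0,
\]
and the quadratic term involves only weights $<q$, where the two elements agree. Therefore $d(\beta_q'-\beta_q)=0$, so $\beta_q'-\beta_q$ defines a class in $H^1(\mathfrak g_q)$. By \eqref{eq:seidel} with $m=q$, $k=1$, this group sits between $H^{1-2q}(E_q)$ and $H^{2-2q}(E_q)$. Both vanish by \eqref{eq:vanishing}, so $H^1(\mathfrak g_q)=0$ and $\beta_q'-\beta_q=d\gamma_q$ with $\gamma_q\in\mathfrak g_q^0$. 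Gauging by $\exp(\gamma_q)$ changes weight $q$ by $-d\gamma_q$, up to sign convention, and changes only higher weights otherwise. This completes the induction.

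\textbf{Expected obstacles.} The main point to check carefully is the identification of the gauge action with relative $A_\infty$ isomorphisms that are the identity on $A$ and on $B/A$. We must also confirm that, to leading order, the action on weight-$q$ components is exactly $\beta_q\mapsto\beta_q-d\gamma_q$. Here we follow Seidel \cite[Section~2]{Sei17}, noting that normalized cochains of positive weight never change $\mu_A$ or the bimodule structure. A second, milder point is convergence of the infinite composite of gauge transformations. On each fixed tensor length only finitely many weights contribute, because a weight-$m$ operation has at least $m$ inputs from $P[1]$. Hence the composite stabilizes arity by arity and defines an honest $A_\infty$ morphism.
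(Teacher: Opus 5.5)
Your proposal is correct and follows the paper's proof essentially step by step. The weight-one step uses $H^{-1}(E_1)=0$ and the equality of sections to make $\beta_1-\beta_1'$ exact. In higher weights, $H^1(\mathfrak g_q)=0$ follows from \eqref{eq:seidel} and the two remaining vanishings, and the corrections compose arity by arity into a positive-weight, hence relative, $A_\infty$ isomorphism.
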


\begin{proof}
Write the two structures as $\beta=\sum_m\beta_m$ and $\gamma=\sum_m\gamma_m$, with $\beta_m,\gamma_m\in\mathfrak g_m^1$, and argue by induction on the weight. By \eqref{eq:seidel} and the first vanishing, $H^1(\mathfrak g_1)\to H^0(E_1)$ is injective. Since the sections agree, $\beta_1-\gamma_1=d\xi_1$ for some $\xi_1\in\mathfrak g_1^0$, and conjugating by $\exp(\xi_1)$ makes the weight-one terms equal. Suppose the structures agree in all weights below $m\ge2$. The quadratic terms of the weight-$m$ Maurer--Cartan equations then coincide, so $\beta_m-\gamma_m$ is a cocycle; by the other two vanishings and \eqref{eq:seidel}, $H^1(\mathfrak g_m)=0$, so $\beta_m-\gamma_m=d\xi_m$, and conjugating by $\exp(\xi_m)$ removes the difference without changing lower weights. In each arity only finitely many of these corrections contribute, since an operation of arity $a$ has weight at most $a$, so the successive transformations compose to an $A_\infty$ isomorphism. It is the identity on $A$ and on $P[1]$ because all corrections have positive weight.
\end{proof}

\subsection{Transfer to minimal models}
A geometric construction gives a divisor algebra on a particular dg model, while an equivalence of categories identifies models only up to quasi-isomorphism. The following relative transfer places two divisor structures on a common pair; see \cite[Sections~3.3 and~8.1]{Kel01} and \cite[Main Perturbation Lemma~2.4]{Cra04}.

\begin{lemma}\label{lem:transfer}
A proper strictly unital noncommutative divisor $B=A\oplus P[1]$ is $A_\infty$ quasi-isomorphic, relative to its base and quotient, to a divisor on minimal models of $A$ and $P$, and its section is carried to the derived image of the original section. If two base algebras and quotient bimodules are identified by quasi-isomorphisms, their divisor structures can be placed on one common minimal pair.
\end{lemma}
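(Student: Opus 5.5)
The plan is to apply homological perturbation to the full $A_\infty$ algebra $B$, with the perturbation data chosen compatibly with the filtration $A\subset B$. Since $B$ is proper and strictly unital, choose homotopy retraction data for $A$ onto $H(A)$ and for the quotient $P[1]$ onto $H(P)[1]$. Explicitly, fix splittings $A=H(A)\oplus\operatorname{im}d\oplus C_A$ and similarly for $P$, with inclusion $\iota$, projection $p$ and homotopy $h$. Then assemble these into retraction data for $B=A\oplus P[1]$ that are block diagonal with respect to the decomposition $A\oplus P[1]$.

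The differential of $B$ is not block diagonal: it has the connecting component $t\colon P\to A$. So first treat $d_B=d_A+d_P+t$ as a perturbation of $d_A\oplus d_P$. The perturbed data remain upper triangular, because $t$ maps the quotient into the subobject, so the perturbation series preserves the subspace $A$. Then apply the Main Perturbation Lemma \cite[Lemma~2.4]{Cra04}, or the tree formulas of \cite[Section~3.3]{Kel01}, to the multiplications $m_k$ of $B$.

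The key check is that every tree built from operations preserving the filtration also preserves it. Trees with all inputs in $A$ use only $h_A$, $\iota_A$ and $p_A$, so they give the usual transferred structure on $H(A)$, i.e.\ a minimal model of $A$. Trees with exactly one input in $P[1]$ give the transferred bimodule structure on $H(P)$. The remaining operations have positive weight in the sense of \eqref{eq:weight}. Hence the transferred structure on $H(A)\oplus H(P)[1]$ is a noncommutative divisor in the sense of Definition~\ref{def:ncdiv}, and the $A_\infty$ quasi-isomorphism $\iota_\infty$ is relative, meaning it is the identity modulo positive weight on base and quotient. Strict unitality is preserved if $h$ is normalized, i.e.\ kills the unit, which we can arrange.

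To identify the section, note that the weight-one, arity-one part of the transferred structure is $p\,t\,\iota$ corrected by $h$-terms. Its class in $H^0(E_1)$ is the image of $[t]$ under the identification of $E_1$ computed with the minimal models. Equivalently, the triangle \eqref{eq:divtriangle} is carried to the corresponding triangle of transferred bimodules by $\iota_\infty$.

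For the second assertion, take quasi-isomorphisms $A\simeq A'$ and $P\simeq P'$, the latter compatible with the former. Minimal models are unique up to $A_\infty$ isomorphism, so transfer both divisors to minimal pairs. Then conjugate one of them by the induced $A_\infty$ isomorphism of minimal algebras and bimodules, extended by zero in positive weight. This places both structures on one common pair $(H(A),H(P))$ and carries sections to sections.

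I expect the main obstacle to be the compatibility check: the homotopy must respect the triangular structure, and the bimodule transfer must agree with a chosen identification $H(P)\simeq H(P')$ as $A_\infty$ bimodules, not merely as graded spaces. I would handle this by performing the bimodule transfer relative to the already transferred algebra, as in \cite[Section~8.1]{Kel01}.
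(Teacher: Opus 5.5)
Your proposal is correct and follows the paper's proof essentially step for step. You use a block-diagonal normalized contraction of $A\oplus P[1]$, perturb by the connecting map $t$, and transfer by tree formulas with the weight argument identifying the weight-zero part as the minimal algebra and bimodule. You then read off the section from the transferred triangle and, for the comparison, conjugate the positive-weight operations by the isomorphisms of minimal models; one small sharpening is that since $t$ kills $A$ and $h$ preserves the summands, $tht=0$, so the perturbed differential is exactly $p\,t\,\iota$ with no $h$-corrections, as the paper notes.
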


\begin{proof}
Write the differential of $B$ as $d_0+\delta$, where $d_0$ is the sum of the differentials of $A$ and $P[1]$ and $\delta\colon P[1]\to A$. Choose contractions of $A$ and $P[1]$ onto their cohomology, unital on $A$, and take their direct sum, with inclusion $\iota$, projection $p$ and homotopy $h$. These preserve the two summands, and $\delta$ vanishes on $A$ and raises the weight, so $\delta h\delta=0$ and the perturbed differential is $p\delta\iota$. Transfer the higher operations by the tree formulas. Operations with all inputs in $A$, or with one input in $P[1]$ and output in $P[1]$, are those of the transferred algebra and bimodule; every other tree has a vertex of positive weight, so the result is again a divisor on the minimal pair. Normalized transfer preserves strict units, and the transferred triangle is the image of \eqref{eq:divtriangle}, so the section is carried to its derived image. For the comparison, a quasi-isomorphism of base algebras induces an isomorphism of minimal models, and similarly for the quotient bimodules; conjugating the positive-weight operations places both divisor structures on the common pair.
\end{proof}

\subsection{Application to the inverse rotation}
In the setting \eqref{eq:setting}, choose a classical generator $G$ of $\cA_X$, put $A=\RHom_X(G,G)$, and let $P$ represent $\Rot_X^{-1}$ under $\cA_X\simeq\Perf(A)$. Tensoring both arguments by the inverse of $P^{\otimes m}$ gives
\begin{equation}\label{eq:EmHm}
E_m\simeq\RHom_{\Fun(\cA_X,\cA_X)}\bigl(\id,\Rot_X^m\bigr)=\cH_m.
\end{equation}

\begin{proposition}\label{prop:unique}
If $n-r\ge1$, noncommutative divisors on $(A,P)$ are determined, up to relative $A_\infty$ isomorphism, by their sections.
\end{proposition}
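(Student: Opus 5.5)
The plan is to check the vanishing hypotheses \eqref{eq:vanishing} of Lemma~\ref{lem:uniqueness} and then apply that lemma. By \eqref{eq:EmHm} we have $E_m\simeq\cH_m$. Proposition~\ref{prop:kernel} gives $\cH_m\in D^{\ge b_m}(\CC)$ with $b_m=\min\{0,n-r-m\}$. Everything therefore reduces to comparing the degrees $-1$ (for $m=1$) and $1-2m$, $2-2m$ (for $m\ge2$) with $b_m$.

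For $m=1$ the assumption $n-r\ge1$ gives $b_1=\min\{0,n-r-1\}=0$. Hence $H^{-1}(E_1)=0$.

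For $m\ge2$ we need $2-2m<b_m$, which then also covers $1-2m$. There are two cases:
\begin{itemize}
\item If $m\le n-r$, then $b_m=0$, and $2-2m\le-2<0$.
\item If $m>n-r$, then $b_m=n-r-m\ge1-m$. The inequality $2-2m<1-m$ is equivalent to $m>1$, which holds.
\end{itemize}
So both groups vanish for all $m\ge2$. This is the point stressed in the introduction: the bound $b_m$ drops by one per step in $m$, while the obstruction degrees drop by two.

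Lemma~\ref{lem:uniqueness} then says that two divisor structures on the same pair $(A,P)$ with the same section are relatively isomorphic. The one genuine subtlety is the bookkeeping needed to compare two divisors that are only given up to quasi-isomorphism of their base and quotient. For this I would first apply Lemma~\ref{lem:transfer}. It puts both structures on a common minimal pair and carries each section to its derived image, so the two sections still agree in $H^0(E_1)$. The cohomological vanishing is a Morita-invariant statement about $E_m$, so it holds on the minimal pair as well, and Lemma~\ref{lem:uniqueness} applies there.

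The main point to be careful about is that the inductive gauge argument in Lemma~\ref{lem:uniqueness} uses properness and finiteness in each arity. Both are preserved by the minimal transfer, since $A$ is proper because $\cA_X$ is a component of $\Db(X)$.
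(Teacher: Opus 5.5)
Your proposal is correct and follows the paper's proof: you obtain the vanishing hypotheses of Lemma~\ref{lem:uniqueness} from Proposition~\ref{prop:kernel} via \eqref{eq:EmHm}, and your case split on $m\le n-r$ versus $m>n-r$ just unpacks the paper's single inequality $2-2m<n-r-m\iff m+n-r>2$. The transfer to a common minimal pair is not needed here, since both divisors already live on the same $(A,P)$; the paper uses Lemma~\ref{lem:transfer} only in the remark after the proposition, to compare divisors across an equivalence.
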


\begin{proof}
By Proposition~\ref{prop:kernel} and \eqref{eq:EmHm}, $E_m\in D^{\ge b_m}$ with $b_m=\min\{0,n-r-m\}$. For $m=1$, $b_1=0$, so $H^{-1}(E_1)=0$. For $m\ge2$, both $1-2m$ and $2-2m$ are smaller than $0$ and than $n-r-m$, since $m+n-r>2$. Lemma~\ref{lem:uniqueness} applies.
\end{proof}

The statement does not depend on the choice of generator or of algebra model. Given an equivalence $\Phi\colon\cA_X\simeq\cA_Y$ commuting with the rotation functors, choose $G$ on one side and $\Phi(G)$ on the other; the endomorphism algebras are then quasi-isomorphic, the inverse-rotation bimodules are identified by the rotation intertwiner, and Lemma~\ref{lem:transfer} places divisor algebras on both sides on a common minimal pair, carrying their sections to the same derived image.

\section{Residual categories of hyperplane sections}\label{sec:sections}

Throughout this section we work in the setting of \eqref{eq:setting}, with a classical generator $G$ of $\cA_X$, $A=\RHom_X(G,G)$, and $P$ the bimodule of $\Rot_X^{-1}$ as in Section~\ref{sec:ncdiv}. Let $0\ne s\in H^0(X,H)$, and let $j\colon D=D_s\hookrightarrow X$ be its zero scheme, an effective Cartier divisor, possibly singular or nonreduced. We construct the divisor algebra of $D$ and show that its perfect modules form the residual category of $D$.

\subsection{The geometric divisor algebra}
Represent $G$ by a bounded complex of vector bundles. The Koszul dg algebra $K_s=[\cO_X(-1)\xrightarrow{s}\cO_X]$, in degrees $-1$ and $0$ with square-zero degree-$(-1)$ part, maps multiplicatively and quasi-isomorphically to $j_*\cO_D$. Using total \v{C}ech complexes for a finite ordered affine cover, with cup product and composition, we obtain a dg algebra
\begin{equation}\label{eq:Bs}
B_s=\check C^\bullet\bigl(\mathcal End(G)\otimes K_s\bigr)\simeq\RHom_D(Lj^*G,Lj^*G),
\end{equation}
which as a graded vector space is $A\oplus M[1]$ with $M=\RHom_X(G,G(-1))$, and which fits into the bimodule triangle
\begin{equation}\label{eq:Bstriangle}
M\xrightarrow{\ \cdot s\ }A\longrightarrow B_s;
\end{equation}
the multiplicative enhancement is \cite[Section~3]{LS16}.

\begin{lemma}\label{lem:geomdiv}
The bimodule $M$ represents $\Rot_X^{-1}$, and under $\Hom_{A^e}(P,A)\simeq\Hom(\id_{\cA_X},\Rot_X)$ the section of $B_s$ corresponds to $\theta_1(s)$. Thus $B_s$ is a noncommutative divisor on $(A,P)$ with section $\theta_1(s)$.
\end{lemma}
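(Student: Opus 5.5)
We prove the two assertions of the lemma separately: first that $M$ represents $\Rot_X^{-1}$, then that the section of $B_s$ corresponds to $\theta_1(s)$. The divisor-algebra statement then follows formally.

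\emph{Step 1: $M$ represents $\Rot_X^{-1}$.} Under $\cA_X\simeq\Perf(A)$, given by $F\mapsto\RHom_X(G,F)$, a bimodule $N$ represents the functor $-\otimes_AN$. By Remark~\ref{rem:rotformula}, $\Rot_X^{-1}\simeq i^!T^{-1}i$. The bimodule of $i^!T^{-1}i$ has value $\RHom_{\cA_X}(G,i^!(G(-1)))\simeq\RHom_X(iG,G(-1))=M$ at the generator, by adjunction. Its left and right $A$-actions are composition, which is the bimodule structure that $M$ carries inside the \v{C}ech model. To make this an enhanced identification, we realize both sides on the same \v{C}ech dg model of $\mathcal End(G)\otimes\cO_X(-1)$. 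The adjunction isomorphism is then compatible with composition on both sides on the nose, so $M\simeq P$ as $A$-bimodules. We fix this identification once and for all, so that $P$ is replaced by $M$.

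\emph{Step 2: identifying the section.} The connecting map of \eqref{eq:Bstriangle} is the chain map $M\to A$ given by multiplication by $s$, i.e.\ postcomposition with $s\colon G(-1)\to G$. Under \eqref{eq:Hm}--\eqref{eq:EmHm}, with $m=1$, a bimodule map $P\to A$ corresponds to a natural transformation $\Rot^{-1}\to\id$, equivalently $\id\to\Rot$. We have to check that the transformation induced by $\cdot s\colon i^!T^{-1}i\to i^!i=\id$ is the mate of $\pi(s)\colon\pi i\to\pi Ti$. Both come from the single map $s\colon T^{-1}\to\id$ of endofunctors of $\Perf(X)$: one by whiskering with $i^!$ and $i$, the other by whiskering $s\colon\id\to T$ with $\pi$ and $i$. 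These are exchanged under the adjunction $\Rot\dashv i^!T^{-1}i$ of Remark~\ref{rem:rotformula}. That adjunction is built from the units and counits of $\pi\dashv i\dashv i^!$ and $T\dashv T^{-1}$, and those are natural with respect to $s$. Since $\theta_1(s)=\pi(s)$ by definition, the section of $B_s$ is $\theta_1(s)$, at least up to a nonzero scalar.

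I expect Step 2 to be the main obstacle. Two things need care. The first is the sign and scalar normalization of the adjunction mate; this is harmless, since only the class in $\PP\Hom(\id,\Rot)$ matters later, and a nonzero scalar can be absorbed. The second is doing the identification at the enhanced level rather than merely on homotopy categories. For the second point, Corollary~\ref{cor:theta1} gives $\Hom(\id,\Rot)\simeq H^0(X,H)$, so it suffices to compare degree-zero cohomology classes, and no higher coherence data enter.

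\emph{Step 3: conclusion.} $B_s$ is a dg algebra containing $A$ as a subalgebra, with quotient the bimodule $M[1]\simeq P[1]$. Indeed, the product of two degree-$(-1)$ Koszul terms vanishes, so the induced action on the quotient is composition. Hence $B_s$ fits Definition~\ref{def:ncdiv}, with triangle \eqref{eq:Bstriangle} matching \eqref{eq:divtriangle}. If one insists that the divisor structure live on $(A,P)$ itself rather than on an isomorphic pair, the structure is transported along the isomorphism $M\simeq P$ of Step 1, via Lemma~\ref{lem:transfer}.
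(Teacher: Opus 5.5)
Your proposal is correct and is essentially the paper's proof: identify $\Rot_X^{-1}$ with the right adjoint $i^!T^{-1}i$ (Remark~\ref{rem:rotformula}), and evaluate its bimodule on $G$ to get $M$ by adjunction. Then note that the right-adjoint mate of $\pi(s)\colon\pi i\to\pi Ti$ is $i^!(s)\colon i^!T^{-1}i\to i^!i=\id$, which is multiplication by $s$ in \eqref{eq:Bstriangle}.

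The only superfluous step is your appeal to Corollary~\ref{cor:theta1}. It would add the hypothesis $n-r\ge2$, whereas the lemma is also used when $n-r=1$ and over field extensions. It is not needed, since the section is by definition a class in $H^0(E_1)$, and your mate argument already compares exactly these degree-zero classes.
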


\begin{proof}
The right adjoint of $\Rot_X=\pi Ti$ is $i^!T^{-1}i$, and since $\Rot_X$ is an equivalence this is $\Rot_X^{-1}$ (Remark~\ref{rem:rotformula}); evaluating its bimodule on the generator gives $M$, naturally in both arguments. The transformation $\theta_1(s)$ is induced by multiplication by $s$, $\id=\pi i\to\pi Ti=\Rot_X$; its right-adjoint mate $i^!T^{-1}i\to i^!i=\id$ is multiplication by $s$ in \eqref{eq:Bstriangle}.
\end{proof}

\subsection{The residual category of a hyperplane section}
Put
\begin{equation}\label{eq:AD}
\cA_D=\bigl\langle Lj^*\cB_X,Lj^*\cB_X(H),\dots,Lj^*\cB_X((r-2)H)\bigr\rangle^{\perp}\subset\Perf(D),
\end{equation}
with $\cA_D=\Perf(D)$ when $r=1$. For $\cB_X=\langle\cO_X\rangle$ this is the category $\Ku(D,H)=\langle\cO_D,\dots,\cO_D((r-2)H)\rangle^{\perp}$ of the introduction.

\begin{lemma}\label{lem:restrict}
For $r\ge2$ the restricted blocks form a semiorthogonal sequence of completely orthogonal exceptional blocks, and
\[
\Perf(D)=\bigl\langle\cA_D,Lj^*\cB_X,\dots,Lj^*\cB_X((r-2)H)\bigr\rangle.
\]
Moreover $Lj^*(\cA_X)\subset\cA_D$.
\end{lemma}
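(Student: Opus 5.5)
The plan is to compute everything through the adjunction $Lj^*\dashv j_*$ and the Koszul triangle $F(-H)\xrightarrow{s}F\to j_*Lj^*F$, which holds for every perfect $F$ on $X$ because $D$ is an effective Cartier divisor. For objects $E,F\in\Perf(X)$ this gives
\[
\RHom_D(Lj^*E,Lj^*F)\simeq\RHom_X(E,j_*Lj^*F),
\]
and the right-hand side sits in a triangle
\[
\RHom_X(E,F(-H))\longrightarrow\RHom_X(E,F)\longrightarrow\RHom_X(E,j_*Lj^*F).
\]
All three assertions of the lemma reduce to vanishing statements on $X$ that follow from the semiorthogonality in \eqref{eq:setting}, together with Serre duality $\mathsf S_X=T^{-r}[n]$.

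First I would check that each restricted block is completely orthogonal and exceptional. Take $E=E_\alpha(aH)$ and $F=E_\beta(aH)$ with $0\le a\le r-2$. The middle term of the triangle is $\CC\cdot\delta_{\alpha\beta}$. The first term is $\RHom_X(E_\alpha,E_\beta(-H))$, and Serre duality identifies it with $\RHom_X(E_\beta,E_\alpha((1-r)H))^\vee[-n]$. Then $\RHom_X(E_\beta((r-1)H),E_\alpha)$ vanishes, because $\cB_X((r-1)H)$ comes after $\cB_X$ in the semiorthogonal sequence; this needs $r\ge2$.

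Next, for $0\le a<b\le r-2$, I would show $\RHom_D(Lj^*E_\alpha(bH),Lj^*E_\beta(aH))=0$. The middle term $\RHom_X(E_\alpha(bH),E_\beta(aH))$ vanishes by semiorthogonality. The first term is $\RHom_X(E_\alpha(bH),E_\beta((a-1)H))$, which by Serre duality is dual to $\RHom_X(E_\beta((a-1+r)H),E_\alpha(bH))$. Since $1\le b-a+1\le r-1$, after twisting by $-bH$ this becomes $\RHom_X(E_\beta((a-b+r-1)H),E_\alpha)$, and the twist $a-b+r-1$ lies in $[1,r-2]$. Again $\cB_X$ comes before that twisted block, so this vanishes as well.

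Once the restricted blocks are an exceptional semiorthogonal sequence, the sequence is admissible in $\Perf(D)$. Indeed $D$ is projective, $\Perf(D)$ is proper, and exceptional objects generate admissible subcategories. Hence the decomposition with $\cA_D$ defined as the orthogonal in \eqref{eq:AD} holds by definition.

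For the inclusion $Lj^*(\cA_X)\subset\cA_D$, take $F\in\cA_X$ and $E=E_\alpha(aH)$ with $0\le a\le r-2$. The middle term $\RHom_X(E_\alpha(aH),F)$ vanishes since $F\in\cB_X(aH)^\perp$. The first term is $\RHom_X(E_\alpha((a+1)H),F)$, which vanishes since $a+1\le r-1$.

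The main subtlety, rather than a genuine obstacle, is nonreduced or singular $D$. Here one must use only the adjunction, the Koszul triangle and properness, all of which hold for an arbitrary Cartier divisor. I would also keep track of the index bookkeeping carefully: the last block $\cB_X((r-1)H)$ is exactly what the Serre-duality step consumes, and this is why only $r-1$ blocks restrict.
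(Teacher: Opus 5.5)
Your proof is correct and follows the paper's argument. It uses the same adjunction/Koszul triangle, Serre duality to kill the $\RHom_X(U,V(-H))$ term, admissibility of the exceptional blocks in the Hom-finite category $\Perf(D)$, and the same two vanishings for $Lj^*(\cA_X)\subset\cA_D$. The only cosmetic difference is that the paper uses plain semiorthogonality when the twisted target $V(-H)$ stays inside the collection, and invokes Serre duality only when it drops to $\cB_X(-H)$; you apply Serre duality uniformly, which works equally well.
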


\begin{proof}
For perfect $U,V$ on $X$, adjunction and the Koszul resolution of $j_*\cO_D$ give a triangle
\begin{equation}\label{eq:koszultriangle}
\RHom_X(U,V(-H))\longrightarrow\RHom_X(U,V)\longrightarrow\RHom_D(Lj^*U,Lj^*V).
\end{equation}
Let $0\le b\le a\le r-2$, $U=E_\alpha(aH)$ and $V=E_\beta(bH)$. The first term of \eqref{eq:koszultriangle} vanishes: if $b\ge1$, this is the semiorthogonality of $\cB_X(aH)$ and $\cB_X((b-1)H)$, since $a>b-1$; if $b=0$, Serre duality gives
\[
\RHom_X(E_\alpha(aH),E_\beta(-H))\simeq\RHom_X\bigl(E_\beta((r-1-a)H),E_\alpha\bigr)^\vee[-n],
\]
which vanishes since $1\le r-1-a\le r-1$. The second term is $\delta_{\alpha\beta}\CC$ if $a=b$ and vanishes if $a>b$. Hence the restricted objects form a semiorthogonal sequence of completely orthogonal blocks of exceptional objects. Since $D$ is proper, $\Perf(D)$ is Hom-finite, these blocks are admissible, and the decomposition follows. Finally, for $F\in\cA_X$ and $0\le a\le r-2$, both $\RHom_X(E_\alpha(aH),F(-H))=\RHom_X(E_\alpha((a+1)H),F)$ and $\RHom_X(E_\alpha(aH),F)$ vanish, so $Lj^*F\in\cA_D$ by \eqref{eq:koszultriangle}.
\end{proof}

\begin{proposition}[Generation on every hyperplane section]\label{prop:generation}
If $G$ classically generates $\cA_X$, then $Lj^*G$ classically generates $\cA_D$, even when $D$ is singular or nonreduced. Consequently $\Perf(B_s)\simeq\cA_D$.
\end{proposition}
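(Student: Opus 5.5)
The plan is to prove generation through compact generation in the unbounded category, in the form ``the right orthogonal of $Lj^*G$ in the big residual category vanishes'', and then to deduce the Morita statement from \eqref{eq:Bs}.

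\emph{Step 1: the big decompositions.} On $X$, the decomposition \eqref{eq:setting} consists of admissible subcategories generated by compact objects. It therefore induces a semiorthogonal decomposition of $\Dqc(X)$ into the localizing subcategories $\widehat\cA_X,\widehat\cB_0,\dots,\widehat\cB_{r-1}$, where $\widehat\cB_a$ is generated by $\cB_X(aH)$. The compact objects of each piece are the original components.

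Mutating the last block to the front by the Serre functor $\mathsf S_X=(-)\otimes\omega_X[n]=T^{-r}[n]$ gives, already on $\Perf(X)$,
\[
\Perf(X)=\bigl\langle\cB_X(-H),\cA_X,\cB_X,\dots,\cB_X((r-2)H)\bigr\rangle,
\]
and this again extends to $\Dqc(X)$. Consequently, an object $x\in\Dqc(X)$ with $\RHom(\cA_X,x)=0$ and $\RHom(\cB_X(aH),x)=0$ for $0\le a\le r-2$ lies in the localizing subcategory generated by $\cB_X(-H)$. Since that block is completely orthogonal and exceptional, $x\simeq\bigoplus_\alpha E_\alpha(-H)\otimes V_\alpha$ with $V_\alpha\in D(\CC)$.

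On $D$, Lemma~\ref{lem:restrict} likewise gives $\Dqc(D)=\langle\widehat\cA_D,\widehat{Lj^*\cB_0},\dots\rangle$, where $\widehat\cA_D$ is compactly generated with compact part $\cA_D$. This uses that $\Dqc(D)$ is compactly generated by perfect objects \cite{BVdB03}.

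\emph{Step 2: the orthogonal vanishes.} Let $N\in\widehat\cA_D$ with $\RHom_D(Lj^*G,N)=0$, and put $x=j_*N$.
\begin{itemize}
\item By adjunction, $\RHom_X(G,x)=\RHom_D(Lj^*G,N)=0$. Since $G$ compactly generates $\widehat\cA_X$, this gives $\RHom(\widehat\cA_X,x)=0$.
\item For $0\le a\le r-2$, $\RHom_X(E_\alpha(aH),x)=\RHom_D(Lj^*E_\alpha(aH),N)=0$, because $N\in\widehat\cA_D$.
\end{itemize}
By Step 1, $j_*N\simeq\bigoplus_\alpha E_\alpha(-H)\otimes V_\alpha$. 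The left side is set-theoretically supported on $D$. The right side, restricted to the generic point of $X$, is $\bigoplus_\alpha V_\alpha^{\oplus\operatorname{rk}E_\alpha}$; this is where positive rank is used. Hence all $V_\alpha=0$, so $j_*N=0$, and $N=0$ because $j$ is affine, so $j_*$ is conservative on $\Dqc$.

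Thus $Lj^*G$, which lies in $\cA_D$ by Lemma~\ref{lem:restrict}, is a compact generator of $\widehat\cA_D$. By Neeman's theorem its thick closure is the full subcategory of compacts, namely $\cA_D$. No smoothness of $D$ enters anywhere, so singular and nonreduced $D$ are allowed.

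\emph{Step 3: Morita.} By \eqref{eq:Bs}, $B_s\simeq\RHom_D(Lj^*G,Lj^*G)$ as dg algebras, via the multiplicative enhancement of \cite{LS16}. Enhanced Morita theory \cite{Toe07} then gives $\Perf(B_s)\simeq\thick(Lj^*G)=\cA_D$.

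\emph{Main obstacle.} I expect the delicate point to be Step 1 in the big category. One must justify that the right orthogonal of $\langle\widehat\cA_X,\widehat\cB_0,\dots,\widehat\cB_{r-2}\rangle$ is exactly the localizing hull of the Serre-twisted block. Serre duality itself only holds for compact arguments, so I would not argue with it directly on big objects. Instead I would obtain the mutated decomposition on $\Perf(X)$ first and then pass to $\Dqc(X)$, using that semiorthogonal decompositions by compactly generated admissible pieces extend. After that, Step 2 is formal.
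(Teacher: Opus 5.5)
Your proposal is correct and follows the paper's argument: you push the orthogonal object forward to $X$, use a Serre-rotated decomposition extended to $\Dqc(X)$ to force it into the localizing hull of $\cB_X(-H)$, and conclude with the positive-rank generic-point argument and the conservativity of $j_*$. The differences are minor. You rotate only the last block, so the orthogonality conditions place $j_*N$ in $\widehat{\cB_X(-H)}$ at once, whereas the paper rotates all $r$ blocks and kills the extra factors one by one using Serre duality; and you apply Neeman's theorem inside the big residual category $\widehat\cA_D$, whereas the paper shows that $Lj^*G\oplus\bigoplus_{a,\alpha}Lj^*E_\alpha(aH)$ compactly generates all of $\Dqc(D)$ and then projects to $\cA_D$.
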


\begin{proof}
We first show that $T_D=Lj^*G\oplus\bigoplus_{0\le a\le r-2,\,\alpha}Lj^*E_\alpha(aH)$ is a compact generator of $\Dqc(D)$. Its summands are perfect, hence compact. Let $F\in\Dqc(D)$ with $\RHom_D(T_D,F)=0$, and put $Z=Rj_*F$. By adjunction $Z$ is right orthogonal to $G$, hence to $\cA_X$. The Serre-rotated decomposition $\Perf(X)=\langle\cB_X(-rH),\dots,\cB_X(-H),\cA_X\rangle$ extends to $\Dqc(X)$, since its projections are given by perfect kernels and preserve coproducts, so $Z$ has a finite filtration with factors
\[
\bigoplus_\alpha V_{a,\alpha}\otimes E_\alpha((a-r)H),\qquad0\le a\le r-1,
\]
for complexes of vector spaces $V_{a,\alpha}$. By Serre duality, $\RHom_X(E_\beta(kH),E_\alpha((a-r)H))$ is isomorphic to $\RHom_X(E_\alpha(aH),E_\beta(kH))^\vee[-n]$, which vanishes for $a>k$ and equals $\delta_{\alpha\beta}\CC[-n]$ for $a=k$. The conditions $\RHom_X(E_\beta(kH),Z)=0$ for $k=0,1,\dots,r-2$, which follow from the orthogonality of $F$ to the restricted blocks, therefore kill the factors one by one, and we are left with
\begin{equation}\label{eq:leftover}
Z\simeq\bigoplus_\alpha V_\alpha\otimes E_\alpha(-H);
\end{equation}
for $r=1$ this is immediate. Now $Z$ is supported on the proper divisor $D$, so it vanishes at the generic point $\eta$ of $X$. Each $(E_\alpha)_\eta$ is a nonzero vector space over $\CC(X)$, because $E_\alpha$ has positive rank, and extension of scalars is faithful, so all $V_\alpha$ vanish. Hence $Z=0$, and $F=0$ since pushforward along a closed immersion is conservative.

Thus $\Perf(D)=\thick(T_D)$. The left projection to $\cA_D$ kills the exceptional summands and fixes $Lj^*G$ by Lemma~\ref{lem:restrict}, so $\cA_D=\thick(Lj^*G)$, and \eqref{eq:Bs} with Morita theory gives $\Perf(B_s)\simeq\cA_D$.
\end{proof}

\begin{theorem}[Recovery of hyperplane sections]\label{thm:recovery}
Assume $n-r\ge2$. Then the Morita class of $\cA_{D_s}$ is determined by $(\cA_X,\Rot_X)$ and $[s]\in\PP\Hom(\id_{\cA_X},\Rot_X)=\PP H^0(X,H)$: it is the category of perfect modules over the unique noncommutative divisor on $(A,P)$ with section $\theta_1(s)$. An equivalence of such categories commuting with the rotation functors identifies the residual categories of corresponding hyperplane sections.
\end{theorem}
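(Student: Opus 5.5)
The plan is to assemble the results already proved. Proposition~\ref{prop:generation} will realize $\cA_{D_s}$ as $\Perf(B_s)$. Lemma~\ref{lem:geomdiv} will identify $B_s$ as a divisor algebra. Proposition~\ref{prop:unique} will make that divisor algebra depend only on its section. Throughout, $n-r\ge2$ is assumed, so Corollary~\ref{cor:theta1} gives the identification $\theta_1\colon H^0(X,H)\xrightarrow{\sim}\Hom(\id_{\cA_X},\Rot_X)$. Hence $[s]$ is a well-defined point of $\PP\Hom(\id,\Rot_X)$, and every class there comes from a unique section up to scalar.

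\textbf{Existence and uniqueness.} Fix a classical generator $G$ of $\cA_X$ and set $A=\RHom_X(G,G)$. Let $P$ be the bimodule of $\Rot_X^{-1}$. By Lemma~\ref{lem:geomdiv}, the \v{C}ech--Koszul algebra $B_s$ of \eqref{eq:Bs} is a noncommutative divisor on $(A,P)$, with $M\simeq P$ and section $\theta_1(s)$. By Proposition~\ref{prop:generation}, $\Perf(B_s)\simeq\cA_{D_s}$. Since $n-r\ge1$, Proposition~\ref{prop:unique} shows that any other divisor on $(A,P)$ with section $\theta_1(s)$ is relatively $A_\infty$ isomorphic to $B_s$. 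It therefore has the same perfect module category.

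One bookkeeping point remains: $B_s$ lives on the specific model $(A,M)$, not on an abstract pair. I would use Lemma~\ref{lem:transfer} to move everything to minimal models, so that "the unique divisor with section $\theta_1(s)$" makes sense intrinsically. The choice of generator $G$ is harmless, since changing $G$ changes $(A,P)$ by a Morita equivalence. Rescaling $s$ gives an isomorphic divisor algebra, via the automorphism $\lambda$ on $P[1]$. So the Morita class depends only on $[s]$.

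\textbf{Compatibility with equivalences.} Let $\Phi\colon\cA_X\simeq\cA_Y$ be an equivalence with a rotation intertwiner. Take $G$ on $X$ and $\Phi(G)$ on $Y$. Then $\Phi$ gives a quasi-isomorphism $A_X\simeq A_Y$, and the intertwiner identifies $P_X$ with $P_Y$ compatibly. Hence $\Phi$ induces the map $\Hom(\id,\Rot_X)\to\Hom(\id,\Rot_Y)$. By Corollary~\ref{cor:theta1}, this map is well defined projectively, independent of the chosen intertwiner.

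The class $[s]$ goes to some $[s']$ under this map. Transport $B_s$ along these identifications, placing both structures on a common minimal pair by Lemma~\ref{lem:transfer}. The result is a divisor on $(A_Y,P_Y)$ whose section is the derived image of $\theta_1(s)$, that is, $\theta_1(s')$ up to scalar. By uniqueness on the $Y$ side, it is isomorphic to $B_{s'}$. Hence $\cA_{D_s}\simeq\Perf(B_s)\simeq\Perf(B_{s'})\simeq\cA_{D_{s'}}$.

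\textbf{Main obstacle.} I expect the delicate step to be this transport. One has to check that the transferred structure is again a \emph{strictly unital} divisor: it must restrict to the given structures on the base algebra and on the quotient bimodule. One also has to check that its section is exactly the image of the section under the identification $E_1\simeq\cH_1$ of \eqref{eq:EmHm}, and that this is compatible with the intertwiner. I would argue via the weight filtration. Quasi-isomorphisms of the base algebra and the quotient bimodule preserve weight, and the weight-one part of the transferred structure is computed by the single tree $p\delta\iota$. This tree carries $[t]$ to its derived image. Everything else is formal from Lemmas~\ref{lem:uniqueness} and~\ref{lem:transfer}.
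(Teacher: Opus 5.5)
Your proposal is correct and follows essentially the same route as the paper's proof. The paper recovers the sections by Corollary~\ref{cor:theta1}, gets existence and uniqueness of the divisor algebra from Lemma~\ref{lem:geomdiv} and Proposition~\ref{prop:unique}, transports it along equivalences by the comparison after Proposition~\ref{prop:unique} (that is, Lemma~\ref{lem:transfer}), identifies perfect modules with $\cA_{D_s}$ by Proposition~\ref{prop:generation}, and absorbs rescaling of $s$ into rescaling of $P[1]$ — exactly your sequence of steps.
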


\begin{proof}
Corollary~\ref{cor:theta1} recovers the space of sections. By Lemma~\ref{lem:geomdiv}, $B_s$ is a noncommutative divisor on $(A,P)$ with section $\theta_1(s)$, and by Proposition~\ref{prop:unique} it is determined by this section. Under an equivalence, the comparison following Proposition~\ref{prop:unique} places the two divisor algebras on a common pair, and Proposition~\ref{prop:generation} identifies their perfect modules with the residual categories. Rescaling $s$ is absorbed by rescaling $P[1]$, so only $[s]$ matters.
\end{proof}

The construction works over any field extension of $\CC$, which will be used in Section~\ref{sec:finite} for divisors defined over function fields. For a field extension $F/\CC$ we write $X_F$, $\cA_{X,F}$ and so on for the scalar extensions.

\begin{corollary}\label{cor:extension}
Let $F/\CC$ be a field extension. The recovery of the residual categories of divisors in Theorem~\ref{thm:recovery}, and in Theorem~\ref{thm:general} below when $n-r=1$ and $\theta_1$ is an isomorphism, holds for every nonzero $s\in H^0(X,H)\otimes_\CC F$. In particular, an equivalence $\cA_X\simeq\cA_Y$ over $\CC$ commuting with the rotation functors induces, for corresponding sections over $F$, an $F$-linear equivalence of the residual categories of the corresponding divisors on $X_F$ and $Y_F$.
\end{corollary}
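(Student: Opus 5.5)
The plan is to rerun the proof of Theorem~\ref{thm:recovery} after base change to $F$, checking that every ingredient is compatible with scalar extension. Put $A_F=A\otimes_\CC F$ and $P_F=P\otimes_\CC F$. Then $\Perf(A_F)\simeq\cA_{X,F}$, where $\cA_{X,F}$ is the corresponding component of $\Perf(X_F)$ in the decomposition \eqref{eq:setting} base-changed to $F$. Moreover $P_F$ represents the base change of $\Rot_X^{-1}$. Since $A$ is proper and smooth, $A^e$-modules such as $P^{\otimes_A m}$ are perfect. Flat base change then gives
\[
E_{m,F}:=\RHom_{A_F^e}(P_F^{\otimes m},A_F)\simeq E_m\otimes_\CC F,
\]
so the bounds $E_m\in D^{\ge b_m}$ of Proposition~\ref{prop:kernel} persist over $F$. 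In particular $H^0(E_{1,F})\simeq H^0(X,H)\otimes F$ whenever $\theta_1$ is an isomorphism over $\CC$, which holds for $n-r\ge2$ or by hypothesis when $n-r=1$.

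The argument then proceeds in three steps.

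\textbf{Uniqueness over $F$.} Lemma~\ref{lem:uniqueness} and Proposition~\ref{prop:unique} are pure obstruction theory over the ground field. They therefore hold verbatim over $F$, using the base-changed vanishings of $H^{-1}(E_{1,F})$, $H^{1-2m}(E_{m,F})$ and $H^{2-2m}(E_{m,F})$.

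\textbf{The geometric divisor algebra over $F$.} For $s\in H^0(X,H)\otimes F$ we build $B_s$ from the Koszul algebra on $X_F$ with generator $G_F$, exactly as in \eqref{eq:Bs}. Lemma~\ref{lem:geomdiv} goes through, since the mate computation is formal. Lemma~\ref{lem:restrict} also holds, because exceptionality and the orthogonalities are preserved by flat base change, and Serre duality holds on $X_F$.

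\textbf{Generation.} We rerun Proposition~\ref{prop:generation} on $D_s\subset X_F$. The positive-rank argument at the generic point of $X_F$ still works, since $E_{\alpha,F}$ has positive rank. The Serre-rotated decomposition likewise extends to $\Dqc(X_F)$.

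Finally, for the equivalence statement, let $\Phi\colon\cA_X\simeq\cA_Y$ be given with a rotation intertwiner. Choose generators $G$ and $\Phi(G)$, so that $\Phi$ provides a quasi-isomorphism $A\simeq A'$ together with $P\simeq P'$. Tensoring with $F$ gives the same identifications over $F$. Corresponding sections are then matched $F$-linearly through $\theta_{1,F}$. Lemma~\ref{lem:transfer}, run over $F$, places both divisor algebras on a common minimal pair with the same section, and uniqueness identifies them up to $A_\infty$ isomorphism. Taking perfect modules yields an $F$-linear equivalence $\cA_{D_s}\simeq\cA_{D'_{s'}}$.

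The main obstacle is the first step: justifying that the enhanced functor complexes commute with base change, i.e.\ $\cH_{m,F}\simeq\cH_m\otimes F$. I would prove this via \eqref{eq:Hm} and flat base change for $\RHom_{X\times X}$ of perfect kernels, noting that $Q_m$ base-changes to the kernel of the rotation on $X_F$.
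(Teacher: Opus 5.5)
Your proposal is correct and follows essentially the same route as the paper. The paper likewise base-changes the perfect kernels and bimodules to carry the bounds of Proposition~\ref{prop:kernel} and the identification of sections over to $F$, builds the Koszul divisor algebra over $F$, reruns Lemma~\ref{lem:restrict} and Proposition~\ref{prop:generation} on the integral variety $X_F$, and uses that relative transfer and weight-by-weight uniqueness hold over any field of characteristic zero.

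The one step you assert rather than justify is $\Perf(A_F)\simeq\cA_{X,F}$. Your rerun of Proposition~\ref{prop:generation} needs it to pass from orthogonality to $G_F$ to orthogonality to all of $\cA_{X,F}$. The paper supplies this by noting that $X_F\to X$ is affine with conservative pushforward, so a compact generator remains one after base change, and the same holds for the residual component through its projection kernel.
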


\begin{proof}
The projection kernels and the inverse-rotation bimodules are perfect, so their morphism complexes and their finite mutation filtrations commute with the flat extension of scalars; hence the bounds of Proposition~\ref{prop:kernel} and the identification of sections hold over $F$. A compact generator remains a compact generator after extension of scalars, since the projection $X_F\to X$ is affine with conservative pushforward, so that vanishing against the pullback is detected by adjunction; the same holds for the residual component, through its projection kernel. For a section over $F$ we use the Koszul algebra \eqref{eq:Bs} over $F$. The proofs of Lemma~\ref{lem:restrict} and Proposition~\ref{prop:generation} only use this algebra, the semiorthogonal decomposition and the positivity of the generic ranks of the exceptional sheaves, and they apply to the Cartier divisor of the section on the integral variety $X_F$. Relative transfer (Lemma~\ref{lem:transfer}) and the weight-by-weight uniqueness of Lemma~\ref{lem:uniqueness} hold over every field of characteristic zero. Hence the two geometric divisor algebras, placed on a common pair after extending the bimodule of the equivalence, have the same section and are $F$-linearly quasi-isomorphic.
\end{proof}

\section{The categorical discriminant and the main theorems}\label{sec:main}

We combine Sections~\ref{sec:kernel}--\ref{sec:sections} to prove the general reconstruction theorem (Theorem~\ref{thm:general}), and deduce Theorem~\ref{thm:sections}, Theorem~\ref{thm:A} for very ample polarizations, and Corollary~\ref{cor:vaPerry}.

\subsection{Smoothness detects singular divisors}

\begin{lemma}\label{lem:smoothness}
For every divisor $D\in|H|$, the category $\cA_D$ is homologically smooth if and only if $D$ is smooth.
\end{lemma}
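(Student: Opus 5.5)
The plan is to compare $\cA_D$ with $\Perf(D)$ through the semiorthogonal decomposition of Lemma~\ref{lem:restrict}, and then use the criterion of Section~\ref{ss:smooth}: $\Perf(D)$ is homologically smooth if and only if $D$ is smooth over $\CC$, which holds for arbitrary projective schemes, nonreduced ones included \cite[Theorem~4.3]{LS16}. The remaining task is to show that $\cA_D$ is smooth exactly when $\Perf(D)$ is. For $r=1$ we have $\cA_D=\Perf(D)$, and there is nothing to prove.

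For $r\ge2$, Lemma~\ref{lem:restrict} gives
\[
\Perf(D)=\bigl\langle\cA_D,Lj^*\cB_X,\dots,Lj^*\cB_X((r-2)H)\bigr\rangle,
\]
where every restricted block is a completely orthogonal collection of exceptional objects. Each block is therefore equivalent to a finite product of copies of $\Perf(\CC)$, which is smooth and proper.

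\emph{Smooth $D$ implies smooth $\cA_D$.} If $D$ is smooth, then $\Perf(D)$ is smooth. Smoothness passes to admissible components (Section~\ref{ss:smooth}), and $\cA_D$ is admissible because $D$ is proper and the blocks are admissible.

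\emph{Smooth $\cA_D$ implies smooth $D$.} Suppose $\cA_D$ is smooth. It is also proper: it is a full subcategory of $\Perf(D)$, and $\Perf(D)$ is Hom-finite since $D$ is projective. All components of the decomposition are then smooth and proper. The gluing argument recalled in Section~\ref{ss:smooth} \cite[Theorems~3.18, 3.25]{Orl16} shows that the glued category $\Perf(D)$ is smooth. Concretely, each gluing bimodule is pointwise perfect over a tensor product of smooth proper categories, hence perfect, so each successive gluing preserves smoothness. It follows that $D$ is smooth.

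I expect the main obstacle to be this reverse implication, since the gluing theorem requires the ambient category to be proper and the gluing bimodules to be perfect. Properness of $\Perf(D)$ is automatic, because $D$ is projective even when it is singular. Perfectness of the bimodules follows because the exceptional components are $\Perf(\CC)^{\oplus s}$ and $\cA_D$ is smooth and proper. The one point to check carefully is that the enhancements agree: the dg enhancement of $\Perf(D)$ should restrict to the one on $\cA_D$ given by $\Perf(B_s)$ via Proposition~\ref{prop:generation}, so that the notion of smoothness is the same on both sides. This holds by Morita invariance.
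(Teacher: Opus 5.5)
Your proposal is correct and follows the paper's proof: the decomposition of Lemma~\ref{lem:restrict}, smoothness passing to admissible components for one direction, and, for the converse, Orlov's gluing criterion for smooth proper components inside the proper category $\Perf(D)$ combined with \cite[Theorem~4.3]{LS16}. Your remark comparing enhancements with $\Perf(B_s)$ is harmless but unnecessary, since $\cA_D$ carries the enhancement induced from $\Perf(D)$.
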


\begin{proof}
If $D$ is smooth, so is $\Perf(D)$ and its admissible component $\cA_D$. Conversely, if $\cA_D$ is smooth, every component of the decomposition of Lemma~\ref{lem:restrict} is smooth and proper, and $\Perf(D)$ is proper, so the gluing criterion of Section~\ref{ss:smooth}, applied successively, shows that $\Perf(D)$ is smooth; hence $D$ is smooth.
\end{proof}

Put $V_X=\Hom(\id_{\cA_X},\Rot_X)$. For $0\ne\zeta\in V_X$ for which a noncommutative divisor on $(A,P)$ with section $\zeta$ exists, this divisor is unique when $n-r\ge1$ (Proposition~\ref{prop:unique}); we denote it by $B_\zeta$. Define the \emph{categorical discriminant}
\begin{equation}\label{eq:disc}
\Delta(\cA_X,\Rot_X)=\bigl\{[\zeta]\in\PP(V_X):\ B_\zeta\ \text{exists and}\ \Perf(B_\zeta)\ \text{is not homologically smooth}\bigr\},
\end{equation}
with its reduced structure. If $\theta_1$ is an isomorphism, then $B_{\theta_1(s)}=B_s$ exists for every $s$, and by Proposition~\ref{prop:generation} and Lemma~\ref{lem:smoothness}, under $V_X\simeq H^0(X,H)$ the discriminant is the locus of singular divisors in $|H|$. This locus is closed: it is the image of the incidence in $X\times\PP H^0(X,H)$ where the first jet of the universal section vanishes. If $H$ is very ample, a hyperplane section of $X\subset\PP(H^0(X,H)^\vee)$ is singular at $x$ exactly when the hyperplane contains the embedded tangent space $T_xX$, so
\begin{equation}\label{eq:discdual}
\Delta(\cA_X,\Rot_X)=X^\vee.
\end{equation}

\subsection{The general theorem}

\begin{theorem}\label{thm:general}
Let $X$ be as in \eqref{eq:setting}, and assume either that $n-r\ge2$, or that $n-r=1$ and $\theta_1\colon H^0(X,H)\to\Hom(\id_{\cA_X},\Rot_X)$ is an isomorphism. Then the pair $(\cA_X,\Rot_X)$ determines, compatibly with equivalences commuting with the rotation functors:
\begin{enumerate}[label=\textup{(\arabic*)}]
\item the projective space $|H|=\PP H^0(X,H)$;
\item for every $[s]\in|H|$, the Morita class of the residual category $\cA_{D_s}$;
\item the reduced locus of singular divisors in $|H|$.
\end{enumerate}
Precisely, let $Y$, with $H_Y$, $r_Y$ and $\cB_Y$, satisfy the same assumptions, with possibly different numerical data, and let $\Phi\colon\cA_X\simeq\cA_Y$ commute with the rotation functors. Then $\Phi$ induces a linear isomorphism $\alpha\colon H^0(X,H)\simeq H^0(Y,H_Y)$, unique up to scalar, such that $\cA_{D_s}\simeq\cA_{D_{\alpha(s)}}$ for every $s\ne0$, and $\PP(\alpha)$ maps the locus of singular divisors in $|H|$ onto that in $|H_Y|$. If moreover $H$ and $H_Y$ are very ample, there is an isomorphism $f\colon X\xrightarrow{\sim}Y$ with $f^*H_Y\simeq H$, induced by the projective transformation dual to $\PP(\alpha)$; in particular $\dim X=\dim Y$ and $r=r_Y$.
\end{theorem}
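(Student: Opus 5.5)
The plan is to assemble the statement from the results of Sections~\ref{sec:kernel}--\ref{sec:sections}, and then to apply projective biduality in the very ample case.

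\emph{Step 1: sections.} By Corollary~\ref{cor:theta1} when $n-r\ge2$, or by the hypothesis when $n-r=1$, $\theta_1$ identifies $H^0(X,H)$ with $V_X=\Hom(\id_{\cA_X},\Rot_X)$. The same holds on $Y$. Fix a rotation intertwiner $\Phi\circ\Rot_X\simeq\Rot_Y\circ\Phi$. Conjugation by $\Phi$ then gives a linear isomorphism $V_X\simeq V_Y$, and composing with $\theta_1$ on both sides defines $\alpha$. By the second part of Corollary~\ref{cor:theta1}, a different intertwiner changes $\alpha$ only by a scalar. This proves (1).

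\emph{Step 2: divisors.} For $s\neq0$, Lemma~\ref{lem:geomdiv} shows that $B_s$ is a noncommutative divisor on $(A,P)$ with section $\theta_1(s)$, and Proposition~\ref{prop:unique} (valid since $n-r\ge1$) shows it is unique. Take $G$ on $X$ and $\Phi(G)$ on $Y$ and use the common minimal pair of Lemma~\ref{lem:transfer}. Then $B_s$ and $B_{\alpha(s)}$ become two divisors with the same section on one pair, hence relatively $A_\infty$ isomorphic. Proposition~\ref{prop:generation} gives $\Perf(B_s)\simeq\cA_{D_s}$, so $\cA_{D_s}\simeq\cA_{D_{\alpha(s)}}$. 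This is Theorem~\ref{thm:recovery}, and the case $n-r=1$ uses the same argument. This proves (2).

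\emph{Step 3: singular locus.} Homological smoothness is Morita invariant, and Lemma~\ref{lem:smoothness} says $\cA_D$ is smooth exactly when $D$ is smooth. Hence $\PP(\alpha)$ carries the singular locus of $|H|$ bijectively onto that of $|H_Y|$. Both loci are closed and given their reduced structure, so $\PP(\alpha)$, being an isomorphism of projective spaces, identifies them as reduced subvarieties. This proves (3).

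\emph{Step 4: very ample case.} Here \eqref{eq:discdual} identifies the singular loci with $X^\vee\subset\PP H^0(X,H)$ and $Y^\vee\subset\PP H^0(Y,H_Y)$, so $\PP(\alpha)(X^\vee)=Y^\vee$. The dual projective transformation $\PP((\alpha^\vee)^{-1})\colon\PP(H^0(X,H)^\vee)\to\PP(H^0(Y,H_Y)^\vee)$ carries duals to duals. Biduality (Section~\ref{ss:duality}), which holds also for dual-defective varieties, then shows that it maps $X=(X^\vee)^\vee$ isomorphically onto $Y$. Call the restriction $f$. It respects hyperplane classes, so $f^*H_Y\simeq H$, and $\dim X=\dim Y$. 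Finally $\omega_X^{-1}\simeq H^{\otimes r}$ and $\omega_Y^{-1}\simeq H_Y^{\otimes r_Y}$ give $H^{\otimes r}\simeq H^{\otimes r_Y}$. Since $\Pic X$ is torsion free and $H$ is ample, $r=r_Y$.

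The main obstacle is the compatibility in Step 2: transporting the geometric divisor algebras along $\Phi$ with their sections matched. This needs the inverse-rotation bimodules to be identified via the intertwiner, so that the derived images of $\theta_1(s)$ and $\theta_1(\alpha(s))$ agree on the common pair. Lemma~\ref{lem:transfer} and the remark after Proposition~\ref{prop:unique} are what supply this, and I would check carefully that the scalar ambiguity in the intertwiner only rescales $P[1]$.
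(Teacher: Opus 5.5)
Your proposal is correct and follows the paper's proof essentially step for step. You define $\alpha$ through $\theta_1$ and a rotation intertwiner, with Corollary~\ref{cor:theta1} controlling the scalar; you get (2) from uniqueness of divisors on a common minimal pair together with Proposition~\ref{prop:generation}, (3) from Lemma~\ref{lem:smoothness}, and the very ample case from \eqref{eq:discdual} and biduality. Your explicit derivation of $r=r_Y$ from $H^{\otimes r}\simeq H^{\otimes r_Y}$ with $H$ ample just fills in what the paper leaves as ``in particular''.
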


\begin{proof}
If $n-r\ge2$, Corollary~\ref{cor:theta1} gives (1); otherwise it is the assumption. Let $\alpha=(\theta_1^Y)^{-1}\circ\Phi_*\circ\theta_1^X$, where $\Phi_*\colon\Hom(\id,\Rot_X)\to\Hom(\id,\Rot_Y)$ is conjugation by $\Phi$ followed by the rotation intertwiner. By Corollary~\ref{cor:theta1}, which only needs $n-r\ge1$, the projective class of $\alpha$ does not depend on the choice of intertwiner.

For (2), Proposition~\ref{prop:unique} applies on both sides. By Lemma~\ref{lem:geomdiv} and the comparison following Proposition~\ref{prop:unique}, the divisor $B_s$ on $(A_X,P_X)$ is transported to a noncommutative divisor on a minimal model of $(A_Y,P_Y)$ with section $\theta_1^Y(\alpha(s))$. The geometric divisor $B_{\alpha(s)}$ on $(A_Y,P_Y)$ has the same section, so the two are relatively $A_\infty$ isomorphic, and Proposition~\ref{prop:generation} gives $\cA_{D_s}\simeq\Perf(B_s)\simeq\Perf(B_{\alpha(s)})\simeq\cA_{D_{\alpha(s)}}$. Part (3) follows from (2) and Lemma~\ref{lem:smoothness}.

Suppose now that $H$ and $H_Y$ are very ample. Then $X\subset\PP(H^0(X,H)^\vee)$ and $Y\subset\PP(H^0(Y,H_Y)^\vee)$ are embedded by complete linear systems, and by (3) and \eqref{eq:discdual}, $\PP(\alpha)(X^\vee)=Y^\vee$. The contragredient projective transformation $\PP((\alpha^\vee)^{-1})$ therefore maps $(X^\vee)^\vee$ onto $(Y^\vee)^\vee$, that is, $X$ onto $Y$ by projective biduality (Section~\ref{ss:duality}). Since both varieties are reduced, it restricts to an isomorphism of schemes $f$, which identifies the restricted hyperplane bundles.
\end{proof}

\subsection{Proof of Theorem~\ref{thm:sections}, and of Theorem~\ref{thm:A} for very ample polarizations}

\begin{proof}[Proof of Theorem~\ref{thm:sections}]
Take $\cB_X=\langle\cO_X\rangle$, so that $\cA_X=\Ku(X,H)$ and $\cA_D=\Ku(D,H)$, and apply Theorem~\ref{thm:general}, which is available since $n-r\ge2$. This gives (1) and (3), and Theorem~\ref{thm:recovery} gives the description of $\Ku(D,H)$ in (2); Corollary~\ref{cor:extension} gives the same over every field extension of $\CC$. If $H$ is very ample, \eqref{eq:discdual} identifies the locus in (3) with $X^\vee$. For $H=-K_X$ we have $r=1$, $\Ku(D,H)=\Perf(D)$ and $n-r=n-1\ge2$; the rotation functor is $\mathsf S^{-1}[n]$ by Proposition~\ref{prop:serre}, so it commutes with every equivalence between such categories for varieties of the same dimension.
\end{proof}

\begin{proof}[Proof of Theorem~\ref{thm:A} when $H_X$ and $H_Y$ are very ample]
Apply Theorem~\ref{thm:general} with $\cB_X=\langle\cO_X\rangle$ and $\cB_Y=\langle\cO_Y\rangle$, so that $\cA_X=\Ku(X,H_X)$ and $\cA_Y=\Ku(Y,H_Y)$. The assumption $r\le n-2$ is $n-r\ge2$ on both sides, and $H_X,H_Y$ are very ample, so Theorem~\ref{thm:general} gives an isomorphism $(X,H_X)\simeq(Y,H_Y)$, identifying the embeddings by the complete linear systems.
\end{proof}

\subsection{Perry's conjecture}
For $(H,r)=(-K_X,1)$ the rotation functor of $\cA_X$ is $\mathsf S^{-1}_{\cA_X}[\dim X]$ by Proposition~\ref{prop:serre}. To deduce Theorem~\ref{thm:B} and Corollary~\ref{cor:vaPerry} from the results for rotation functors, it remains to see that the dimension is detected by the category.

\begin{lemma}\label{lem:dim}
Let $X$ and $Y$ be smooth Fano varieties of dimensions $n,m\ge2$ with $H^0(X,-K_X)\ne0$ and $H^0(Y,-K_Y)\ne0$, and let $\cB_X$, $\cB_Y$ be completely orthogonal collections of exceptional coherent sheaves of positive rank, with orthogonals $\cA_X=\cB_X^{\perp}$, $\cA_Y=\cB_Y^{\perp}$. If $\cA_X\simeq\cA_Y$, then $n=m$, and every equivalence $\cA_X\simeq\cA_Y$ commutes with the rotation functors for the anticanonical polarizations.
\end{lemma}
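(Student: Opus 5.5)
We will read off the dimension from the Hochschild--Serre type complexes $\RHom(\id,\mathsf S^{-1})$, which are intrinsic to the category. For $(H,r)=(-K_X,1)$, Proposition~\ref{prop:serre} gives $\Rot_X=\mathsf S_{\cA_X}^{-1}[n]$. Hence
\[
\cH_1^X=\RHom(\id_{\cA_X},\Rot_X)\simeq\RHom(\id_{\cA_X},\mathsf S_{\cA_X}^{-1})[n].
\]
The complex $\RHom(\id,\mathsf S^{-1}_{\cA})$ is an invariant of $\cA$ up to equivalence, since Serre functors commute with every equivalence. So it suffices to show that its lowest nonvanishing degree equals $n$.

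By Proposition~\ref{prop:kernel} with $m=1$, $r=1$, there is a triangle
\[
\RHom_{X\times X}(\cO_\Delta,\Delta_*\omega_X^{-1})\to\cH_1\to W_1,\qquad W_1\in D^{\ge n-2}.
\]
The first term has no negative cohomology. Its $H^0$ is $H^0(X,-K_X)\ne0$, and $H^{-1}(W_1)=0$ because $n\ge2$ (we need $n-2\ge0$). So the connecting map is harmless in degree $-1$, and $\theta_1$ is injective on $H^0$ by the long exact sequence. Therefore $\cH_1\in D^{\ge0}$, with $H^0(\cH_1)\supset H^0(X,-K_X)\ne0$.

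Translating, $\RHom(\id,\mathsf S^{-1}_{\cA_X})$ has its lowest nonzero cohomology exactly in degree $n$:
\[
n=\min\{p:\ \Hom(\id,\mathsf S^{-1}_{\cA_X}[p])\ne0\}.
\]
This is the case $q=1$ of the invariant in Theorem~\ref{thm:C}, up to the sign convention $\HS^{p,-1}$. An equivalence $\Phi\colon\cA_X\simeq\cA_Y$ intertwines the Serre functors, $\Phi\mathsf S_{\cA_X}\simeq\mathsf S_{\cA_Y}\Phi$ as enhanced functors (for Morita equivalences of smooth proper dg categories, the Serre bimodule is the dual of the diagonal, which is preserved). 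So it identifies $\RHom(\id,\mathsf S^{-1}_{\cA_X})$ with $\RHom(\id,\mathsf S^{-1}_{\cA_Y})$, and comparing minimal degrees gives $n=m$.

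Once $n=m$, we have $\Phi\Rot_X=\Phi\mathsf S^{-1}_{\cA_X}[n]\simeq\mathsf S^{-1}_{\cA_Y}[n]\Phi=\Rot_Y\Phi$, which is the commutation with the rotation functors.

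The main point to check is the low-dimension edge case. For $n=2$ the bound $W_1\in D^{\ge0}$ still kills $H^{-1}(W_1)$, so surjectivity of $\theta_1$ is not needed, only injectivity and the vanishing below degree $0$. This is exactly what the triangle provides, and the hypothesis $H^0(-K)\ne0$ ensures degree $0$ is genuinely attained.
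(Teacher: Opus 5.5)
Your proof is correct and is essentially the paper's argument. Both use Proposition~\ref{prop:serre} to write $\Rot=\mathsf S^{-1}[n]$, and the case $m=1$, $r=1$ of the kernel estimate to get $\cH_1\in D^{\ge0}$ with $H^0(\cH_1)\supset\theta_1(H^0(X,-K_X))\neq0$. The only difference is packaging: you extract the intrinsic invariant $n=\min\{p:\Hom(\id,\mathsf S^{-1}[p])\ne0\}$, which is the $q=1$ case of Lemma~\ref{lem:dimq}, whereas the paper transports $\Rot_Y$ to $\Rot_X[n-m]$, deduces $m\ge n$, and concludes by symmetry.
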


\begin{proof}
Let $\Phi\colon\cA_X\simeq\cA_Y$. It commutes with the Serre functors, so $\Phi\Rot_X\Phi^{-1}\simeq\mathsf S_{\cA_Y}^{-1}[n]=\Rot_Y[n-m]$, and therefore
\[
\Hom(\id,\Rot_Y)\simeq\Hom^{m-n}(\id,\Rot_X)=H^{m-n}(\cH_1^X).
\]
The left-hand side contains $\theta_1^Y(H^0(Y,-K_Y))\ne0$, since $\theta_1^Y$ is injective by Proposition~\ref{prop:kernel} ($m-1\ge1$). The right-hand side vanishes if $m<n$, since $\cH_1^X\in D^{\ge0}$ by \eqref{eq:bound} ($n-1\ge1$). Hence $m\ge n$, and by symmetry $m=n$. Then $\Phi\Rot_X\Phi^{-1}\simeq\Rot_Y$.
\end{proof}

\begin{proof}[Proof of Corollary~\ref{cor:vaPerry}]
Let $X$ and $Y$ be smooth Fano varieties of dimension at least three with very ample anticanonical bundles, and let $\cB_X,\cB_Y$ be either $\langle\cO\rangle$ or completely orthogonal collections of positive-rank exceptional sheaves, with an equivalence $\cB_X^{\perp}\simeq\cB_Y^{\perp}$. By Lemma~\ref{lem:dim}, $\dim X=\dim Y=n$ and the equivalence commutes with the rotation functors $\mathsf S^{-1}[n]$. Theorem~\ref{thm:general} with $(H,r)=(-K,1)$, which satisfies $n-r=n-1\ge2$, gives an isomorphism $X\simeq Y$ identifying the anticanonical embeddings.
\end{proof}

\begin{remark}\label{rem:dP5}
For pairs of del Pezzo surfaces of degree at least five, Perry's conjecture is elementary. For a del Pezzo surface $S$ of degree $d$, the Grothendieck group $K_0(S)$ is free of rank $12-d$, so $K_0(\cO_S^{\perp})$ has rank $11-d$, and an equivalence $\cO_S^{\perp}\simeq\cO_{S'}^{\perp}$ preserves the degree. In degrees $5,6,7,9$ there is a unique del Pezzo surface. In degree $8$ there are two, $\PP^1\times\PP^1$ and the blow-up $F_1$ of $\PP^2$ in a point. For $\PP^1\times\PP^1$, $\cO^{\perp}=\langle\cO(-1,-1),\cO(0,-1),\cO(-1,0)\rangle$ and the Gram matrix of the Euler form is $\begin{psmallmatrix}1&2&2\\0&1&0\\0&0&1\end{psmallmatrix}$; for $F_1$, with $h$ the pullback of a line and $E$ the exceptional curve, Orlov's blow-up formula \cite{Orl92} gives $\cO^{\perp}=\langle\cO_E(-1),\cO(-2h),\cO(-h)\rangle$ with Gram matrix $\begin{psmallmatrix}1&-1&-1\\0&1&3\\0&0&1\end{psmallmatrix}$. The symmetrized Euler form takes only even values in the first case but not in the second, and the Euler form on $K_0$ is preserved by equivalences, so the two categories are not equivalent.
\end{remark}

\subsection{Variants}
In Theorem~\ref{thm:general}, very ampleness enters only through \eqref{eq:discdual}. The following variant replaces it by a weaker condition, using normalization alone.

\begin{proposition}\label{prop:unramified}
In Theorem~\ref{thm:general}, and hence in Theorem~\ref{thm:A}, very ampleness of $H$ and $H_Y$ may be replaced by the condition that the linear systems are base point free and define unramified morphisms.
\end{proposition}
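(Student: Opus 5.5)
The plan is to rerun the proof of Theorem~\ref{thm:general} unchanged up to part (3). That part does not use very ampleness. It gives a linear isomorphism $\alpha\colon H^0(X,H)\simeq H^0(Y,H_Y)$ such that $\PP(\alpha)$ carries the locus of singular divisors in $|H|$ onto the corresponding locus in $|H_Y|$. Write $f\colon X\to Z\subset\PP(H^0(X,H)^\vee)$ and $g\colon Y\to Z'$ for the morphisms defined by the base point free systems. Since $H$ is ample, $f$ is finite; since it is unramified, it is an immersion at every point, so $df_x$ is injective.

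\emph{Step 1: identify the singular locus with a dual variety.} A divisor $D=f^*\Pi$ is singular at $x$ exactly when the pulled-back linear form vanishes to second order at $x$. Because $df_x$ is injective, this happens iff $\Pi$ contains the embedded tangent space $df_x(T_xX)$, which is an $n$-plane in the ambient projective space.

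The image $Z$ is irreducible of dimension $n$. On a dense open subset of $X$, $f$ is an unramified map to the smooth locus of $Z$. There its tangent plane is the tangent plane of $Z$ at $f(x)$, since the dimensions agree and $df_x$ is injective into $T_{f(x)}Z$. So the closure of the divisors singular at general points of $X$ is exactly $Z^\vee$.

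The full singular locus $\Sigma_X$ is the image of the irreducible incidence $\{(x,\Pi):\Pi\supset df_x(T_xX)\}$. This incidence is a projective bundle over $X$ of rank $\dim V-n-2$, hence irreducible. It therefore equals the closure of its part over the dense open set, and so $\Sigma_X=Z^\vee$. The same argument gives $\Sigma_Y=Z'^\vee$.

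\emph{Step 2: recover the images.} By Step 1 and part (3), $\PP(\alpha)(Z^\vee)=Z'^\vee$. Biduality (Section~\ref{ss:duality}), which holds for any irreducible variety including dual-defective ones, then shows that the contragredient transformation carries $Z$ onto $Z'$.

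\emph{Step 3: recover the varieties from the images.} This is the main obstacle, since $f$ may have degree greater than one and need not be birational. The paper's abstract says the general degree is handled by tangent divisors over function fields, with normalization doing the work in simple cases. Here is what I would try first.

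An unramified finite morphism from a smooth variety onto $Z$ is a local immersion, so over the normal locus of $Z$ it is étale. The preceding text asserts that normalization alone suffices, which suggests that for unramified $f$ the covering is birational onto $Z$, so that $X$ is the normalization $\tilde Z$. To argue this, I would use that, since $X$ is normal, $f$ factors through $\tilde Z\to Z$. The map $X\to\tilde Z$ is then finite and unramified, hence étale in codimension one. By purity of the branch locus over a smooth open subset it is étale there, and simple connectivity of the smooth Fano $X$ combined with the degree count should force it to have degree one. If this degree argument fails, I would fall back on the function-field argument of Section~\ref{sec:finite}, which the introduction states recovers $\CC(X)$ from the nodes of the generic tangent section.

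Granting birationality, $X\simeq\tilde Z$ and $Y\simeq\tilde Z'$. The projective isomorphism $Z\simeq Z'$ from Step 2 lifts to the normalizations, giving $f\colon X\simeq Y$. This $f$ lies over $Z\simeq Z'$, so it pulls back the hyperplane bundle and satisfies $f^*H_Y\simeq H$.
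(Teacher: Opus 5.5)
Your Steps 1 and 2 are essentially the paper's argument. The incidence $\{(x,\Pi):\Pi\supset df_x(T_xX)\}$ is a projective bundle over $X$, hence irreducible, and it agrees with the conormal variety of $Z$ over a dense open set. So the locus of singular divisors is $Z^\vee$, and biduality recovers $Z$, dual-defective or not.

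The gap is in Step 3, which is exactly where unramifiedness has to do the work that very ampleness did. Simple connectivity of $X$ is the wrong input. A simply connected variety can be a nontrivial étale cover, for example a K3 surface over an Enriques surface, so nothing about $\pi_1(X)$ forces $X\to\tilde Z$ to have degree one. What you need is simple connectivity of the base $\tilde Z$.

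For that you must first know that $\psi\colon X\to\tilde Z$ is étale everywhere. Purity only gives étaleness over the smooth locus of $\tilde Z$, which is quasi-projective and need not be simply connected. Your own earlier remark already supplies the missing fact, applied to $\tilde Z$, which is normal everywhere. The paper argues as follows. By the local structure of unramified morphisms (EGA IV, 18.4.7), étale locally $X$ is a closed subscheme of a scheme étale over the normal $\tilde Z$. That scheme is normal of the same dimension, and since $X$ is integral the two agree, so $\psi$ is étale. Then $\tilde Z$ is smooth and $\psi^*K_{\tilde Z}=K_X$, so $-K_{\tilde Z}$ is ample. Thus $\tilde Z$ is a smooth Fano variety, hence simply connected, and the connected étale cover $\psi$ is an isomorphism.

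Once $\psi$ is known to be étale over all of the smooth projective $\tilde Z$, an honest degree count also works: $1=\chi(\cO_X)=\deg\psi\cdot\chi(\cO_{\tilde Z})$. Neither argument is available from what you wrote.

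Your proposed fallback to the function-field argument does not rescue the general case. Lemma~\ref{lem:tangent} and Theorem~\ref{thm:finite} require $Z^\vee$ to be a hypersurface. The statement, like the very ample case of Theorem~\ref{thm:general}, is meant to cover dual-defective images, where only biduality and the identification $X\simeq\tilde Z$ are used.
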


\begin{proof}
Let $\varphi\colon X\to\PP(V^\vee)$, $V=H^0(X,H)$, be the morphism defined by $|H|$, assumed unramified, and let $Z=\varphi(X)$. Since $H$ is ample, $\varphi$ is finite. It factors through the normalization $\nu\colon Z^\nu\to Z$ as $\psi\colon X\to Z^\nu$, which is finite, surjective and unramified, since $\Omega_{X/Z^\nu}$ is a quotient of $\Omega_{X/\PP(V^\vee)}=0$. By the local structure of unramified morphisms \cite[18.4.7]{EGAIV4}, étale locally $X$ is a closed subscheme of a scheme étale over $Z^\nu$, hence normal, of the same dimension; as $X$ is integral, the two agree, so $\psi$ is étale. Then $Z^\nu$ is smooth, and $-K_{Z^\nu}$ is ample because $\psi^*(-K_{Z^\nu})=-K_X$; thus $Z^\nu$ is a smooth Fano variety, hence simply connected, and the connected étale cover $\psi$ is an isomorphism. So $\varphi$ is the normalization of $Z$.

A divisor $D_s=\varphi^{-1}(\Pi_s)$ is singular at $x$ exactly when the hyperplane $\Pi_s$ contains the embedded tangent space $\varphi(x)+d\varphi_x(T_xX)$, which is $n$-dimensional since $\varphi$ is unramified. The incidence of such pairs $(x,[s])$ is a projective bundle over $X$, hence irreducible, and over the dense open set where $\varphi$ is an isomorphism onto the smooth locus of $Z$ it is the conormal variety of $Z$. Hence the locus of singular divisors is $Z^\vee$, biduality recovers $Z\subset\PP(V^\vee)$, and $X$ is recovered as its normalization, with $H=\nu^*\cO(1)$. The argument of Theorem~\ref{thm:general} then applies verbatim.
\end{proof}

\begin{remark}\label{rem:limits}
If $|H|$ defines a finite morphism with ramification, parts (1)--(3) of Theorem~\ref{thm:general} still hold, and the locus of singular divisors also records the ramification. For instance, let $f\colon X\to M\subset\PP^N$ be a double cover branched along a smooth divisor $B$, with $H=f^*\cO(1)$. Away from $B$ the map $f$ is étale. Near a point of $B$, in local coordinates with $B=\{u_1=0\}$ and $X$ given by $y^2=u_1$, the pullback $h(y^2,u_2,\dots,u_n)$ of a local equation $h$ of $M\cap\Pi$ has vanishing $y$-derivative along $y=0$, and its other derivatives are those of $h|_B$. Hence the singular divisors in $|H|$ are those tangent to $M$ or to $B$, and the discriminant is $M^\vee\cup B^\vee$. Section~\ref{sec:finite} recovers the image from the component $M^\vee$, when it is a hypersurface, and the covering from a tangent divisor over the function field of $M^\vee$.
\end{remark}

\section{Finite polarization maps and tangent divisors}\label{sec:finite}

When $|H|$ defines a finite morphism which is not an embedding, the locus of singular divisors is no longer the dual of the image: the ramification contributes further components (Remark~\ref{rem:limits}). In this section we first recover the image from a hypersurface component of this locus, and then the covering from a single divisor defined over the function field of the dual of the image. Throughout, equivalences are enhanced and linear over the field indicated.

\subsection{Incidence and the common image}
We use the following hypersurface case of generalized biduality; the inclusion of dual varieties is \cite[Theorem~(1.4)]{LM08}, and the differential argument below also locates every singular point of a general member.

\begin{lemma}\label{lem:incidence}
Let $W$ be a smooth projective complex variety, $L$ a globally generated line bundle, and $V\subset H^0(W,L)$ a generating subspace, with induced morphism $f\colon W\to\PP(V^\vee)$. Let $\Gamma\subset\PP(V)$ be an irreducible hypersurface which is a component of the locus of singular divisors, and let $\gamma_\Gamma\colon\Gamma\dashrightarrow\PP(V^\vee)$ be its Gauss map. Then for a general $[s]\in\Gamma$ every singular point $w$ of $D_s$ satisfies $f(w)=\gamma_\Gamma([s])$. In particular $\Gamma^\vee\subset f(W)$.
\end{lemma}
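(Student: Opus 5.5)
We set up the incidence variety and differentiate along it; the statement is then a Lagrangian/conormal argument. Let
\[
I=\{(w,[s])\in W\times\PP(V): s(w)=0,\ ds(w)=0\}
\]
be the locus where the first jet of $s$ vanishes. The first-jet map $V\otimes\cO_W\to J^1(L)$ has rank at least one everywhere, because $L$ is globally generated and $V$ generates. Its kernel is therefore locally free away from the points where the rank drops, and the fibre of $I$ over $w$ is the linear space $\PP(\ker(V\to J^1(L)_w))$. Since $\Gamma$ is an irreducible component of the image of $I\to\PP(V)$, we choose an irreducible component $I_\Gamma$ of $I$ dominating $\Gamma$. Because $\Gamma$ is a hypersurface, the general fibre of $I_\Gamma\to\Gamma$ is finite. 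We then pass to an open dense subset on which $I_\Gamma$ and $\Gamma$ are smooth and the map $I_\Gamma\to\Gamma$ is étale, and on which every point of $\Gamma$ lies in the image of only the components dominating $\Gamma$. For general $[s]\in\Gamma$, every singular point $w$ of $D_s$ then gives a point $(w,[s])$ of such a component.

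The key step is the differential computation. Take a smooth curve germ $t\mapsto(w(t),[s_t])$ in $I_\Gamma$ through $(w,[s])$, with $s_t\in V$ lifting it. From $s_t(w(t))=0$ we get
\[
\dot s(w)+ds(w)\cdot\dot w=0,
\]
and since $ds(w)=0$ this reduces to $\dot s(w)=0$. The tangent space $T_{[s]}\Gamma$ is the image of $T I_\Gamma$, by étaleness. Hence every tangent vector $\dot s\in V/\CC s$ of $\Gamma$ at $[s]$ satisfies $\dot s(w)=0$, that is, $\dot s$ lies in the hyperplane of $V$ cut out by evaluation at $f(w)$. So the embedded tangent hyperplane of $\Gamma$ at $[s]$ is the hyperplane $\{v: v(f(w))=0\}$. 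By definition of the Gauss map this gives $\gamma_\Gamma([s])=f(w)$. We should check that $s$ itself lies in this hyperplane, which holds because $s(w)=0$, so it is consistent with the affine cone tangent space.

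The conclusion follows from this. For general $[s]$, the point $\gamma_\Gamma([s])$ lies in $f(W)$. The dual $\Gamma^\vee$ is the closure of the image of the Gauss map, and $f(W)$ is closed since $W$ is projective, so $\Gamma^\vee\subset f(W)$.

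The main obstacle is the genericity bookkeeping. We must ensure that for general $[s]\in\Gamma$ every singular point of $D_s$, not just some, lies on a component of $I$ that dominates $\Gamma$ and is étale over it there. Components of $I$ not dominating $\Gamma$ have images that are proper closed subsets of $\Gamma$ and can be discarded. For the dominating components, $I_\Gamma\to\Gamma$ is generically finite, and in characteristic zero generic smoothness gives étaleness over a dense open set; this uses the characteristic-zero hypothesis, as in the reflexivity statement for duals.
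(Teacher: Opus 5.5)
\textbf{Gap: the generic finiteness of $I_\Gamma\to\Gamma$.} Your differential computation is the paper's: differentiating $s(w)=0$ along the incidence and using $ds_w=0$ gives $\dot s(w)=0$. So the tangent hyperplane of $\Gamma$ at $[s]$ is the kernel of evaluation at $w$, whose dual point is $f(w)$.

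The gap is the claim that, because $\Gamma$ is a hypersurface, the general fibre of $I_\Gamma\to\Gamma$ is finite. Your \'etaleness and your genericity bookkeeping both rest on it, and under the lemma's hypotheses, where $L$ is only globally generated, it is false. Take $W=\PP^1\times\PP^1$, $L=\cO(2,0)$ and $V=H^0(W,L)$. The locus of singular divisors is the conic of squares $\Gamma\subset\PP(V)=\PP^2$, an irreducible hypersurface. But for $[s]=[\ell_p^2]\in\Gamma$, with $\ell_p$ a linear form vanishing at $p$, the divisor $D_s$ is the double fibre over $p$. It is singular along all of $\{p\}\times\PP^1$, so $I_\Gamma\to\Gamma$ is a $\PP^1$-bundle. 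The lemma's conclusion still holds here, since $f$ contracts that fibre to the contact point.

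Once fibres can be positive-dimensional, you can no longer pick a dense open $U\subset\Gamma$ whose entire preimage in $I_\Gamma$ consists of smooth points at which $I_\Gamma\to\Gamma$ is submersive. The singular locus of $I_\Gamma$, or the non-smooth locus of the map, is a proper closed subset that may still dominate $\Gamma$. So for a general $[s]$ your argument does not reach \emph{every} singular point of $D_s$, which is what the lemma asserts.

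\textbf{The repair (the paper's argument).} First, the tangent computation only needs $I_\Gamma\to\Gamma$ to be smooth, i.e.\ surjective on tangent spaces, at the point, not \'etale. Generic smoothness in characteristic zero provides this on a dense open subset of $I_\Gamma$. Hence the morphisms $f\circ p_1$ and $\gamma_\Gamma\circ p_2$ agree on a dense open subset of the reduced scheme $I_\Gamma\cap p_2^{-1}(\Gamma_{\mathrm{sm}})$. Because $\PP(V^\vee)$ is separated, they agree on all of it. Discarding from $\Gamma$ the images of the components of $I$ that do not dominate it then covers every singular point of a general member.

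In the setting where the lemma is used later ($L$ ample, $f$ finite), generic finiteness does hold, but it must be deduced rather than read off from $\Gamma$ being a hypersurface. One way is from this same argument: a positive-dimensional family of singular points of a general $D_s$ would be contracted by $f$ to the single point $\gamma_\Gamma([s])$.
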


\begin{proof}
Consider the reduced closed incidence $I=\{(w,[s]):s(w)=0,\ ds_w=0\}\subset W\times\PP(V)$, and let $T$ be an irreducible component of $I$ dominating $\Gamma$. In characteristic zero the map $T\to\Gamma$ is smooth at a general point $(w,[s])$. Differentiating $s(w)=0$ along $T$ gives $\dot s(w)+ds_w(\dot w)=0$, hence $\dot s(w)=0$, since $ds_w=0$. Thus $T_{[s]}\Gamma$ is contained in the kernel of evaluation at $w$ on $T_{[s]}\PP(V)$. This evaluation is nonzero because $V$ generates $L$, and both subspaces are hyperplanes, so they are equal; the point of $\PP(V^\vee)$ dual to the first is $\gamma_\Gamma([s])$, and that dual to the second is $f(w)$.

On the inverse image of the smooth locus of $\Gamma$, the equality $f(w)=\gamma_\Gamma([s])$ of morphisms extends from a dense open subset of $T$ to all of it, since projective space is separated. As $I$ has finitely many components, removing from $\Gamma$ the closures of the images of the components which do not dominate it, and intersecting the open sets obtained for the dominating ones, proves the assertion for every singular point of a general member. Finally, the closure of the image of the Gauss map of $\Gamma$ is $\Gamma^\vee$.
\end{proof}

\begin{lemma}\label{lem:commonimage}
Let $X,Y$ be smooth projective complex varieties of the same dimension, with ample globally generated line bundles $H_X,H_Y$, and assume that $Z_{H_X}^\vee$ is a hypersurface. Then every linear isomorphism $H^0(X,H_X)\simeq H^0(Y,H_Y)$ which identifies the reduced loci of singular divisors identifies the images $Z_{H_X}$ and $Z_{H_Y}$.
\end{lemma}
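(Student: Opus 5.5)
The plan is to show that $Z_{H_X}^\vee$ is an irreducible hypersurface component of the singular locus on the $X$ side, transport it by the given linear isomorphism, and then apply Lemma~\ref{lem:incidence} on the $Y$ side.

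Write $V_X=H^0(X,H_X)$, $V_Y=H^0(Y,H_Y)$, and let $\alpha\colon V_X\simeq V_Y$ be the given isomorphism. Let $\Sigma_X\subset\PP(V_X)$ and $\Sigma_Y\subset\PP(V_Y)$ be the reduced loci of singular divisors, so that $\PP(\alpha)(\Sigma_X)=\Sigma_Y$. Let $f_X\colon X\to Z_X=Z_{H_X}$ and $f_Y\colon Y\to Z_Y=Z_{H_Y}$ be the finite morphisms defined by the complete linear systems. Both $Z_X$ and $Z_Y$ are irreducible of dimension $n=\dim X=\dim Y$.

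\emph{Step 1: $Z_X^\vee$ is an irreducible component of $\Sigma_X$.} The morphism $f_X$ is finite and surjective onto $Z_X$, with $X$ smooth, and we are in characteristic zero. Hence there is a dense open $U\subset Z_X^{\mathrm{sm}}$ over which $f_X$ is étale. For $z\in U$ and any hyperplane $\Pi_s$ tangent to $Z_X$ at $z$, take $w\in f_X^{-1}(z)$. Then $d(f_X)_w$ maps $T_wX$ isomorphically onto $T_zZ_X\subset\Pi_s$, so $s(w)=0$ and $ds_w=0$, and $D_s$ is singular at $w$. Hyperplanes tangent at points of $U$ are dense in $Z_X^\vee$ and $\Sigma_X$ is closed, so $Z_X^\vee\subset\Sigma_X$. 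A general divisor is smooth by Bertini, since $H_X$ is globally generated, so $\Sigma_X$ is a proper closed subset of $\PP(V_X)$. Since $Z_X^\vee$ is an irreducible hypersurface by assumption, it is an irreducible component of $\Sigma_X$.

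\emph{Step 2: transport and incidence.} Put $\Gamma=\PP(\alpha)(Z_X^\vee)$. It is an irreducible hypersurface component of $\Sigma_Y$, because $\PP(\alpha)$ is an isomorphism carrying $\Sigma_X$ onto $\Sigma_Y$. Apply Lemma~\ref{lem:incidence} to $W=Y$, $L=H_Y$, $V=V_Y$; this gives $\Gamma^\vee\subset Z_Y$. Projective duality is equivariant for linear isomorphisms, and biduality holds (Section~\ref{ss:duality}). Hence
\[
\Gamma^\vee=\PP\bigl((\alpha^\vee)^{-1}\bigr)\bigl((Z_X^\vee)^\vee\bigr)=\PP\bigl((\alpha^\vee)^{-1}\bigr)(Z_X),
\]
so the contragredient transformation maps $Z_X$ into $Z_Y$.

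\emph{Step 3: conclusion.} Both $Z_X$ and $Z_Y$ are irreducible closed subvarieties of dimension $n$. The inclusion of the image of $Z_X$ in $Z_Y$ is therefore an equality, and $\PP((\alpha^\vee)^{-1})$ identifies $Z_X$ with $Z_Y$.

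The main obstacle is Step 1, namely ensuring that the dual of the image actually lies in the singular locus despite possible ramification of $f_X$. This is handled by restricting to the generic étale locus and using closedness of $\Sigma_X$. Everything else is formal given Lemma~\ref{lem:incidence} and biduality.
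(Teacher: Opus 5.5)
Your proposal is correct and follows essentially the same route as the paper. You use the generic étale locus to put $Z_{H_X}^\vee$ inside the singular locus, Bertini to make it a hypersurface component, transport it by the given isomorphism, then Lemma~\ref{lem:incidence} on $Y$ with biduality, and irreducibility with equal dimensions to conclude. The only detail you leave implicit, and the paper states, is why $f_{H_X},f_{H_Y}$ are finite: a positive-dimensional fibre would contain a curve on which an ample bundle has degree zero. That finiteness is what gives $\dim Z_{H_X}=\dim Z_{H_Y}=n$.
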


\begin{proof}
Use the identification of section spaces to place both images in $\PP(V^\vee)$. The maps $f_{H_X}$ and $f_{H_Y}$ are finite, since a positive-dimensional fibre would contain a curve on which an ample line bundle has degree zero; so the images have the same dimension. Put $\Gamma=Z_{H_X}^\vee$. Over a dense smooth open subset of $Z_{H_X}$ the map from $X$ is étale, so a hyperplane tangent there pulls back to a singular divisor, and $\Gamma$ lies in the locus of singular divisors of $X$. By Bertini's theorem this locus is a proper closed subset, so the irreducible hypersurface $\Gamma$ is one of its components, and hence also a component of the locus of singular divisors of $Y$. Lemma~\ref{lem:incidence} for $Y$ and biduality give $Z_{H_X}=\Gamma^\vee\subset Z_{H_Y}$. The two images are irreducible and reduced of the same dimension, hence equal.
\end{proof}

\subsection{The singularity category of a residual component}
For a small proper idempotent-complete dg category $\cT$ over a field $F$, let $\PsPerf(\cT)$ be the dg category of $\cT$-modules with values in perfect $F$-complexes. It contains the Yoneda image of $\cT$, and we put
\[
\Sg(\cT)=\bigl(\PsPerf(\cT)/\cT\bigr)^{\natural},
\]
where the quotient is the enhanced Verdier quotient and $(-)^\natural$ is idempotent completion. This construction is Morita invariant. For a projective $F$-scheme $D$, \cite[Proposition~B.14]{Efi17} identifies $\PsPerf(\Perf(D))$ with $\Db_{\mathrm{coh}}(D)$, so $\Sg(\Perf(D))$ is the idempotent completion of the singularity category of $D$.

\begin{lemma}\label{lem:sgresidual}
Let $D$ be a projective $F$-scheme with a semiorthogonal decomposition $\Perf(D)=\langle\cT,E_1,\dots,E_t\rangle$, where $E_1,\dots,E_t$ is an exceptional collection. Then there is an enhanced equivalence
\[
\Sg(\cT)\simeq\bigl(\Db_{\mathrm{coh}}(D)/\Perf(D)\bigr)^{\natural},
\]
compatible with automorphisms of $F$ acting on $D$ and on the decomposition. In particular, the residual category $\cA_D$ of \eqref{eq:AD} determines the singularity category of $D$.
\end{lemma}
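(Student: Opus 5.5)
We compare $\Sg(\cT)$ with $\Sg(\Perf(D))$ by showing that passing from $\cT$ to $\Perf(D)$ only adds pieces that are smooth and proper, and that such pieces contribute nothing to $\Sg$. The first step is a general gluing principle. Suppose $\cC=\langle\cT,\cE\rangle$ is a semiorthogonal decomposition of a small proper idempotent-complete dg category over $F$, with $\cE$ smooth and proper. Then $\PsPerf(\cC)=\langle\PsPerf(\cT),\PsPerf(\cE)\rangle$. To see this, restrict a pseudo-perfect $\cC$-module along the two inclusions; the restrictions remain pseudo-perfect. The gluing bimodule is pointwise perfect, and the upper-triangular description of $\cC$-modules from Section~\ref{ss:gluing} gives the decomposition. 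For $\cE$ smooth and proper one has $\PsPerf(\cE)=\cE$, since a smooth proper category is saturated and pseudo-perfect modules are perfect. Under the Yoneda embedding, $\cC=\langle\cT,\cE\rangle$ sits inside $\PsPerf(\cC)=\langle\PsPerf(\cT),\cE\rangle$ compatibly with the two decompositions. Hence the Verdier quotient $\PsPerf(\cC)/\cC$ is equivalent to $\PsPerf(\cT)/\cT$. Indeed, the projection onto the left component induces the equivalence, because $\cE\subset\cC$ is killed on both sides. After idempotent completion this gives $\Sg(\cT)\simeq\Sg(\cC)$.

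Next we apply this principle inductively to $\cC_k=\langle\cT,E_1,\dots,E_k\rangle$, starting from $\cC_0=\cT$ and ending with $\cC_t=\Perf(D)$. Each $\langle E_k\rangle\simeq\Perf(F)$ is smooth and proper, and each $\cC_k$ is an admissible, proper, idempotent-complete subcategory of $\Perf(D)$. This yields $\Sg(\cT)\simeq\Sg(\Perf(D))$. By Efimov's identification \cite[Proposition~B.14]{Efi17}, $\PsPerf(\Perf(D))\simeq\Db_{\mathrm{coh}}(D)$, and this turns the right-hand side into $(\Db_{\mathrm{coh}}(D)/\Perf(D))^\natural$.

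Compatibility with automorphisms comes for free, since every construction is functorial. An automorphism $\sigma$ of $F$ that acts semilinearly on $D$ and preserves the decomposition acts on $\cT$, on $\PsPerf$, on the quotient and on Efimov's equivalence. The last of these is induced by $M\mapsto\RHom(-,M)$ and is therefore natural. For the final assertion, Lemma~\ref{lem:restrict}, applied over $F$ as in Corollary~\ref{cor:extension}, supplies such a decomposition of $\Perf(D)$ with $\cT=\cA_D$. The blocks $Lj^*\cB_X(aH)$ are completely orthogonal exceptional blocks, and they can be ordered into an exceptional collection. The Morita class of $\cA_D$ therefore determines the singularity category.

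The main obstacle is the first step, and specifically showing that the decomposition of $\PsPerf(\cC)$ is compatible with the Yoneda image. One must check that $\PsPerf(\cT)$ embeds fully faithfully as the left-orthogonal piece. Concretely, a $\cT$-module $N$ should extend to a $\cC$-module that vanishes on $\cE$, or is glued through the bimodule, and this extension must stay pseudo-perfect. The extension is the induced module $\RHom$-tensored with the gluing bimodule. Its pseudo-perfectness uses properness of $\cC$ together with the finiteness of the gluing bimodule, which holds because $\cE$ is smooth and proper, as in Section~\ref{ss:smooth}. We would verify this directly with the upper-triangular model, where every statement becomes a finite-dimensionality check on $F$-complexes.
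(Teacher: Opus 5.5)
Your argument is correct and is essentially the paper's: both extend the decomposition to pseudo-perfect modules, with $\PsPerf(\cT)$ embedded as the modules killing the exceptional part, and with that part remaining representable because $\RHom(E_j,M)$ is perfect over $F$ (your saturatedness of $\cE$); both then pass to Verdier quotients. The only difference is that you add one smooth proper piece at a time and apply Efimov at the end, while the paper applies Efimov first and uses mutation triangles in $\Db_{\mathrm{coh}}(D)$. Note also that the step you flag as the main obstacle is not one: the extension of a $\cT$-module is restriction along the left adjoint $\pi$, that is, extension by zero, which is trivially pseudo-perfect, so no finiteness of the gluing bimodule is needed. The appeal to Section~\ref{ss:smooth} would not apply anyway, since $\cT$ is not smooth when $D$ is singular.
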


\begin{proof}
Let $i\colon\cT\to\Perf(D)$ be the inclusion and $\pi$ its left adjoint. Precomposition with $\pi$ extends a pseudo-perfect module on $\cT$ to one on $\Perf(D)$; this is fully faithful, preserves representable modules, and its image consists of the modules vanishing on every $E_j$, since the unit triangle of the projection has its third term in $\langle E_1,\dots,E_t\rangle$. Under Efimov's identification this image is the right orthogonal of the $E_j$ in $\Db_{\mathrm{coh}}(D)$. For a coherent complex $M$ all $\RHom_D(E_j,M)$ are perfect over $F$, because the $E_j$ are perfect and $D$ is proper, so the usual mutation triangles extend the decomposition to $\Db_{\mathrm{coh}}(D)=\langle\PsPerf(\cT),E_1,\dots,E_t\rangle$, with perfect part $\langle\cT,E_1,\dots,E_t\rangle$. Taking quotients kills the exceptional summands on both sides and gives the equivalence after idempotent completion. All constructions are natural, which gives the compatibility with automorphisms.
\end{proof}

\begin{lemma}\label{lem:nodes}
Let $D,D'$ be projective schemes of positive dimension over a field $F$ of characteristic zero, all of whose geometric singularities are isolated ordinary double points, and suppose that $\Perf(D)$ and $\Perf(D')$ have semiorthogonal decompositions into residual components $\cT,\cT'$ and finite, possibly empty, exceptional collections. Then an $F$-linear equivalence $\cT\simeq\cT'$ induces an isomorphism of finite étale $F$-schemes $\Sing(D)_{\mathrm{red}}\simeq\Sing(D')_{\mathrm{red}}$.
\end{lemma}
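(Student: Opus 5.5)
By Lemma~\ref{lem:sgresidual}, the equivalence $\cT\simeq\cT'$ induces an $F$-linear enhanced equivalence
\[
\Sg(\cT)\simeq\bigl(\Db_{\mathrm{coh}}(D)/\Perf(D)\bigr)^{\natural}\simeq\bigl(\Db_{\mathrm{coh}}(D')/\Perf(D')\bigr)^{\natural}.
\]
So the task reduces to reading off $\Sing(D)_{\mathrm{red}}$, as a finite étale $F$-scheme, from the idempotent-completed singularity category $\Sg(D)$ of a projective scheme with only isolated nodes. The plan is to extract the finite étale $F$-algebra $\cO(\Sing(D)_{\mathrm{red}})$ as an intrinsic invariant of $\Sg(D)$, namely a suitable part of its Hochschild cohomology (its center).

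First I localize. Since the singular locus is a finite set of closed points $p_1,\dots,p_k$ with residue fields $F_i$ (finite separable over $F$), Orlov's theorem \cite{Orl11} gives
\[
\Sg(D)\simeq\prod_i\Sg\bigl(\widehat{\cO}_{D,p_i}\bigr)^{\natural}
\]
after idempotent completion. Each factor is an $F_i$-linear category and is nonzero, since $p_i$ is singular. Over $\bar F$, a node of dimension $d=\dim D\ge1$ has complete local ring $\bar F[[x_0,\dots,x_d]]/(\sum x_j^2)$. Its singularity category is the matrix factorization category of $\sum x_j^2$, which by Knörrer periodicity is equivalent to that of $x^2$ or of $x^2+y^2$ according to the parity of $d$.

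Next I compute the center $\HH^0$ of each local factor. For the Hochschild cohomology of the singularity category of an isolated hypersurface singularity, I would use Dyckerhoff \cite{Dyc11}: it is the Jacobian (Milnor) algebra, in the $\ZZ/2$-graded sense with the appropriate parity twist. For a node the Milnor number is $1$, so I expect $\HH^0$ of the geometric factor to be $\bar F$. By Galois descent, $\HH^0$ of $\Sg(\widehat{\cO}_{D,p_i})$ is then a form of $F_i\otimes_F\bar F$ restricted to one point. I would argue more carefully that its center is exactly $F_i$, using that the category is $F_i$-linear and that its base change to $\bar F$ splits into $[F_i:F]$ factors, each with center $\bar F$. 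This gives
\[
\HH^0(\Sg(D))\simeq\prod_iF_i=\cO\bigl(\Sing(D)_{\mathrm{red}}\bigr),
\]
an invariant of $F$-linear enhanced equivalence. Hence $\Sing(D)_{\mathrm{red}}\simeq\Spec\HH^0(\Sg(\cT))\simeq\Sing(D')_{\mathrm{red}}$.

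The main obstacle is this center computation in the $\ZZ$-graded enhanced sense. $\Sg$ of an odd-dimensional node may be $\ZZ/2$-periodic, with $\HH^\bullet$ concentrated in degrees of some parity. Idempotent completion can also change $K_0$, although it should not change $\HH^0$. If $\HH^0$ fails to be exactly $F_i$ in some parity (for instance $\bar F\oplus\bar F$ for $x^2+y^2$ if computed naively), I would fall back on a different invariant. In that case I would decompose $\Sg(D)\otimes_F\bar F$ into its indecomposable block summands, one per geometric node; this is intrinsic, since the blocks are the minimal central idempotents, or equivalently the connected components in the orthogonal-decomposition sense. I would then recover $\Sing(D)_{\mathrm{red}}$ as the finite $\mathrm{Gal}(\bar F/F)$-set of these blocks. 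The Galois action is intrinsic because the equivalence is defined over $F$.
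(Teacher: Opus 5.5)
Your main route is different from the paper's, and it works once you settle the step you hedge on. Your fallback is essentially the paper's proof.

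\emph{The paper's route.} The paper never computes a center. It passes to a finite Galois extension $E$ that splits the nodes and their tangent cones, applies Lemma~\ref{lem:sgresidual} over $E$, and uses Orlov's product decomposition. From Dyckerhoff's Clifford description it reads off that each split local factor has a single suspension orbit of indecomposable objects, so the factor is indecomposable as a triangulated category. An exact equivalence must then permute the factors Galois-equivariantly, and the Galois-set description of finite \'etale schemes finishes the proof.

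\emph{Your main route.} You read off the \'etale algebra directly as $\HH^0(\Sg(\cT))$. The worry about $\bar F\oplus\bar F$ does not arise, because enhanced natural transformations commute with $[1]$. The additive splitting of $\Sg(\bar F[[x,y]]/(xy))$ into the two shift-swapped objects $M_0$ and $M_0[1]$ therefore produces no central idempotent.

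Concretely, a split node's factor is Morita equivalent to $\Perf(\bar F[u^{\pm1}])$ with $|u|=2$, or to $\Perf(\bar F[t^{\pm1}])$ with $|t|=1$. Its $\HH^0$ is $\bar F$. The classes that the $\ZZ$-grading adds to Dyckerhoff's $\ZZ/2$-graded answer, such as the Euler derivation, sit in odd total degree.

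Three further checks close the argument:
\begin{enumerate}
\item Orlov's decomposition is only triangulated, but an orthogonal decomposition of the homotopy category lifts to the enhancement, since the mixed morphism complexes are acyclic. Hence $\HH^0(\Sg(D))$ splits as a product over the singular points.
\item $\HH^0$ commutes with base change along a \emph{finite} extension, since $\RHom$ commutes with tensoring by a finite-dimensional vector space. Your dimension count then gives exactly $F_i$ for each local factor.
\item Use a finite Galois extension rather than $\bar F$ throughout.
\end{enumerate}

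\emph{On your fallback.} ``One block per geometric node'' needs exactly the indecomposability input the paper supplies, with blocks taken shift-closed. If you defined blocks through central idempotents of a center that really were $\bar F\oplus\bar F$, you would get two blocks per node. So as written, the fallback is not independent of the computation it is meant to replace.

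\emph{Trade-off.} Your route yields a canonical $F$-algebra invariant with $\Spec\HH^0(\Sg(\cT))\simeq\Sing(D)_{\mathrm{red}}$. It needs no Galois sets and handles non-split tangent cones by descent of dimension. The paper's route needs only triangulated-level information and the classification of indecomposable objects, and avoids Hochschild computations in the $\ZZ$-graded setting.
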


\begin{proof}
Choose a finite Galois extension $E/F$ over which the singular points and their quadratic tangent cones are split. Flat base change of compact generators identifies the residual components over $E$ with the scalar extensions of $\cT$ and $\cT'$, so the equivalence gives an $E$-linear equivalence compatible with the semilinear Galois descent data. Apply Lemma~\ref{lem:sgresidual} over $E$. By \cite[Proposition~2.7, Corollary~2.9 and Theorem~2.10]{Orl11}, the idempotent completion of the singularity category is a product of local factors, one for each singular point, and this decomposition is natural with respect to permutations of the supports. At a split ordinary double point the formal Morse lemma reduces the local equation to a nondegenerate split quadratic form, and the local factor is described by the corresponding Clifford superalgebra \cite[Section~2, Theorem~5.7 and Section~5.5]{Dyc11}; the split Clifford superalgebra is graded Morita equivalent to $E$ or to the Clifford superalgebra on one generator. Hence each local factor has a single orbit of indecomposable objects under the suspension, and it is not a nontrivial product of triangulated categories. An exact equivalence therefore permutes the local factors, and the permutation is equivariant for the Galois group of $E/F$, because the equivalence and the support decomposition are compatible with the descent data. Since finite étale $F$-schemes are equivalent to finite continuous Galois sets \cite[Tag~03QR]{Stacks}, the equivariant bijection descends to the stated isomorphism.
\end{proof}

\subsection{A generic tangent divisor}

\begin{lemma}\label{lem:tangent}
Let $f\colon W\to Z\subset\PP^N$ be a finite surjective morphism from a smooth complex variety of dimension $n\ge2$, and assume that $\Gamma=Z^\vee$ is a hypersurface. Put $K=\CC(Z)$ and $F=\CC(\Gamma)$, and let $D_\eta$ be the pullback to $W_F$ of the \emph{generic tangent hyperplane}, the hyperplane over $F$ given by the generic point of $\Gamma$. The \emph{contact map} $\Gamma\dashrightarrow Z$, sending a general tangent hyperplane to its point of tangency, embeds $K$ in $F$, and
\begin{equation}\label{eq:puretrans}
F\simeq K(t_1,\dots,t_{N-n-1}).
\end{equation}
Every geometric singularity of $D_\eta$ is an ordinary double point, and the reduced singular scheme of $D_\eta$ is
\begin{equation}\label{eq:nodealgebra}
\Sing(D_\eta)_{\mathrm{red}}\simeq\Spec\bigl(\CC(W)\otimes_KF\bigr).
\end{equation}
\end{lemma}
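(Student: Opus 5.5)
We work with the conormal variety $\mathcal N_Z\subset Z\times\PP(V)$, where $\PP^N=\PP(V^\vee)$. It is the closure of the set of pairs $(z,\Pi)$ with $z\in Z_{\mathrm{sm}}$ and $\Pi\supset T_zZ$. It is irreducible of dimension $N-1$, and its projection to $Z$ is generically a $\PP^{N-n-1}$-bundle. Its projection to $\PP(V)$ has image $\Gamma$. Since $\Gamma$ is a hypersurface, this projection is generically finite; by reflexivity (biduality in characteristic zero, Section~\ref{ss:duality}) it is birational, and its inverse is the Gauss map $\gamma_\Gamma$. Composing $\gamma_\Gamma$ with the projection to $Z$ gives the contact map $\Gamma\dashrightarrow Z$, which is dominant, so $K\hookrightarrow\CC(\mathcal N_Z)=F$.

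Over the generic point of $Z$, the fibre of $\mathcal N_Z\to Z$ is the projective space of hyperplanes containing a fixed $n$-plane. This is a $\PP^{N-n-1}_K$, which has a $K$-point, hence is $K$-rational. This gives \eqref{eq:puretrans}.

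For the singularities, use Lemma~\ref{lem:incidence} with $W,f$, $L=f^*\cO(1)$ and $V$ the span of the pulled-back linear forms. A general $[s]\in\Gamma$ has all singular points of $D_s$ in $f^{-1}(\gamma_\Gamma[s])$. By generic smoothness and finiteness of $f$, there is a dense open $Z^\circ\subset Z_{\mathrm{sm}}$ over which $f$ is finite étale. The general contact point $z=\gamma_\Gamma[s]$ lies in $Z^\circ$. Conversely, every $w\in f^{-1}(z)$ is a singular point of $D_s$, since locally $D_s$ is the pullback of $Z\cap\Pi_s$ under an étale map, and $Z\cap\Pi_s$ is singular at $z$.

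To get ordinary double points, it suffices to show that $Z\cap\Pi_s$ has a nondegenerate quadratic singularity at $z$ for general $[s]\in\Gamma$. This is the classical fact that the Hessian (second fundamental form contracted with the conormal direction) is nondegenerate at a general point of the conormal variety exactly when $Z^\vee$ is a hypersurface, as in Katz's criterion. I expect this step to be the main obstacle. I would prove it by the standard computation: in local coordinates, write $Z$ as a graph $x_{n+1+j}=g_j(x_1,\dots,x_n)$ with $dg_j(0)=0$, and the hyperplanes as $\sum_j\lambda_jx_{n+1+j}=0$. The projection $\mathcal N_Z\to\PP(V)$ then has differential whose $x$-block is $\sum_j\lambda_j\,\mathrm{Hess}(g_j)$. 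This differential has rank $N-1$, which is the generic finiteness above, iff that Hessian is nondegenerate. Étale pullback preserves nondegeneracy, so the geometric singularities of the general $D_s$ are nodes.

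Pass from the general closed point to the generic point. The incidence $\{(w,[s]):w\in\Sing D_s\}$ over a dense open of $\Gamma$ agrees with the fibre product of $W^\circ=f^{-1}(Z^\circ)\to Z^\circ$ with the contact map $\Gamma^\circ\to Z^\circ$; the equality holds as reduced schemes by the previous paragraph. The nondegeneracy is an open condition, so after shrinking $\Gamma$ it holds at every point of this incidence. Taking the generic fibre over $\Gamma$ shows that $D_\eta$ has only ordinary double points geometrically. It also gives
\[
\Sing(D_\eta)_{\mathrm{red}}\simeq W^\circ\times_{Z^\circ}\Spec F=\Spec\bigl(\CC(W)\otimes_KF\bigr),
\]
since $W^\circ\to Z^\circ$ is finite étale with generic fibre $\Spec\CC(W)$ over $K$. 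This is \eqref{eq:nodealgebra}. The tensor product is reduced because the extension $\CC(W)/K$ is separable.
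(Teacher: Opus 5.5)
Your proposal is correct and follows essentially the same route as the paper. You use the conormal variety, which is birational to $\Gamma$ by reflexivity and is a $\PP^{N-n-1}$-bundle over $Z$, to get \eqref{eq:puretrans}; the identification of the kernel of the differential of the conormal projection with the kernel of the Hessian to get nodes; the étaleness of $f$ at the generic contact point; and Lemma~\ref{lem:incidence} to exclude other singular points. The one hypothesis you leave unchecked is that $\Gamma$ is a \emph{component} of the locus of singular divisors, which Lemma~\ref{lem:incidence} requires; you only show that $\Gamma$ is contained in it. The paper closes this with Bertini: a general member of the pulled-back system is smooth, so that locus is a proper closed subset.
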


\begin{proof}
Over the smooth locus of $Z$ the conormal variety is the projective bundle of tangent hyperplanes, with fibres $\PP^{N-n-1}$. By reflexivity its second projection is the conormal correspondence of $\Gamma$, and since $\Gamma$ is a hypersurface, its general point has a unique tangent hyperplane, so this projection is birational. Taking function fields gives \eqref{eq:puretrans}.

At a general conormal pair $(z,\Pi)$ the projection to $\Gamma$ is a local isomorphism. In local coordinates the contact equations are $h(z)=0$ and $dh_z=0$, where $h$ is a local equation of the hyperplane section, and with $\Pi$ fixed the kernel of the differential of the conormal projection is the kernel of $\mathrm{Hess}_z(h)$. Its vanishing shows that this Hessian is nondegenerate, so the generic tangent hyperplane section has an ordinary double point at its contact point. The contact point maps to the generic point of $Z$, which lies in the open set over which $f$ is étale, so the fibre over it consists of ordinary double points of $D_\eta$. There are no other singularities: by Bertini's theorem a general member of the pulled-back linear system is smooth, so $\Gamma$ is a hypersurface component of the locus of singular divisors, and Lemma~\ref{lem:incidence} confines every singular point over its generic point to the contact fibre. This fibre is the generic fibre $\Spec\CC(W)$ of $f$, base changed along $K\subset F$, which gives \eqref{eq:nodealgebra}.
\end{proof}

\begin{lemma}\label{lem:fielddescent}
Let $K$ be a field of characteristic zero, $F=K(t_1,\dots,t_a)$, and $E_1,E_2$ finite field extensions of $K$. If $E_1\otimes_KF\simeq E_2\otimes_KF$ as $F$-algebras, then $E_1\simeq E_2$ over $K$.
\end{lemma}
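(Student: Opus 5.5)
The approach is to recover $E_i$ from $E_i\otimes_KF$ by specializing the transcendental variables back to $K$-rational values. By induction on $a$ it suffices to treat $a=1$, since $F=K(t_1,\dots,t_{a-1})(t_a)$. In that case the hypothesis for the field $K'=K(t_1,\dots,t_{a-1})$ and the extensions $E_i\otimes_KK'$ gives $E_1\otimes_KK'\simeq E_2\otimes_KK'$. The $E_i\otimes_KK'=E_i(t_1,\dots,t_{a-1})$ are again fields, because $t_1,\dots,t_{a-1}$ remain algebraically independent over $E_i$. So I reduce to $F=K(t)$.

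For $a=1$, let $\phi\colon E_1(t)\xrightarrow{\sim}E_2(t)$ be an isomorphism of $K(t)$-algebras. Choose a primitive element $\alpha$ of $E_1/K$, which exists in characteristic zero, with minimal polynomial $g\in K[x]$. Its image $\phi(\alpha)\in E_2(t)=E_2\otimes_KK(t)$ is a root of $g$. The key step is to show that every root of $g$ in $E_2(t)$ already lies in $E_2$. Since $E_2$ is algebraically closed in $E_2(t)$, any element of $E_2(t)$ algebraic over $K$ lies in $E_2$. Concretely, write $\beta=p(t)/q(t)$ with $p,q\in E_2[t]$ coprime. The relation $g(\beta)=0$ forces $q$ to be constant, since $E_2[t]$ is a UFD and $g$ is monic, and then comparing top $t$-degrees forces $p$ to be constant.

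Hence $\beta=\phi(\alpha)\in E_2$ is a root of $g$, and $\alpha\mapsto\beta$ defines a $K$-embedding $E_1\hookrightarrow E_2$. By symmetry there is also a $K$-embedding $E_2\hookrightarrow E_1$, so $[E_1:K]=[E_2:K]$. (Alternatively, the degrees agree because $\dim_FE_i\otimes_KF=[E_i:K]$.) An embedding between extensions of equal finite degree is an isomorphism, so $E_1\simeq E_2$ over $K$.

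The only step with any content is the algebraic closedness of $E_2$ in $E_2(t)$, and it is elementary. The point to check carefully is that $\phi$ is $F$-linear and hence in particular $K$-linear, so that $\phi(\alpha)$ really satisfies $g$. No Galois theory is needed.
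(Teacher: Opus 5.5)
Your proof is correct and uses the same key fact as the paper: $E_i$ is relatively algebraically closed in $E_i\otimes_KF=E_i(t_1,\dots,t_a)$, and a $K$-linear isomorphism sends elements algebraic over $K$ to elements algebraic over $K$. The paper applies this directly in all $a$ variables to get $\phi(E_1)=E_2$ at once. Your reduction to $a=1$, the primitive element, and the symmetry/degree count are therefore valid but unnecessary.
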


\begin{proof}
A finite extension is linearly disjoint from a purely transcendental one, so $E_i\otimes_KF$ is the field $E_i(t_1,\dots,t_a)$, and $E_i$ is relatively algebraically closed in it; hence $E_i$ is the set of its elements algebraic over $K$. An $F$-algebra isomorphism fixes $K$, and therefore maps $E_1$ onto $E_2$.
\end{proof}

\subsection{Reconstruction of the finite map}

\begin{theorem}\label{thm:finite}
Let $X$ and $Y$ be as in \eqref{eq:setting}, of the same dimension $n\ge3$, with possibly different blocks $\cB_X,\cB_Y$ and integers $r_X,r_Y$. On each side assume either that $n-r\ge2$, or that $n-r=1$ and $\theta_1$ is an isomorphism. Assume that $H_X$ and $H_Y$ are globally generated and that $Z_{H_X}^\vee$ is a hypersurface. Then every equivalence $\cA_X\simeq\cA_Y$ commuting with the rotation functors induces a projective identification of $Z_{H_X}$ and $Z_{H_Y}$ which lifts to an isomorphism $g\colon X\xrightarrow{\sim}Y$ with $g^*H_Y\simeq H_X$. In particular $r_X=r_Y$.
\end{theorem}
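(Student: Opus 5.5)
The plan follows Steps 3 and 4 of the outline in the introduction. Fix an equivalence $\Phi\colon\cA_X\simeq\cA_Y$ commuting with the rotation functors.

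\emph{Common image.} Theorem~\ref{thm:general}, whose hypotheses hold on both sides, gives a linear isomorphism $\alpha\colon H^0(X,H_X)\simeq H^0(Y,H_Y)$. This isomorphism is unique up to scalar and identifies the reduced loci of singular divisors. Since $\dim X=\dim Y$ and $H_X,H_Y$ are ample and globally generated, Lemma~\ref{lem:commonimage} applies. Placing everything in $\PP(V^\vee)$ with $V=H^0(X,H_X)$, it gives $Z:=Z_{H_X}=Z_{H_Y}$, and hence also $\Gamma:=Z^\vee$ on both sides. Now put $K=\CC(Z)$ and $F=\CC(\Gamma)$. The generic point of $\Gamma$ is an $F$-point of $\PP(V)$, that is, a section $s_\eta\in V\otimes F$. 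Its image $\alpha(s_\eta)$ is again the generic tangent hyperplane, now viewed for $Y$.

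\emph{Tangent divisors.} Corollary~\ref{cor:extension} shows that $\Phi$ induces an $F$-linear equivalence $\cA_{D_\eta^X}\simeq\cA_{D_\eta^Y}$ between the residual categories of the pullbacks of this hyperplane to $X_F$ and $Y_F$. By Lemma~\ref{lem:restrict}, each $\Perf(D_\eta)$ decomposes into the residual component and restrictions of the blocks. These blocks consist of completely orthogonal exceptional objects, so together they form a finite exceptional collection, and Lemma~\ref{lem:sgresidual} applies. Lemma~\ref{lem:tangent} is applied to $f_{H_X}\colon X\to Z$ and to $f_{H_Y}\colon Y\to Z$; both maps are finite and surjective, and $n\ge3\ge2$. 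It shows that all geometric singularities of both divisors are ordinary double points, with $\Sing(D^X_\eta)_{\mathrm{red}}\simeq\Spec(\CC(X)\otimes_KF)$ and likewise for $Y$. Lemma~\ref{lem:nodes} then gives an isomorphism of $F$-algebras $\CC(X)\otimes_KF\simeq\CC(Y)\otimes_KF$. Lemma~\ref{lem:tangent} also shows that $F\simeq K(t_1,\dots,t_{N-n-1})$ is purely transcendental over $K$. Hence Lemma~\ref{lem:fielddescent} yields a $K$-isomorphism $\CC(X)\simeq\CC(Y)$.

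\emph{Finite cover.} The varieties $X$ and $Y$ are smooth, hence normal, and $f_{H_X}$, $f_{H_Y}$ are finite. So each of $X$ and $Y$ is the normalization of $Z$ in its function field. The $K$-isomorphism of function fields therefore gives an isomorphism $g\colon X\simeq Y$ over $Z$, and $g^*H_Y\simeq g^*f_{H_Y}^*\cO(1)=f_{H_X}^*\cO(1)=H_X$. Comparing $\omega^{-1}$ on both sides gives $H_X^{\otimes r_X}\simeq H_X^{\otimes r_Y}$. Since $\Pic X$ is torsion free and $H_X$ is ample, this forces $r_X=r_Y$.

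\emph{Main obstacle.} I expect the hardest step to be the identification of the $K$-structures in the second step. Lemma~\ref{lem:nodes} produces only an $F$-algebra isomorphism. For Lemma~\ref{lem:fielddescent} to apply, the embedding $K\subset F$ must be the same on both sides, and this holds because both sides use the same contact map $\Gamma\dashrightarrow Z$ for the common image. I would also check two further points. First, the $F$-equivalence of Corollary~\ref{cor:extension} must be compatible with Galois descent data, so that Lemma~\ref{lem:nodes} can be applied. Second, the scalar ambiguity in $\alpha$ must not matter; it does not, since only $[s_\eta]$ enters.
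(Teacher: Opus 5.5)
Your proposal is correct and follows the paper's proof step for step. Theorem~\ref{thm:general} and Lemma~\ref{lem:commonimage} give the common image $Z$; Corollary~\ref{cor:extension} together with Lemmas~\ref{lem:tangent} and~\ref{lem:nodes} gives $\CC(X)\otimes_KF\simeq\CC(Y)\otimes_KF$; Lemma~\ref{lem:fielddescent} descends this to $K$; and the normalization of $Z$ in its function field yields $g$ with $g^*H_Y\simeq H_X$ and $r_X=r_Y$. The points you flag are already handled: the same embedding $K\subset F$ comes from the shared image $Z$ and its contact map, Galois descent is built into Lemma~\ref{lem:nodes} for $F$-linear equivalences, and the scalar ambiguity of $\alpha$ is irrelevant because only $[s_\eta]$ enters.
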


\begin{proof}
Theorem~\ref{thm:general} identifies the section spaces and the reduced loci of singular divisors, and Lemma~\ref{lem:commonimage} identifies the images; let $Z$ be the common image. Put $\Gamma=Z^\vee$, $K=\CC(Z)$ and $F=\CC(\Gamma)$, and choose an $F$-rational representative of the generic point of $\Gamma$. By Corollary~\ref{cor:extension}, the equivalence induces an $F$-linear equivalence of the residual categories of the two generic tangent divisors on $X_F$ and $Y_F$. By Lemma~\ref{lem:tangent} these divisors have only geometric ordinary double points; their exceptional decompositions are those of Lemma~\ref{lem:restrict}, or the empty one for $r=1$. Lemma~\ref{lem:nodes} identifies their reduced singular schemes, and \eqref{eq:nodealgebra} gives an isomorphism of $F$-algebras
\[
\CC(X)\otimes_KF\simeq\CC(Y)\otimes_KF.
\]
By \eqref{eq:puretrans} and Lemma~\ref{lem:fielddescent}, $\CC(X)\simeq\CC(Y)$ over $K$. A normal variety finite over $Z$ is the normalization of $Z$ in its function field, even if $Z$ is not normal: on an affine open subset $\Spec R\subset Z$ its coordinate ring $B$ is finite over $R$ and integrally closed in its fraction field, and every element integral over $R$ is integral over $B$, so $B$ is the integral closure of $R$. Hence the isomorphism of function fields lifts to an isomorphism $g\colon X\simeq Y$ over $Z$. Both polarizations are pullbacks of $\cO_Z(1)$, so $g^*H_Y\simeq H_X$, and $-K_X=r_XH_X=r_YH_X$ with $H_X$ ample forces $r_X=r_Y$.
\end{proof}

\begin{lemma}\label{lem:vadual}
Let $X$ be a smooth complex Fano variety with very ample anticanonical bundle. Then the projective dual of its anticanonical embedding is a hypersurface.
\end{lemma}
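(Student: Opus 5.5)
The plan is to use the classical theorem that a smooth projective variety whose dual is not a hypersurface (dual defective, with defect $\delta>0$) is covered by lines, together with the structure of its hyperplane sections. Concretely, for a smooth dual-defective $X\subset\PP^N$ of defect $\delta>0$, a general tangent hyperplane is tangent along a linear subspace $\PP^\delta\subset X$, and the normal bundle of a general line $\ell$ in this contact locus satisfies $N_{\ell/X}\simeq\cO_\ell(1)^{\oplus(n+\delta)/2-1}\oplus\cO_\ell^{\oplus(n-\delta)/2}$ (Ein's theorem). This gives
\[
\deg(-K_X|_\ell)=2+\tfrac{n+\delta}{2}-1=\tfrac{n+\delta}{2}+1.
\]
In the anticanonical embedding, $-K_X|_\ell=\cO_X(1)|_\ell=\cO_\ell(1)$ has degree $1$. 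Hence $\frac{n+\delta}{2}+1=1$, which forces $n+\delta=0$. This contradicts $n\ge1$ and $\delta>0$, so the dual is a hypersurface.

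The key steps, in order, are these. First, recall (from Ein, or Zak's and Tevelev's surveys) that if $\delta>0$ then the general fibre of the conormal projection onto $X^\vee$ is a linear $\PP^\delta$ contained in $X$. In particular $X$ is covered by lines. Second, identify the normal bundle of a general such line, or more simply use the inequality $-K_X\cdot\ell=\frac{n+\delta}{2}+1\ge 2$ that follows from Ein's splitting. Third, compare with $-K_X\cdot\ell=1$, which holds because $\ell$ is a line in the embedding by $|-K_X|$. This yields the contradiction.

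A more elementary fallback avoids the exact splitting. A line $\ell\subset X$ through a general point is a free rational curve, so $-K_X\cdot\ell\ge2$ by the standard deformation count: $\dim_{[\ell]}\mathrm{Hom}(\PP^1,X)\ge -K_X\cdot\ell+n$, and these curves must cover $X$. The covering family of lines passing through a general point therefore has $-K_X\cdot\ell\ge2$, while very ampleness of $-K_X$ gives degree $1$.

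The main obstacle is justifying that the lines produced by dual defectivity form a covering family through a general point, so that freeness applies. I would settle this by citing the fact that the contact loci $\PP^\delta$ sweep out $X$, since the conormal variety surjects onto $X$. With this in hand, bend-and-break is not needed; only the freeness of a general member of a covering family of rational curves in characteristic zero is required, which gives $-K_X\cdot\ell\ge2$ immediately.
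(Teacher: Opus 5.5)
Your main argument is correct and is the paper's proof: assuming a positive dual defect $\delta$, Ein's theorem on contact loci gives a line $\ell$ in a general contact locus $\PP^\delta$ with $\deg N_{\ell/X}=(n+\delta-2)/2$, hence $-K_X\cdot\ell=(n+\delta+2)/2>1$, contradicting $-K_X\cdot\ell=1$ for a line in the anticanonical embedding. The paper also states explicitly that the anticanonical image is not a linearly embedded $\PP^n$, since $-K_{\PP^n}\ne\cO_{\PP^n}(1)$; this excludes the degenerate case of an empty dual, where Ein's theorem says nothing, and your freeness fallback covers that case as well.
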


\begin{proof}
The anticanonical image is not a linearly embedded projective space, since $-K_{\PP^n}\ne\cO_{\PP^n}(1)$. Suppose that its dual defect $\delta$ is positive. By Ein's theorem on contact loci \cite{Ein86}, in the form of \cite[Proposition~3.1]{IR14}, a general contact locus is a linear space $\PP^\delta$, and a line $\ell$ in it has $\deg N_{\ell/X}=(n+\delta-2)/2$. Hence $-K_X\cdot\ell=2+\deg N_{\ell/X}=(n+\delta+2)/2>1$. But a line in the anticanonical embedding has $-K_X\cdot\ell=1$, a contradiction.
\end{proof}

\subsection{Proofs of Theorems~\ref{thm:A} and~\ref{thm:B}}

\begin{proof}[Proof of Theorem~\ref{thm:A}]
If $H_X$ and $H_Y$ are very ample, this was proved in Section~\ref{sec:main}. Otherwise, apply Theorem~\ref{thm:finite} with $\cB_X=\langle\cO_X\rangle$ and $\cB_Y=\langle\cO_Y\rangle$, so that $\cA_X=\Ku(X,H_X)$ and $\cA_Y=\Ku(Y,H_Y)$; the assumption $r\le n-2$ is $n-r\ge2$ on both sides.
\end{proof}

\begin{proof}[Proof of Theorem~\ref{thm:B}]
Globally generated ample line bundles have nonzero sections, so Lemma~\ref{lem:dim} gives a common dimension $n\ge3$ and shows that the equivalence commutes with the anticanonical rotation functors. Theorem~\ref{thm:finite} with $(H,r)=(-K,1)$, so that $n-r\ge2$, applies to $\langle\cO\rangle$ and to every completely orthogonal collection of positive-rank exceptional sheaves, and gives an isomorphism $X\simeq Y$ over the common anticanonical image.
\end{proof}

By Lemma~\ref{lem:vadual}, Corollary~\ref{cor:vaPerry} is also the very ample case of Theorem~\ref{thm:B}.
\section{Hypersurfaces}\label{sec:hyp}

In this section we deduce the categorical Torelli theorems for smooth Fano hypersurfaces from Theorems~\ref{thm:A} and~\ref{thm:B} and from Theorem~\ref{thm:general}. All of them are known; here they appear as special cases of the general theorems.

Let $X\subset\PP^{n+1}$ be a smooth hypersurface of degree $d$ with $3\le d\le n+1$, put $H=\cO_X(1)$ and $r=n+2-d$, so that $\omega_X^{-1}\simeq H^{\otimes r}$, and write $\Ku(X)=\Ku(X,\cO_X(1))$. For $n\ge3$, $\cO_X(1)$ generates $\Pic(X)$ by the Lefschetz theorem, so this is the canonical Kuznetsov component. The line bundle $\cO_X(1)$ is very ample and the ambient linear forms give the complete linear system $|\cO_X(1)|$, so an isomorphism $(X,\cO_X(1))\simeq(Y,\cO_Y(1))$ is a projective equivalence. We also use the periodicity
\begin{equation}\label{eq:periodic}
\Rot_X^{\,d}\simeq[2],
\end{equation}
which follows from the identification of $\Ku(X)$ with the category of graded matrix factorizations of the equation of $X$, under which the rotation functor corresponds to the grading shift \cite{Orl09}, \cite[Theorem~2.2]{LZ25}; see also \cite[Theorem~4.6]{Pir24}.

\begin{corollary}[{\cite{HR19,Pir24,BMMS12}}]\label{cor:hypmarked}
Let $n\ge3$ and $3\le d\le n+1$, and let $X,Y\subset\PP^{n+1}$ be smooth hypersurfaces of degree $d$. If $\Ku(X)\simeq\Ku(Y)$ by an equivalence commuting with the rotation functors, then $X$ and $Y$ are projectively equivalent.
\end{corollary}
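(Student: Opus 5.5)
The plan is to deduce the corollary from Theorem~\ref{thm:general}, applied with $\cB_X=\langle\cO_X\rangle$, $H=\cO_X(1)$ and $r=n+2-d$. Since $\cO_X(1)$ is very ample, the conclusion $(X,\cO_X(1))\simeq(Y,\cO_Y(1))$ is a projective equivalence, as observed at the start of this section. Only the hypotheses of Theorem~\ref{thm:general} need checking, and the cases split according to $n-r=d-2$.

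\emph{Case $d\ge4$.} Here $n-r=d-2\ge2$, so Theorem~\ref{thm:general} applies directly. This is exactly Theorem~\ref{thm:A} in the very ample case, already proved in Section~\ref{sec:main}. Thus an equivalence $\Ku(X)\simeq\Ku(Y)$ commuting with the rotation functors gives $X\simeq Y$ compatibly with $\cO(1)$, hence a projective equivalence.

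\emph{Case $d=3$.} Here $r=n-1$ and $n-r=1$. Proposition~\ref{prop:kernel} only gives that $\theta_1$ is injective, so the extra hypothesis of Theorem~\ref{thm:general} has to be verified: that $\theta_1\colon H^0(X,\cO_X(1))\to\Hom(\id,\Rot_X)$ is surjective. I would prove this through the matrix factorization model behind \eqref{eq:periodic}. Under Orlov's equivalence $\Ku(X)\simeq\mathrm{HMF}^{\mathrm{gr}}(w)$, the rotation functor is the grading shift $(1)$. Then $\Hom(\id,\Rot_X)$ is the degree-one, cohomological-degree-zero part of the graded Hochschild cohomology of the graded matrix factorization category. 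By the computation of \cite{BFK14} (as used in \cite{LZ25}), this contains the Jacobian ring component $\mathrm{Jac}(w)_1$ together with twisted-sector contributions, and in degree one the contribution is $\mathrm{Jac}(w)_1=\CC[x_0,\dots,x_{n+1}]_1$ because $d=3>1$ and the partials have degree $2$. So $\dim\Hom(\id,\Rot_X)=n+2=h^0(\cO_X(1))$, and by dimension count injectivity gives bijectivity.

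\emph{Main obstacle.} The hard step is the twisted-sector bookkeeping. One must check that for $\mu_3$-twisted sectors no class of internal degree $1$ and cohomological degree $0$ survives; I expect the fixed loci for nontrivial $\zeta\in\mu_3$ to be the origin, contributing only in degrees shifted by $n+2$. One must also check that this Hochschild computation matches the enhanced $\Hom(\id,\Rot_X)$ used in the paper. An alternative would be to identify $W_1$ in \eqref{eq:kerneltriangle} and show $H^0(W_1)=0$ directly. For cubics, with line bundles, \eqref{eq:dualcube} is concentrated in degree $n-p$, so $H^0$ needs $p=n=m+r$, i.e.\ the full subset $J$. One would then have to show that the connecting map from the top cube term to the lower ones is injective in degree zero, which I find less transparent. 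Once surjectivity is established, Theorem~\ref{thm:general} gives the very ample conclusion and finishes the proof.
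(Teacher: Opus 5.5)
Your proposal is correct and follows the paper's proof. For $d\ge4$ you use the very ample case of Theorem~\ref{thm:A}. For $d=3$ you apply Theorem~\ref{thm:general} with $n-r=1$, and you get $\theta_1$ bijective from the graded matrix factorization computation of \cite{BFK14,Pir24}; combining injectivity with a dimension count is a valid substitute for the paper's direct citation. Your twisted-sector expectation is also right: a nontrivial $\zeta\in\mu_3$ fixes only the origin and contributes a single class whose bidegree $(m,t)$ in $\Hom(\id,\Rot^m[t])$ satisfies $m+\tfrac{3t}{2}=\tfrac{n+2}{2}$, which rules out $(m,t)=(1,0)$ for $n\ge3$.

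The only difference is that the paper uses the matrix factorization argument only for $n\ge4$ and treats cubic threefolds through Theorem~\ref{thm:dP3}. That route yields the stronger statement without rotation compatibility. Your argument covers $n=3$ uniformly, which is fine there as well.
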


\begin{proof}
For $d\ge4$ we have $r=n+2-d\le n-2$, and since $\cO_X(1)$ and $\cO_Y(1)$ are very ample, Theorem~\ref{thm:A} gives $(X,\cO_X(1))\simeq(Y,\cO_Y(1))$. For $d=3$ and $n\ge4$ we have $n-r=1$, and the geometric map $\theta_1$ is an isomorphism by the computation of natural transformations of graded matrix factorizations \cite[Lemma~4.4]{Pir24}, see also \cite[Theorem~5.39]{BFK14} and \cite[Theorem~3.5]{LRZ26}; Theorem~\ref{thm:general} applies in the case $n-r=1$. For cubic threefolds, Theorem~\ref{thm:dP3} gives the stronger statement that $\Ku(X)$ alone determines $X$. In each case the polarized isomorphism is a projective equivalence.
\end{proof}

\begin{corollary}[{\cite{LZ25}}]\label{cor:hypcoprime}
Let $n\ge3$ and $3\le d\le n+1$ with $\gcd(n+2,d)=1$, and let $X,Y\subset\PP^{n+1}$ be smooth hypersurfaces of degree $d$. If $\Ku(X)\simeq\Ku(Y)$, then $X$ and $Y$ are projectively equivalent.
\end{corollary}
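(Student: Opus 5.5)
The plan is to reduce Corollary~\ref{cor:hypcoprime} to Corollary~\ref{cor:hypmarked}. Given an arbitrary equivalence $\Phi\colon\Ku(X)\simeq\Ku(Y)$, I will show that it commutes with the rotation functors up to a power that can be absorbed, and then apply Corollary~\ref{cor:hypmarked}. The tool is the Serre functor. By Proposition~\ref{prop:serre}, $\mathsf S_{\Ku(X)}\simeq\Rot_X^{-r}[n]$ with $r=n+2-d$. By the periodicity \eqref{eq:periodic}, $\Rot_X^{d}\simeq[2]$. Since $\gcd(r,d)=\gcd(n+2,d)=1$, choose integers $a,b$ with $ar+bd=1$. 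Then
\[
\Rot_X=\Rot_X^{ar+bd}\simeq\bigl(\Rot_X^{r}\bigr)^{a}[2b]\simeq\mathsf S_{\Ku(X)}^{-a}[an+2b],
\]
so $\Rot_X$ is expressed purely through the Serre functor and shifts. The same formula holds on $Y$ with the same $a,b$, because $n$ and $d$ agree.

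The key steps are as follows. First, every exact enhanced equivalence commutes with Serre functors: $\Phi\mathsf S_X\simeq\mathsf S_Y\Phi$ canonically, as enhanced functors, since the Serre functor is intrinsic to the dg category (the inverse dualizing bimodule). Second, $\Phi$ commutes with shifts. Third, combining the two displays gives $\Phi\Rot_X\simeq\Rot_Y\Phi$ as enhanced functors. Corollary~\ref{cor:hypmarked} then yields the projective equivalence. The case of cubic threefolds, $d=3$ and $n=3$, has $\gcd(5,3)=1$. There $n-r=1$, and it is covered either by the same route through Theorem~\ref{thm:general}, using that $\theta_1$ is an isomorphism, or directly by Theorem~\ref{thm:dP3}, as noted in the proof of Corollary~\ref{cor:hypmarked}.

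The main obstacle is that \eqref{eq:periodic} must hold as an isomorphism of \emph{enhanced} functors, not just on objects, so that the intertwining is enhanced as required in Theorem~\ref{thm:A}. I would take this from the matrix factorization model cited for \eqref{eq:periodic}, where the grading shift $(d)$ equals $[2]$ on the nose at the level of dg categories. I would also check that transporting it through the Orlov equivalence preserves the enhancement, which should follow from Orlov's equivalence being of Fourier--Mukai type. A lesser point is that Corollary~\ref{cor:theta1} makes the resulting intertwiner well defined up to scalar. This is all Theorem~\ref{thm:general} needs.
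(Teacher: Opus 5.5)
Your proof is correct and is essentially the paper's argument: write $\Rot_X=\Rot_X^{ar+bd}\simeq\mathsf S_{\Ku(X)}^{-a}[an+2b]$ using Proposition~\ref{prop:serre} and \eqref{eq:periodic}, conclude that every equivalence commutes with the rotation functors, and apply Corollary~\ref{cor:hypmarked}. Your separate discussion of cubic threefolds is unnecessary, since Corollary~\ref{cor:hypmarked} already covers the case $n=d=3$.
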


\begin{proof}
Since $\gcd(r,d)=\gcd(n+2,d)=1$, choose $a,b$ with $ar+bd=1$. By Proposition~\ref{prop:serre} and \eqref{eq:periodic},
\[
\Rot_X=\Rot_X^{\,ar+bd}\simeq\mathsf S_{\Ku(X)}^{-a}[an+2b],
\]
and similarly for $Y$, so every equivalence $\Ku(X)\simeq\Ku(Y)$ commutes with the rotation functors. Corollary~\ref{cor:hypmarked} applies.
\end{proof}

\begin{corollary}[{\cite{LZ25}}]\label{cor:hypindexone}
Let $n\ge3$ and let $X\subset\PP^{n+1}$ be a smooth hypersurface of degree $n+1$. If $Y$ is a smooth Fano variety of dimension at least three with globally generated anticanonical bundle and $\cO_X^{\perp}\simeq\cO_Y^{\perp}$, then $Y\simeq X$, and the anticanonical morphism of $Y$ is an embedding as a hypersurface of degree $n+1$ in $\PP^{n+1}$ projectively equivalent to $X$.
\end{corollary}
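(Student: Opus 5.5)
The plan is to deduce the statement from Theorem~\ref{thm:B}, or directly from Theorem~\ref{thm:finite} with $(H,r)=(-K,1)$. Neither $X$ nor $Y$ needs to be assumed a hypersurface. We only use that $X$ is a smooth Fano variety of dimension $n\ge3$ with $-K_X=\cO_X(1)$ very ample, so that $r=1$ and $n-r=n-1\ge2$, and that $Y$ has globally generated anticanonical bundle. The hypotheses of Theorem~\ref{thm:B} therefore reduce to one point: the projective dual of the anticanonical image $Z_{-K_X}$ must be a hypersurface.

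The key steps, in order, are as follows.

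\emph{Step 1.} By adjunction $\omega_X\simeq\cO_X(d-n-2)=\cO_X(-1)$ for $d=n+1$. The restriction $H^0(\PP^{n+1},\cO(1))\to H^0(X,\cO_X(1))$ is an isomorphism, because $H^0$ and $H^1$ of $\cO_{\PP^{n+1}}(1-d)$ vanish. Hence $|-K_X|$ is the complete linear system of the hyperplane embedding, and $Z_{-K_X}=X\subset\PP^{n+1}$.

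\emph{Step 2.} The dual of $Z_{-K_X}$ is a hypersurface, by Lemma~\ref{lem:vadual}. One can also see this directly: a smooth hypersurface of degree $\ge2$ has a finite Gauss map, so $X^\vee$ is a hypersurface.

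\emph{Step 3.} By Lemma~\ref{lem:dim}, the equivalence $\cO_X^{\perp}\simeq\cO_Y^{\perp}$ forces $\dim Y=n$ and commutes with the rotation functors $\mathsf S^{-1}[n]$. Theorem~\ref{thm:B}, equivalently Theorem~\ref{thm:finite}, then gives an isomorphism $g\colon X\simeq Y$ lying over a projective identification $Z_{-K_X}\simeq Z_{-K_Y}$. This identification is induced by the linear isomorphism $H^0(X,-K_X)\simeq H^0(Y,-K_Y)$ of Theorem~\ref{thm:general}.

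\emph{Step 4.} It remains to identify the anticanonical morphism $f_{-K_Y}\colon Y\to Z_{-K_Y}$. The isomorphism $g$ lies over the identification of images and satisfies $g^*(-K_Y)\simeq-K_X$. Therefore $f_{-K_Y}\circ g$ agrees, after the projective identification, with $f_{-K_X}$, which is the embedding of $X$ as a degree $n+1$ hypersurface in $\PP^{n+1}$. So $f_{-K_Y}$ is an embedding onto a hypersurface projectively equivalent to $X$.

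The only delicate point is the bookkeeping in Step 4. The identification of images produced in the proof of Theorem~\ref{thm:finite} comes from Lemma~\ref{lem:commonimage} applied inside a common $\PP(V^\vee)$. The lift $g$ is constructed over the common image $Z$. Hence $f_{-K_Y}\circ g=f_{-K_X}$ holds as maps to $Z$, which yields both the embedding and the projective equivalence. No further obstacle is expected.
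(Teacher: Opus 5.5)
Your proposal is correct and follows the paper's own proof. As there, you note that $-K_X=\cO_X(1)$ is very ample and that the given embedding is the anticanonical one, get a hypersurface dual from Lemma~\ref{lem:vadual}, and then apply Theorem~\ref{thm:B} (through Lemma~\ref{lem:dim} and Theorem~\ref{thm:finite}) to obtain an isomorphism over the common anticanonical image; this shows that the anticanonical morphism of $Y$ is an embedding onto a hypersurface projectively equivalent to $X$. Your additional checks, namely linear normality via the restriction sequence, the Gauss-map remark, and the bookkeeping of $g$ over $Z$, only make explicit what the paper's short proof leaves implicit.
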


\begin{proof}
Here $-K_X=\cO_X(1)$ is very ample, and the anticanonical embedding of $X$ is the given one. By Lemma~\ref{lem:vadual} its dual is a hypersurface, and Theorem~\ref{thm:B} gives an isomorphism $X\simeq Y$ compatible with the anticanonical morphisms; hence that of $Y$ is an embedding with the same image.
\end{proof}

\begin{remark}\label{rem:hyp}
\leavevmode
\begin{enumerate}[label=(\alph*)]
\item \emph{Cubic hypersurfaces.} Corollary~\ref{cor:hypmarked} contains the theorem of Bernardara, Macr\`\i, Mehrotra and Stellari for cubic threefolds \cite{BMMS12} and the theorems with rotation functors of Huybrechts--Rennemo for cubic fourfolds \cite{HR19} and of Pirozhkov in dimension at least four \cite{Pir24}. For cubic fourfolds the rotation functor cannot be removed in general \cite{Per21}. The theorem of Lin--Zhang for arbitrary equivalences when $\gcd(n+2,3)=1$ \cite{LZ25}, which includes cubic surfaces, is Corollary~\ref{cor:hypcoprime} for $n\ge3$.
\item \emph{Outside the coprime range.} The theorem for arbitrary equivalences when $d\nmid n+2$ and $(d,n+2)\ne(4,4k+2)$ is due to Lin, Rennemo and Zhang \cite[Theorem~3.12 and Corollary~3.13]{LRZ26}, who reconstruct $X$ from the intrinsic power $\Rot_X^{\,g}$, $g=\gcd(n+2,d)$.
\item \emph{Weighted hypersurfaces.} The theorems with rotation functors, and for arbitrary equivalences in the coprime case, for quasismooth Fano weighted hypersurfaces are due to \cite{Lin26,LRZ26,LZZ26}. Such a hypersurface is in general a stack, and when it is a smooth variety its weight-one polarization may have projective space as image, with empty dual. Proposition~\ref{prop:veronese} treats the anticanonical residual categories of some smooth cyclic covers of projective spaces.
\end{enumerate}
\end{remark}

\section{Prime Fano threefolds of index one}\label{sec:prime}

A smooth Fano threefold $X$ is \emph{prime of index one} if $\Pic(X)=\ZZ\cdot K_X$. Its genus $g$ is defined by $(-K_X)^3=2g-2$; then $h^0(X,-K_X)=g+2$, and $2\le g\le12$, $g\ne11$. We use the following facts \cite[Chapter~4]{IP99}. The anticanonical bundle is very ample unless $X$ is \emph{hyperelliptic}; the hyperelliptic ones are the sextic double solids, of genus two, and the double covers of a smooth quadric threefold branched along its intersection with a quartic, of genus three. Since $X$ has index one, its canonical Kuznetsov component is $\Ku(X)=\cO_X^{\perp}$. For $g\in\{6,7,8,9,10,12\}$ there is moreover a semiorthogonal decomposition
\begin{equation}\label{eq:Mukai}
\Db(X)=\langle\cK_X,\cE_X,\cO_X\rangle,
\end{equation}
where $\cE_X$ is an exceptional vector bundle, the Mukai bundle; the smaller category $\cK_X$ is the one usually called the Kuznetsov component of $X$ in the literature \cite{Kuz14,JLZ26}, and $\cO_X^{\perp}=\langle\cK_X,\cE_X\rangle$. Both results of this section are consequences of Theorem~\ref{thm:B} and Corollary~\ref{cor:vaPerry}.

\begin{corollary}[Perry's conjecture for prime Fano threefolds]\label{cor:primePerry}
Let $X$ be a smooth prime Fano threefold of index one and genus $g\ge3$, and let $Y$ be a smooth Fano variety of dimension at least three with globally generated anticanonical bundle. If $\cO_X^{\perp}\simeq\cO_Y^{\perp}$, then $X\simeq Y$. This applies to every family of genus at least four, to the quartic threefolds, and to the hyperelliptic threefolds of genus three.
\end{corollary}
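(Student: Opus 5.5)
The plan is to reduce the statement to Theorem~\ref{thm:B}, applied with $X$ as the distinguished variety. By the facts recalled from \cite[Chapter~4]{IP99}, $-K_X$ is globally generated for every prime Fano threefold of index one. Indeed, for $g\ge3$ it is very ample unless $X$ is hyperelliptic of genus three, and in that case it is the pullback of $\cO_Q(1)$ under the double cover $\pi\colon X\to Q$. The variety $Y$ has dimension at least three and globally generated anticanonical bundle by hypothesis. Theorem~\ref{thm:B} requires the hypersurface-dual condition only on $X$. So the proof comes down to checking that the projective dual of the anticanonical image $Z_{-K_X}$ is a hypersurface. I would split into two cases according to whether $X$ is hyperelliptic.

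\emph{Non-hyperelliptic case.} Here $-K_X$ is very ample; this covers all genera $g\ge4$ and the quartic threefolds in genus three. Lemma~\ref{lem:vadual} then shows that the dual of the anticanonical embedding is a hypersurface. Alternatively, one can bypass this and apply Corollary~\ref{cor:vaPerry} directly, but only if $-K_Y$ is also known to be very ample. Since we only assume global generation on $Y$, I would go through Theorem~\ref{thm:B}, so that the statement holds "among all smooth Fano varieties" with globally generated $-K$.

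\emph{Hyperelliptic genus-three case.} The anticanonical morphism is $\pi\colon X\to Q\subset\PP^4$, so $Z_{-K_X}=Q$ is a smooth quadric threefold. Its dual is again a smooth quadric in $\PP^4$, hence a hypersurface. Theorem~\ref{thm:B} applies verbatim, and the conclusion $X\simeq Y$ holds over the common anticanonical image $Q$.

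The main obstacle is bookkeeping rather than new mathematics. One must confirm that genus $g\ge3$ excludes exactly the sextic double solids, where $Z_{-K_X}=\PP^3$ has empty dual. One must also check that the anticanonical morphism in the hyperelliptic genus-three case is the double cover onto the smooth quadric itself, not onto a cone, so that $Q^\vee$ is a hypersurface. Both facts are part of the classification in \cite{IP99}. With them in hand, the statement follows immediately from Theorem~\ref{thm:B} and Lemma~\ref{lem:vadual}.
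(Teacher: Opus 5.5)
Your proposal is correct and follows essentially the same route as the paper. In the non-hyperelliptic case the paper also uses very ampleness, Lemma~\ref{lem:vadual} and Theorem~\ref{thm:B}. In the hyperelliptic genus-three case the paper cites Corollary~\ref{cor:hyperelliptic}, the case $n=3$, $d=2$, $b=2$ of Corollary~\ref{cor:cyclicquadric}; that corollary runs exactly your argument: $H^0(X,H)=H^0(Q,\cO_Q(1))$, so the anticanonical image is the smooth quadric $Q\subset\PP^4$, whose dual is a quadric hypersurface, and Theorem~\ref{thm:B} applies.
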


\begin{proof}
If $X$ is not hyperelliptic, $-K_X$ is very ample, and Lemma~\ref{lem:vadual} and Theorem~\ref{thm:B} apply. The remaining case, the double covers of a quadric threefold of genus three, is Corollary~\ref{cor:hyperelliptic}.
\end{proof}

\begin{corollary}[{\cite{JLZ26,JLLZ24}}]\label{thm:refined}
Let $X$ and $Y$ be smooth prime Fano threefolds of index one and genus $g\in\{6,7,8,9,10,12\}$, with the decompositions \eqref{eq:Mukai}. An equivalence $\Phi\colon\cK_X\simeq\cK_Y$ with $\Phi(i^!\cE_X)\simeq i^!\cE_Y$, up to an autoequivalence of $\cK_Y$ and a shift, induces an isomorphism $X\simeq Y$.
\end{corollary}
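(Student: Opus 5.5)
The plan is to glue the given equivalence up to an equivalence of $\cO_X^{\perp}$ with $\cO_Y^{\perp}$, and then apply Corollary~\ref{cor:vaPerry}. By \eqref{eq:Mukai} we have $\cO_X^{\perp}=\langle\cK_X,\cE_X\rangle$, where $\cE_X$ is exceptional, and likewise for $Y$. These categories are admissible components of $\Db(X)$ and $\Db(Y)$, so they are smooth and proper with their standard enhancements. This puts us in the setting of Corollary~\ref{cor:gluingobject} with $\cC_X=\cO_X^{\perp}$ and $E_X=\cE_X$.

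First I would check that the gluing functor is the one named in the statement. The inclusion $i\colon\cK_X\to\Db(X)$ factors through $\cO_X^{\perp}$, and $\cO_X^{\perp}$ is admissible. So the right adjoint of $\cK_X\hookrightarrow\cO_X^{\perp}$ is the restriction of $i^!$, and the gluing object of $\langle\cK_X,\cE_X\rangle$ is exactly $i^!\cE_X$.

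Next, the hypothesis says $\Phi(i^!\cE_X)$ lies in the orbit of $i^!\cE_Y$ under autoequivalences of $\cK_Y$ and shifts. This is precisely the orbit version of Corollary~\ref{cor:gluingobject}. I would replace $\Phi$ by $\Psi\circ\Phi$ for a suitable autoequivalence $\Psi$ of $\cK_Y$, and send $\cE_X$ to the corresponding shift $\cE_Y[k]$. Lemma~\ref{lem:gluing} then yields $\cO_X^{\perp}\simeq\cO_Y^{\perp}$.

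The one subtle point is that we need an \emph{enhanced} identification $\Phi\Gamma_X\simeq\Gamma_Y\Psi$. Here that reduces to an isomorphism of the single objects $\Phi(i^!\cE_X)\simeq i^!\cE_Y[k]$. This is harmless because $\REnd(\cE_X)=\CC$, so the gluing bimodule is just the representable-type module $\RHom(-,i^!\cE_X)$ with trivial action, as in the proof of Corollary~\ref{cor:gluingobject}. If the given $\Phi$ is only an exact equivalence, I would first use the uniqueness of enhancements for Kuznetsov components of these threefolds, or else assume the enhanced convention of the paper. This is the step I expect to need the most care, but under the paper's conventions it is immediate.

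Finally, $X$ and $Y$ have genus $g\ge6$, so they are not hyperelliptic. Hence $-K_X$ and $-K_Y$ are very ample and both varieties have dimension three. Corollary~\ref{cor:vaPerry}, or equivalently Corollary~\ref{cor:primePerry}, applied to the glued equivalence gives $X\simeq Y$.
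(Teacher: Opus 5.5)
Your proposal is correct and follows the paper's proof essentially verbatim: you identify $i^!\cE_X$ as the gluing object of $\cO_X^{\perp}=\langle\cK_X,\cE_X\rangle$ by restricting the right adjoint, extend $\Phi$ to an equivalence $\cO_X^{\perp}\simeq\cO_Y^{\perp}$ by the orbit version of Corollary~\ref{cor:gluingobject}, and conclude by Corollary~\ref{cor:vaPerry}, since $-K$ is very ample in genus $g\ge6$. Your caveat about enhancements matches the paper's standing convention that all equivalences are enhanced.
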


\begin{proof}
The category $\cO_X^{\perp}=\langle\cK_X,\cE_X\rangle$ has gluing object $i^!\cE_X$, since the right adjoint of $\cK_X\subset\Db(X)$ agrees on $\cO_X^{\perp}$ with the right adjoint of $\cK_X\subset\cO_X^{\perp}$. Corollary~\ref{cor:gluingobject} extends $\Phi$ to an equivalence $\cO_X^{\perp}\simeq\cO_Y^{\perp}$. In these genera the anticanonical bundles are very ample, and Corollary~\ref{cor:vaPerry} gives $X\simeq Y$.
\end{proof}

\section{Further examples}\label{sec:examples}

In Sections~\ref{ss:CI}--\ref{ss:dP3} the relevant polarizations are very ample; Sections~\ref{ss:cyclic} and~\ref{ss:veronese} treat finite polarization maps of higher degree.

\subsection{Complete intersections}\label{ss:CI}

\begin{theorem}\label{thm:CI}
Fix $n\ge3$ and a multidegree $(d_1,\dots,d_c)$ with all $d_j\ge2$, and put $s=\sum_j(d_j-1)\le n$. Let $X,Y\subset\PP^{n+c}$ be smooth complete intersections of this type.
\begin{enumerate}[label=\textup{(\arabic*)}]
\item If $s\ge3$, an equivalence $\Ku(X)\simeq\Ku(Y)$ commuting with the rotation functors implies that $X$ and $Y$ are projectively equivalent.
\item If $s=n$, so that $X$ has index one, an equivalence $\cO_X^{\perp}\simeq\cO_Y^{\perp}$ implies that $X$ and $Y$ are projectively equivalent.
\end{enumerate}
\end{theorem}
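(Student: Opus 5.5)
The plan is to deduce both parts from Theorem~\ref{thm:A} (very ample case, i.e.\ Theorem~\ref{thm:general}) and Corollary~\ref{cor:vaPerry}. The work is to verify the numerical and positivity hypotheses for a smooth complete intersection and to check that isomorphisms of polarized pairs are projective equivalences.

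\emph{Setup.} By adjunction $\omega_X\simeq\cO_X(-(n+c+1)+\sum_jd_j)$, so $\omega_X^{-1}\simeq\cO_X(r)$ with $r=n+1-s$. Since $s\le n$ we get $r\ge1$, so $X$ is Fano. For $n\ge3$ the Lefschetz hyperplane theorem, applied successively, shows that $\Pic(X)=\ZZ\cdot\cO_X(1)$. Hence $H=\cO_X(1)$ is the fundamental divisor and $\Ku(X)=\Ku(X,\cO_X(1))$ is the canonical Kuznetsov component. The bundle $H$ is very ample. The restriction $H^0(\PP^{n+c},\cO(1))\to H^0(X,\cO_X(1))$ is an isomorphism: the Koszul resolution of $\cO_X$ gives $H^i(\cI_X(1))=0$ for $i=0,1$, because all $d_j\ge2$ and the intermediate cohomology of line bundles on projective space vanishes. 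So the embedding by $|H|$ is the given embedding in $\PP^{n+c}$, and an isomorphism $(X,\cO_X(1))\simeq(Y,\cO_Y(1))$ is induced by a linear automorphism of $\PP^{n+c}$.

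\emph{Part (1).} The condition $r\le n-2$ is equivalent to $s\ge3$. Theorem~\ref{thm:A}, in its very ample case, then gives $(X,H_X)\simeq(Y,H_Y)$ from an equivalence commuting with the rotation functors, and by the setup this is a projective equivalence. (The same dimension and $r$ hold by assumption, though Remark~\ref{rem:numerical} says they are not needed.)

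\emph{Part (2).} If $s=n$ then $r=1$, $-K_X=\cO_X(1)$ is very ample, and $\Ku(X)=\cO_X^{\perp}$. Corollary~\ref{cor:vaPerry}, with $\dim X=n\ge3$, gives an isomorphism $X\simeq Y$ identifying the anticanonical embeddings. These are the given embeddings in $\PP^{n+c}$, so $X$ and $Y$ are projectively equivalent.

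The only step with real content is checking that $H^0(\PP^{n+c},\cO(1))\simeq H^0(X,\cO_X(1))$, so that the complete linear system recovers the ambient embedding; the Koszul argument settles it. In part (1) no cubic-type boundary case arises, since $s\ge3$ already places us in the range $n-r\ge2$.
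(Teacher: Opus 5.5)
Your proposal is correct and follows the paper's proof essentially verbatim. Adjunction gives $r=n+1-s$, Grothendieck--Lefschetz gives $\Pic(X)=\ZZ\cdot\cO_X(1)$, and the Koszul resolution gives linear normality, so the complete embeddings are the given ones; then Theorem~\ref{thm:A} yields (1) (since $r\le n-2\iff s\ge3$), and Corollary~\ref{cor:vaPerry} yields (2).

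One minor point concerns ``Lefschetz applied successively''. The intermediate complete intersections need not be smooth, so you need the version of the theorem that allows possibly singular lci intermediate steps. Citing the Grothendieck--Lefschetz theorem for complete intersections directly, as the paper does, avoids this.
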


\begin{proof}
By the Grothendieck--Lefschetz theorem \cite[Section~3.1]{Laz04}, $\Pic(X)=\ZZ\cdot\cO_X(1)$, and by adjunction $-K_X=\cO_X(n+1-s)$; so $\cO_X(1)$ is the fundamental divisor, $i_X=n+1-s$, and it is very ample. Since all $d_j\ge2$, the Koszul resolution shows that $X$ is linearly normal and nondegenerate, so $H^0(X,\cO_X(1))=H^0(\PP^{n+c},\cO(1))$. The condition $i_X\le n-2$ is $s\ge3$, and Theorem~\ref{thm:A} gives (1); Corollary~\ref{cor:vaPerry} gives (2). The isomorphisms identify the complete embeddings, hence are projective equivalences.
\end{proof}

\subsection{Gushel--Mukai varieties}

\begin{proposition}\label{prop:GM}
Let $X$ and $Y$ be smooth ordinary Gushel--Mukai varieties of dimension $n\in\{3,4,5\}$, that is, smooth intersections $\Gr(2,5)\cap\PP^{n+4}\cap Q$ with a quadric $Q$. Then an equivalence $\Ku(X)\simeq\Ku(Y)$ of canonical Kuznetsov components commuting with the rotation functors implies $X\simeq Y$; for $n=3$ an equivalence $\cO_X^{\perp}\simeq\cO_Y^{\perp}$ suffices.
\end{proposition}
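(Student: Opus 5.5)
The plan is to reduce to Theorem~\ref{thm:A} for very ample polarizations, and for $n=3$ to Corollary~\ref{cor:vaPerry}. First I fix the numerical data. Let $X=\Gr(2,5)\cap\PP^{n+4}\cap Q$ be smooth and ordinary of dimension $n\in\{3,4,5\}$. By Lefschetz-type results for Gushel--Mukai varieties (Debarre--Kuznetsov), $\Pic(X)=\ZZ\cdot H$ for $n\ge3$, where $H$ is the restriction of the Pl\"ucker class. Adjunction on $\Gr(2,5)$, with $-K_{\Gr(2,5)}=5H$, gives
\[
-K_X=(5-(6-n)-2)H=(n-3)H .
\]
So $H=H_X$ is the fundamental divisor and $i_X=n-3$ for $n\ge4$, while $i_X=1$ for $n=3$. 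In the latter case $H=-K_X$, and $\Ku(X)=\cO_X^{\perp}$ with $r=1$. In all cases $r=i_X\le 2\le n-2$, so the numerical hypothesis of Theorem~\ref{thm:A} holds.

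Next I check very ampleness and completeness of the linear system. The class $H$ is the restriction of the Pl\"ucker hyperplane class, so it is very ample. I then need $H^0(X,H)$ to equal the space of linear forms on $\PP^{n+4}$, so that the embedding is by the complete system. Linear normality follows from the Koszul resolution of $X$ inside the linear section $M=\Gr(2,5)\cap\PP^{n+4}$: $M$ is itself projectively normal, with vanishing $H^1(M,\cO_M(1-2))=H^1(M,\cO_M(-1))$ by Kodaira, and $h^0(X,H)=n+5$ by the standard Gushel--Mukai computation. The only subtle point is that an isomorphism $(X,H_X)\simeq(Y,H_Y)$ is then an isomorphism of the varieties themselves, which is all the statement asks for.

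With these checks in hand, Theorem~\ref{thm:A} applies when $H_X,H_Y$ are very ample: an equivalence $\Ku(X)\simeq\Ku(Y)$ commuting with rotation functors gives $(X,H_X)\simeq(Y,H_Y)$, hence $X\simeq Y$. For $n=3$, $-K_X=H$ is very ample and $X$ has dimension three. Corollary~\ref{cor:vaPerry} then gives $X\simeq Y$ from any equivalence $\cO_X^{\perp}\simeq\cO_Y^{\perp}$, and by Lemma~\ref{lem:dim} the target $Y$ automatically has the same dimension.

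I expect the main obstacle to be the Picard group and linear normality inputs, which the paper does not prove and which I would cite from Debarre--Kuznetsov's work on Gushel--Mukai varieties. I would also need to make sure the excluded special (double cover) Gushel--Mukai varieties, whose polarization is not very ample, are not needed here, since the statement restricts to ordinary ones.
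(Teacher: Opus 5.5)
Your reduction is the same as the paper's: Theorem~\ref{thm:A} for the very ample Pl\"ucker polarization, and Corollary~\ref{cor:vaPerry} for $n=3$. However, the numerical input is wrong as written.

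Since $\dim\Gr(2,5)=6$ and $\PP^{n+4}$ has codimension $5-n$ in $\PP^9$, $X$ is cut out of $\Gr(2,5)$ by $5-n$ hyperplanes and one quadric, not $6-n$ hyperplanes. Adjunction therefore gives
\[
-K_X=\bigl(5-(5-n)-2\bigr)H=(n-2)H,
\]
which is the formula the paper takes from \cite{DK18}.

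Your formula $(n-3)H$ has several consequences:
\begin{enumerate}
\item For $n=3$ it gives $-K_X=0$, contradicting the Fano hypothesis and your own claim $i_X=1$.
\item For $n=4,5$ it gives indices $1,2$ instead of the correct $2,3$.
\item Your chain $r=i_X\le2\le n-2$ fails: its second inequality is false for $n=3$, and with the correct index its first is false for $n=5$.
\item You would be applying Theorem~\ref{thm:A} to the wrong category, e.g.\ $\cO_X^{\perp}$ instead of $\langle\cO_X,\cO_X(H)\rangle^{\perp}$ for $n=4$.
\end{enumerate}

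With the correct value, $r=i_X=n-2$. The hypothesis $r\le n-2$ of Theorem~\ref{thm:A} then holds, but with equality: Gushel--Mukai varieties sit exactly on the boundary of the theorem. Once this is fixed the argument goes through. For $n=3$ one gets $r=1$, $\Ku(X)=\cO_X^{\perp}$ and $-K_X=H$ very ample, as you say.

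Two smaller points.
\begin{itemize}
\item For primitivity of $H$ you invoke the Lefschetz statement $\Pic(X)=\ZZ H$. The paper needs only primitivity and gets it more cheaply: $\Pic(X)$ is torsion free and $H^n=10$ is not divisible by $k^n$ for any $k\ge2$. Hence $-K_X=(n-2)H$ has divisibility exactly $n-2$, with no Lefschetz input.
\item Your linear-normality discussion is not needed. Theorem~\ref{thm:A} only uses very ampleness, since the complete linear system of a very ample bundle is automatically an embedding. Moreover, the statement only asks for $X\simeq Y$.
\end{itemize}
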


\begin{proof}
The Pl\"ucker class $H$ is very ample, and $-K_X=(n-2)H$ \cite{DK18}. Since $\Pic(X)$ is torsion free and $H^n=10$ is not divisible by $k^n$ for any $k\ge2$, the class $H$ is primitive, so it is the fundamental divisor and $i_X=n-2$. Theorem~\ref{thm:A} applies, and for $n=3$ so does Corollary~\ref{cor:vaPerry}.
\end{proof}

\subsection{Hypersurfaces in products of projective spaces}

\begin{proposition}\label{prop:products}
Let $X,Y\subset\PP^a\times\PP^b$ be smooth hypersurfaces of bidegree $(d_1,d_2)$ with $1\le d_1\le a$, $1\le d_2\le b$ and $a+b\ge4$. Then $\cO_X^{\perp}\simeq\cO_Y^{\perp}$ implies $X\simeq Y$. Moreover, if $\iota=\gcd(a+1-d_1,b+1-d_2)\le a+b-3$, an equivalence of canonical Kuznetsov components commuting with the rotation functors implies $X\simeq Y$.
\end{proposition}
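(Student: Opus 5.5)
The plan is to reduce both statements to Corollary~\ref{cor:vaPerry} and Theorem~\ref{thm:A}, following the proof of Theorem~\ref{thm:CI}. First I compute the Picard group and canonical class. Since $a+b\ge4$ and $X$ is an ample divisor in $\PP^a\times\PP^b$ of dimension $n=a+b-1\ge3$, the Grothendieck--Lefschetz theorem gives $\Pic(X)=\ZZ h_1\oplus\ZZ h_2$, restricted from the product. Adjunction then gives
\[
-K_X=\cO_X(a+1-d_1,\,b+1-d_2).
\]
The hypotheses $d_1\le a$ and $d_2\le b$ make both coefficients at least one. So $-K_X$ is the restriction of a very ample class on $\PP^a\times\PP^b$, hence very ample on $X$, and $X$ is a smooth Fano variety of dimension at least three.

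For the first statement, Corollary~\ref{cor:vaPerry} applies directly to the pair $X,Y$. It gives $X\simeq Y$, and the isomorphism even identifies the anticanonical embeddings. Note that $Y$ has the same bidegree, so it satisfies the same hypotheses.

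For the second statement I identify the fundamental divisor. The class $\cO(p,q)$ is divisible by $k$ in $\ZZ h_1\oplus\ZZ h_2$ exactly when $k\mid\gcd(p,q)$. Hence $i_X=\iota$, and $H_X=\cO_X\bigl((a+1-d_1)/\iota,\,(b+1-d_2)/\iota\bigr)$. Both coefficients of $H_X$ are positive, so $H_X$ is very ample, and the same holds for $Y$. The hypothesis $\iota\le a+b-3=n-2$ is exactly the bound $r\le n-2$ of Theorem~\ref{thm:A}, which then gives $(X,H_X)\simeq(Y,H_Y)$.

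The only step needing care is the Lefschetz statement. I would also check that $H^1(X,\cO_X)=0$, which follows from the Koszul sequence and Künneth. Beyond that I expect no real obstacle: the isomorphism need not be ambient, because the results invoked recover $X$ from its complete linear systems.
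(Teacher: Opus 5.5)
Your proposal is correct and follows essentially the same route as the paper: adjunction shows that $-K_X=\cO_X(a+1-d_1,b+1-d_2)$ is very ample, so Corollary~\ref{cor:vaPerry} applies. Lefschetz then identifies $H_X$ as the divided class, which is very ample with $i_X=\iota$, so Theorem~\ref{thm:A} applies when $\iota\le\dim X-2=a+b-3$. The extra check that $H^1(X,\cO_X)=0$ is harmless but not needed.
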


\begin{proof}
By adjunction $-K_X=\cO_X(a+1-d_1,b+1-d_2)$ with both entries positive, which is very ample, and $\dim X=a+b-1\ge3$, so Corollary~\ref{cor:vaPerry} applies. By the Lefschetz theorem $\Pic(X)=\Pic(\PP^a\times\PP^b)$, so the fundamental divisor is $\cO_X((a+1-d_1)/\iota,(b+1-d_2)/\iota)$, which is very ample, and $i_X=\iota$; Theorem~\ref{thm:A} applies when $\iota\le\dim X-2$.
\end{proof}

\subsection{Del Pezzo threefolds}\label{ss:dP3}
For del Pezzo threefolds of degree one the categorical Torelli theorem is due to Lin, Rennemo and Zhang \cite{LRZ26}, via Hochschild--Serre algebras; see Section~\ref{ss:comparison}. For a smooth del Pezzo threefold $Y_d$ of Picard rank one and degree $d\in\{2,3,4\}$ one has $-K_{Y_d}=2H$ and $\Db(Y_d)=\langle\Ku(Y_d),\cO,\cO(H)\rangle$. Mutating $\cO(H)$ gives
\begin{equation}\label{eq:dP3}
\Db(Y_d)=\langle\Ku(Y_d),\mathcal Q_{Y_d},\cO_{Y_d}\rangle,\qquad\mathcal Q_{Y_d}=L_{\cO}\cO(H)[-1],
\end{equation}
where $\mathcal Q_{Y_d}$ is the kernel of the evaluation map of $H$. By \cite[Theorem~7.1]{FLZ23}, the object $i^!\mathcal Q_{Y_d}$ is preserved by every equivalence of Kuznetsov components, up to autoequivalences of the target and shifts.

\begin{theorem}\label{thm:dP3}
Let $Y_d$ and $Y'_d$ be smooth del Pezzo threefolds of Picard rank one and degree $d\in\{2,3,4\}$. Then $\Ku(Y_d)\simeq\Ku(Y'_d)$ implies $Y_d\simeq Y'_d$.
\end{theorem}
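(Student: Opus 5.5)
The plan is to reduce to Corollary~\ref{cor:vaPerry} by gluing. Let $\Phi\colon\Ku(Y_d)\simeq\Ku(Y'_d)$ be any equivalence. The decomposition \eqref{eq:dP3} gives
\[
\cO_{Y_d}^{\perp}=\langle\Ku(Y_d),\mathcal Q_{Y_d}\rangle,
\]
and $\mathcal Q_{Y_d}$ is exceptional, since it is a mutation of the exceptional object $\cO(H)$. As in the proof of Corollary~\ref{thm:refined}, the gluing object of this two-term decomposition is $i^!\mathcal Q_{Y_d}$. The key point is that the right adjoint of $\Ku(Y_d)\subset\Db(Y_d)$, restricted to $\cO^{\perp}$, agrees with the right adjoint of $\Ku(Y_d)\subset\cO^{\perp}$. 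This holds because $\cO^{\perp}$ is a full admissible subcategory containing $\Ku(Y_d)$.

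By \cite[Theorem~7.1]{FLZ23}, $\Phi(i^!\mathcal Q_{Y_d})$ lies in the orbit of $i^!\mathcal Q_{Y'_d}$ under autoequivalences of $\Ku(Y'_d)$ and shifts. So the orbit version of Corollary~\ref{cor:gluingobject} applies and yields an equivalence $\cO_{Y_d}^{\perp}\simeq\cO_{Y'_d}^{\perp}$.

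It remains to check the hypotheses of Corollary~\ref{cor:vaPerry}. Both varieties are threefolds. Their anticanonical bundles are $-K=2H$. The main point to verify is that $2H$ is very ample for $d\in\{2,3,4\}$:
\begin{itemize}
\item for $d=3,4$, $H$ is already very ample, since $Y_3$ is a cubic threefold and $Y_4$ is an intersection of two quadrics, so $2H$ is very ample;
\item for $d=2$, $Y_2$ is a double cover of $\PP^3$ branched in a quartic, and $H$ is only globally generated; one instead uses the standard fact that $2H$ is very ample, since $|2H|$ embeds $Y_2$ as a quartic hypersurface in the weighted projective space $\PP(1,1,1,1,2)$, re-embedded by $\cO(2)$.
\end{itemize}
If one prefers to avoid very ampleness for $d=2$, Theorem~\ref{thm:B} suffices instead. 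In that case one needs $-K$ globally generated and the dual of the anticanonical image to be a hypersurface, which holds when the image is smooth or has an embedding with nondefective dual.

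Corollary~\ref{cor:vaPerry} then gives $Y_d\simeq Y'_d$. The only delicate input is the cited preservation of the gluing object from \cite{FLZ23}, which is assumed. On the paper's side the step that needs care is the precise identification of the gluing object with $i^!\mathcal Q_{Y_d}$, including the enhanced gluing bimodule. This is handled exactly as in Corollary~\ref{cor:gluingobject}, because $\REnd(\mathcal Q_{Y_d})=\CC$.
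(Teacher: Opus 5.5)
Your proposal is correct and follows the paper's proof essentially step for step: \cite[Theorem~7.1]{FLZ23} together with the orbit version of Corollary~\ref{cor:gluingobject} glues $\Phi$ to an equivalence $\cO_{Y_d}^{\perp}\simeq\cO_{Y'_d}^{\perp}$, and very ampleness of $-K=2H$ then lets Corollary~\ref{cor:vaPerry} conclude; for $d=2$ this very ampleness holds because the even Veronese subring of $\CC[x_0,\dots,x_3,y]/(y^2-F_4)$ is generated by the $x_ix_j$ and $y$. Your aside about Theorem~\ref{thm:B} is unnecessary, and as phrased (``holds when the image is smooth'') it is not a valid criterion for a hypersurface dual, but it plays no role in your argument.
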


\begin{proof}
By \eqref{eq:dP3}, $\cO_{Y_d}^{\perp}=\langle\Ku(Y_d),\mathcal Q_{Y_d}\rangle$, and by \cite[Theorem~7.1]{FLZ23} and Corollary~\ref{cor:gluingobject}, $\cO_{Y_d}^{\perp}\simeq\cO_{Y'_d}^{\perp}$. For $d=3,4$, $H$ and hence $-K=2H$ is very ample. For $d=2$, $Y_2$ is $y^2=F_4(x_0,\dots,x_3)$ in $\PP(1,1,1,1,2)$; it avoids the singular point of the weighted projective space, and the second Veronese subring of its coordinate ring is generated in degree one by the quadratic monomials in the $x_i$ and by $y$, so $2H$ is very ample. Corollary~\ref{cor:vaPerry} applies.
\end{proof}

\subsection{Cyclic covers of quadrics}\label{ss:cyclic}
Theorems~\ref{thm:A} and~\ref{thm:B} apply to globally generated polarizations which are not very ample, provided the image of the polarization map has a hypersurface as dual. We first record when a Veronese re-embedding has hypersurface dual.

\begin{lemma}\label{lem:veronesedual}
Let $M\subset\PP^N$ be a smooth projective variety embedded by a very ample line bundle $L$. For every $a\ge2$, the projective dual of the embedding of $M$ by the complete linear system $|L^{\otimes a}|$ is a hypersurface.
\end{lemma}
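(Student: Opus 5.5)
The strategy is to use Ein's theorem on contact loci, as in the proof of Lemma~\ref{lem:vadual}, in the form of \cite[Proposition~3.1]{IR14}. Write $M_a\subset\PP(H^0(M,L^{\otimes a})^\vee)$ for the embedding by $|L^{\otimes a}|$; it is smooth, and we argue by contradiction, supposing that its dual defect $\delta$ is positive.

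By Ein's theorem, a general contact locus is then a linear subspace $\PP^\delta\subset M_a$. Choose a line $\ell$ inside it. It is a line in the embedding by $L^{\otimes a}$, so
\[
a\,(L\cdot\ell)=\deg\bigl(L^{\otimes a}|_\ell\bigr)=1.
\]
But $L\cdot\ell$ is a positive integer, since $L$ is very ample, and hence $a\,(L\cdot\ell)\ge a\ge2$. This contradiction gives $\delta=0$, so the dual is a hypersurface. Only the existence of lines on a dual-defective smooth variety is used here, not the normal bundle formula.

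An alternative argument, which avoids Ein's theorem, goes as follows. A hyperplane section of $M_a$ is a divisor in $|L^{\otimes a}|$. Fix a general point $x\in M$ and write $\mathfrak m_x$ for its maximal ideal. Since $a\ge2$ and $L$ is very ample, the restriction map $H^0(L^{\otimes a})\to L^{\otimes a}\otimes\cO_M/\mathfrak m_x^3$ is surjective. Indeed, products of sections of $L$ already generate the second-order jets. So among the sections singular at $x$ one can prescribe any Hessian, in particular a nondegenerate one. The existence of a hyperplane section whose singular locus is an isolated nondegenerate node is then equivalent, by the standard Hessian criterion, to the dual variety being a hypersurface: the incidence map to the dual is generically finite near such a point.

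I would present the first argument. I expect no serious obstacle in either version. The only point to check is that Ein's result applies, which requires $M_a$ to be smooth and not a linear space. This holds because $a\ge2$ forces $\deg M_a>1$ whenever $\dim M\ge1$. If $\dim M=0$ the statement is vacuous, so we assume $\dim M\ge1$.
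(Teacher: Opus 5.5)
Your argument is correct, but it is not the paper's route; the paper's proof is essentially your ``alternative''.

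\textbf{The paper's proof.} It fixes $p\in M$, a section $s_0$ of $L$ with $s_0(p)\ne0$, and sections $s_1,\dots,s_m$ giving local coordinates at $p$. It observes that $s_0^a$, $s_is_0^{a-1}$ and $s_is_js_0^{a-2}$ span the second jets of $L^{\otimes a}$ at $p$, and picks a section singular at $p$ with nondegenerate quadratic term. It then invokes the Hessian computation from the proof of Lemma~\ref{lem:tangent}. Writing $V=H^0(M,L^{\otimes a})$, the conormal variety is irreducible of dimension $\dim V-2$, and at that conormal pair its projection to $\PP(V)$ has injective differential. So the dual has dimension $\dim V-2$. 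This proof is purely local at one point and reuses a computation the paper needs anyway.

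\textbf{Your main argument.} It replaces the jet computation by a global fact. A dual-defective variety is swept out by its linear contact loci $\PP^\delta$. The embedding by $|L^{\otimes a}|$ contains no lines, because every curve $C\subset M$ has degree $a(L\cdot C)\ge2$. Unlike Lemma~\ref{lem:vadual}, no normal-bundle formula is needed.

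\textbf{You do not need Ein's theorem.} Linearity of the general contact locus already follows from reflexivity. Consider the conormal variety of $M_a\subset\PP(V^\vee)$ and its fibre over a smooth point $[\Pi]$ of the dual. By biduality, this fibre is the set of points of $\PP(V^\vee)$ that, viewed as hyperplanes of $\PP(V)$, contain the embedded tangent space of the dual at $[\Pi]$. That set is a linear space of dimension $\delta$, and it lies in $M_a$. So your argument needs only biduality, which the paper already uses.

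\textbf{A small point in your alternative.} Only nondegeneracy of the Hessian at $x$ matters; whether the chosen section has other singular points is irrelevant. The claim that its singular locus is an isolated node is therefore unnecessary, and it would need an extra Bertini argument.
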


\begin{proof}
Let $p\in M$, choose $s_0\in H^0(M,L)$ with $s_0(p)\ne0$, and sections $s_1,\dots,s_m$ such that the functions $s_i/s_0$ vanish at $p$ and span the cotangent space there. The products $s_0^a$, $s_is_0^{a-1}$ and $s_is_js_0^{a-2}$ span all second jets at $p$, so among the sections of $L^{\otimes a}$ with vanishing first jet at $p$ there is one with nondegenerate quadratic term. At the corresponding conormal pair the differential of the conormal projection to the dual projective space is injective, by the Hessian computation in the proof of Lemma~\ref{lem:tangent}, so the dual has dimension one less than the dual projective space.
\end{proof}

\begin{corollary}\label{cor:cyclicquadric}
Let $Q\subset\PP^{n+1}$ be a smooth quadric of dimension $n\ge3$, and let $\pi\colon X\to Q$ be the cyclic cover of degree $d\ge2$ associated with $\cO_Q(b)$ and a smooth branch divisor in $|\cO_Q(db)|$. Assume that $b\ge2$ and $r=n-(d-1)b\ge1$, and put $H=\pi^*\cO_Q(1)$.
\begin{enumerate}[label=\textup{(\arabic*)}]
\item The pair $(\Ku(X,H),\Rot_{X,H})$ determines $(X,H)$ among smooth Fano $n$-folds with a globally generated Fano polarization of index $r$.
\item The category $\cO_X^{\perp}$ determines $X$ among all smooth Fano varieties of dimension at least three with globally generated anticanonical bundle.
\end{enumerate}
\end{corollary}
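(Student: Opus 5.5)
The plan is to reduce both parts to Theorem~\ref{thm:finite} (through Theorems~\ref{thm:A} and~\ref{thm:B}) by checking the numerical and geometric hypotheses for the cover $\pi\colon X\to Q$. Write $X\subset\mathrm{Tot}(\cO_Q(b))$ as $y^d=g$ with $g\in H^0(Q,\cO_Q(db))$ defining the smooth branch divisor. Then $X$ is smooth of dimension $n$, and Hurwitz gives
\[
K_X=\pi^*\bigl(K_Q+(d-1)b\,H_Q\bigr)=\pi^*\cO_Q\bigl(-n+(d-1)b\bigr).
\]
Hence $-K_X=rH$ with $H=\pi^*\cO_Q(1)$ ample and $r=n-(d-1)b\ge1$, so $X$ is Fano and $(H,r)$ is a Fano polarization.

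\emph{Global generation and the map $f_H$.} I would compute $\pi_*\cO_X=\bigoplus_{k=0}^{d-1}\cO_Q(-kb)$. This gives $H^0(X,H)=H^0(Q,\cO_Q(1))$, since $b\ge2>1$ kills the other summands. So $|H|$ is the pullback of $|\cO_Q(1)|$, which is globally generated, and $f_H=\pi$ composed with the embedding $Q\subset\PP^{n+1}$, so $Z_H=Q$. The dual of a smooth quadric is a smooth quadric hypersurface in the dual space. In part (1) the hypothesis $r\le n-2$ of Theorem~\ref{thm:A} amounts to $(d-1)b\ge2$, which holds because $b\ge2$. Part (1) then follows from Theorem~\ref{thm:A}, or directly from Theorem~\ref{thm:finite} applied to $\cB=\langle\cO\rangle$, since the comparison variety $Y$ is assumed to be an $n$-fold with a globally generated Fano polarization of index $r$.

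\emph{Part (2).} Here the polarization is $-K_X=\pi^*\cO_Q(r)$. By the same pushforward computation,
\[
H^0(X,-K_X)=\bigoplus_{kb\le r}H^0\bigl(Q,\cO_Q(r-kb)\bigr)\,y^k,
\]
and this system is globally generated because it contains $\pi^*|\cO_Q(r)|$. I would identify the anticanonical image $Z$ in one of two ways:
\begin{itemize}
\item if $r<b$, then $Z=\nu_r(Q)$, the $r$-th Veronese re-embedding of $Q$;
\item if $r\ge b$, the sections $y^k$ enter, and the morphism is generically injective, or at least of lower degree.
\end{itemize}
Theorem~\ref{thm:B} needs $Z^\vee$ to be a hypersurface. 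For $r=1$ this is $Q^\vee$. For $r\ge2$ with image $\nu_r(Q)$, it is Lemma~\ref{lem:veronesedual} with $L=\cO_Q(1)$ and $a=r$. Then Theorem~\ref{thm:B} applies, using $\dim X=n\ge3$ and global generation of $-K_Y$ on the other side, which is an assumption.

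\emph{Main obstacle.} The hard step is the case $r\ge b$ in part (2), where the map is $X\to\PP(H^0(X,-K_X)^\vee)$ and its image is no longer a Veronese of $Q$. There I would try to show that the image has hypersurface dual by adapting the jet argument of Lemma~\ref{lem:veronesedual}. At a point $x$ of $X$ off the ramification locus, the subsystem $\pi^*|\cO_Q(r)|$ with $r\ge2$ already realizes a section with vanishing first jet and nondegenerate Hessian at $x$, since $\pi$ is étale there. The Hessian argument of Lemma~\ref{lem:tangent} should then give injectivity of the conormal projection at a general point, so the dual of the image is a hypersurface. If $r=1$, so that $r<b$ automatically, the Veronese case above already settles it.
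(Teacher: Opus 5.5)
Your proof is correct. Part (1) and the case $r<b$ of part (2) match the paper's argument. For $r\ge b$ you take a different and more elementary route.

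\textbf{The paper's route for $r\ge b$.} It proves that $rH$ is very ample. On the chart $x_0\ne0$, the sections of $rH$ include all monomials of degree $r$ together with $zx_0^{r-b}$. Their ratios to $x_0^r$ give the affine coordinates and the fibre coordinate $z/x_0^b$. The paper then invokes Lemma~\ref{lem:vadual}, which rests on Ein's theorem on contact loci.

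\textbf{Your route for $r\ge b$.} Your jet argument works. Since $\pi^*H^0(Q,\cO_Q(r))\subset H^0(X,-K_X)$ and $r\ge2$, at any point $x$ where $\pi$ is étale these sections realize all second jets, as in Lemma~\ref{lem:veronesedual}. So some anticanonical section has vanishing first jet and nondegenerate Hessian at $x$. Make one step explicit: take $x$ general. The anticanonical morphism is finite, hence generically étale onto its image in characteristic zero. So you can choose $x$ where it is étale onto a smooth point of the image. Then the incidence of singular anticanonical divisors near $(x,[s])$ is locally the conormal variety of the image. The Hessian computation of Lemma~\ref{lem:tangent} then makes the conormal projection an immersion there, so the dual has dimension $h^0(X,-K_X)-2$.

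\textbf{Comparison.} Your argument avoids both very ampleness and Ein's theorem. It is also uniform in $r\ge2$, so it covers the Veronese case $2\le r<b$ as well, and only $r=1$ needs the separate remark about $Q^\vee$. The paper's route gives the additional geometric fact that the anticanonical morphism is an embedding when $r\ge b$.

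Your aside that the morphism is ``generically injective, or at least of lower degree'' is neither proved nor needed, and should be dropped.
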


\begin{proof}
The cyclic algebra structure and the ramification formula give
\[
\pi_*\cO_X=\bigoplus_{j=0}^{d-1}\cO_Q(-jb),\qquad-K_X=\pi^*\cO_Q\bigl(n-(d-1)b\bigr)=rH;
\]
equivalently, $X$ is the hypersurface $z^d=s$ in the total space of $\cO_Q(b)$. A smooth branch divisor makes $X$ smooth, and $r>0$ makes it Fano. Since $b\ge2$, $H^0(X,H)=H^0(Q,\cO_Q(1))$, so the morphism defined by $|H|$ is $\pi$ followed by the embedding of $Q$; its image is $Q$, whose dual is a quadric hypersurface. Moreover $1\le r\le n-2$, so Theorem~\ref{thm:A} proves (1).

The anticanonical bundle $rH$ is globally generated. If $r<b$, then $H^0(X,rH)=H^0(Q,\cO_Q(r))$, so the anticanonical image is $Q$ in its complete embedding by $\cO_Q(r)$; its dual is a hypersurface, for $r=1$ because $Q$ is a smooth quadric and for $r\ge2$ by Lemma~\ref{lem:veronesedual}. If $r\ge b$, then $rH$ is very ample: on the chart where a linear coordinate $x_0$ of $\PP^{n+1}$ does not vanish, the sections of $rH$ include all monomials of degree $r$ in the coordinates and $zx_0^{r-b}$, and their ratios to $x_0^r$ give the affine coordinates $x_i/x_0$ and the fibre coordinate $z/x_0^b$, so the complete anticanonical map is a closed immersion; Lemma~\ref{lem:vadual} applies. In all cases Theorem~\ref{thm:B} proves (2).
\end{proof}

\begin{corollary}\label{cor:hyperelliptic}
Let $X$ be a smooth double cover of a smooth quadric threefold branched along a smooth quartic section. Then $\cO_X^{\perp}$ determines $X$ among all smooth Fano varieties of dimension at least three with globally generated anticanonical bundle. In particular the hyperelliptic prime Fano threefolds of genus three satisfy Perry's conjecture.
\end{corollary}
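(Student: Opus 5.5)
This is the case $n=3$, $d=2$, $b=2$ of Corollary~\ref{cor:cyclicquadric}(2); the plan is to check that these parameters satisfy its hypotheses and then invoke it. Let $Q\subset\PP^4$ be the smooth quadric threefold and $\pi\colon X\to Q$ the double cover branched along a smooth quartic section $B=Q\cap Q_4$. Then $B\in|\cO_Q(4)|=|\cO_Q(db)|$ with $d=2$ and $b=2$. The cyclic cover is associated with $\cO_Q(2)$, and $X$ is the hypersurface $z^2=s$ in the total space of $\cO_Q(2)$, with $s$ the equation of $B$. The index is $r=n-(d-1)b=3-2=1$. Thus the hypotheses $n\ge3$, $d\ge2$, $b\ge2$ and $r\ge1$ of Corollary~\ref{cor:cyclicquadric} all hold, and $-K_X=H=\pi^*\cO_Q(1)$.

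Applying part (2) of that corollary gives the first assertion directly. For the record, the mechanism in that proof is as follows. Since $r=1<b=2$, we have $H^0(X,-K_X)=H^0(Q,\cO_Q(1))$. Hence the anticanonical morphism is $\pi$ followed by $Q\subset\PP^4$, and the anticanonical bundle is globally generated. Its image is $Q$, whose dual is a quadric hypersurface, so Theorem~\ref{thm:B} applies. We have $\dim X=3$, so Lemma~\ref{lem:dim} handles the dimension of an arbitrary competitor $Y$ and makes the equivalence commute with the rotation functors. Therefore $\cO_X^{\perp}\simeq\cO_Y^{\perp}$ yields $X\simeq Y$ for any smooth Fano $Y$ of dimension at least three with globally generated anticanonical bundle.

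For the second assertion, recall the facts from \cite[Chapter~4]{IP99} quoted in Section~\ref{sec:prime}. The hyperelliptic prime Fano threefolds of genus three are exactly these double covers of a smooth quadric threefold branched along a quartic section. Smoothness of $X$ forces $B$ to be smooth, since $X$ is locally $z^2=s$. Their Picard group is $\ZZ K_X$, and $(-K_X)^3=2\cdot Q\cdot\cO(1)^3=4$, giving $g=3$. So the first assertion applies to them.

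No step here is a genuine obstacle; the content lies in Corollary~\ref{cor:cyclicquadric} and Theorem~\ref{thm:B}. The only points to verify with care are the two just made. First, the linear system $|H|$ really is pulled back from $Q$; this uses $b\ge2$, so that $\pi_*\cO_X=\cO_Q\oplus\cO_Q(-2)$ contributes no sections in degree one. Second, the identification of the hyperelliptic genus-three family with smooth branch data.
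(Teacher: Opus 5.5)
Your proposal is correct and matches the paper's proof. Both specialize Corollary~\ref{cor:cyclicquadric} to $n=3$, $d=2$, $b=2$, so that $r=1$, $-K_X=H$ and the anticanonical morphism is two-to-one onto $Q$. Both then identify these double covers with the hyperelliptic genus-three prime Fano threefolds via \cite{IP99}.
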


\begin{proof}
Take $n=3$, $d=2$ and $b=2$ in Corollary~\ref{cor:cyclicquadric}. Then $-K_X=H$, $H^3=4$ and $h^0(X,H)=5$, and the anticanonical morphism is two-to-one onto the quadric. These are the hyperelliptic Fano threefolds of genus three in the classification \cite{IP99}, with weighted complete-intersection equations $q_2(x)=0$, $z^2=f_4(x)$ in $\PP(1^5,2)$.
\end{proof}

\begin{example}\label{ex:triplecover}
Let $X$ be a cyclic triple cover of a smooth quadric fivefold branched along a smooth sextic section. Here $n=5$, $d=3$, $b=2$ and $r=1$, so $-K_X=\pi^*\cO_Q(1)$ is globally generated, the anticanonical morphism has degree three onto $Q$, and Corollary~\ref{cor:cyclicquadric} shows that $X$ is determined by $\cO_X^{\perp}$.
\end{example}

\subsection{Veronese anticanonical images}\label{ss:veronese}

\begin{proposition}\label{prop:veronese}
Let $X$ be the smooth cyclic cover of $\PP^n$, $n\ge3$, of degree $d\ge2$ associated with $\cO_{\PP^n}(b)$ and a smooth branch divisor of degree $db$, and put $a=n+1-(d-1)b$. If $a\ge2$, then $\cO_X^{\perp}$ determines $X$ among all smooth Fano varieties of dimension at least three with globally generated anticanonical bundle.
\end{proposition}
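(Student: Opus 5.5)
The plan is to apply Theorem~\ref{thm:B} directly, in the same way as the proof of Corollary~\ref{cor:cyclicquadric}. Write $\pi\colon X\to\PP^n$ for the cyclic cover, realized as the hypersurface $z^d=s$ in the total space of $\cO_{\PP^n}(b)$, with $s\in H^0(\PP^n,\cO(db))$ cutting out the smooth branch divisor. The steps are as follows.

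\emph{Step 1: canonical class and pushforward.} The cyclic algebra structure gives
\[
\pi_*\cO_X=\bigoplus_{j=0}^{d-1}\cO_{\PP^n}(-jb).
\]
The ramification formula gives
\[
-K_X=\pi^*\cO_{\PP^n}\bigl(n+1-(d-1)b\bigr)=\pi^*\cO(a).
\]
Hence $X$ is a smooth Fano variety when $a\ge1$, and it has dimension $n\ge3$. Its anticanonical bundle is globally generated, since it is pulled back from a globally generated bundle.

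\emph{Step 2: identify the anticanonical image.} By the projection formula,
\[
H^0(X,-K_X)=\bigoplus_{j=0}^{d-1}H^0\bigl(\PP^n,\cO(a-jb)\bigr).
\]
\emph{Case $a<b$.} Only $j=0$ contributes. Then $|-K_X|$ is $\pi$ composed with the Veronese embedding $\nu_a$, and the image is $\nu_a(\PP^n)$. Since $a\ge2$, Lemma~\ref{lem:veronesedual} applied to $(\PP^n,\cO(1))$ shows that its projective dual is a hypersurface. This is the step that uses the hypothesis $a\ge2$: for $a=1$ the image would be $\PP^n$ itself, whose dual is empty.

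\emph{Case $a\ge b$.} On the chart $x_0\ne0$, the sections include all degree-$a$ monomials and $zx_0^{a-b}$. Their ratios to $x_0^a$ give the affine coordinates $x_i/x_0$ and the fibre coordinate $z/x_0^b$, so the anticanonical map is a closed immersion on each chart, and hence $-K_X$ is very ample. In this case Lemma~\ref{lem:vadual} gives a hypersurface dual.

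\emph{Step 3: conclude.} In both cases the hypotheses of Theorem~\ref{thm:B} hold on the side of $X$. The hypothesis on $Y$, global generation of $-K_Y$ and $\dim Y\ge3$, is part of the statement. Theorem~\ref{thm:B} then gives $X\simeq Y$ whenever $\cO_X^{\perp}\simeq\cO_Y^{\perp}$.

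The main point to check is the very ampleness chart computation in the case $a\ge b$. A closed immersion requires separating points and tangents globally, not only giving coordinates on each chart. The chart argument handles this because the charts $x_i\ne0$ cover $X$ and each maps isomorphically onto a closed subscheme of the corresponding affine chart of the target projective space. This is exactly the argument already used in Corollary~\ref{cor:cyclicquadric}, so I expect it to go through verbatim.
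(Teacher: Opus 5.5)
Your proposal is correct and follows the paper's proof essentially verbatim: it uses the same split into the cases $2\le a<b$ (anticanonical image $\nu_a(\PP^n)$, whose dual is a hypersurface by Lemma~\ref{lem:veronesedual}) and $a\ge b$ (very ampleness from the chart coordinates $x_i/x_0$ and $z/x_0^b$, then Lemma~\ref{lem:vadual}), followed by an application of Theorem~\ref{thm:B}. Your passage from the chart computation to a global closed immersion is also correct: the preimage of each affine chart $\{x_i^a\ne0\}$ of the target is the chart $\{x_i\ne0\}$ of $X$, these cover $X$, and $X$ is proper; this justifies a point the paper leaves implicit.
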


\begin{proof}
Write $H=\pi^*\cO_{\PP^n}(1)$. As in the proof of Corollary~\ref{cor:cyclicquadric}, $-K_X=aH$ and $\pi_*\cO_X=\bigoplus_{j=0}^{d-1}\cO(-jb)$. If $2\le a<b$, the anticanonical morphism is the cover followed by the $a$-th Veronese embedding of $\PP^n$, whose dual is a hypersurface by Lemma~\ref{lem:veronesedual}. If $a\ge b$, the monomials of degree $a$ and the fibre coordinate show as before that $aH$ is very ample, and Lemma~\ref{lem:vadual} applies. In both cases Theorem~\ref{thm:B} proves the statement.
\end{proof}

\begin{example}\label{ex:decic}
For the smooth double cover $X$ of $\PP^6$ branched along a decic we have $d=2$, $b=5$ and $a=2$, so $-K_X=2H$ and the anticanonical morphism is two-to-one onto the Veronese variety $\nu_2(\PP^6)$. Proposition~\ref{prop:veronese} shows that $X$ is determined by $\cO_X^{\perp}$. Theorem~\ref{thm:C} does not apply here: $-qK_X=2qH$ is very ample only for $2q\ge5$, whereas $q\ell\le n-1=5$ with $\ell\ge2$ forces $q\le2$.
\end{example}

\subsection{Comparison with known categorical Torelli theorems}\label{ss:comparison}
Table~\ref{tab:comparison} compares the main applications with the literature.

\begin{table}[ht]
\centering
\small
\renewcommand{\arraystretch}{1.2}
\begin{tabular}{>{\raggedright\arraybackslash}p{3.3cm}>{\raggedright\arraybackslash}p{3.3cm}>{\raggedright\arraybackslash}p{2.5cm}>{\raggedright\arraybackslash}p{3.7cm}}
\toprule
Family & Categorical input & Earlier result & Deduction here\\
\midrule
Fano hypersurfaces, $d\ge4$ & $\Ku(X)$ with rotation; the category alone in the coprime cases & \cite{HR19,Pir24,LZ25} & Corollaries~\ref{cor:hypmarked} and~\ref{cor:hypcoprime}\\
Cubic hypersurfaces, $n\ge4$ & $\Ku(X)$ with rotation & \cite{HR19,Pir24} & Theorem~\ref{thm:general} with $n-r=1$\\
Cubic threefolds, quartic double solids, intersections of two quadrics in $\PP^5$ & The usual Kuznetsov component & \cite{BMMS12,APR22,FLZ23} & Gluing object of \cite{FLZ23}, then Corollary~\ref{cor:vaPerry}\\
Prime Fano threefolds, $g\ge6$ & Smaller component with its gluing object & \cite{JLLZ24,JLZ26} & Corollaries~\ref{thm:refined} and~\ref{cor:vaPerry}\\
Prime Fano threefolds, $g\ge3$ & $\cO_X^{\perp}$ & & Corollary~\ref{cor:primePerry}\\
Cyclic covers of quadrics and of $\PP^n$ & $\cO_X^{\perp}$; $\Ku(X,H)$ with rotation & & Corollary~\ref{cor:cyclicquadric}, Proposition~\ref{prop:veronese}\\
\bottomrule
\end{tabular}
\caption{Known categorical Torelli theorems and the finite-map applications.}
\label{tab:comparison}
\end{table}

For sextic double solids and for Veronese double cones, the del Pezzo threefolds of degree one, categorical Torelli theorems for very general members were proved by Dell, Jacovskis and Rota \cite[Theorems~6.4 and~6.5]{DJR23} using Hodge theory and the covering involution. For del Pezzo threefolds of degree one, which are weighted hypersurfaces of degree six in $\PP(1,1,1,1,2,3)$, the categorical Torelli theorem was proved by Lin, Rennemo and Zhang \cite{LRZ26} using Hochschild--Serre algebras. Their anticanonical images are $\PP^3$, with empty dual, and the cone over $\nu_2(\PP^2)$, whose dual has codimension two, since a tangent hyperplane must contain the vertex and cut out a singular conic on $\PP^2$.

\appendix

\section{Reconstruction from the equations of pluri-anticanonical embeddings}\label{app:equations}

In this appendix we prove Theorem~\ref{thm:C}. The argument only uses the Hochschild--Serre algebra of $\cO_X^{\perp}$, the kernel estimate of Proposition~\ref{prop:kernel}, and the multiplicativity of the comparison maps; it is independent of the noncommutative divisors of Sections~\ref{sec:ncdiv}--\ref{sec:main}. We write $X'$ for the second variety, called $Y$ in the statement of Theorem~\ref{thm:C}. The proof is a chain of identifications, each recorded by a commutative diagram:
\begin{enumerate}[label=(\arabic*)]
\item an equivalence $\cO_X^{\perp}\simeq\cO_{X'}^{\perp}$ identifies the Hochschild--Serre algebras, as bigraded algebras (Section~\ref{ss:A-HS});
\item for each variety, multiplication by pluri-anticanonical sections maps its section ring to its Hochschild--Serre algebra by a homomorphism of graded rings, which is injective in low degrees (Sections~\ref{ss:A-setting}--\ref{ss:A-mult});
\item hence the homogeneous coordinate rings of the embeddings by $|-qK_X|$ and $|-qK_{X'}|$ are identified in low degrees, together with the kernels of the multiplication maps, that is, the low-degree equations (Sections~\ref{ss:A-rings} and~\ref{ss:A-proofC});
\item when these equations generate the ideals, the identification passes to $\Proj$ of the coordinate rings, that is, to the varieties (Section~\ref{ss:A-proofC}).
\end{enumerate}

\subsection{The Hochschild--Serre algebra}\label{ss:A-HS}
Let $\cA$ be a smooth proper enhanced category with Serre functor $\mathsf S=\mathsf S_\cA$. We work in the homotopy categories of enhanced functors, in which composition is a bifunctor; thus vertical composition $\circ$ and horizontal composition $*$ of natural transformations satisfy the interchange law
\begin{equation}\label{eq:interchange}
(\beta'\circ\beta)*(\alpha'\circ\alpha)=(\beta'*\alpha')\circ(\beta*\alpha),
\end{equation}
where, for $\beta\colon F\to F'$ and $\alpha\colon G\to G'$, we have $\beta*\alpha=F'(\alpha)\circ\beta_G=\beta_{G'}\circ F(\alpha)\colon F\circ G\to F'\circ G'$.

The Hochschild--Serre algebra of $\cA$ \cite{BFK23,LZ25} is
\[
\HS^{\bullet,\bullet}(\cA)=\bigoplus_{p,q\in\ZZ}\HS^{p,q}(\cA),\qquad\HS^{p,q}(\cA)=\Hom\bigl(\id_\cA,\mathsf S^q[p]\bigr),
\]
with the product
\begin{equation}\label{eq:HSproduct}
u\cdot v=\mathsf S^q[p](v)\circ u=u*v\in\HS^{p+p',q+q'}(\cA)\qquad\bigl(u\in\HS^{p,q}(\cA),\ v\in\HS^{p',q'}(\cA)\bigr),
\end{equation}
where $\mathsf S^q[p]\circ\mathsf S^{q'}[p']=\mathsf S^{q+q'}[p+p']$. It is an associative bigraded algebra with unit $\id\in\HS^{0,0}(\cA)$.

Let $\Phi\colon\cA\xrightarrow{\sim}\cB$ be an equivalence, and fix an isomorphism $\Phi\circ\mathsf S_\cA\simeq\mathsf S_\cB\circ\Phi$, which exists since Serre functors are intrinsic. Conjugation by $\Phi$, followed by this isomorphism, defines linear isomorphisms $\Phi_*\colon\HS^{p,q}(\cA)\to\HS^{p,q}(\cB)$, and these are compatible with products \cite[Theorem~3.4]{LZ25}: for all bidegrees, the square
\begin{equation}\label{eq:diagHSinv}
\begin{tikzcd}
\HS^{p,q}(\cA)\otimes\HS^{p',q'}(\cA)\arrow[r,"\cdot"]\arrow[d,"\Phi_*\otimes\Phi_*"',"\wr"] & \HS^{p+p',q+q'}(\cA)\arrow[d,"\Phi_*","\wr"']\\
\HS^{p,q}(\cB)\otimes\HS^{p',q'}(\cB)\arrow[r,"\cdot"] & \HS^{p+p',q+q'}(\cB)
\end{tikzcd}
\end{equation}
commutes. Changing the isomorphism $\Phi\mathsf S_\cA\simeq\mathsf S_\cB\Phi$ by a scalar $\lambda\in\CC^*$ multiplies $\Phi_*$ by $\lambda^q$ on $\HS^{p,q}$; this does not affect any of the subspaces considered below.

\subsection{Setting and the comparison maps}\label{ss:A-setting}
From now on, $X$ is a smooth Fano variety of dimension $n$, $L=\omega_X^{-1}$, and $\cC_X=\cO_X^{\perp}$, with inclusion $i$ and left adjoint $\pi$. We use the notation of Sections~\ref{ss:Ku} and~\ref{ss:kernels} for the anticanonical polarization $(L,1)$ and $\cB_X=\langle\cO_X\rangle$: $T=(-)\otimes L$, $\Rot=\pi Ti$, $\cH_m=\RHom(\id_{\cC_X},\Rot^m)$, and $\HS^{p,q}_X=\HS^{p,q}(\cC_X)$. By Proposition~\ref{prop:serre} with $r=1$ there is an isomorphism $\Rot\simeq\mathsf S^{-1}[n]$, which we fix; its powers give isomorphisms $\Rot^m\simeq\mathsf S^{-m}[mn]$ compatible with composition. Hence, for all $p\in\ZZ$ and $m\ge0$,
\begin{equation}\label{eq:HSdegrees}
\HS^{p,-m}_X=\Hom\bigl(\id,\Rot^m[p-mn]\bigr)=H^{p-mn}(\cH_m),\qquad\HS^{mn,-m}_X=\Hom(\id,\Rot^m)=H^0(\cH_m).
\end{equation}
Thus sections of $L^m=\omega_X^{-m}$ give classes in bidegree $(mn,-m)$: the comparison map of Section~\ref{ss:kernels} is
\[
\theta_m\colon H^0(X,-mK_X)\longrightarrow\HS^{mn,-m}_X,
\]
and Proposition~\ref{prop:kernel} with $r=1$ gives
\begin{equation}\label{eq:HSbound}
\cH_m\in D^{\ge\min\{0,n-1-m\}}(\CC),\qquad\theta_m\ \text{is bijective for }m\le n-2\text{ and injective for }m\le n-1.
\end{equation}
On the subalgebra $\bigoplus_{m\ge0}\HS^{mn,-m}_X$, the product \eqref{eq:HSproduct} is $a\cdot b=\Rot^e(b)\circ a$ for $a\in\HS^{en,-e}_X$.

We write $\mathfrak F_X$ for the homotopy category of enhanced endofunctors of $\Perf(X)$, equivalently of perfect kernels on $X\times X$ under convolution \cite[Theorem~8.15]{Toe07}, \cite[Theorem~6.4]{LS16}. Let $P=i\pi$, with unit $\eta\colon\id\to P$ and multiplication $\mu=i\varepsilon\pi\colon P\circ P\to P$, where $\varepsilon\colon\pi i\to\id$ is the counit. Since $i$ is fully faithful, $\varepsilon$ and $\mu$ are isomorphisms, and
\begin{equation}\label{eq:monad}
\mu\circ(\eta*\id_P)=\id_P=\mu\circ(\id_P*\eta),\qquad\mu\circ(\eta*\eta)=\eta;
\end{equation}
the last identity follows from the first, since $\eta*\eta=(\eta*\id_P)\circ\eta$ by \eqref{eq:interchange}. Recall from \eqref{eq:Qm} that $i\Rot^m\pi=PTPT\cdots TP$ with $m$ factors $T$, that $u_m\colon T^m\to i\Rot^m\pi$ inserts $\eta$ at each of the $m+1$ factors $P$, and that $\theta_m(s)$ is defined by $u_m\circ s$, where $s\colon\id\to T^m$ is multiplication by $s$.

\subsection{Multiplicativity of the comparison maps}\label{ss:A-mult}
For a transformation $a$ between endofunctors of $\cC_X$ write $E(a)=i\,a\,\pi$. Since $\pi i=\id$ through $\varepsilon$, the identity $\Rot^e\Rot^f=\Rot^{e+f}$ gives the isomorphism
\begin{equation}\label{eq:junction}
\iota_{e,f}\colon i\Rot^e\pi\circ i\Rot^f\pi=(PT\cdots TP)\circ(PT\cdots TP)\xrightarrow{\ \sim\ }PT\cdots TP=i\Rot^{e+f}\pi,
\end{equation}
which is $\mu$ applied to the two adjacent factors $P$ at the junction.

\begin{lemma}\label{lem:kerneldesc}
For $m\ge0$ the map
\[
\rho_m\colon\HS^{mn,-m}_X\longrightarrow\Hom_{\mathfrak F_X}(\id,i\Rot^m\pi),\qquad a\longmapsto E(a)\circ\eta,
\]
is bijective, and $\rho_m(\theta_m(s))=u_m\circ s$ for $s\in H^0(X,-mK_X)$. Moreover, for $a\in\HS^{en,-e}_X$ and $b\in\HS^{fn,-f}_X$,
\begin{equation}\label{eq:productkernel}
\rho_{e+f}(a\cdot b)=\iota_{e,f}\circ\bigl(\rho_e(a)*\rho_f(b)\bigr).
\end{equation}
\end{lemma}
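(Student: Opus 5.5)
The plan is to get bijectivity from the extension principle of Section~\ref{ss:kernels}, the identity for $\theta_m$ by unwinding definitions, and \eqref{eq:productkernel} by a direct manipulation with the interchange law \eqref{eq:interchange} and the monad identities \eqref{eq:monad}.

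\emph{Bijectivity.} By \eqref{eq:HSdegrees}, $\HS^{mn,-m}_X=\Hom(\id_{\cC_X},\Rot^m)$. The extension $a\mapsto E(a)=ia\pi$ is fully faithful on morphism complexes by the argument preceding \eqref{eq:Hm}. So $E$ identifies this space with $\Hom_{\mathfrak F_X}(P,i\Rot^m\pi)$, using $E(\id)=i\pi=P$. Precomposition with $\eta$ gives a map
\[
\Hom(P,i\Rot^m\pi)\to\Hom(\id,i\Rot^m\pi).
\]
Its cone is controlled by $\Hom(\Cone(\eta),i\Rot^m\pi)$. The cone of $\eta\colon\cO_\Delta\to Q_0$ is built from $E^\vee\boxtimes E$ pieces, and Lemma~\ref{lem:diagonal} says exactly that these are orthogonal to $Q_m$. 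So precomposition with $\eta$ is an isomorphism, already at the level of complexes, and hence $\rho_m$ is bijective.

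\emph{The identity $\rho_m(\theta_m(s))=u_m\circ s$.} This is the definition of $\theta_m$: $\theta_m(s)$ is the unique class whose extension, restricted along $\eta$, equals $u_m\circ s$. I only need to check that the identification in \eqref{eq:Hm} combined with Lemma~\ref{lem:diagonal} is exactly $a\mapsto E(a)\circ\eta$. That holds because both are induced by the same map $\cO_\Delta\to Q_0$.

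\emph{Product formula.} Take $a\cdot b=\Rot^e(b)\circ a$ and set $c=\rho_e(a)*\rho_f(b)$. By \eqref{eq:interchange},
\[
c=\bigl(E(a)\circ\eta\bigr)*\bigl(E(b)\circ\eta\bigr)=\bigl(E(a)*E(b)\bigr)\circ(\eta*\eta).
\]
Next use $E(a)*E(b)=\bigl(i\Rot^e\pi(E(b))\bigr)\circ\bigl(E(a)_P\bigr)$ and compose with $\iota_{e,f}$, which is $\mu$ at the junction. Since $\mu$ at the junction is $i\varepsilon\pi$ inserted between $\Rot^e$ and $\Rot^f$, the composite
\[
\iota_{e,f}\circ\bigl(E(a)*E(b)\bigr)
\]
becomes $E(\Rot^e(b)\circ a)\circ\mu$, using naturality of $\varepsilon$ and $\pi i=\id$. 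Finally $\mu\circ(\eta*\eta)=\eta$ by \eqref{eq:monad}, which gives \eqref{eq:productkernel}.

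The main obstacle is this bookkeeping: checking that moving $\varepsilon$ past $E(b)$, which lives inside $i\Rot^e\pi\circ i(-)\pi$, really produces $i\Rot^e(b)\pi$. Concretely I must verify
\[
i\Rot^e\varepsilon\,\pi\circ E(a)*E(b)=E(a*b)\circ\mu .
\]
This is whiskered naturality of $\varepsilon$, applied in the homotopy category $\mathfrak F_X$, where the interchange law holds strictly.
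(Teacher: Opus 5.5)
Your proposal is correct and follows the paper's proof essentially step for step. Bijectivity comes from \eqref{eq:Hm} together with Lemma~\ref{lem:diagonal}, the identity for $\theta_m$ holds by definition, and the product formula follows from the naturality identity $\iota_{e,f}\circ(E(a)*E(b))=E(a*b)\circ\mu$, the interchange law, and $\mu\circ(\eta*\eta)=\eta$.
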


\begin{proof}
By \eqref{eq:HSdegrees}, $\HS^{mn,-m}_X=\Hom(\id,\Rot^m)$. The map $a\mapsto E(a)$ is the bijection $\Hom(\id,\Rot^m)\simeq\Hom_{\mathfrak F_X}(P,i\Rot^m\pi)$ of \eqref{eq:Hm}, and restriction along $\eta$ is the bijection of Lemma~\ref{lem:diagonal}; this proves the first assertion, and the second is the definition of $\theta_m$. By \eqref{eq:HSproduct}, $a\cdot b=a*b$, and $E(a*b)\circ\mu=\iota_{e,f}\circ(E(a)*E(b))$ as transformations $P\circ P\to i\Rot^{e+f}\pi$, by naturality of $\varepsilon$. Composing with $\eta*\eta$ and using $\mu\circ(\eta*\eta)=\eta$ from \eqref{eq:monad} and \eqref{eq:interchange}, we obtain
\[
\rho_{e+f}(a\cdot b)=E(a*b)\circ\eta=\iota_{e,f}\circ(E(a)*E(b))\circ(\eta*\eta)=\iota_{e,f}\circ\bigl((E(a)\circ\eta)*(E(b)\circ\eta)\bigr).\qedhere
\]
\end{proof}

\begin{lemma}\label{lem:mult}
For all $e,f\ge0$ the square
\begin{equation}\label{eq:diagmult}
\begin{tikzcd}
H^0(X,-eK_X)\otimes H^0(X,-fK_X)\arrow[r,"\text{mult}"]\arrow[d,"\theta_e\otimes\theta_f"'] & H^0(X,-(e+f)K_X)\arrow[d,"\theta_{e+f}"]\\
\HS^{en,-e}_X\otimes\HS^{fn,-f}_X\arrow[r,"\cdot"] & \HS^{(e+f)n,-(e+f)}_X
\end{tikzcd}
\end{equation}
commutes, that is, $\theta_e(s)\cdot\theta_f(t)=\theta_{e+f}(st)$. Moreover $\theta_0(1)=\id$.
\end{lemma}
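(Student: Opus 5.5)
The plan is to transport everything to the kernel side using Lemma~\ref{lem:kerneldesc}. Since $\rho_{e+f}$ is bijective, it suffices to prove $\rho_{e+f}(\theta_e(s)\cdot\theta_f(t))=\rho_{e+f}(\theta_{e+f}(st))$ in $\Hom_{\mathfrak F_X}(\id,i\Rot^{e+f}\pi)$. By \eqref{eq:productkernel} and the identity $\rho_m(\theta_m(s))=u_m\circ s$, the left-hand side equals
\[
\iota_{e,f}\circ\bigl((u_e\circ s)*(u_f\circ t)\bigr),
\]
and the right-hand side equals $u_{e+f}\circ(st)$. The statement $\theta_0(1)=\id$ is immediate: $u_0=\eta$ and $\rho_0(\id)=E(\id)\circ\eta=\eta$, so bijectivity of $\rho_0$ gives it.

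First apply the interchange law \eqref{eq:interchange}. It gives $(u_e\circ s)*(u_f\circ t)=(u_e*u_f)\circ(s*t)$. Here $s*t\colon\id\to T^e\circ T^f=T^{e+f}$ is multiplication by $st$, because the horizontal composite of two multiplication maps by sections is multiplication by their tensor product. This is a routine check on kernels: $\Delta_*$ of the product section. It remains to show
\[
\iota_{e,f}\circ(u_e*u_f)=u_{e+f}\colon T^{e+f}\to i\Rot^{e+f}\pi .
\]
Expand $u_e*u_f$ as $\eta$ inserted at each of the $e+1$ factors $P$ of the first word and the $f+1$ factors of the second, with identities on the $T$'s. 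The map $\iota_{e,f}$ applies $\mu$ to the two adjacent $P$'s at the junction and the identity elsewhere. By interchange again, $\iota_{e,f}\circ(u_e*u_f)$ is the horizontal composite of $\eta$ at the $e+f$ non-junction positions, identities on the $T$'s, and $\mu\circ(\eta*\eta)$ at the junction. By \eqref{eq:monad}, $\mu\circ(\eta*\eta)=\eta$, so the result is $\eta$ inserted at all $e+f+1$ factors, which is $u_{e+f}$ by definition.

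The main obstacle is bookkeeping rather than substance. We must make sure that the identification $i\Rot^m\pi=PT\cdots TP$ in \eqref{eq:Qm} is compatible with $\iota_{e,f}$ as stated in \eqref{eq:junction}, that is, that the junction isomorphism really is $\mu$ on the adjacent factors and nothing else. We also need the fixed isomorphisms $\Rot^m\simeq\mathsf S^{-m}[mn]$ to be compatible with composition, so that the product in $\HS$ restricted to $\bigoplus_m\HS^{mn,-m}_X$ is $a\cdot b=\Rot^e(b)\circ a=a*b$; this was fixed in Section~\ref{ss:A-setting}. Granting these, the argument is a formal manipulation in the bicategory $\mathfrak F_X$ using only interchange and the monad identities.
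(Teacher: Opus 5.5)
Your proposal is correct and follows the paper's proof essentially step for step. You reduce to kernels via the injectivity of $\rho_{e+f}$ and \eqref{eq:productkernel}, use interchange to split off $s*t$ as multiplication by $st$, and collapse the junction with $\mu\circ(\eta*\eta)=\eta$ to get $\iota_{e,f}\circ(u_e*u_f)=u_{e+f}$; the identity $\theta_0(1)=\id$ is handled exactly as in the paper, via $u_0=\eta=\rho_0(\id)$.
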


\begin{proof}
By Lemma~\ref{lem:kerneldesc} and \eqref{eq:interchange},
\[
\rho_{e+f}\bigl(\theta_e(s)\cdot\theta_f(t)\bigr)=\iota_{e,f}\circ\bigl((u_e\circ s)*(u_f\circ t)\bigr)=\iota_{e,f}\circ(u_e*u_f)\circ(s*t).
\]
The horizontal composite $s*t\colon\id\to T^e\circ T^f=T^{e+f}$ is $T^e(t)\circ s$, that is, multiplication by $st$. The transformation $u_e*u_f$ inserts $\eta$ at each of the $e+1$ factors $P$ of $i\Rot^e\pi$ and at each of the $f+1$ factors $P$ of $i\Rot^f\pi$; the map $\iota_{e,f}$ replaces the two adjacent factors $P\circ P$ at the junction by $P$ through $\mu$, and $\mu\circ(\eta*\eta)=\eta$ by \eqref{eq:monad}. Hence $\iota_{e,f}\circ(u_e*u_f)=u_{e+f}$, and
\[
\rho_{e+f}\bigl(\theta_e(s)\cdot\theta_f(t)\bigr)=u_{e+f}\circ(st)=\rho_{e+f}\bigl(\theta_{e+f}(st)\bigr).
\]
Since $\rho_{e+f}$ is injective, the square commutes. Finally $\rho_0(\theta_0(1))=u_0=\eta=\rho_0(\id)$.
\end{proof}

\subsection{The dimension}\label{ss:A-dim}

\begin{lemma}\label{lem:dimq}
Let $q\ge1$ with $q\le n-1$ and $H^0(X,-qK_X)\ne0$. Then
\[
qn=\min\bigl\{p\in\ZZ:\HS^{p,-q}_X\ne0\bigr\}.
\]
In particular, if $X'$ is a smooth Fano variety satisfying the same hypotheses for the same $q$, and $\cO_X^{\perp}\simeq\cO_{X'}^{\perp}$, then $\dim X=\dim X'$.
\end{lemma}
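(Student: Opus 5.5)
The plan is to translate the statement into a statement about the complex $\cH_q$, using \eqref{eq:HSdegrees}: $\HS^{p,-q}_X=H^{p-qn}(\cH_q)$. Under this identification the claim becomes two assertions: $H^0(\cH_q)\ne0$, and $H^j(\cH_q)=0$ for all $j<0$.

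For the nonvanishing, I would use \eqref{eq:HSbound}, i.e.\ Proposition~\ref{prop:kernel} with $(H,r)=(-K_X,1)$. It says that $\theta_q\colon H^0(X,-qK_X)\to\HS^{qn,-q}_X$ is injective for $q\le n-1$. Since $H^0(X,-qK_X)\ne0$ by hypothesis, the target $\HS^{qn,-q}_X=H^0(\cH_q)$ is nonzero.

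For the vanishing, the same proposition gives $\cH_q\in D^{\ge\min\{0,n-1-q\}}(\CC)$. The hypothesis $q\le n-1$ makes $n-1-q\ge0$, so this bound is $0$. Hence $H^j(\cH_q)=0$ for $j<0$, that is, $\HS^{p,-q}_X=0$ for $p<qn$. Together these show that the minimum is exactly $qn$.

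For the consequence, I would first fix an equivalence $\Phi\colon\cO_X^{\perp}\simeq\cO_{X'}^{\perp}$. Conjugation by $\Phi$, combined with the intertwining $\Phi\mathsf S\simeq\mathsf S\Phi$ (Section~\ref{ss:A-HS}), gives linear isomorphisms $\HS^{p,-q}_X\simeq\HS^{p,-q}_{X'}$ in every bidegree. The bigrading used here is intrinsic: it refers only to the Serre functor and shifts, not to $n$. So the set $\{p:\HS^{p,-q}\ne0\}$ is the same on both sides. Applying the first part to $X$ and to $X'$ with the same $q$ gives $qn=qn'$, and since $q\ge1$ this yields $n=n'$.

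I expect no real obstacle. The one point to check carefully is the identification \eqref{eq:HSdegrees}: the fixed isomorphism $\Rot\simeq\mathsf S^{-1}[n]$ turns $\mathsf S^{-q}[p]$ into $\Rot^q[p-qn]$ without any sign or degree error. This is exactly where $n$ enters, and it is why the minimal degree detects the dimension.
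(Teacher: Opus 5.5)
Your proposal is correct and follows the paper's proof essentially step for step. You translate via \eqref{eq:HSdegrees} to $H^{p-qn}(\cH_q)$, use the bound $\cH_q\in D^{\ge0}$ and the injectivity of $\theta_q$ from \eqref{eq:HSbound}, and then use the invariance of the bigraded spaces $\HS^{\bullet,\bullet}$ under equivalences to get $q\dim X=q\dim X'$.
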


\begin{proof}
By \eqref{eq:HSdegrees}, $\HS^{p,-q}_X=H^{p-qn}(\cH_q)$. Since $q\le n-1$, \eqref{eq:HSbound} gives $\cH_q\in D^{\ge0}(\CC)$, so $\HS^{p,-q}_X=0$ for $p<qn$; and $\HS^{qn,-q}_X$ contains $\theta_q(H^0(X,-qK_X))\ne0$, since $\theta_q$ is injective. For the second assertion, an equivalence identifies the bigraded vector spaces $\HS^{\bullet,\bullet}$ (Section~\ref{ss:A-HS}), so $q\dim X=q\dim X'$.
\end{proof}

\subsection{Section rings and Hochschild--Serre rings}\label{ss:A-rings}
Fix $q\ge1$. We attach to $X$ the following graded rings, graded by $m\ge0$:
\begin{itemize}
\item the polynomial ring $\Sym V_q(X)$ on $V_q(X)=H^0(X,-qK_X)$;
\item the section ring $\Gamma_q(X)=\bigoplus_{m\ge0}H^0(X,-qmK_X)$;
\item the homogeneous coordinate ring $S_q(X)=\Sym V_q(X)/I_X$, where $\mu_X\colon\Sym V_q(X)\to\Gamma_q(X)$ is multiplication of sections and $I_X=\ker\mu_X$; thus $S_q(X)=\operatorname{im}\mu_X\subset\Gamma_q(X)$;
\item the \emph{Hochschild--Serre ring} $\mathbb H_q(X)=\bigoplus_{m\ge0}\HS^{qmn,-qm}_X$, a graded subalgebra of $\HS^{\bullet,\bullet}_X$ along the ray of bidegrees $(qn,-q)$;
\item the subalgebra $\mathbb B_q(X)\subset\mathbb H_q(X)$ generated by $\mathbb H_q(X)_1=\HS^{qn,-q}_X$.
\end{itemize}
If $-qK_X$ is very ample, $|-qK_X|$ embeds $X$ in $\PP(V_q(X)^\vee)$, and $I_X$ is the homogeneous ideal of this embedding: a homogeneous polynomial of degree $m$ vanishes on $X$ exactly when its restriction, which is its image under $\mu_X$, is zero. In particular $I_X$ is saturated, $I_{X,0}=I_{X,1}=0$, and
\begin{equation}\label{eq:ProjX}
X=\Proj S_q(X)=\Proj\bigl(\Sym V_q(X)/I_X\bigr)\subset\PP(V_q(X)^\vee).
\end{equation}
No projective normality is needed: $S_q(X)$ may be smaller than $\Gamma_q(X)$.

By Lemma~\ref{lem:mult}, the maps $\theta_{qm}$ assemble into a homomorphism of graded rings
\[
\theta^{(q)}_X=\bigoplus_{m\ge0}\theta_{qm}\colon\Gamma_q(X)\longrightarrow\mathbb H_q(X).
\]

\begin{proposition}\label{prop:relations}
Assume that $\theta_q\colon V_q(X)\to\HS^{qn,-q}_X$ is bijective; by \eqref{eq:HSbound} this holds if $q\le n-2$. Then:
\begin{enumerate}[label=\textup{(\arabic*)}]
\item the elements of $\HS^{qn,-q}_X$ commute pairwise, so that multiplication defines a surjective homomorphism of graded rings $\mu_{\HS,X}\colon\Sym\HS^{qn,-q}_X\to\mathbb B_q(X)$, and the diagram
\begin{equation}\label{eq:diagsingle}
\begin{tikzcd}
\Sym V_q(X)\arrow[r,two heads,"\mu_X"]\arrow[d,"\Sym\theta_q"',"\wr"] & S_q(X)\arrow[r,hook]\arrow[d,"\theta^{(q)}_X"] & \Gamma_q(X)\arrow[d,"\theta^{(q)}_X"]\\
\Sym\HS^{qn,-q}_X\arrow[r,two heads,"\mu_{\HS,X}"] & \mathbb B_q(X)\arrow[r,hook] & \mathbb H_q(X)
\end{tikzcd}
\end{equation}
of graded rings commutes; in particular $\theta^{(q)}_X$ maps $S_q(X)$ onto $\mathbb B_q(X)$;
\item for every $m\ge1$ the following are equivalent:
\begin{enumerate}[label=(\alph*)]
\item the restriction $\theta_{qm}\colon S_q(X)_m\to\mathbb B_q(X)_m$ is an isomorphism;
\item $\ker\theta_{qm}\cap S_q(X)_m=0$;
\item $\Sym^m\theta_q$ maps $I_{X,m}$ onto $(\ker\mu_{\HS,X})_m$;
\end{enumerate}
\item the equivalent conditions of \textup{(2)} hold whenever $qm\le n-1$.
\end{enumerate}
\end{proposition}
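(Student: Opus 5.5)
The plan is to derive everything from Lemma~\ref{lem:mult}, the bijectivity of $\theta_q$, and the injectivity bound \eqref{eq:HSbound}, treating the three parts in order.

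For (1), let $x,y\in\HS^{qn,-q}_X$ and write $x=\theta_q(s)$, $y=\theta_q(t)$ with $s,t\in V_q(X)$, which is possible because $\theta_q$ is bijective. Lemma~\ref{lem:mult} then gives
\[
x\cdot y=\theta_{2q}(st)=\theta_{2q}(ts)=y\cdot x,
\]
since multiplication of sections is commutative. So the degree-one part $\mathbb H_q(X)_1$ consists of pairwise commuting elements. By the universal property of the symmetric algebra, this yields a ring homomorphism $\mu_{\HS,X}\colon\Sym\HS^{qn,-q}_X\to\mathbb H_q(X)$. Its image is exactly the subalgebra generated in degree one, namely $\mathbb B_q(X)$, so it is surjective onto $\mathbb B_q(X)$.

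Commutativity of the outer square $\theta^{(q)}_X\circ\mu_X=\mu_{\HS,X}\circ\Sym\theta_q$ is checked on monomials $s_1\cdots s_m$. By iterating Lemma~\ref{lem:mult} (an induction on $m$, with $\theta_0(1)=\id$ handling $m=0$),
\[
\theta_{qm}(s_1\cdots s_m)=\theta_q(s_1)\cdots\theta_q(s_m).
\]
The left square then follows because $\mu_X$ is surjective onto $S_q(X)$: the commutativity shows $\theta^{(q)}_X(S_q(X))=\mu_{\HS,X}(\Sym\HS^{qn,-q}_X)=\mathbb B_q(X)$. The right square consists of inclusions, so it commutes trivially.

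For (2), fix a degree $m$. The map $\theta_{qm}\colon S_q(X)_m\to\mathbb B_q(X)_m$ is surjective by (1), so (a) and (b) are equivalent. For (b)$\Leftrightarrow$(c), note that $\Sym^m\theta_q$ is an isomorphism, so the question is whether it matches $\ker\mu_X$ with $\ker\mu_{\HS,X}$ in degree $m$. By the commutativity in (1), a polynomial $f\in\Sym^mV_q(X)$ lies in the preimage of $(\ker\mu_{\HS,X})_m$ under $\Sym^m\theta_q$ if and only if $\theta_{qm}(\mu_X(f))=0$.

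This preimage therefore equals $\mu_X^{-1}\bigl(\ker\theta_{qm}\cap S_q(X)_m\bigr)$, and it always contains $I_{X,m}=\ker\mu_X$ in degree $m$. Equality holds exactly when $\ker\theta_{qm}\cap S_q(X)_m=0$, because $\mu_X$ maps $\Sym^mV_q(X)$ onto $S_q(X)_m$. This gives (b)$\Leftrightarrow$(c).

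For (3), apply \eqref{eq:HSbound}: for $qm\le n-1$ the map $\theta_{qm}$ is injective on all of $H^0(X,-qmK_X)$. In particular it is injective on the subspace $S_q(X)_m$, which is condition (b).

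I expect no serious obstacle here. The only delicate point is the commutativity claim in (1), which must be read off from Lemma~\ref{lem:mult} rather than from any commutativity of $\HS^{\bullet,\bullet}$ itself. I would also check that the fixed isomorphism $\Rot\simeq\mathsf S^{-1}[n]$ is used consistently, so that the products $\theta_q(s_1)\cdots\theta_q(s_m)$ land in $\HS^{qmn,-qm}_X$ and agree with the products computed via Lemma~\ref{lem:kerneldesc}.
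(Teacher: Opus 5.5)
Your proposal is correct and follows the paper's proof essentially step by step. You get commutativity of $\HS^{qn,-q}_X$ from Lemma~\ref{lem:mult} and the surjectivity of $\theta_q$, and check the left square on monomials. You obtain (a)$\Leftrightarrow$(b) from the surjectivity in (1), (b)$\Leftrightarrow$(c) from the identity $(\Sym^m\theta_q)^{-1}\bigl((\ker\mu_{\HS,X})_m\bigr)=\mu_{X,m}^{-1}\bigl(\ker\theta_{qm}\cap S_q(X)_m\bigr)$, and (3) from \eqref{eq:HSbound}. Your extra step, first verifying the identity inside $\mathbb H_q(X)$ and then using surjectivity of $\mu_X$ to see that $\theta^{(q)}_X$ lands in $\mathbb B_q(X)$, is a harmless refinement of the same argument.
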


\begin{proof}
\leavevmode
\begin{enumerate}[label=\textup{(\arabic*)}]
\item For $s,t\in V_q(X)$, Lemma~\ref{lem:mult} gives $\theta_q(s)\cdot\theta_q(t)=\theta_{2q}(st)=\theta_q(t)\cdot\theta_q(s)$; since $\theta_q$ is surjective, $\HS^{qn,-q}_X$ is commutative, and $\mu_{\HS,X}$ is defined and surjective onto $\mathbb B_q(X)$ by definition of $\mathbb B_q(X)$. The left square commutes on monomials, since by Lemma~\ref{lem:mult} and induction on $m$
\[
\theta_{qm}(s_1\cdots s_m)=\theta_q(s_1)\cdots\theta_q(s_m)\qquad(s_1,\dots,s_m\in V_q(X)),
\]
and monomials span $\Sym V_q(X)$. The right square commutes trivially. Since $\Sym\theta_q$ and $\mu_{\HS,X}$ are surjective, the left square gives $\theta^{(q)}_X(S_q(X))=\theta^{(q)}_X(\mu_X(\Sym V_q(X)))=\mu_{\HS,X}(\Sym\HS^{qn,-q}_X)=\mathbb B_q(X)$.
\item By (1), $\theta_{qm}\colon S_q(X)_m\to\mathbb B_q(X)_m$ is surjective, so (a) and (b) are equivalent. By the left square of \eqref{eq:diagsingle} and bijectivity of $\Sym^m\theta_q$,
\[
(\Sym^m\theta_q)^{-1}\bigl((\ker\mu_{\HS,X})_m\bigr)=\ker(\theta_{qm}\circ\mu_{X,m})=\mu_{X,m}^{-1}\bigl(\ker\theta_{qm}\cap S_q(X)_m\bigr),
\]
which equals $\ker\mu_{X,m}=I_{X,m}$ exactly when (b) holds. This proves the equivalence with (c).
\item By \eqref{eq:HSbound}, $\theta_{qm}$ is injective on all of $H^0(X,-qmK_X)$ for $qm\le n-1$, so (b) holds.\qedhere
\end{enumerate}
\end{proof}

\subsection{Proof of Theorem~\ref{thm:C}}\label{ss:A-proofC}
Let $X$ and $X'$ satisfy the hypotheses of Theorem~\ref{thm:C} for $q$ and $\ell$, and let $\Phi\colon\cO_X^{\perp}\xrightarrow{\sim}\cO_{X'}^{\perp}$ be an equivalence.

\emph{Step 1: dimension and the range of injectivity.} Since $\ell\ge2$ and $q\ell\le\dim X-1$, we have $q\le\dim X-1-q\le\dim X-2$, and similarly for $X'$. The line bundles $-qK_X$ and $-qK_{X'}$ are very ample, so they have nonzero sections, and Lemma~\ref{lem:dimq} gives $\dim X=\dim X'=n$. By \eqref{eq:HSbound}, $\theta_q$ is bijective on $X$ and on $X'$, and $\theta_{qm}$ is injective for $1\le m\le\ell$, since $qm\le q\ell\le n-1$. Thus Proposition~\ref{prop:relations} applies to both varieties, and its conditions (2) hold in all degrees $1\le m\le\ell$.

\emph{Step 2: the Hochschild--Serre rings.} Since $\dim X=\dim X'=n$, the isomorphism $\Phi_*$ of Section~\ref{ss:A-HS} preserves the bidegrees $(qmn,-qm)$, and by \eqref{eq:diagHSinv} it restricts to an isomorphism of graded rings
\[
\Phi_*\colon\mathbb H_q(X)\xrightarrow{\ \sim\ }\mathbb H_q(X'),
\]
which maps $\HS^{qn,-q}_X$ onto $\HS^{qn,-q}_{X'}$, hence $\mathbb B_q(X)$ onto $\mathbb B_q(X')$; moreover $\Phi_*\circ\mu_{\HS,X}=\mu_{\HS,X'}\circ\Sym(\Phi_*|_{\HS^{qn,-q}_X})$, because both sides are ring homomorphisms which agree in degree one.

\emph{Step 3: the identification of sections.} Define the linear isomorphism
\begin{equation}\label{eq:alpha}
\alpha=(\theta_q^{X'})^{-1}\circ\Phi_*\circ\theta_q^X\colon V_q(X)\xrightarrow{\ \sim\ }V_q(X').
\end{equation}
Changing the isomorphism $\Phi\mathsf S\simeq\mathsf S\Phi$ by a scalar multiplies $\alpha$ by a scalar, so the projective class of $\alpha$ is determined by $\Phi$.

\emph{Step 4: the main diagram.} Consider the diagram of graded rings
\begin{equation}\label{eq:diagmain}
\begin{tikzcd}[column sep=large]
\Sym V_q(X)\arrow[r,"\Sym\theta_q^X","\sim"']\arrow[d,two heads,"\mu_X"'] & \Sym\HS^{qn,-q}_X\arrow[r,"\Sym\Phi_*","\sim"']\arrow[d,two heads,"\mu_{\HS,X}"] & \Sym\HS^{qn,-q}_{X'}\arrow[d,two heads,"\mu_{\HS,X'}"'] & \Sym V_q(X')\arrow[l,"\Sym\theta_q^{X'}"',"\sim"]\arrow[d,two heads,"\mu_{X'}"]\\
S_q(X)\arrow[r,"\theta^{(q)}_X"] & \mathbb B_q(X)\arrow[r,"\Phi_*","\sim"'] & \mathbb B_q(X') & S_q(X')\arrow[l,"\theta^{(q)}_{X'}"']
\end{tikzcd}
\end{equation}
The left and right squares are the left squares of \eqref{eq:diagsingle} for $X$ and $X'$, and the middle square is Step~2; all three commute. By \eqref{eq:alpha}, the composite of the top row from $\Sym V_q(X)$ to $\Sym V_q(X')$ is $\Sym\alpha$. In each degree $1\le m\le\ell$, the two outer bottom arrows are isomorphisms by Proposition~\ref{prop:relations}(2)(a), and the middle one is an isomorphism. Consequently, in degrees $m\le\ell$:
\begin{enumerate}[label=(\roman*)]
\item the truncated homogeneous coordinate rings are identified: the composite of the bottom row defines isomorphisms
\[
\bar\alpha_m=(\theta_{qm}^{X'})^{-1}\circ\Phi_*\circ\theta_{qm}^X\colon S_q(X)_m\xrightarrow{\ \sim\ }S_q(X')_m\qquad(0\le m\le\ell),
\]
compatible with products whenever the degrees stay at most $\ell$, and satisfying $\bar\alpha\circ\mu_X=\mu_{X'}\circ\Sym\alpha$ in these degrees;
\item the equations are identified: since $\Sym^m\alpha$ is bijective and $\bar\alpha_m$ is injective, the relation $\bar\alpha_m\circ\mu_{X,m}=\mu_{X',m}\circ\Sym^m\alpha$ gives
\begin{equation}\label{eq:idealsmatch}
\Sym^m\alpha\,(I_{X,m})=I_{X',m}\qquad(2\le m\le\ell).
\end{equation}
Equivalently, by Proposition~\ref{prop:relations}(2)(c), the kernels in the top row are matched through the Hochschild--Serre algebras:
\[
I_{X,m}\xrightarrow[\sim]{\ \Sym^m\theta_q^X\ }(\ker\mu_{\HS,X})_m\xrightarrow[\sim]{\ \Sym^m\Phi_*\ }(\ker\mu_{\HS,X'})_m\xleftarrow[\sim]{\ \Sym^m\theta_q^{X'}\ }I_{X',m}.
\]
\end{enumerate}

\emph{Step 5: the ideals.} Let $J'\subset\Sym V_q(X')$ be the ideal generated by $I_{X',2},\dots,I_{X',\ell}$. By hypothesis, $I_X$ is generated by $I_{X,2},\dots,I_{X,\ell}$; hence, by \eqref{eq:idealsmatch}, the ring isomorphism $\Sym\alpha$ maps $I_X$ onto $J'$, and $J'\subset I_{X'}$. We obtain the commutative diagram of graded rings, with exact rows,
\begin{equation}\label{eq:diagideals}
\begin{tikzcd}
0\arrow[r] & I_X\arrow[r]\arrow[d,"\Sym\alpha","\wr"'] & \Sym V_q(X)\arrow[r]\arrow[d,"\Sym\alpha","\wr"'] & S_q(X)\arrow[r]\arrow[d,"\overline{\Sym\alpha}","\wr"'] & 0\\
0\arrow[r] & J'\arrow[r] & \Sym V_q(X')\arrow[r] & \Sym V_q(X')/J'\arrow[r]\arrow[d,two heads] & 0\\
 & & & S_q(X')=\Sym V_q(X')/I_{X'} &
\end{tikzcd}
\end{equation}
in which the induced map $\overline{\Sym\alpha}$ is an isomorphism of graded rings, and the vertical map on the right is the quotient by $I_{X'}/J'$.

\emph{Step 6: the varieties.} Apply $\Proj$ to the right column of \eqref{eq:diagideals}. Let $\psi\colon\PP(V_q(X)^\vee)\xrightarrow{\sim}\PP(V_q(X')^\vee)$ be the projective linear isomorphism induced by $\alpha$, so that a homogeneous polynomial $g$ vanishes at a point $x$ if and only if $\Sym\alpha(g)$ vanishes at $\psi(x)$. By \eqref{eq:ProjX} and \eqref{eq:diagideals} we obtain the commutative diagram
\begin{equation}\label{eq:diagProj}
\begin{tikzcd}
X=\Proj S_q(X)\arrow[r,"\psi|_X","\sim"']\arrow[d,hook] & \Proj\bigl(\Sym V_q(X')/J'\bigr)=V(J')\arrow[d,hook] & X'=\Proj S_q(X')\arrow[l,hook']\arrow[dl,hook]\\
\PP(V_q(X)^\vee)\arrow[r,"\psi","\sim"'] & \PP(V_q(X')^\vee) &
\end{tikzcd}
\end{equation}
where the top left arrow is $\Proj$ of $\overline{\Sym\alpha}^{-1}$, so $V(J')=\psi(X)$, and the arrow $X'\hookrightarrow V(J')$ is the closed immersion given by the surjection $\Sym V_q(X')/J'\twoheadrightarrow S_q(X')$. Thus $X'\subset\psi(X)$. Both are integral closed subschemes of $\PP(V_q(X')^\vee)$ of dimension $n$: $X'$ by Step~1, and $\psi(X)\simeq X$. A closed subscheme of an integral scheme of the same dimension, which is itself integral, is equal to it, so $X'=\psi(X)$, and $J'$ and $I_{X'}$ have the same saturation. Therefore $\psi$ restricts to an isomorphism
\[
f=\psi|_X\colon X\xrightarrow{\ \sim\ }X',\qquad f^*\cO_{\PP(V_q(X')^\vee)}(1)|_{X'}\simeq\cO_{\PP(V_q(X)^\vee)}(1)|_X,
\]
that is, $f$ identifies the embeddings by $|-qK_X|$ and $|-qK_{X'}|$. Since $I_X$ and $I_{X'}$ are the saturated homogeneous ideals of $X$ and $X'=\psi(X)$, we also get $\Sym\alpha(I_X)=I_{X'}$, so $J'=I_{X'}$ and the bottom vertical arrow in \eqref{eq:diagideals} is an isomorphism.

\emph{Step 7: the presentation.} By the left square of \eqref{eq:diagsingle} and Proposition~\ref{prop:relations}(2)(c), the isomorphism $\Sym\theta^X_q$ maps $I_X$, which is generated by $I_{X,2},\dots,I_{X,\ell}$, onto the ideal of $\Sym\HS^{qn,-q}_X$ generated by $(\ker\mu_{\HS,X})_m$, $2\le m\le\ell$. Hence \eqref{eq:ProjX} gives
\begin{equation}\label{eq:presentation}
X\simeq\Proj\Bigl(\Sym\HS^{qn,-q}_X\Big/\bigl((\ker\mu_{\HS,X})_m:2\le m\le\ell\bigr)\Bigr),
\end{equation}
which is the presentation in Theorem~\ref{thm:C}; here $n=\dim X$ is determined by $\cO_X^{\perp}$ by Lemma~\ref{lem:dimq}. This completes the proof of Theorem~\ref{thm:C}. \qed

\begin{remark}\label{rem:variants}
\leavevmode
\begin{enumerate}[label=(\alph*)]
\item The generation hypothesis is used only for $X$, in Step~5; for $X'$ it suffices that $-qK_{X'}$ is very ample and $q\ell\le\dim X'-1$, and Step~6 then shows that $I_{X'}$ is generated in degrees at most $\ell$.
\item The uniform bound $q\ell\le n-1$ is used only to guarantee the conditions of Proposition~\ref{prop:relations}(2) in degrees $m\le\ell$. If $\dim X=\dim X'$, $\theta_q$ is bijective on both sides, $I_X$ is generated in degrees belonging to a finite set $\mathcal D\subset\ZZ_{\ge2}$, and these conditions hold on $X$ and on $X'$ for every $m\in\mathcal D$, then the same argument, with $\mathcal D$ in place of $\{2,\dots,\ell\}$, gives the same conclusion. The conditions are automatic for $qm\le n-1$, but in general they are not easy to verify.
\item The classes used lie in the bidegrees $(qmn,-qm)$, that is, they are transformations to $\mathsf S^{-qm}[qmn]\simeq\Rot^{qm}=(\pi Ti)^{qm}$, which by \eqref{eq:Qm} involves all intermediate projections $P$. The argument makes no assumption on $-K_X$ itself, nor on projective normality.
\item For $q=1$, Theorem~\ref{thm:C} is contained in Theorem~\ref{thm:B}, but \eqref{eq:presentation} gives an explicit presentation of the anticanonical model in terms of $\HS^{n,-1}_X$ and the kernels of $\Sym^m\HS^{n,-1}_X\to\HS^{mn,-m}_X$.
\end{enumerate}
\end{remark}

\subsection{Examples}\label{ss:A-examples}

\begin{proposition}\label{prop:doublecovers}
Let $q,k\ge2$, put $n=(q+1)k-1$ and $d=qk$, let $B\subset\PP^n$ be a smooth hypersurface of degree $2d$ with equation $f$, and let $\pi\colon X\to\PP^n$ be the double cover branched along $B$, with $H=\pi^*\cO_{\PP^n}(1)$.
\begin{enumerate}[label=\textup{(\arabic*)}]
\item $X$ is a smooth Fano variety of dimension $n$ with $-K_X=kH$, and $-K_X$ is not very ample.
\item $-qK_X=dH$ is very ample, and the homogeneous ideal of $X\subset\PP(H^0(X,dH)^\vee)$ is generated by quadrics.
\item $2q\le n-1$, with equality if and only if $k=2$.
\end{enumerate}
Consequently $X$ satisfies the hypotheses of Theorem~\ref{thm:C} with $\ell=2$: if $X'$ is a smooth Fano variety with $-qK_{X'}$ very ample and $2q\le\dim X'-1$, and $\cO_{X'}^{\perp}\simeq\cO_X^{\perp}$, then $X'\simeq X$.
\end{proposition}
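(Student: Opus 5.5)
I will realize $X$ as the hypersurface $w^2=f(x_0,\dots,x_n)$ in the weighted projective space $\PP(1^{n+1},d)$. Here $w$ has weight $d$, and $X$ misses the singular point $[0:\dots:0:1]$. Then $H=\cO(1)|_X$ and $\pi$ is the projection to the $x$-coordinates.

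\emph{Step (1).} I will use the ramification formula $K_X=\pi^*(K_{\PP^n}+\tfrac12 B)$. This gives $-K_X=(n+1-d)H=\bigl((q+1)k-qk\bigr)H=kH$. Smoothness of $X$ follows from smoothness of $B$, and $k\ge1$ gives Fano. For non-very-ampleness, note that $\pi_*\cO_X=\cO\oplus\cO(-d)$, so
\[
H^0(X,kH)=H^0(\PP^n,\cO(k))\oplus H^0(\PP^n,\cO(k-d)) .
\]
Since $k<d=qk$, the second summand vanishes. Hence every section of $-K_X$ is pulled back from $\PP^n$, the map $|{-K_X}|$ factors through $\pi$, and it is two-to-one. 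So $-K_X$ is not very ample.

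\emph{Step (2).} Similarly, $H^0(X,dH)=H^0(\cO(d))\oplus\CC w$. The morphism given by $|dH|$ is therefore the $d$-th Veronese map $\nu_d$ together with the coordinate $w$. On the chart $x_0\ne0$ the ratios $x_i/x_0$ and $w/x_0^d$ give coordinates, so this is a closed immersion, as in Corollary~\ref{cor:cyclicquadric}. The image lies in $\PP(\Sym^d\oplus\CC)$ and is cut out by two kinds of quadrics: the $2\times2$ minors defining $\nu_d(\PP^n)$, and the equations $w^2=\tilde f$, where $\tilde f$ is a quadratic expression in the monomial coordinates representing $f$ (possible since $\deg f=2d$).

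To see that these generate the full homogeneous ideal, I will compute the coordinate ring. The section ring in degree $m$ is
\[
H^0(\cO(md))\oplus w\,H^0(\cO((m-1)d)) .
\]
The Veronese ring is quadratically presented and projectively normal. Modulo $w^2-\tilde f$, every polynomial reduces uniquely to the form $A+wB$, with $A,B$ polynomials in the monomial coordinates. Such an expression vanishes on $X$ only if $A$ and $B$ vanish on $\nu_d(\PP^n)$, by comparing the two eigenspaces of the involution $w\mapsto -w$. Hence the ideal equals the Veronese ideal plus $(w^2-\tilde f)$, which is generated in degree $2$.

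I expect this ideal computation to be the main point needing care. It amounts to a short Gröbner/normal-form argument using the splitting of the coordinate ring into the two eigenspaces of the involution.

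\emph{Step (3).} Directly, $n-1-2q=(q+1)k-2-2q=(q+1)(k-2)\ge0$, with equality exactly when $k=2$.

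\emph{Consequence.} With $\ell=2$, the hypotheses of Theorem~\ref{thm:C} hold on the $X$ side. On the $X'$ side only very ampleness of $-qK_{X'}$ and $q\ell\le\dim X'-1$ are needed, by Remark~\ref{rem:variants}(a). Theorem~\ref{thm:C} then gives $X'\simeq X$.
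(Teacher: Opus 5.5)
Your proposal is correct and follows essentially the same route as the paper. You use the weighted model $w^2=f$ in $\PP(1^{n+1},d)$ and the splitting $\pi_*\cO_X=\cO\oplus\cO(-d)$ for the sections. You then identify the ideal as the Veronese ideal plus $(w^2-\tilde f)$. Your normal-form and eigenspace argument is an unpacked version of the paper's direct identification $\Sym V/\bigl(J\Sym V+(z^2-F)\bigr)\simeq S^{(d)}[z]/(z^2-f)$ with the section ring of $dH$.

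The one point you leave implicit is irreducibility of $X$. The paper gets it from $f$ not being a square, which holds because $B$ is reduced.
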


\begin{proof}
\leavevmode
\begin{enumerate}[label=\textup{(\arabic*)}]
\item By definition $X=\Spec_{\PP^n}(\cO\oplus\cO(-d))$, with the algebra structure given by $f\colon\cO(-d)\otimes\cO(-d)=\cO(-2d)\to\cO$; equivalently, $X=\{z^2=f(x_0,\dots,x_n)\}\subset\PP(1^{n+1},d)$, which does not contain the singular point $(0:\dots:0:1)$, since $z^2=f(0)=0$ forces $z=0$. The map $\pi$ is étale over $\PP^n\setminus B$. Near a point of $B$, choose local coordinates $(u,x_2,\dots,x_n)$ on $\PP^n$ with $B=\{u=0\}$, which is possible since $B$ is smooth; then $X$ is locally $\{z^2=u\}$, which is smooth with coordinates $(z,x_2,\dots,x_n)$. Since $B$ is reduced, $f$ is not a square, and $X$ is irreducible. The ramification divisor $R=\{z=0\}$ is the zero locus of the section $z$ of $\pi^*\cO(d)$, which vanishes to first order along it, so $\cO_X(R)\simeq\pi^*\cO(d)$, and the ramification formula gives
\[
K_X=\pi^*K_{\PP^n}+R=(d-n-1)H,\qquad-K_X=(n+1-d)H=kH.
\]
Since $\pi$ is finite, $H$ is ample, so $X$ is Fano. From $\pi_*\cO_X=\cO\oplus\cO(-d)$ we get, for all $j$,
\begin{equation}\label{eq:dcsections}
H^0(X,jH)=H^0(\PP^n,\cO(j))\oplus H^0(\PP^n,\cO(j-d))\,z.
\end{equation}
Since $q\ge2$, we have $k<d$, so $H^0(X,-K_X)=\pi^*H^0(\PP^n,\cO(k))$. The morphism defined by $|-K_X|$ is therefore the composite of $\pi$ with the $k$-th Veronese embedding; it is two-to-one over $\PP^n\setminus B$, so $-K_X$ is not very ample.
\item Let $S=\CC[x_0,\dots,x_n]$ and $S^{(d)}=\bigoplus_jS_{jd}$. By \eqref{eq:dcsections}, the section ring of $dH$ is
\[
\bigoplus_{j\ge0}H^0(X,jdH)=\bigoplus_{j\ge0}\bigl(S_{jd}\oplus S_{(j-1)d}\,z\bigr)\simeq S^{(d)}[z]/(z^2-f),
\]
with $z$ in degree one and $f\in S_{2d}=S^{(d)}_2$; the multiplication agrees because $z^2=f$ in the algebra of the cover. This ring is generated in degree one, by $S_d$ and $z$, since $S^{(d)}$ is generated by $S_d$. Since $dH$ is ample, $X$ is the $\Proj$ of this ring, and generation in degree one shows that $|dH|$ is a closed embedding, so $dH=-qK_X$ is very ample. Put $V=H^0(X,dH)=S_d\oplus\CC z$, so that $\Sym V=(\Sym S_d)[z]$. Let $J\subset\Sym S_d$ be the kernel of $\Sym S_d\to S^{(d)}$, the homogeneous ideal of the Veronese variety $\nu_d(\PP^n)$, which is generated by quadrics (classically; see \cite[Theorem~1.1]{HLMP21} for a stronger statement), and choose $F\in\Sym^2S_d$ mapping to $f$, which is possible since $S_d\cdot S_d=S_{2d}$. Then $\Sym V/J\Sym V=S^{(d)}[z]$, and dividing further by $z^2-F$ gives the section ring of $dH$. Hence the kernel of the surjection from $\Sym V$ onto the section ring is $J\Sym V+(z^2-F)$. This kernel is the homogeneous ideal of $X\subset\PP(V^\vee)$, and it is generated by quadrics.
\item $n-1-2q=(q+1)k-2-2q=(q+1)(k-2)\ge0$, with equality exactly for $k=2$.
\end{enumerate}
The final assertion follows from (1)--(3), the proof of Theorem~\ref{thm:C} with $\ell=2$, and Remark~\ref{rem:variants}(a).
\end{proof}

\begin{example}\label{ex:qone}
Let $X=Q_1\cap Q_2\cap C\subset\PP^7$ be a smooth complete intersection of two quadrics and a cubic. Then $n=4$, $-K_X=\cO_X(1)$, and the ideal of $X$ is generated by $Q_1,Q_2,C$, so Theorem~\ref{thm:C} applies with $q=1$ and $\ell=3=n-1$. Here $\dim I_{X,2}=2$ and $\dim I_{X,3}=17$, since the two quadrics have no linear syzygy, while $h^0(X,-2K_X)=34$ and $h^0(X,-3K_X)=103$; the presentation \eqref{eq:presentation} recovers $X$ from the kernels of $\Sym^m\HS^{4,-1}_X\to\HS^{4m,-m}_X$ for $m=2,3$.
\end{example}

\subsection{Quadratically defined anticanonical embeddings}\label{ss:A-quadratic}
The case $q=1$, $\ell=2$ of Theorem~\ref{thm:C} applies in every dimension $n\ge3$, and only involves the bidegrees $(n,-1)$ and $(2n,-2)$ of the Hochschild--Serre algebra.

\begin{corollary}\label{cor:quadratic}
Let $X$ be a smooth Fano variety of dimension $n\ge3$ such that $-K_X$ is very ample and the homogeneous ideal of the anticanonical embedding $X\subset\PP(H^0(X,-K_X)^\vee)$ is generated by quadrics. Put
\[
Q_X=\ker\bigl(\Sym^2\HS^{n,-1}_X\longrightarrow\HS^{2n,-2}_X\bigr).
\]
\begin{enumerate}[label=\textup{(\arabic*)}]
\item The map $\Sym^2\theta_1$ identifies the space of quadrics containing $X$ with $Q_X$, and
\[
X\simeq\Proj\bigl(\Sym\HS^{n,-1}_X/(Q_X)\bigr),
\]
that is, $X$ is the subscheme of $\PP\bigl((\HS^{n,-1}_X)^\vee\bigr)$ cut out by the quadrics in $Q_X$.
\item If $X'$ is a smooth Fano variety of dimension at least three with very ample anticanonical bundle and $\cO_{X'}^{\perp}\simeq\cO_X^{\perp}$, then $X'\simeq X$, by an isomorphism identifying the anticanonical embeddings.
\end{enumerate}
\end{corollary}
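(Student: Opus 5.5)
This is the case $q=1$, $\ell=2$ of Theorem~\ref{thm:C}, and the plan is to read it off from the proof in Section~\ref{ss:A-proofC} rather than argue anew. The numerical hypothesis $q\ell\le\dim X-1$ becomes $2\le n-1$, which holds because $n\ge3$. Since $-K_X$ is very ample, $V_1(X)=H^0(X,-K_X)\ne0$. By \eqref{eq:HSbound}, $\theta_1$ is bijective (as $1\le n-2$) and $\theta_2$ is injective (as $2\le n-1$).

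For (1), apply Proposition~\ref{prop:relations} with $q=1$. Part (3) of that proposition, with $m=2$, gives condition (2)(c): $\Sym^2\theta_1$ maps $I_{X,2}$ onto $(\ker\mu_{\HS,X})_2$. This is exactly $Q_X$, because $\mu_{\HS,X}$ in degree two is the multiplication $\Sym^2\HS^{n,-1}_X\to\HS^{2n,-2}_X$; that map is well defined by the commutativity in Proposition~\ref{prop:relations}(1). The ideal $I_X$ is generated by $I_{X,2}$, so the ring isomorphism $\Sym\theta_1$ carries $I_X$ onto the ideal $(Q_X)$. Then \eqref{eq:ProjX} gives $X\simeq\Proj(\Sym\HS^{n,-1}_X/(Q_X))$, compatibly with the identification $\PP(V_1(X)^\vee)\simeq\PP((\HS^{n,-1}_X)^\vee)$ induced by $\theta_1$. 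This is Step~7 of the proof of Theorem~\ref{thm:C}.

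For (2), let $X'$ be as stated. Lemma~\ref{lem:dimq} with $q=1$ applies to both sides, since $1\le\dim-1$ and the anticanonical bundles have sections, so $\dim X'=n$. Then run Steps~2--6 of Section~\ref{ss:A-proofC} with $\ell=2$. The equivalence induces an isomorphism $\Phi_*$ of Hochschild--Serre rings that preserves the bidegrees $(n,-1)$ and $(2n,-2)$. Hence $\alpha=(\theta_1^{X'})^{-1}\Phi_*\theta_1^X$ satisfies $\Sym^2\alpha(I_{X,2})=I_{X',2}$, by \eqref{eq:idealsmatch}. Note that the quadric-generation hypothesis is needed only for $X$, by Remark~\ref{rem:variants}(a). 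It follows that $\psi(X)=V(J')\supset X'$, where $\psi$ is the projective transformation induced by $\alpha$. Both sides are integral of dimension $n$, so they are equal, and $\psi|_X$ identifies the anticanonical embeddings.

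The only points needing care are already handled in the general proof, and none is a real obstacle here. The first is that the product on $\HS^{n,-1}_X$ is commutative, so that $\Sym$ makes sense; this is Proposition~\ref{prop:relations}(1). The second is that rescaling the Serre intertwiner only rescales $\alpha$, which does not change $Q_X$ or the subschemes involved.
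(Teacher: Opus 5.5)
Your proposal is correct and follows the paper's own proof. You specialize Theorem~\ref{thm:C} to $q=1$, $\ell=2$, obtain part (1) from Proposition~\ref{prop:relations}(2)(c),(3) with $m=2$ together with the presentation \eqref{eq:presentation}, and obtain part (2) by running the proof of Theorem~\ref{thm:C}, using Remark~\ref{rem:variants}(a) so that the quadric-generation hypothesis is needed only on $X$.
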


\begin{proof}
Here $q=1$, $\ell=2$ and $q\ell=2\le n-1$. Part (1) is Proposition~\ref{prop:relations}(2)(c) with $m=2$, together with \eqref{eq:presentation}. For (2), the proof of Theorem~\ref{thm:C} applies by Remark~\ref{rem:variants}(a): the generation hypothesis is only needed for $X$, and $X'$ satisfies $-K_{X'}$ very ample and $2\le\dim X'-1$.
\end{proof}

Table~\ref{tab:quadratic} lists smooth Fano threefolds to which Corollary~\ref{cor:quadratic} applies, together with the prime Fano threefolds of index one to which it does not apply, and one family in every dimension. In the table, $N=h^0(X,-K_X)-1$, and $\dim I_{X,2}$ is the number of independent quadrics containing the anticanonical model.

\begin{table}[ht]
\centering
\renewcommand{\arraystretch}{1.15}
\begin{tabular}{>{\raggedright\arraybackslash}p{6.4cm}ccc>{\raggedright\arraybackslash}p{3.3cm}}
\toprule
Fano variety $X$ & $\dim X$ & $X\subset\PP^N$ & $\dim I_{X,2}$ & Corollary~\ref{cor:quadratic}\\
\midrule
\multicolumn{5}{l}{\emph{Prime Fano threefolds of index one, genus $g$}}\\
$g=2$: sextic double solids & $3$ & --- & --- & no: $-K_X$ not very ample\\
$g=3$: double covers of a quadric threefold & $3$ & --- & --- & no: $-K_X$ not very ample\\
$g=3$: quartic threefolds & $3$ & $\PP^4$ & $0$ & no: quartic equation$^{\ast}$\\
$g=4$: complete intersections $Q\cap C\subset\PP^5$ & $3$ & $\PP^5$ & $1$ & no: cubic equation$^{\ast}$\\
$g=5$: complete intersections $Q_1\cap Q_2\cap Q_3\subset\PP^6$ & $3$ & $\PP^6$ & $3$ & yes\\
$g=6$: Gushel--Mukai threefolds, ordinary and special & $3$ & $\PP^7$ & $6$ & yes\\
$g=7$: linear sections of the spinor tenfold $\mathrm{OGr}_+(5,10)\subset\PP^{15}$ & $3$ & $\PP^8$ & $10$ & yes\\
$g=8$: linear sections of $\Gr(2,6)\subset\PP^{14}$ & $3$ & $\PP^9$ & $15$ & yes\\
$g=9$: linear sections of $\mathrm{LGr}(3,6)\subset\PP^{13}$ & $3$ & $\PP^{10}$ & $21$ & yes\\
$g=10$: linear sections of the adjoint $G_2$-variety in $\PP^{13}$ & $3$ & $\PP^{11}$ & $28$ & yes\\
$g=12$: the threefolds $V_{22}$ & $3$ & $\PP^{13}$ & $45$ & yes\\
\midrule
\multicolumn{5}{l}{\emph{Other Fano threefolds}}\\
Verra threefolds: divisors of bidegree $(2,2)$ in $\PP^2\times\PP^2$ & $3$ & $\PP^8$ & $10$ & yes\\
\midrule
\multicolumn{5}{l}{\emph{Higher dimension}}\\
complete intersections of $n$ quadrics in $\PP^{2n}$ & $n\ge3$ & $\PP^{2n}$ & $n$ & yes\\
\bottomrule
\end{tabular}

\smallskip
\begin{minipage}{\linewidth}\footnotesize
$^{\ast}$\,Theorem~\ref{thm:B} applies: the anticanonical bundle of a quartic threefold ($g=3$), and of a complete intersection $Q\cap C\subset\PP^5$ ($g=4$), is very ample. Only the quadratic presentation of Corollary~\ref{cor:quadratic} is not available, since the ideal is not generated by quadrics.
\end{minipage}
\caption{Smooth Fano varieties and Corollary~\ref{cor:quadratic}.}
\label{tab:quadratic}
\end{table}

The entries of Table~\ref{tab:quadratic} are justified as follows.
\begin{enumerate}[label=(\alph*)]
\item \emph{Prime Fano threefolds of index one.} Let $X$ be a smooth prime Fano threefold of index one and genus $g$, so that $(-K_X)^3=2g-2$ and $h^0(X,-K_X)=g+2$. The anticanonical bundle is very ample unless $X$ is hyperelliptic, that is, a sextic double solid or a double cover of a quadric threefold. If $X$ is not hyperelliptic, its anticanonical model $X_{2g-2}\subset\PP^{g+1}$ is projectively normal, and its homogeneous ideal is generated by quadrics unless $X$ is trigonal, which happens exactly for the quartic threefolds ($g=3$) and the complete intersections of a quadric and a cubic in $\PP^5$ ($g=4$); see \cite[Chapter~4]{IP99}, and \cite{SD74} for the corresponding statements for the K3 surfaces cut out by hyperplanes. The descriptions for $g\ge6$ are those of Mukai, recalled in \cite{IP99}. In the quadratically defined cases, Riemann--Roch gives $h^0(X,-2K_X)=5g$, and projective normality gives
\[
\dim I_{X,2}=\binom{g+3}{2}-5g=\frac{(g-2)(g-3)}{2}.
\]
For $g=6$ this is the description of Gushel--Mukai threefolds as intersections of six quadrics \cite{DK18}.
\item \emph{Verra threefolds.} A smooth divisor $X\subset\PP^2\times\PP^2$ of bidegree $(2,2)$ has $-K_X=\cO_X(1,1)$ and $h^0(X,-K_X)=9$, and $X\subset\PP^8$ is the intersection of the Segre variety, cut out by the nine $2\times2$ minors of a $3\times3$ matrix of coordinates, with a quadric lifting the equation of $X$. Since the Segre variety is arithmetically Cohen--Macaulay, the ideal of $X$ is generated by these ten quadrics, and $h^0(X,-2K_X)=35=45-10$.
\item \emph{Complete intersections of quadrics.} A smooth complete intersection of $n$ quadrics in $\PP^{2n}$ has dimension $n$ and $-K_X=\cO_X(2n+1-2n)=\cO_X(1)$, and by the Koszul resolution its ideal is generated by the $n$ quadrics. For $n=3$ this is the case $g=5$ above.
\end{enumerate}
In all the cases marked ``yes'', Corollary~\ref{cor:quadratic} reconstructs $X$ from $\cO_X^{\perp}$ as the intersection of the quadrics in $Q_X$, and identifies $\dim Q_X$ with the number in the fourth column.

\end{document}